\let\Bbb\mathbb
\def\>{\relax\ifmmode\mskip.666667\thinmuskip\relax\else\kern.111111em\fi}
\def\<{\relax\ifmmode\mskip-.333333\thinmuskip\relax\else\kern-.0555556em\fi}
\def\vsk#1>{\vskip#1\baselineskip}
\def\vv#1>{\vadjust{\vsk#1>}\ignorespaces}
\def\vvn#1>{\vadjust{\nobreak\vsk#1>\nobreak}\ignorespaces}
\def\vvgood{\vadjust{\penalty-500}}
\let\alb\allowbreak
\def\sskip{\par\vsk.2>}
\let\Medskip\medskip
\def\medskip{\par\Medskip}
\let\Bigskip\bigskip
\def\bigskip{\par\Bigskip}
\let\Maketitle\maketitle
\def\maketitle{\Maketitle\thispagestyle{empty}\let\maketitle\empty}
\newtheorem{thm}{Theorem}[section]
\newtheorem{cor}[thm]{Corollary}
\newtheorem{lem}[thm]{Lemma}
\newtheorem{prop}[thm]{Proposition}
\numberwithin{equation}{section}
\theoremstyle{definition}
\newtheorem*{rem}{Remark}
\newtheorem*{example}{Example}
\let\mc\mathcal
\let\nc\newcommand
\nc{\on}{\operatorname}
\nc{\Z}{{\mathbb Z}}
\nc{\C}{{\mathbb C}}
\nc{\N}{{\mathbb N}}
\nc{\pone}{{\mathbb C}{\mathbb P}^1}
\nc{\arr}{\rightarrow}
\nc{\larr}{\longrightarrow}
\nc{\al}{\alpha}
\nc{\W}{{\mc W}}
\nc{\la}{\lambda}
\nc{\su}{\widehat{{\mathfrak sl}}_2}
\nc{\g}{{\mathfrak g}}
\nc{\h}{{\mathfrak h}}
\nc{\m}{{\mathfrak m}}
\nc{\n}{{\mathfrak n}}
\nc{\Gm}{\Gamma}
\nc{\La}{\Lambda}
\nc{\gl}{\widehat{\mathfrak{gl}_2}}
\nc{\bi}{\bibitem}
\nc{\om}{\omega}
\nc{\Res}{\on{Res}}
\nc{\gm}{\gamma}
\nc{\Om}{\Omega}
\def\fratop{\genfrac{}{}{0pt}1}
\def\Mu{\mathrm M}
\def\z{\mathfrak z}
\def\ev{\mbox{\sl ev}}
\def\ch{\on{ch}}
\def\gr{\on{gr}}
\def\Ann{\on{Ann}}
\def\End{\on{End}}
\def\Gr{\on{Gr}}
\def\Res{\on{Res}}
\def\rdet{\on{rdet}}
\def\Wr{\on{Wr}}
\def\tbigcap{\mathrel{\textstyle{\bigcap}}}
\def\lab{{\bs{\bar\la}}}
\def\mut{\bs{\tilde\mu}}
\def\lba{{\bs\la\>,\bs a}}
\def\Llb{{\bs\La,\bs\la\>,\bs b}}
\def\Lbl{{\bs\La,\lab\>,\bs b}}
\def\Mmb{{\bs\Mu,\bs b}}
\def\Wrl{\Wr_{\bs\la}}
\def\Wrli{\Wrl^{-1}}
\def\Wrnb{\Wr_{\bs n,\bs b}}
\def\Wrnbi{\Wrnb^{-1}}
\def\Dlnb{\Dl_{\bs n,\bs b}}
\def\B{{\mc B}}
\def\D{{\mc D}}
\def\F{{\mc F}}
\def\L{{\mc L}}
\def\M{{\mc M}}
\def\O{{\mc O}}
\def\Q{{\mc Q}}
\def\V{{\mc V}}
\def\Dt{\Tilde\D}
\def\flati{\def\=##1{\rlap##1\hphantom b)}}
\let\dl\delta
\let\Dl\Delta
\let\eps\varepsilon
\let\si\sigma
\let\Sig\varSigma
\let\Tht\Theta
\let\Tilde\widetilde
\let\der\partial
\let\ge\geqslant
\let\geq\geqslant
\let\le\leqslant
\let\leq\leqslant
\let\bat\bar
\nc{\gln}{\mathfrak{gl}_N}
\nc{\sln}{\mathfrak{sl}_N}
\def\glnt{\gln[t]}
\def\Ugln{U(\gln)}
\def\Uglnt{U(\glnt)}
\def\slnt{\sln[t]}
\def\beq{\begin{equation}}
\def\eeq{\end{equation}}
\def\be{\begin{equation*}}
\def\ee{\end{equation*}}
\nc{\bean}{\begin{eqnarray}}
\nc{\eean}{\end{eqnarray}}
\nc{\bea}{\begin{eqnarray*}}
\nc{\eea}{\end{eqnarray*}}
\nc{\bs}{\boldsymbol}
\nc{\Ref}[1]{{\rm(\ref{#1})}}
\nc{\R}{\Bbb R}
\begin{document}

\hrule width0pt
\vsk->

\title[Schubert calculus and representations of general linear group]
{Schubert calculus and representations of\\[4pt] general linear group}

\author[E.\,Mukhin, V.\,Tarasov, and A.\,Varchenko]
{E.\,Mukhin$\>^*$, V.\,Tarasov$\>^\star$, and A.\,Varchenko$\>^\diamond$}

\maketitle

\begin{center}
{\it $\kern-.4em^{*,\star}\<$Department of Mathematical Sciences,
Indiana University\,--\>Purdue University Indianapolis\kern-.4em\\
402 North Blackford St, Indianapolis, IN 46202-3216, USA\/}

\medskip
{\it $^\star\<$St.\,Petersburg Branch of Steklov Mathematical Institute\\
Fontanka 27, St.\,Petersburg, 191023, Russia\/}

\medskip
{\it $^\diamond\<$Department of Mathematics, University of North Carolina
at Chapel Hill\\ Chapel Hill, NC 27599-3250, USA\/}
\end{center}

{\let\thefootnote\relax
\footnotetext{\vsk-.8>\noindent
$^*\<${\sl E\>-mail}:\enspace mukhin@math.iupui.edu\\
$^\star\<${\sl E\>-mail}:\enspace vt@math.iupui.edu\>, vt@pdmi.ras.ru\\
$^\diamond\<${\sl E\>-mail}:\enspace anv@email.unc.edu}}

\medskip
\begin{abstract}
We construct a canonical isomorphism between the Bethe algebra
acting on a multiplicity space of a tensor product of evaluation
$\gln[t]$-modules and the scheme-theoretic intersection of suitable
Schubert varieties. Moreover, we prove that the multiplicity space
as a module over the Bethe algebra is isomorphic to the coregular
representation of the scheme-theoretic intersection.

In particular, this result implies the simplicity of the spectrum of
the Bethe algebra for real values of evaluation parameters and the
transversality of the intersection of the corresponding Schubert
varieties.
\end{abstract}

\setcounter{footnote}{0}
\renewcommand{\thefootnote}{\arabic{footnote}}

\section{Introduction}

It is known for a long time that the intersection index of the Schubert
varieties in the Grassmannian of the $N$-dimensional planes coincides with
the dimension of the space of invariant vectors in a suitable tensor product
of finite-dimensional irreducible representations of the general linear group
$\gln$.

There is a natural commutative algebra acting on such a space of
invariant vectors, which we call the Bethe algebra. The Bethe algebra is 
the central object of study in the quantum Gaudin model, 
see \cite{G}. 
In this paper,
we construct an algebra isomorphism between the Bethe algebra and
the scheme-theoretic intersection of the Schubert varieties. We also show that
the representation of the Bethe algebra on the space of the invariant vectors
is naturally isomorphic to the coregular representation of the scheme-theoretic
intersection of the Schubert varieties (in fact, the regular and coregular
representations of that algebra are isomorphic). Thus we explain and generalize
the above mentioned coincidence of the numbers.

The existence of such a strong connection between seemingly unrelated subjects:
Schubert calculus and integrable systems, turns out to be very advantageous for
both.

On the side of Schubert calculus, using a weaker form of the relation
we succeeded to prove the B.~and M.\,Shapiro conjecture in~\cite{MTV2}.
\vvgood
An alternative proof is given in the present paper. Moreover, we are now able
to deduce that the Schubert varieties corresponding to real data intersect
transversally, see Corollary~\ref{real}. The transversality of the intersection
was a long standing conjecture, see~\cite{EH}, \cite{S}.

\sskip
The transversality property can be reformulated as the following statement:
the number of monodromy free monic Fuchsian differential
operators of order $N$ with $k+1$ singular points, all of them lying on a line
or a circle, and prescribed exponents at the singular points equals the
multiplicity of the trivial $\gln$-module in the tensor product of $k+1$
irreducible $\gln$-modules with highest weights determined by the exponents at
the critical points.

\sskip
On the side of the integrable systems, we obtain a lot
of new information about
the Bethe algebra, see Theorem~\ref{BL}. In particular, we are able to show 
that all
eigenspaces of the Bethe algebra are one-dimensional for all
values of parameters. Since it is known that the Bethe algebra is
diagonalizable for real data, our result implies that the spectrum of the Bethe
algebra is simple in this case and hence it is simple generically. We also show
that the Bethe algebra is a maximal commutative subalgebra in the algebra of
linear operators.

In addition, an immediate corollary of our result is a bijective correspondence
between eigenvalues of the Bethe algebra and monic differential operators whose
kernels consist of polynomials only, see Theorem~\ref{BL}, parts~(v), (vi).
The obtained correspondence between the spectrum of the Bethe algebra and
the differential operators is in the spirit of the geometric Langlands
correspondence, see~\cite{F}.

\sskip
We obtained similar results for the Lie algebra $\mathfrak{gl}_2$ and the
Grassmannian of two-dimen\-sional planes in~\cite{MTV3}. The crucial difference
between~\cite{MTV3} and the present paper is that we are able to avoid using
the so-called weight function. The weight function in the case of
$\mathfrak{gl}_2$ can be handled with the help of the functional Bethe ansatz.
In the case of $\gln$, $N>2$, the weight function is a much more complicated
object and the functional Bethe ansatz is not yet sufficiently developed.
We hope that the results of this paper 
can be used to regularize the weight function.

\sskip
In the present paper we use the following approach. Let $V=\C^N$ be
the vector representation of $\gln$. We consider the $\gln[t]$-module
$\V^S=(V^{\otimes n}\otimes \C[z_1,\dots,z_n])^S$ which is a
cyclic submodule of the tensor product of evaluation modules with
formal evaluation parameters. We show that the Bethe algebra
$\B_{\bs\la}$ acting on the subspace $(\V^S)_{\bs\la}^{sing}$ of
singular vectors of weight $\bs\la$ is a free polynomial algebra,
see Theorem~\ref{first} and its representation on $(\V^S)_{\bs\la}^{sing}$ is
isomorphic to the regular representation, see Theorem~\ref{first1}. To get this
result we exploit three observations: a natural identification of the algebra
$\B_{\bs\la}$ with the algebra of functions on a suitable Schubert cell, the
completeness of the Bethe Ansatz for the tensor product of the vector
representations at generic evaluation points, and the equality of the graded
characters of the Bethe algebra $\B_{\bs\la}$ and of the space
$(\V^S)_{\bs\la}^{sing}$.

At the second step we specialize the evaluation parameters, and
consider a quotient of the $\gln[t]$-module $\V^S$ which is
isomorphic to the Weyl module. We take the corresponding quotient of
the algebra of functions on the Schubert cell and obtain an
isomorphism between the scheme-theoretic fiber of the Wronski map and
the Bethe algebra $\B_\lba$ acting on the subspace of
singular vectors of weight $\bs\la$ in the Weyl module.
We also show that this representation of the algebra
$\B_\lba$ is isomorphic to the regular representation,
see Theorem~\ref{second}. In fact, the algebra $\B_{\bs\la,\bs a}$
is Frobenius, so its regular and coregular representations are isomorphic.

\newpage
Finally, we impose more constraints to descend to the scheme-theoretic
intersection of the Schubert varieties on one side and to the Bethe
algebra $\B_{\bs\La,\bs\la,\bs b}$ acting on a tensor product of
evaluation modules with highest weights
$\bs\la^{(1)},\dots,\bs\la^{(k)}$ and evaluation parameters
$b_1,\dots,b_k$ on the other side, see Theorem~\ref{third}.

The paper is organized as follows. In Section~\ref{alg sec}, we discuss
representations of the current algebra $\glnt$ and
introduce the Bethe algebra $\B$ as a subalgebra of $U(\glnt)$.
In Section~\ref{sch sec}, we study the algebra of
functions of a Schubert cell and the associated Wronski map.
The scheme-theoretic intersection of Schubert cells is considered
in Section~\ref{schubert}. The central result there is Proposition~\ref{OIX},
which describes this algebra by generators and relations.
We prove the main results of the paper, Theorems~\ref{first}, \ref{first1},
\ref{second}, and ~\ref{third} in Section~\ref{iso sec}.
Section~\ref{app sec} describes applications.

\subsection*{Acknowledgments}
We are grateful to P.\,Belkale, L.\,Borisov, V.\,Chari, P.\,Etingof, B.
Feigin, A.\,Kirillov, M.\,Nazarov and F.\,Sottile
for valuable discussions.

E.\,Mukhin is supported in part by NSF grant DMS-0601005.
V.\,Tarasov is supported in part by RFFI grant 05-01-00922.
A.\,Varchenko is supported in part by NSF grant DMS-0555327.
\vvn-.6>

\section{Representations of current algebra $\glnt$}
\label{alg sec}
\subsection{Lie algebra $\gln$}
Let $e_{ij}$, $i,j=1,\dots,N$, be the standard generators of the complex
Lie algebra
$\gln$ satisfying the relations
$[e_{ij},e_{sk}]=\dl_{js}e_{ik}-\dl_{ik}e_{sj}$. We identify the Lie algebra
$\sln$ with the subalgebra in $\gln$ generated by the elements
$e_{ii}-e_{jj}$ and $e_{ij}$ for $i\ne j$, $i,j=1,\dots,N$.

The subalgebra $\z_N\subset\gln$ generated by the element
$\sum_{i=1}^Ne_{ii}\,$ is central. The Lie algebra $\gln$ is canonically
isomorphic to the direct sum $\sln\oplus\z_N$.

Given an $N\times N$ matrix $A$ with possibly noncommuting entries $a_{ij}$,
we define its {\it row determinant\/} to be
\beq
\rdet A\,=
\sum_{\;\si\in S_N\!} (-1)^\si\,a_{1\si(1)}a_{2\si(2)}\dots a_{N\si(N)}\,.
\eeq

Let $Z(x)$ be the following polynomial in variable $x$ with coefficients
in $\Ugln$:
\vvn.3>
\beq
\label{Zx}
Z(x)\,=\,\rdet\left( \begin{matrix}
x-e_{11} & -\>e_{21}& \dots & -\>e_{N1}\\
-\>e_{12} &x+1-e_{22}& \dots & -\>e_{N2}\\
\dots & \dots &\dots &\dots \\
-\>e_{1N} & -\>e_{2N}& \dots & x+N-1-e_{NN}
\end{matrix}\right).
\vv.3>
\eeq

The following statement is proved in~\cite{HU},
see also Section~2.11 in~\cite{MNO}.
\begin{thm}\label{Zcent}
The coefficients of the polynomial\/ $\,Z(x)-x^N$ are free generators
of the center of\/ $\Ugln$.
\qed
\end{thm}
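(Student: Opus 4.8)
The plan is to establish two things: first, that the coefficients of $Z(x)-x^N$ are central in $\Ugln$; second, that they are algebraically independent and generate the whole center. For centrality, I would verify that $Z(x)$ commutes with each generator $e_{kl}$. The standard device is to recognize the matrix in~\Ref{Zx} as $x + \rho - E^t$ in a suitably shifted form, where $E=(e_{ij})$ is the matrix of generators and $\rho$ encodes the shift $\mathrm{diag}(0,1,\dots,N-1)$; one then uses the fundamental commutation relations $[e_{kl}, e_{ij}] = \dl_{li}e_{kj} - \dl_{kj}e_{il}$ to check that conjugation by $1 + t\,e_{kl}$ (at the level of formal deformations) leaves $\rdet$ of this specific column-determinant ordering invariant. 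Concretely, the commutator of $e_{kl}$ with the row determinant telescopes: the contributions from row $l$ and column $k$ cancel because of the particular ordering of factors $a_{1\si(1)}\cdots a_{N\si(N)}$ together with the diagonal shifts $x, x+1, \dots, x+N-1$, which are precisely tuned so that the "anomalous" terms coming from moving $e_{kl}$ past the other entries cancel in pairs. This is the Capelli-type identity; I would either cite the computation from~\cite{HU} and Section~2.11 of~\cite{MNO} or reproduce the telescoping argument.

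For the second part, I would pass to the Harish-Chandra homomorphism. Restricting $Z(x)$ to the Cartan subalgebra $\h$ (i.e.\ killing all $e_{ij}$ with $i\ne j$), the matrix becomes diagonal, so
\beq
Z(x)\big|_{\h} \,=\, \prod_{i=1}^{N}\bigl(x+i-1-e_{ii}\bigr).
\eeq
Writing $\la_i = e_{ii}$ and performing the standard $\rho$-shift $\mu_i = \la_i - i + 1$ (up to sign and an overall constant), this is $\prod_{i=1}^N (x-\mu_i)$, a polynomial whose non-leading coefficients are the elementary symmetric functions in the shifted variables. These are manifestly algebraically independent and generate the subalgebra of $S_N$-invariant (with respect to the shifted, i.e.\ dotted, action) polynomials on $\h^*$. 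By the Harish-Chandra isomorphism, the center $Z(\Ugln)$ maps isomorphically onto exactly this ring of invariants, so the $N$ coefficients of $Z(x)-x^N$, being central (by part one) and mapping to a free generating set of the invariant ring, must themselves freely generate $Z(\Ugln)$.

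The main obstacle is the centrality computation: the noncommutativity of the entries means that $\rdet$ is genuinely order-dependent, and one must check that the specific row ordering together with the shifts $0,1,\dots,N-1$ on the diagonal is the unique choice (up to trivialities) that makes the commutator vanish. All the subtlety is in tracking how $e_{kl}$ commutes past entries $a_{p\si(p)}$ in rows $p\ne k,l$ versus rows $p = k$ or $p=l$; the cross-terms $\dl_{li}e_{kj}$ and $-\dl_{kj}e_{il}$ produce new row-determinant-like expressions that must telescope against the shift differences. Once centrality is in hand, the Harish-Chandra argument is routine. I would therefore devote the bulk of the proof to the Capelli identity, likely by induction on $N$ (expanding $\rdet$ along the last row and using the inductive centrality of the $(N-1)\times(N-1)$ analogue inside $U(\mathfrak{gl}_{N-1})\subset\Ugln$), and dispatch the generation statement by the Harish-Chandra isomorphism as above.
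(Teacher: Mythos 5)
The paper itself gives no argument for this theorem: it is quoted from~\cite{HU} and Section~2.11 of~\cite{MNO}, so there is no in-paper proof to compare against. Your outline is the standard argument underlying those references, and its overall structure (centrality of the coefficients of the Capelli-type determinant~\Ref{Zx}, then free generation via the Harish-Chandra isomorphism) is correct; in particular the Harish-Chandra half is sound, since the image of $Z(x)$ is $\prod_{i=1}^N(x+i-1-e_{ii})$, whose nonleading coefficients are, after the shift $\mu_i=e_{ii}-i+1$, the elementary symmetric polynomials in $\mu_1,\dots,\mu_N$, i.e.\ free generators of the $W$-invariants for the dotted action.

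Two points in your write-up need tightening if you intend to do more than cite \cite{HU}, \cite{MNO}. First, ``restricting to $\h$ by killing all $e_{ij}$, $i\ne j$'' is not literally the Harish-Chandra projection; the clean justification that only the identity permutation survives is to evaluate $Z(x)$ on a highest weight vector of a Verma module: for $\si\ne\mathrm{id}$, taking the largest $p$ with $\si(p)\ne p$, the factor $-e_{\si(p)p}$ (with $\si(p)<p$) is a raising operator and everything to its right in $a_{1\si(1)}\cdots a_{N\si(N)}$ is diagonal, so the term annihilates the highest weight vector -- this is exactly the computation behind formula~\Ref{Zxv} in the paper, and it determines the Harish-Chandra image of the (already central) element. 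Second, your proposed induction on $N$ for centrality is incomplete as stated: centrality of the $(N-1)\times(N-1)$ analogue inside $U(\mathfrak{gl}_{N-1})$ only gives commutation with $\mathfrak{gl}_{N-1}$, so one must still verify commutation with the generators involving the index $N$ (equivalently, with all $e_{k,k+1}$ and $e_{k+1,k}$); this is precisely the telescoping computation with the diagonal shifts $0,1,\dots,N-1$ that you correctly identify as the crux, and it must be carried out (or cited, as the paper does) rather than asserted.
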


Let $M$ be a $\gln$-module. A vector $v\in M$ has weight
$\bs\la=(\la_1,\dots,\la_N)\in\C^N$ if $e_{ii}v=\la_iv$ for $i=1,\dots,N$.
A vector $v$ is called {\it singular\/} if $e_{ij}v=0$ for $1\le i<j\le N$.
If $v$ is a singular of weight $\bs\la$, then clearly
\vvn-.5>
\beq
\label{Zxv}
Z(x)\,v\,=\,\prod_{i=1}^N\,(x-\la_i+i-1)\cdot v\,.
\eeq

We denote by $(M)_{\bs\la}$ the subspace of $M$ of weight $\bs\la$,
by $M^{sing}$ the subspace of $M$ of all singular vectors and by
$(M)^{sing}_{\bs\la}$ the subspace of $M$ of all singular vectors
of weight $\bs\la$.

Denote by $L_{\bs\la}$ the irreducible finite-dimensional $\gln$-module with
highest weight $\bs\la$. Any finite-dimensional $\gln$-module $M$ is isomorphic
to the direct sum $\bigoplus_{\bs\la}L_{\bs\la}\otimes(M)_{\bs\la}^{sing}$,
where the spaces $(M)_{\bs\la}^{sing}$ are considered as trivial
$\gln$-modules.

The $\gln$-module $L_{(1,0,\dots,0)}$ is the standard $N$-dimensional vector
representation of $\gln$. We denote it by $V$. We choose a highest weight
vector in $V$ and denote it by $v_+$.

A $\gln$-module $M$ is called polynomial if it is isomorphic to a submodule of
$V^{\otimes n}$ for some $n$.

A sequence of integers $\bs\la=(\la_1,\dots,\la_N)$ such that
$\la_1\ge\la_2\ge\dots\ge\la_N\ge0$ is called a {\it partition with $N$
parts\/}. Set $|\bs\la|=\sum_{i=1}^N\la_i$. Then it is said that $\bs\la$
is a partition of $|\bs\la|$.

\sskip
The $\gln$-module $V^{\otimes n}$ contains the module $L_{\bs\la}$
if and only if $\bs\la$ is a partition of $n$ with at most $N$ parts.

\sskip
For a Lie algebra $\g\,$, we denote by $U(\g)$ the universal enveloping algebra
of $\g$.

\subsection{Current algebra $\glnt$}
Let $\glnt=\gln\otimes\C[t]$ be the complex
Lie algebra of $\gln$-valued polynomials
with the pointwise commutator. We call it the {\it current algebra\/}.
We identify the Lie algebra $\gln$ with the subalgebra $\gln\otimes1$
of constant polynomials in $\glnt$. Hence, any $\glnt$-module has the canonical
structure of a $\gln$-module.

The standard generators of $\glnt$ are $e_{ij}\otimes t^r$, $i,j=1,\dots,N$,
$r\in\Z_{\ge0}$. They satisfy the relations
$[e_{ij}\otimes t^r,e_{sk}\otimes t^p]=
\dl_{js}e_{ik}\otimes t^{r+p}-\dl_{ik}e_{sj}\otimes t^{r+p}$.

The subalgebra $\z_N[t]\subset\glnt$ generated by the elements
$\sum_{i=1}^Ne_{ii}\otimes t^r$, $r\in\Z_{\ge0}$, is central.
The Lie algebra $\glnt$ is canonically isomorphic to the direct sum
$\slnt\oplus\z_N[t]$.

It is convenient to collect elements of $\glnt$ in generating series
of a formal variable $u$. For $g\in\gln$, set
\vvn-.2>
\be
g(u)=\sum_{s=0}^\infty (g\otimes t^s)u^{-s-1}.
\vv.2>
\ee
We have $(u-v)[e_{ij}(u),e_{sk}(v)] =
\dl_{js}(e_{ik}(u)-e_{ik}(v)) -
\dl_{ik}(e_{sj}(u)-e_{sj}(v))$.

For each $a\in\C$, there exists an automorphism $\rho_a$ of $\glnt$,
\;$\rho_a:g(u)\mapsto g(u-a)$. Given a $\glnt$-module $M$, we denote by $M(a)$
the pull-back of $M$ through the automorphism $\rho_a$. As $\gln$-modules,
$M$ and $M(a)$ are isomorphic by the identity map.

For any $\glnt$-modules $L,M$ and any $a\in\C$, the identity map
$(L\otimes M)(a)\to L(a)\otimes M(a)$ is an isomorphism of $\glnt$-modules.

We have the evaluation homomorphism,
${\ev:\glnt\to\gln}$, \;${\ev:g(u) \mapsto g\>u^{-1}}$.
Its restriction to the subalgebra $\gln\subset\glnt$ is the identity map.
For any $\gln$-module $M$, we denote by the same letter the $\glnt$-module,
obtained by pulling $M$ back through the evaluation homomorphism. For each
$a\in\C$, the $\glnt$-module $M(a)$ is called an {\it evaluation module\/}.

If $b_1,\dots,b_n$ are distinct complex numbers and $L_1,\dots,L_n$ are
irreducible finite-dimensional $\gln$-modules, then the $\glnt$-module
$\otimes_{s=1}^n L_s(b_s)$ is irreducible.

We have a natural $\Z_{\ge0}$-grading on $\glnt$ such that for any $g\in\gln$,
the degree of $g\otimes t^r$ equals $r$. We set the degree of $u$ to be $1$.
Then the series $g(u)$ is homogeneous of degree $-1$.

A $\glnt$-module is called {\it graded\/} if it
has a $\Z_{\ge0}$-grading compatible with the grading of $\glnt$.
Any irreducible graded $\glnt$-module is isomorphic to an evaluation module
$L(0)$ for some irreducible $\gln$-module $L$, see~\cite{CG}.

Let $M$ be a $\Z_{\ge0}$-graded space with finite-dimensional homogeneous
components. Let $M_j\subset M$ be the homogeneous component of degree $j$.
We call the formal power series in variable $q$,
\vvn-.2>
\be
\ch(M)=\sum_{j=0}^\infty\,(\dim M_j)\,q^j\,,
\vv-.2>
\ee
the {\it graded character\/} of $M$.

\subsection{Weyl modules}
\label{secweyl}
Let $W_m$ be the $\glnt$-module generated by a vector $v_m$ with the
defining relations:
\vvn-.2>
\begin{alignat*}2
e_{ii} &(u)v_m=\>\dl_{1i}\,\frac mu\,v_m\,, && i=1,\dots,N\,,
\\[4pt]
e_{ij} & (u)v_m=\>0\,, && 1\le i<j\le N\,,
\\[4pt]
(e_{ji} &{}\otimes1)^{m\dl_{1i}+1}v_m=\>0\,,\qquad && 1\le i<j\le N\,.
\\[-12pt]
\end{alignat*}
As an $\slnt$-module, the module $W_m$ is isomorphic to the Weyl module from
\cite{CL}, \cite{CP}, corresponding to the weight $m\om_1$, where $\om_1$ is
the first fundamental weight of $\sln$. Note that $W_1=V(0)$.

\begin{lem}
\label{weyl}
The module $W_m$ has the following properties.
\begin{enumerate}
\item[(i)] The module \>$W_m$ has a unique grading such that\/ \>$W_m$ is
a graded $\glnt$-module and such that
the degree of\/ $v_m$ equals\/ $0$.
\item[(ii)] As a $\gln$-module, $W_m$ is isomorphic to $V^{\otimes m}$.
\item[(iii)] A $\glnt$-module $M$ is an irreducible subquotient of\/ $W_m$
if and only if\/ $M$ has the form $L_{\bs\la}(0)$,
where\/ $\bs\la$ is a partition of\/ $m$ with at most $N$ parts.
\item[(iv)] For any partition $\bs\la$ of\/ $m$ with at most $N$ parts,
the graded character of the space $(W_m)_{\bs\la}^{sing}$ is given by
\vvn-.2>
\be
\ch((W_m)_{\bs\la}^{sing})\,=\,
\frac{(q)_m\prod_{1\le i<j\le N}(1-q^{\la_i-\la_j+j-i})}
{\prod_{i=1}^N(q)_{\la_i+N-i}}\ q^{\sum_{i=1}^N{(i-1)\la_i }},
\ee
where\/ $\,(q)_a=\prod_{j=1}^a(1-q^j)\,$.
\end{enumerate}
\end{lem}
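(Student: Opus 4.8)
The plan is to establish the four properties of the Weyl module $W_m$ essentially by analyzing the defining relations and comparing with known structural facts about Weyl modules for $\slnt$.

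For part (i), I would observe that the defining relations of $W_m$ are compatible with the grading of $\glnt$: assigning degree $0$ to $v_m$, the relations $e_{ii}(u)v_m = \dl_{1i}(m/u)v_m$, $e_{ij}(u)v_m = 0$, and $(e_{ji}\otimes 1)^{\dots}v_m = 0$ are each homogeneous (the first two because $g(u)$ has degree $-1$ and $u$ has degree $1$, the third because it lives in degree $0$). Hence the ideal generated by these relations in $U(\glnt)$ is homogeneous, so the quotient $W_m$ inherits a $\Z_{\ge0}$-grading with $v_m$ in degree $0$; uniqueness is clear because $v_m$ is a cyclic generator.

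For part (ii), the key point is that $W_m$ splits as a tensor product of the $\z_N[t]$-action and the $\slnt$-action. The central subalgebra $\z_N[t]$ acts on $v_m$ through $\sum_i e_{ii}(u)v_m = (m/u)v_m$, i.e. via the evaluation-at-$0$ character of the one-dimensional $\z_N$-module of weight $m$, so as a $\z_N[t]$-module $W_m$ is generated by $v_m$ with $(\z_N\otimes t^r)$ acting by $0$ for $r>0$; combined with the $\slnt$-Weyl-module structure, $W_m \cong W^{\slnt}_{m\om_1}(0)$ as $\slnt$-modules. Now I invoke the known fact (Chari--Loktev, Chari--Pressley, as cited) that the $\slnt$-Weyl module associated to $m\om_1$ has, as an $\sln$-module, the same character as $V^{\otimes m}$ — indeed for the "rectangular" weight $m\om_1$ the Weyl module is well understood. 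Restoring the central direction, $W_m \cong V^{\otimes m}$ as $\gln$-modules. The main obstacle here is precisely citing the right form of the $\slnt$-result; one must be careful that the Weyl module in \cite{CL},\cite{CP} is defined with relations matching ours after passing to $\sln$, and that its graded character is the one needed.

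For parts (iii) and (iv), I would use part (ii) together with the graded structure from part (i). For (iii): any irreducible graded $\glnt$-module is an evaluation module $L_{\bs\mu}(0)$ by the cited fact \cite{CG}; since $W_m \cong V^{\otimes m}$ as a $\gln$-module, its $\gln$-composition factors are exactly the $L_{\bs\la}$ with $\bs\la$ a partition of $m$ with at most $N$ parts, so its graded $\glnt$-composition factors are among the $L_{\bs\la}(0)$ for such $\bs\la$; that each actually occurs follows from the multiplicity being $\dim(V^{\otimes m})^{sing}_{\bs\la} > 0$. For (iv): $\dim (W_m)^{sing}_{\bs\la}$ is a fixed number (the Kostka-type number), and the graded refinement is computed from the known $q$-analogue of the tensor power multiplicity — this is exactly the $q$-graded multiplicity of $L_{\bs\la}$ in the graded $\slnt$-Weyl module, which by the results of Chari--Loktev equals the principally specialized character expression on the right-hand side, namely $(q)_m \prod_{i<j}(1-q^{\la_i-\la_j+j-i}) / \prod_i (q)_{\la_i+N-i}$ times the overall shift $q^{\sum (i-1)\la_i}$. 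I would verify the formula reduces correctly at $q=1$ (giving the number of semistandard Young tableaux, i.e. $\dim (V^{\otimes m})^{sing}_{\bs\la}$) as a consistency check, and that the power of $q$ matches the grading normalization fixed in (i). The main work, and the place where care is needed, is matching our grading convention (degree of $v_m$ equal to $0$, degree of $u$ equal to $1$) with the grading under which the Chari--Loktev character formula is stated, which may differ by the overall monomial $q^{\sum(i-1)\la_i}$ that appears in the statement.
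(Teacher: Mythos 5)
Your proposal is correct and follows essentially the same route as the paper: parts (i) and (ii) rest on the Chari--Pressley/Chari--Loktev results on Weyl modules (the paper simply cites \cite{CP}, while you unfold the homogeneity-of-relations argument for (i)), part (iii) is deduced from (i) and (ii) together with the classification of graded irreducible $\glnt$-modules as evaluation modules at $0$, and part (iv) is the known identification of the graded multiplicities with Kostka polynomials plus their explicit formula, exactly as in the paper's citation of \cite[Corollary 1.5.2]{CL} and \cite[Example 2, p.\,243]{M}. Your consistency checks on the grading normalization and the $q$-power shift are a sensible addition but do not change the substance of the argument.
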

\begin{proof}
The first two properties are proved in~\cite{CP}. The third property follows
from the first two. The last property is well-known, see for example the
relation between the character $\ch\bigl((W_n)_{\bs\la}^{sing}\bigr)$ and
the Kostka polynomials in~\cite[Corollary 1.5.2]{CL}, and use the formula
for the corresponding Kostka polynomial in~\cite[Example~2, p.\,243]{M}.
\end{proof}

For each $b\in\C$, the $\glnt$-module $W_m(b)$ is cyclic with a cyclic vector
$v_m$.

\begin{lem}
\label{weylb}
The module $W_m(b)$ has the following properties.
\begin{enumerate}
\item[(i)] As a $\gln$-module, $W_m(b)$ is isomorphic to $V^{\otimes m}$.
\item[(ii)] A $\glnt$-module $M$ is an irreducible subquotient of\/ $W_m(b)$
if and only if\/ $M$ has the form $L_{\bs\la}(b)$,
where\/ $\bs\la$ is a partition of\/ $m$ with at most $N$ parts.
\item[(iii)] For any natural numbers $n_1,\dots,n_k$ and distinct complex
numbers $b_1,\dots,b_k$, the $\glnt$-module $\otimes_{s=1}^k W_{n_s}(b_s)$ is
cyclic with a cyclic vector $\otimes_{s=1}^kv_{n_s}$.
\item[(iv)] Let\/ $M$ be a cyclic finite-dimensional $\glnt$-module with
a cyclic vector $v$ satisfying $e_{ij}(u)v=0$ for $1\le i<j\le N$, and
$e_{ii}(u)v=\dl_{1i}(\sum_{s=1}^k n_s(u-b_s)^{-1})v$ for $i=1,\dots,N$.
Then there exists a surjective homomorphism of $\glnt$-modules from\/
$\otimes_{s=1}^kW_{n_s}(b_s)$ to $M$ sending\/ $\otimes_{s=1}^kv_{n_s}$ to $v$.
\end{enumerate}
\end{lem}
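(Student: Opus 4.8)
The plan is to reduce everything to the corresponding statements about the single Weyl modules $W_m$, which are already available by Lemma \ref{weyl} and \cite{CP}. Part (i) is immediate from Lemma \ref{weyl}(ii) together with the fact that $M$ and $M(b)$ are isomorphic as $\gln$-modules via the identity map; the automorphism $\rho_b$ does not touch the $\gln$-action. Part (ii) follows similarly: if $0\to M'\to M''\to M'''\to 0$ is exact for $\glnt$-modules, then pulling back by $\rho_b$ preserves exactness, and an irreducible graded $\glnt$-module is an evaluation module $L(0)$ by \cite{CG}, so its pull-back by $\rho_b$ is $L(b)$. Hence the irreducible subquotients of $W_m(b)$ are exactly the $L_{\bs\la}(b)$ with $\bs\la$ a partition of $m$ with at most $N$ parts, by Lemma \ref{weyl}(iii).

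For part (iv) I would argue directly from the defining relations of $\otimes_{s=1}^k W_{n_s}(b_s)$. Write $w=\otimes_{s=1}^k v_{n_s}$. Since each $W_{n_s}(b_s)$ is obtained from $W_{n_s}$ by the shift $g(u)\mapsto g(u-b_s)$, the cyclic vector $v_{n_s}$ of $W_{n_s}(b_s)$ satisfies $e_{ij}(u)v_{n_s}=0$ for $i<j$ and $e_{ii}(u)v_{n_s}=\dl_{1i}\,n_s(u-b_s)^{-1}v_{n_s}$, together with the Serre-type relations $(e_{ji}\otimes1)^{n_s\dl_{1i}+1}v_{n_s}=0$. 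Using the Leibniz (coproduct) rule for the action of $\glnt$ on a tensor product — namely $g(u)$ acts as $\sum_s 1\otimes\cdots\otimes g(u)\otimes\cdots\otimes1$ — one checks that $w$ satisfies $e_{ij}(u)w=0$ for $i<j$ and $e_{ii}(u)w=\dl_{1i}\bigl(\sum_{s=1}^k n_s(u-b_s)^{-1}\bigr)w$, which are precisely the hypotheses imposed on $v$. The point of part (iv) is that these relations, possibly after adjoining the nilpotency relations $(e_{ji}\otimes1)^{n_s\dl_{1i}+1}v=0$, already present in $M$ because $M$ is finite-dimensional, are a complete set of defining relations for $\otimes_{s=1}^k W_{n_s}(b_s)$; the universal property of the $W_{n_s}$'s then produces the surjection $w\mapsto v$. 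More precisely: the defining relations of $M$ as a cyclic module are generated by the displayed $e_{ij}(u)$-relations together with the $\glnt$-module relations coming from finite-dimensionality, so it suffices to show that these force the tensor-product relations; for the $\m_-:=\bigoplus_{i>j}\C e_{ij}\otimes t^{\Z_{\ge0}}$ part one uses that $M$, being finite-dimensional, is in particular a finite-dimensional $\glnt$-module on which $e_{ji}\otimes 1$ acts nilpotently, and the required bound on the nilpotency order follows because the weights appearing in $M$ are those of $V^{\otimes(\sum n_s)}$, forced by the $e_{ii}(u)$-relations (the total degree in the $e_{ii}$-eigenvalue is $\sum n_s$). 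Part (iii) is then the special case of (iv) in which $M=\otimes_{s=1}^k W_{n_s}(b_s)$ itself and $v=w$: one only has to exhibit $w$ as satisfying the hypotheses of (iv), which was done above, whence the identity map is the asserted surjection and $w$ is a cyclic vector.

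The main obstacle is part (iv), and within it the verification that the nilpotency/finite-dimensionality relations in $M$ are strong enough to recover the Serre-type relations $(e_{ji}\otimes1)^{n_s\dl_{1i}+1}w=0$ in the tensor product. The cleanest route is: first build the surjection $\varphi$ from the \emph{universal} cyclic module $\widetilde M$ defined only by the $e_{ij}(u)$-relations onto $M$; then show $\widetilde M$ already coincides with $\otimes_{s=1}^k W_{n_s}(b_s)$ by checking that in $\widetilde M$ the element $(e_{ji}\otimes1)^{n_s\dl_{1i}+1}$ annihilates the generator — this is where one invokes that the $e_{ii}(u)$-relations pin down the $\gln$-weight of the generator to be $m\om_1$ with $m=\sum n_s$ when restricted appropriately, and a standard $\sln$-argument (or direct use of the defining relations of $W_{n_s}$ via the coproduct) gives the nilpotency bound. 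I would expect this last step to require a short computation with the commutation relations $(u-v)[e_{ij}(u),e_{sk}(v)]=\dl_{js}(e_{ik}(u)-e_{ik}(v))-\dl_{ik}(e_{sj}(u)-e_{sj}(v))$ to control how $e_{ji}\otimes1$ interacts with the $e_{11}(u)$-eigenvalue, but nothing deeper than what is already used in \cite{CP} for the single Weyl module.
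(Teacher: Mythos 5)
For parts (i) and (ii) your argument is fine and is essentially the paper's own: both statements are read off from Lemma~\ref{weyl}, parts (ii) and (iii), since pulling back along $\rho_b$ is exact, does not change the $\gln$-action, and sends $L_{\bs\la}(0)$ to $L_{\bs\la}(b)$.

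The gap is in parts (iii) and (iv), which the paper does not prove but quotes from \cite{CP}; your sketch does not replace that citation. What you need is that the displayed relations on $v$, together with the nilpotency relations $(e_{ji}\otimes1)^{n\dl_{1i}+1}v=0$ forced by finite-dimensionality (with $n=\sum_s n_s$; this part of your argument, via $\mathfrak{sl}_2$-theory and the weight $(n,0,\dots,0)$ of $v$, is fine), form a complete set of defining relations for $\otimes_{s=1}^k W_{n_s}(b_s)$. That is precisely the Chari--Pressley theorem that the local Weyl module attached to $\prod_s(u-b_s)^{n_s}$ is isomorphic to the tensor product of the local Weyl modules at the distinct points $b_s$: the easy half is the surjection from the module presented by those relations onto the tensor product, while the isomorphism requires knowing that the dimension of the presented module is $N^{n}$ independently of $b_1,\dots,b_k$, which in \cite{CP} is obtained by a passage to the quantum affine algebra --- it is not ``a short computation with the commutation relations,'' and it is strictly deeper than what is needed for a single $W_m$. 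Your proposed ``cleanest route'' fails at exactly this step: the universal cyclic module $\widetilde M$ defined only by the $e_{ij}(u)$-relations does not satisfy the Serre-type relations and is not $\otimes_s W_{n_s}(b_s)$; by PBW it is a free $U(\n_-[t])$-module of rank one, hence infinite-dimensional, and no power of $e_{ji}\otimes1$ annihilates its generator, so the nilpotency relations must be imposed, and after imposing them the identification with the tensor product is the nontrivial theorem just described. Finally, deducing (iii) as ``the special case of (iv) with $M=\otimes_s W_{n_s}(b_s)$ and $v=w$'' is circular: the hypotheses of (iv) require $M$ to be cyclic with cyclic vector $v$, which for this choice is exactly the assertion (iii). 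Cyclicity of a tensor product of shifted cyclic modules at distinct evaluation points needs its own argument (a partial-fractions/Vandermonde-type computation, in the spirit of the proof of Lemma~\ref{cycl=symm}, or the argument in \cite{CP}).
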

\begin{proof}
The first two properties follow from Lemma~\ref{weyl}, parts~(ii) and~(iii).
The other two properties are proved in~\cite{CP}.
\end{proof}

Given sequences $\bs n=(n_1,\dots,n_k)$ of natural numbers and
$\bs b=(b_1,\dots,b_k)$ of distinct complex numbers, we call the $\glnt$-module
$\otimes_{s=1}^k W_{n_s}(b_s)$ the {\it Weyl module associated with\/ $\bs n$
and\/ $\bs b$}.

\begin{cor}
\label{weylbk}
A $\glnt$-module $M$ is an irreducible subquotient of\/
$\otimes_{s=1}^k W_{n_s}(b_s)$ if and only if\/ $M$ has the form
$\otimes_{s=1}^kL_{\bs\la^{(s)}}(b_s)$, where
$\bs\la^{(1)},\dots,\bs\la^{(k)}$ are partitions with at most $N$ parts
such that $|\bs\la^{(s)}|=n_s$, \,$s=1,\dots,k$.
\end{cor}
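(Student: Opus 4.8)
The plan is to obtain the composition factors of $\otimes_{s=1}^k W_{n_s}(b_s)$ from composition series of the individual Weyl modules $W_{n_s}(b_s)$ by means of the standard ``tensor product of filtrations'' construction.

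First I would record that each $W_{n_s}(b_s)$ is finite-dimensional by Lemma~\ref{weylb}(i), hence admits a finite composition series; by Lemma~\ref{weylb}(ii) its composition factors are precisely the modules $L_{\bs\la}(b_s)$ with $\bs\la$ a partition of $n_s$ having at most $N$ parts, each occurring with positive multiplicity. In particular $\otimes_{s=1}^k W_{n_s}(b_s)$ is finite-dimensional, so it too has a finite composition series.

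Next, working two factors at a time, suppose $0=A_0\subset A_1\subset\dots\subset A_p=A$ and $0=B_0\subset\dots\subset B_q=B$ are composition series of finite-dimensional $\glnt$-modules. Ordering pairs $(i,j)$ lexicographically and setting $F_{i,j}=A_{i-1}\otimes B+A_i\otimes B_j$ produces a filtration of $A\otimes B$ with $F_{i,j}/F_{i,j-1}\cong (A_i/A_{i-1})\otimes(B_j/B_{j-1})$; iterating over the $k$ tensor factors gives a filtration of $\otimes_{s=1}^k W_{n_s}(b_s)$ whose successive quotients are the modules $\otimes_{s=1}^k L_{\bs\la^{(s)}}(b_s)$, where each $\bs\la^{(s)}$ is a partition of $n_s$ with at most $N$ parts. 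Since $b_1,\dots,b_k$ are distinct and each $L_{\bs\la^{(s)}}$ is irreducible finite-dimensional, every such quotient is an irreducible $\glnt$-module, by the irreducibility criterion for tensor products of evaluation modules at distinct points recalled in Section~\ref{alg sec}. Hence this filtration is already a composition series of $\otimes_{s=1}^k W_{n_s}(b_s)$, and its factors are exactly the tuples claimed, each appearing with positive multiplicity.

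The statement now follows from the Jordan--Hölder theorem: the irreducible subquotients of a module are precisely its composition factors, and these have just been exhibited, yielding both implications. The one step that deserves a careful check is the isomorphism $F_{i,j}/F_{i,j-1}\cong(A_i/A_{i-1})\otimes(B_j/B_{j-1})$ of $\glnt$-modules, which rests on the fact that over a field tensoring is exact so that $A_{i-1}\otimes B\cap A_i\otimes B_j=A_{i-1}\otimes B_j$; this is routine, and I do not expect any genuine obstacle, the essential input being the distinctness of the evaluation parameters, which keeps the successive quotients irreducible so that no further refinement of the filtration is required.
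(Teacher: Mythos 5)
Your proposal is correct and follows essentially the same route as the paper: the paper's proof likewise combines Lemma~\ref{weylb}(ii), the irreducibility of $\otimes_{s=1}^kL_{\bs\la^{(s)}}(b_s)$ for distinct $b_1,\dots,b_k$, and the Jordan--H\"older theorem, with the tensor-product-of-filtrations construction you spell out being the implicit standard step. Your write-up simply makes explicit the routine details (the filtration $F_{i,j}$ and the identification of its subquotients) that the paper leaves to the reader.
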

\begin{proof}
The statement follows from part (ii) of Lemma~\ref{weylb}, irreducibility
of $\otimes_{s=1}^kL_{\bs\la^{(s)}}(b_s)$ and the Jordan-H\"older theorem.
\end{proof}

Consider the $\Z_{\ge0}$-grading of the vector space $W_m$, introduced
in Lemma~\ref{weyl}. Let $W_m^j$ be the homogeneous component of $W_m$
of degree $j$ and $\bar W^j_m=\oplus_{r\ge j}W_m^{r}$.
Since the $\glnt$-module $W_m$ is graded and $W_m=W_m(b)$ as vector spaces,
$W_m(b)=\bar W_m^0\supset\bar W_m^1\supset\dots{}$ is a descending filtration
of $\glnt$-submodules. This filtration induces the structure of the associated
graded $\glnt$-module on the vector space $W_m$ which we denote by
$\gr W_m(b)$.

\begin{lem}
\label{grWmb}
The $\glnt$-module $\gr W_m(b)$ is isomorphic to the evaluation module\/
$(V^{\otimes m})(b)$.
\vvn-.3>\qed
\end{lem}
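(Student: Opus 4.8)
The plan is to identify $\gr W_m(b)$ via its universal property as a quotient of the Weyl module, exactly as in Lemma~\ref{weylb}(iv). First I would determine the action of the generating series $e_{ij}(u)$ on the image $\bar v_m$ of the cyclic vector $v_m$ in $\gr W_m(b)$. Since $v_m$ has degree $0$ in the grading of $W_m$ from Lemma~\ref{weyl}(i), it spans the degree-$0$ component of $\gr W_m(b)$. The defining relations of $W_m$ say $e_{ij}(u)v_m=0$ for $i<j$ and $e_{ii}(u)v_m=\dl_{1i}(m/u)v_m$ in $W_m$; pulling back by $\rho_b$ replaces $u$ by $u-b$, so in $W_m(b)$ we get $e_{ij}(u)v_m=0$ for $i<j$ and $e_{ii}(u)v_m=\dl_{1i}\,m(u-b)^{-1}v_m$. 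Passing to $\gr$ preserves these relations on $\bar v_m$ because the right-hand sides are scalar multiples of $v_m$ itself (degree $0$), and $e_{ij}(u)$ has degree $-1$ in the formal variable $u$. Hence $\bar v_m$ satisfies precisely the hypotheses of Lemma~\ref{weylb}(iv) with $k=1$, $n_1=m$, $b_1=b$, so there is a surjective $\glnt$-module homomorphism $W_m(b)\twoheadrightarrow\gr W_m(b)$ sending $v_m\mapsto\bar v_m$.

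Next I would argue that this surjection is in fact an isomorphism by a dimension count. By Lemma~\ref{weyl}(ii), $W_m\cong V^{\otimes m}$ as a $\gln$-module, so $\dim W_m(b)=\dim(V^{\otimes m})=N^m$; the grading does not change the total dimension, so $\dim\gr W_m(b)=N^m$ as well. A surjection between finite-dimensional spaces of equal dimension is an isomorphism, so $\gr W_m(b)\cong W_m(b)$ as $\glnt$-modules via the homomorphism above.

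Finally I would identify $W_m(b)$ itself with the evaluation module $(V^{\otimes m})(b)$. Actually the cleanest route is to apply Lemma~\ref{weylb}(iv) once more, now to $M=(V^{\otimes m})(b)$: taking the cyclic vector $v=v_+^{\otimes m}$ (where $v_+$ is the highest weight vector of $V$), one checks $e_{ij}(u)v=0$ for $i<j$ and $e_{ii}(u)v=\dl_{1i}\,m(u-b)^{-1}v$, since under the evaluation homomorphism $g(u)\mapsto g\,u^{-1}$ shifted by $\rho_b$ we have $e_{ij}(u)=e_{ij}(u-b)^{-1}$ acting diagonally, and $e_{ii}v_+^{\otimes m}=\dl_{1i}\,m\,v_+^{\otimes m}$. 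This gives a surjection $W_m(b)\twoheadrightarrow(V^{\otimes m})(b)$, and again both sides have dimension $N^m$, so it is an isomorphism. Composing, $\gr W_m(b)\cong(V^{\otimes m})(b)$.

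The main obstacle is the first step: verifying that the defining relations of the Weyl module survive the passage from $W_m(b)$ to its associated graded, i.e.\ that $\bar v_m$ genuinely satisfies $e_{ii}(u)\bar v_m=\dl_{1i}\,m(u-b)^{-1}\bar v_m$ and not merely the leading-order relation $e_{ii}(u)\bar v_m=\dl_{1i}(m/u)\bar v_m$. This is where the specific shift by $b$ enters and must be handled with care; the point is that in $W_m(b)$ the vector $v_m$ already has the correct eigenvalue $m(u-b)^{-1}$ for $e_{11}(u)$ before grading, and since $e_{ii}(u)v_m$ is a scalar multiple of the degree-$0$ vector $v_m$, no lower-degree correction terms can appear when we pass to $\gr$. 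Everything else is a routine dimension bookkeeping using Lemma~\ref{weyl}(ii).
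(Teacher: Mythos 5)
Your verification that $\bar v_m$ satisfies $e_{ij}(u)\bar v_m=0$ for $i<j$ and $e_{ii}(u)\bar v_m=\dl_{1i}\,m\,(u-b)^{-1}\bar v_m$ in $\gr W_m(b)$ is correct, but your use of Lemma~\ref{weylb}(iv) is not: that lemma requires the target module to be \emph{cyclic} with the given cyclic vector, and $\gr W_m(b)$ is not cyclic over $\glnt$ with cyclic vector $\bar v_m$ once $m\ge 2$ and $N\ge 2$. Indeed, $e_{ij}\otimes t^s$ acts on $W_m(b)$ as $\rho_b(e_{ij}\otimes t^s)=e_{ij}\otimes(t+b)^s$ acts on $W_m$, and on a graded piece $\bar W_m^j/\bar W_m^{j+1}\simeq W_m^j$ only the degree-zero term $b^s\,e_{ij}\otimes 1$ survives modulo $\bar W_m^{j+1}$; hence the $\glnt$-action on $\gr W_m(b)$ factors through the evaluation at $b$, and the submodule generated by $\bar v_m$ is $U(\gln)\bar v_m\simeq L_{(m,0,\dots,0)}$, a proper subspace. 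So the surjection $W_m(b)\twoheadrightarrow\gr W_m(b)$ you claim does not exist, and the intermediate conclusion $\gr W_m(b)\simeq W_m(b)$ is false: together with your last step it would give $W_m(b)\simeq (V^{\otimes m})(b)$, which already fails for $N=m=2$, since $W_2(b)$ is cyclic, generated by a singular vector of weight $(2,0)$, while in $(V^{\otimes 2})(b)\simeq L_{(2,0)}(b)\oplus L_{(1,1)}(b)$ every vector of weight $(2,0)$ generates only the first summand. The same cyclicity problem invalidates your third step: $(V^{\otimes m})(b)$ is not cyclic with cyclic vector $v_+^{\otimes m}$, so Lemma~\ref{weylb}(iv) does not apply there either.

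The computation that breaks your argument is, in fact, the whole proof of the lemma: since on each graded piece only the constant term of $e_{ij}\otimes(t+b)^s$ acts, i.e.\ $e_{ij}\otimes t^s$ acts on $\gr W_m(b)$ as $b^s e_{ij}$, the module $\gr W_m(b)$ is the pull-back through evaluation at $b$ of the underlying $\gln$-module $W_m$, which is isomorphic to $V^{\otimes m}$ by Lemma~\ref{weyl}(ii); this is precisely the evaluation module $(V^{\otimes m})(b)$. No appeal to the universal property of Weyl modules or to a dimension count is needed, and none is possible along your route, because neither $\gr W_m(b)$ nor $(V^{\otimes m})(b)$ is a cyclic $\glnt$-module generated by the indicated vectors.
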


The space $\otimes_{s=1}^k W_{n_s}$ has a
$\Z_{\ge0}^k$-grading, induced by the gradings on the factors and
the associated descending $\Z_{\ge0}^k$-filtration by the subspaces
$\otimes_{s=1}^k\bar W_{n_s}^{j_s}$. This filtration is compatible with
the $\glnt$-action on the module $\otimes_{s=1}^k W_{n_s}(b_s)$. We
denote by $\gr\bigl(\otimes_{s=1}^k W_{n_s}(b_s)\bigr)$ the induced
structure of the associated graded $\glnt$-module on the space
$\otimes_{s=1}^k W_{n_s}$.

\begin{lem}
\label{grtensor}
\label{split lem}
The $\glnt$-modules \,$\gr\bigl(\otimes_{s=1}^k W_{n_s}(b_s)\bigr)$
and\/ \,$\otimes_{s=1}^k \gr W_{n_s}(b_s)$ are canonically isomorphic.
\qed
\end{lem}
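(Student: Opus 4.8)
The plan is to compare the two graded $\glnt$-module structures on the same underlying vector space $\otimes_{s=1}^k W_{n_s}$, showing that the canonical ``shuffle'' identification intertwines the $\glnt$-actions. First I would recall that the two modules in question have the same underlying space by construction: both $\gr\bigl(\otimes_{s=1}^k W_{n_s}(b_s)\bigr)$ and $\otimes_{s=1}^k\gr W_{n_s}(b_s)$ are defined on $\otimes_{s=1}^k W_{n_s}$, the first via the $\Z_{\ge0}^k$-filtration by the subspaces $\otimes_{s=1}^k\bar W_{n_s}^{j_s}$ on the tensor product of the \emph{deformed} modules $W_{n_s}(b_s)$, the second as the tensor product of the individual graded modules $\gr W_{n_s}(b_s)$, each of which carries the $\Z_{\ge0}$-grading of Lemma~\ref{weyl}. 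So the content of the lemma is that the $\glnt$-action on the associated graded of the tensor product agrees termwise with the tensor-product action on the associated gradeds.

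The key point is a general compatibility between ``associated graded'' and tensor products in the presence of multifiltrations. Concretely, I would argue as follows. Fix $g\in\gln$ and $r\ge 0$, and consider the action of $e\otimes t^r$ (write $e$ for a generator $e_{ij}$) on $\otimes_{s=1}^k W_{n_s}(b_s)$. By the Leibniz rule, $e\otimes t^r$ acts on $w_1\otimes\dots\otimes w_k$ as $\sum_{s=1}^k w_1\otimes\dots\otimes(e\otimes t^r)\,w_s\otimes\dots\otimes w_k$, where the $s$-th factor uses the $W_{n_s}(b_s)$-action. Now $W_{n_s}(b_s)$ is the pull-back of the graded module $W_{n_s}$ through $\rho_{b_s}$, so $(e\otimes t^r)\bigr|_{W_{n_s}(b_s)} = \rho_{b_s}(e\otimes t^r) = (e\otimes(t+b_s)^r)\bigr|_{W_{n_s}}=\sum_{p=0}^r\binom rp b_s^{r-p}\,(e\otimes t^p)\bigr|_{W_{n_s}}$. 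The term $p=r$ has degree $r$ with respect to the $s$-th grading on $W_{n_s}$, while the terms $p<r$ have strictly smaller degree, hence raise the $\Z_{\ge0}^k$-filtration index in the $s$-th coordinate by less than $r$; they therefore vanish on the associated graded. Thus on $\gr\bigl(\otimes_{s=1}^k W_{n_s}(b_s)\bigr)$ the operator $e\otimes t^r$ acts precisely by $\sum_{s=1}^k w_1\otimes\dots\otimes(e\otimes t^r)\,w_s\otimes\dots\otimes w_k$ with each $s$-th factor using the \emph{graded} module $W_{n_s}=\gr W_{n_s}(b_s)$ (the last identification being Lemma~\ref{grWmb}, or rather its defining construction). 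This is exactly the $\glnt$-action on $\otimes_{s=1}^k\gr W_{n_s}(b_s)$, so the identity map on $\otimes_{s=1}^k W_{n_s}$ is the desired isomorphism, and it is canonical since it is built from the canonical gradings of Lemma~\ref{weyl} and the canonical Leibniz rule for the coproduct of $\glnt$.

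The main thing to be careful about — the place where a sloppy argument could go wrong — is the bookkeeping of the \emph{multi}-grading as opposed to a single total grading. One must check that the $\Z_{\ge0}^k$-filtration on $\otimes_{s=1}^k W_{n_s}(b_s)$ is genuinely the product filtration, i.e. that the filtration index in the $s$-th coordinate is governed only by the $s$-th tensor factor, so that the ``lower order'' terms coming from the binomial expansion of $(t+b_s)^r$ really do land in a strictly deeper piece of the multifiltration and die in the associated graded. Once that is set up correctly, the argument is the standard ``associated graded commutes with $\otimes$'' statement and the rest is routine; I would phrase it cleanly using the generating series $g(u)\mapsto g(u-b_s)$ on each factor, observing that the top-degree part of $g(u-b_s)$ in the $s$-th grading is exactly $g(u)$.
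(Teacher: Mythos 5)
Your overall strategy (take the identity map on $\otimes_{s=1}^k W_{n_s}$ and check, via the binomial expansion of the $\rho_{b_s}$-twisted action, that it intertwines the two structures) is the natural one, but the degree bookkeeping at the heart of your argument is inverted, and this makes the central step false as written. Write the action of $e\otimes t^r$ on the factor $W_{n_s}(b_s)$ as $\sum_{p=0}^r\binom rp\,b_s^{r-p}\,(e\otimes t^p)\big|_{W_{n_s}}$. The operator $e\otimes t^p$ raises the grading of $W_{n_s}$ by $p$, hence maps $\bar W^{j}_{n_s}$ into $\bar W^{j+p}_{n_s}$. Since the filtration is descending and the associated graded is formed from the subquotients of the pieces $\otimes_{s}\bar W^{j_s}_{n_s}$ by the strictly deeper ones, the terms that vanish on the associated graded are exactly those that strictly increase the filtration index, i.e. all terms with $p\ge 1$; the term that survives is $p=0$. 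So on $\gr\bigl(\otimes_{s=1}^k W_{n_s}(b_s)\bigr)$ the element $e\otimes t^r$ acts by $\sum_{s=1}^k b_s^r\;1\otimes\dots\otimes(e\otimes1)\otimes\dots\otimes1$, not, as you claim, by the graded Weyl-module action of $e\otimes t^r$ in each factor. Your companion claim that $\gr W_{n_s}(b_s)$ is the graded module $W_{n_s}$ contradicts the lemma you invoke: Lemma~\ref{grWmb} identifies $\gr W_{n_s}(b_s)$ with the evaluation module $(V^{\otimes n_s})(b_s)$, on which $e\otimes t^r$ acts by $b_s^r\,e$; as $\glnt$-modules, $W_{n_s}$ and $(V^{\otimes n_s})(b_s)$ are not isomorphic in general (the former is cyclic and typically indecomposable, the latter completely reducible). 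Note also that the picture in which $e\otimes t^r$ raises the $s$-th filtration index by $r$ cannot even be set up: the $p=0$ term shows that the degree-$r$ part of $\glnt$ does not carry $\bar W^{j}_{n_s}$ into $\bar W^{j+r}_{n_s}$, so the only available associated-graded structure is the one coming from the filtration by $\glnt$-submodules, and that is the evaluation-module one.

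The good news is that once the bookkeeping is corrected the argument closes immediately: on $\otimes_{s=1}^k\gr W_{n_s}(b_s)\simeq\otimes_{s=1}^k(V^{\otimes n_s})(b_s)$ the element $e\otimes t^r$ acts by the very same formula $\sum_{s=1}^k b_s^r\;1\otimes\dots\otimes e\otimes\dots\otimes1$, so the identity map on $\otimes_{s=1}^k W_{n_s}$ is indeed the desired canonical isomorphism of $\glnt$-modules. The paper states the lemma without proof, treating it as immediate from the definitions; your write-up reaches the right conclusion, but only because the same sign-of-degree error is made on both sides, and the intermediate assertions (which terms die in the associated graded, and what $\gr W_{n_s}(b_s)$ is as a module) must be fixed as above for the proof to be correct.
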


\subsection{Representations of symmetric group}
\label{repsym}
Let $S_n$ be the group of permutations of $n$ elements. We denote by $\C[S_n]$
the regular representation of $S_n$. Given an $S_n$-module $M$ we denote by
$M^S$ the subspace of all $S_n$-invariant vectors in $M$.

\medskip
We need the following simple fact.
\begin{lem}\label{elementary}
Let\/ $U$ be a finite-dimensional $S_n$-module.
Then \,$\dim(U\otimes\C[S_n])^S=\dim U$.
\vvn-.3>\qed
\end{lem}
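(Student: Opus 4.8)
The plan is to compute both dimensions directly using the decomposition into irreducibles over the group algebra of $S_n$. First I would recall that $\C[S_n]$, as an $S_n$-module under the left regular action, decomposes as $\C[S_n]\cong\bigoplus_\mu (V_\mu)^{\oplus\dim V_\mu}$, the sum over irreducible $S_n$-modules $V_\mu$; this is the standard Wedderburn/Peter--Weyl statement for finite groups. Writing $U\cong\bigoplus_\mu V_\mu^{\oplus m_\mu}$ with $m_\mu=\dim\on{Hom}_{S_n}(V_\mu,U)$, the tensor product $U\otimes\C[S_n]$ is an $S_n$-module under the diagonal action.

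Next I would use the general fact that for any finite group $G$ and any $G$-module $U$, one has $U\otimes\C[G]\cong\C[G]^{\oplus\dim U}$ as $G$-modules, where $G$ acts diagonally on the left and regularly on the right factor. The quick way to see this: the map $u\otimes g\mapsto (g^{-1}u)\otimes g$ is a linear isomorphism $U\otimes\C[G]\to U_{\mathrm{triv}}\otimes\C[G]$ intertwining the diagonal action with the action that is trivial on the first factor and regular on the second, and $U_{\mathrm{triv}}\otimes\C[G]\cong\C[G]^{\oplus\dim U}$ is immediate. Therefore the space of invariants satisfies $\dim(U\otimes\C[S_n])^S=\dim U\cdot\dim(\C[S_n])^S$.

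Finally I would observe that $(\C[S_n])^S$ is one-dimensional, spanned by $\sum_{\si\in S_n}\si$, so $\dim(U\otimes\C[S_n])^S=\dim U$, as claimed. Alternatively, and perhaps more in keeping with the character-theoretic style of the paper, I could argue via characters: $\dim(U\otimes\C[S_n])^S=\langle\chi_U\cdot\chi_{\mathrm{reg}},\bs 1\rangle=\langle\chi_U,\overline{\chi_{\mathrm{reg}}}\rangle=\langle\chi_U,\chi_{\mathrm{reg}}\rangle$, and since $\chi_{\mathrm{reg}}(\si)=0$ for $\si\neq e$ and $\chi_{\mathrm{reg}}(e)=n!$, this inner product equals $\frac{1}{n!}\cdot\chi_U(e)\cdot n!=\dim U$. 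There is essentially no obstacle here; the only mild point to get right is the bookkeeping in the "untwisting" isomorphism $u\otimes g\mapsto(g^{-1}u)\otimes g$ (equivalently, that the permutation action on $U\otimes\C[S_n]$ is isomorphic to a direct sum of copies of the regular representation), but this is completely standard.
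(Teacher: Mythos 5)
Your argument is correct and complete: both the untwisting isomorphism $u\otimes g\mapsto(g^{-1}u)\otimes g$, which identifies $U\otimes\C[S_n]$ with $\dim U$ copies of the regular representation, and the character computation $\frac{1}{n!}\sum_{\si}\chi_U(\si)\chi_{\mathrm{reg}}(\si)=\chi_U(e)=\dim U$ are standard and carried out accurately. The paper itself offers no proof of this lemma (it is stated with a \,\qed\, as an elementary fact), so there is nothing to compare against; either of your two routes fully justifies the statement.
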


The group $S_n$ acts on the algebra
$\C[z_1,\dots,z_n]$ by permuting
the variables. Let $\si_s(\bs z)$, $s=1,\alb\dots,n$, be the $s$-th elementary
symmetric polynomial in $z_1,\dots,z_n$. The algebra of symmetric polynomials
$\C[z_1,\dots,z_n]^S$ is a free polynomial algebra with generators
$\si_1(\bs z),\dots,\alb\si_n(\bs z)$. It is well-known that the algebra
$\C[z_1,\dots,z_n]$ is a free $\C[z_1,\dots,z_n]^S$-module of rank $n!$,
see~\cite{M}.

Given $\bs a=(a_1,\dots,a_n)\in\C^n$, denote by
$I_{\bs a}\subset C[z_1,\dots,z_n]$ the ideal generated by
the polynomials $\si_s(\bs z)-a_s$, $s=1,\dots, n$.
Clearly, $I_{\bs a}$ is $S_n$-invariant.

\begin{lem}\label{regular}
For any $\bs a\in\C^n$, the $S_n$-representation $\C[z_1,\dots,z_n]/I_{\bs a}$
is isomorphic to the regular representation $\C[S_n]$.
\end{lem}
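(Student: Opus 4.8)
The plan is to reduce the statement to the case $\bs a = 0$ (generators $\si_s(\bs z)$) and then identify the quotient with the coinvariant algebra of $S_n$, whose $S_n$-module structure is classically the regular representation.

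\emph{Step 1: the case $\bs a=0$.} First I would recall that $\C[z_1,\dots,z_n]$ is a free module of rank $n!$ over the subalgebra $\C[z_1,\dots,z_n]^S=\C[\si_1(\bs z),\dots,\si_n(\bs z)]$, a fact already cited from \cite{M} in the excerpt. Hence for $\bs a=0$ the quotient $\C[z_1,\dots,z_n]/I_0$ is the classical coinvariant algebra, a graded vector space of dimension $n!$. It is a standard fact (Chevalley) that as an $S_n$-module the coinvariant algebra is isomorphic to the regular representation $\C[S_n]$. If one wants to stay self-contained, one can argue as follows: $\C[z_1,\dots,z_n]=\C[z_1,\dots,z_n]^S\otimes_{\C}\bigl(\C[z_1,\dots,z_n]/I_0\bigr)$ as $S_n$-modules (using freeness and that the subalgebra is $S_n$-trivial), so for every irreducible $S_n$-module $U$ the multiplicity of $U$ in $\C[z_1,\dots,z_n]/I_0$ equals the multiplicity of $U$ in $\C[z_1,\dots,z_n]$ regarded as a module over the trivial part — equivalently, localizing at the generic point of $\operatorname{Spec}\C[z_1,\dots,z_n]^S$, one sees $\C(\si_1,\dots,\si_n)\otimes \C[z_1,\dots,z_n]$ is the regular representation of $S_n$ over the field of symmetric rational functions, because $\C(z_1,\dots,z_n)/\C(\si_1,\dots,\si_n)$ is Galois with group $S_n$ (normal basis theorem). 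Comparing, $\C[z_1,\dots,z_n]/I_0\cong\C[S_n]$.

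\emph{Step 2: passing from $\bs a=0$ to general $\bs a$.} For general $\bs a$, I would show that $\C[z_1,\dots,z_n]/I_{\bs a}$ is $S_n$-equivariantly a flat/free degeneration of $\C[z_1,\dots,z_n]/I_0$. Concretely: introduce the Rees-type filtration on $\C[z_1,\dots,z_n]/I_{\bs a}$ by degree of polynomials; since the $\si_s(\bs z)-a_s$ have leading terms $\si_s(\bs z)$, the associated graded algebra is a quotient of $\C[z_1,\dots,z_n]/I_0$. Because $\C[z_1,\dots,z_n]$ is free of rank $n!$ over $\C[z_1,\dots,z_n]^S$, the quotient $\C[z_1,\dots,z_n]/I_{\bs a}$ has dimension exactly $n!$ for every $\bs a$ (the polynomials $z_1^{j}\cdots$ forming a module basis descend to a basis of each quotient). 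Hence the surjection $\gr\bigl(\C[z_1,\dots,z_n]/I_{\bs a}\bigr)\twoheadrightarrow$ has both sides of dimension $n!$ — wait, more carefully: the natural surjection $\C[z_1,\dots,z_n]/I_0\twoheadrightarrow \gr\bigl(\C[z_1,\dots,z_n]/I_{\bs a}\bigr)$ is between spaces of equal dimension $n!$, hence an isomorphism of graded $S_n$-modules. Therefore $\C[z_1,\dots,z_n]/I_{\bs a}\cong\gr\bigl(\C[z_1,\dots,z_n]/I_{\bs a}\bigr)\cong\C[z_1,\dots,z_n]/I_0\cong\C[S_n]$ as $S_n$-modules, which is the claim.

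\emph{Main obstacle.} The routine parts (dimension count via freeness over the symmetric subalgebra, the leading-term argument for the associated graded) are straightforward. The one genuine input is the identification of the coinvariant algebra with the regular representation; depending on how self-contained the authors want to be, this is either quoted (Chevalley, or \cite{M}) or proved via the normal basis theorem for the Galois extension $\C(z_1,\dots,z_n)/\C(z_1,\dots,z_n)^S$. I would expect the cleanest writeup to isolate Step 1 as the conceptual core and treat Step 2 as a short flatness argument; the only place where care is needed is making sure the filtration argument in Step 2 genuinely gives an $S_n$-module isomorphism and not merely an equality of dimensions, which follows because the filtration by polynomial degree is $S_n$-stable.
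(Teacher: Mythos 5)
Your proposal is correct, but it takes a genuinely different route from the paper. The paper anchors the argument at \emph{generic} $\bs a$: when the polynomial $u^n+\sum_s(-1)^sa_su^{n-s}$ has distinct roots $b_1,\dots,b_n$, the quotient $\C[z_1,\dots,z_n]/I_{\bs a}$ is the algebra of functions on the free $S_n$-orbit of $(b_1,\dots,b_n)$, hence visibly the regular representation; the constant dimension $n!$ (from freeness of $\C[z_1,\dots,z_n]$ over $\C[z_1,\dots,z_n]^S$, exactly as in your dimension count) then lets the authors conclude for arbitrary $\bs a$ by continuity of characters, since the character of the $n!$-dimensional family depends continuously on $\bs a$ and takes discrete values. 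You instead anchor at the most \emph{degenerate} point $\bs a=0$, identifying $\C[z_1,\dots,z_n]/I_0$ with the coinvariant algebra and invoking Chevalley (or your normal-basis/Galois argument, which is fine once one notes that the equivariant splitting $\C[z_1,\dots,z_n]\cong\C[z_1,\dots,z_n]^S\otimes\C[z_1,\dots,z_n]/I_0$ uses complete reducibility to choose the lift $S_n$-stably), and then transfer to general $\bs a$ by the $S_n$-stable degree filtration: the surjection $\C[z_1,\dots,z_n]/I_0\twoheadrightarrow\gr\bigl(\C[z_1,\dots,z_n]/I_{\bs a}\bigr)$ between spaces of equal dimension $n!$ is an isomorphism, and filtered $\cong$ graded as $S_n$-modules by semisimplicity. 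What your approach buys is a purely algebraic argument with no appeal to continuity or topology of $\C$ (it works verbatim over any field in which $n!$ is invertible, with $\C$ replaced accordingly), at the cost of importing the coinvariant-algebra theorem; the paper's version is shorter and needs only the elementary generic-orbit picture, at the cost of the slightly informal continuity-of-characters step.
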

\begin{proof}
Since $\C[z_1,\dots,z_n]$ is a free $\C[z_1,\dots,z_n]^S$-module of rank $n!$,
the dimension of the quotient $\C[z_1,\dots,z_n]/I_{\bs a}$ is $n!$ for all
$\bs a\in\C^n$. If the polynomial $u^n+\sum_{s=1}^n(-1)^sa_s\>u^{n-s}$
has no multiple roots, the lemma is obvious. For other $\bs a$, the lemma
follows by the continuity of characters.
\end{proof}

\subsection{$\glnt$-module $\V^S$}
\label{VS}
Let $\V$ be the space of polynomials in $z_1,\dots,z_n$ with coefficients
in $V^{\otimes n}$:
\vvn-.3>
\be
\V\>=\,V^{\otimes n}\<\otimes_{\C}\C[z_1,\dots,z_n]\,.
\vv.2>
\ee
The space $V^{\otimes n}$ is embedded in $\V$ as the subspace of constant
polynomials.

Abusing notation, for $v\in V^{\otimes n}$ and
$p(z_1,\dots,z_n)\in\C[z_1,\dots,z_n]$, we write
$p(z_1,\dots,z_n)\,v$ instead of $v\otimes p(z_1,\dots,z_n)$.

We make the symmetric group $S_n$ act on $\V$ by permuting the factors
of $V^{\otimes n}$ and the variables $z_1,\dots,z_n$ simultaneously,
\vvn.2>
\be
\si\bigl(p(z_1,\dots,z_n)\,v_1\otimes\dots\otimes v_n\bigr)\,=\,
p(z_{\si(1)},\dots,z_{\si(n)})\,
v_{\si^{-1}(1)}\!\otimes\dots\otimes v_{\sigma^{-1}(n)}\,,\qquad\si\in S_n\,.
\kern-3em
\vv.2>
\ee
We denote by $\V^S$ the subspace of $S_n$-invariants in $\V$.

\begin{lem}
\label{VSfree}
The space $\V^S$ is a free $\C[z_1,\dots,z_n]^S$-module of rank $N^n$.
\end{lem}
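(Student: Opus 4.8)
The plan is to exhibit $\V^S$ explicitly as a module over the ring $R=\C[z_1,\dots,z_n]^S$ of symmetric polynomials and to count its rank via a dimension argument at a generic point of $\on{Spec} R$. Recall from Section~\ref{repsym} that $\C[z_1,\dots,z_n]$ is a free $R$-module of rank $n!$. Hence $\V = V^{\otimes n}\otimes_\C \C[z_1,\dots,z_n]$ is a free $R$-module of rank $N^n\cdot n!$, carrying a diagonal $S_n$-action which is $R$-linear because $R$ consists of $S_n$-invariant polynomials. Taking $S_n$-invariants, $\V^S = \bigl(V^{\otimes n}\otimes_\C\C[z_1,\dots,z_n]\bigr)^{S}$ is an $R$-submodule of $\V$, and since $R$ is a polynomial ring (in particular Noetherian and integrally closed) and $\V^S$ is a finitely generated torsion-free $R$-module, it suffices to show that $\V^S$ is \emph{free} and then compute its rank.

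First I would establish freeness. The cleanest route is to observe that the functor $M\mapsto M^{S}$ on $\C[S_n]$-modules that are simultaneously $R$-modules with commuting actions is exact when applied to $R$-free modules, because over $\C$ the group algebra $\C[S_n]$ is semisimple, so $(-)^S$ is a direct summand of the identity functor (it is the image of the averaging idempotent $e=\frac1{n!}\sum_{\si\in S_n}\si$). Thus $\V^S = e\cdot\V$ is a direct summand, as an $R$-module, of the free module $\V$, hence projective; and a finitely generated projective module over the polynomial ring $R$ is free by the Quillen--Suslin theorem. (Alternatively, and more in the spirit of the paper, one can avoid Quillen--Suslin: the $\Z_{\ge0}$-grading on $\C[z_1,\dots,z_n]$ makes $\V$ a graded $R$-module with $S_n$ acting by degree-preserving maps, so $\V^S$ is a finitely generated graded module over the graded polynomial ring $R$; it is torsion-free, and a finitely generated torsion-free graded module over a graded polynomial ring that is moreover a direct summand of a free graded module is a free graded module.)

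Next I would compute the rank. Since $\V^S$ is $R$-free, its rank equals $\dim_{\C}\bigl(\V^S\otimes_R \C_{\bs a}\bigr)$ for any maximal ideal of $R$, where $\C_{\bs a}=R/\m_{\bs a}$ corresponds to a point $\bs a\in\C^n$. Because $(-)^S$ commutes with the (flat, being a quotient by a regular sequence at generic points — but in fact, by exactness of $e\cdot(-)$, with \emph{any}) base change $-\otimes_R\C_{\bs a}$, we get $\V^S\otimes_R\C_{\bs a} \cong \bigl(\V\otimes_R\C_{\bs a}\bigr)^{S} = \bigl(V^{\otimes n}\otimes_\C (\C[z_1,\dots,z_n]/I_{\bs a})\bigr)^{S}$, using the notation $I_{\bs a}$ of Section~\ref{repsym}. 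By Lemma~\ref{regular}, $\C[z_1,\dots,z_n]/I_{\bs a}\cong \C[S_n]$ as an $S_n$-module, so this space is $\bigl(V^{\otimes n}\otimes_\C\C[S_n]\bigr)^{S}$. Applying Lemma~\ref{elementary} with $U = V^{\otimes n}$ (a finite-dimensional $S_n$-module of dimension $N^n$) gives $\dim_\C\bigl(V^{\otimes n}\otimes_\C\C[S_n]\bigr)^S = \dim_\C V^{\otimes n} = N^n$. Therefore $\V^S$ is a free $R$-module of rank $N^n$, as claimed.

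The main obstacle is the freeness step: torsion-freeness and projectivity are immediate from semisimplicity of $\C[S_n]$, but upgrading projective to free requires either invoking Quillen--Suslin or using the graded structure. I expect the paper to take the graded route, since the grading is already in play throughout Section~\ref{alg sec} and it is both elementary and self-contained; the rank computation via specialization to $\bs a$ and Lemmas~\ref{regular} and~\ref{elementary} is then routine.
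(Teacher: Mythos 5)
Your argument is correct, but it fills in the one-line proof of the paper by a genuinely different route. The paper's proof simply invokes the freeness of $\C[z_1,\dots,z_n]$ over $\C[z_1,\dots,z_n]^S$; the intended completion is the equivariant form of this fact (Chevalley): one can choose the free basis to be homogeneous and spanning an $S_n$-stable subspace $H$ isomorphic to the regular representation $\C[S_n]$ (the coinvariant algebra, i.e.\ Lemma~\ref{regular} at $\bs a=0$), so that $\V\cong \bigl(V^{\otimes n}\otimes H\bigr)\otimes_\C\C[z_1,\dots,z_n]^S$ as an $S_n$-equivariant graded module; taking invariants and applying Lemma~\ref{elementary} then exhibits an explicit graded basis of $\V^S$ of cardinality $N^n$ in one step. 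You instead use only the bare (non-equivariant) freeness: semisimplicity of $\C[S_n]$ makes $\V^S=e\V$ a graded direct summand of a free module, hence projective, and you upgrade to free either by Quillen--Suslin or by graded Nakayama, then compute the rank by specializing at a point $\bs a$ and invoking Lemmas~\ref{regular} and~\ref{elementary}. Both are valid; your version trades the explicit equivariant basis for abstract machinery (Quillen--Suslin is avoidable, and your graded alternative is the right fix, since the later use of this lemma in formula~\Ref{char cycl} needs the freeness to be graded), while your specialization step essentially reproduces the dimension count that reappears in the proof of Lemma~\ref{factor=weyl}. The equivariant route is more self-contained and is what the citation of Section~\ref{repsym} is set up for, but nothing in your argument is wrong.
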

\begin{proof}
The lemma follows from the fact that $\C[z_1,\dots,z_n]$ is a free
$\C[z_1,\dots,z_n]^S$-module.
\end{proof}

We consider the space $\V$ as a $\glnt$-module with
the series $g(u)$, \,$g\in\gln$, acting by
\vvn.1>
\beq
\label{action}
g(u)\,\bigl(p(z_1,\dots,z_n)\,v_1\otimes\dots\otimes v_n)\,=\,
p(z_1,\dots,z_n)\,\sum_{s=1}^n
\frac{v_1\otimes\dots\otimes gv_s\otimes\dots\otimes v_n}{u-z_s}\ .
\vv.2>
\eeq

\begin{lem}
\label{Uz}
The image of the subalgebra $U(\z_N[t])\subset \Uglnt$ in
$\End(\V)$ coincides with the algebra of operators of multiplication
by elements of\/ $\C[z_1,\dots,z_n]^S$.
\end{lem}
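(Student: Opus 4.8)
The plan is to unwind what the generating series $g(u)$ does for $g\in\z_N$ and compare with the definition of the central subalgebra $\z_N[t]$. Recall that $\z_N$ is spanned by $E=\sum_{i=1}^N e_{ii}$, and that $E$ acts on the vector representation $V$ as the identity. First I would compute, using the action \Ref{action}, that
\beq
\label{Euaction}
E(u)\,\bigl(p(z_1,\dots,z_n)\,v_1\otimes\dots\otimes v_n\bigr)\,=\,
p(z_1,\dots,z_n)\,\Bigl(\sum_{s=1}^n\frac1{u-z_s}\Bigr)\,
v_1\otimes\dots\otimes v_n\,,
\eeq
since $E v_s=v_s$ for every basis vector. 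Thus $E(u)$ acts on all of $\V$ simply as multiplication by the scalar rational function $\sum_{s=1}^n (u-z_s)^{-1}$. Expanding in powers of $u^{-1}$, the coefficient of $u^{-r-1}$ is the power sum $p_r(\bs z)=\sum_{s=1}^n z_s^r$, so $E\otimes t^r$ acts as multiplication by $p_r(\bs z)$. Since $E\otimes t^r$, $r\ge 0$, generate $\z_N[t]$ as a Lie algebra and these operators all commute (they are multiplications), the image of $U(\z_N[t])$ in $\End(\V)$ is exactly the subalgebra of $\C[z_1,\dots,z_n]$ generated by the power sums $p_0,p_1,p_2,\dots$.

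Next I would invoke the classical fact that over $\C$ (indeed over any field of characteristic zero) the power sums $p_1,\dots,p_n$ generate the ring of symmetric polynomials $\C[z_1,\dots,z_n]^S$, and $p_r$ for $r>n$ is a polynomial in $p_1,\dots,p_n$ (Newton's identities), while $p_0=n$ is a constant. Hence the subalgebra generated by all $p_r$, $r\ge 0$, is precisely $\C[z_1,\dots,z_n]^S$. Therefore the image of $U(\z_N[t])$ in $\End(\V)$ coincides with the operators of multiplication by symmetric polynomials, which is the claim. Note the statement is about the action on $\V$ itself (not on $\V^S$), so no restriction to invariants is needed; but the computation \Ref{Euaction} is manifestly $S_n$-equivariant, which is consistent with $\z_N[t]$ being central and hence acting on the submodule $\V^S$ as well.

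There is essentially no serious obstacle here; the only point requiring a word of care is the direction of the inclusion. One direction — that the image is contained in multiplication by symmetric polynomials — is immediate from \Ref{Euaction}. For the reverse inclusion I must check that \emph{every} symmetric polynomial is hit, which is exactly where Newton's identities enter: they guarantee $p_1,\dots,p_n$ already generate $\C[z_1,\dots,z_n]^S$, so finitely many of the generators $E\otimes t^r$ suffice. I would state this explicitly rather than leaving it implicit, since the reader may otherwise wonder whether only a proper subalgebra of $\C[z_1,\dots,z_n]^S$ is reached. With that remark the proof is complete.
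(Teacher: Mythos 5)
Your proof is correct and follows essentially the same route as the paper, whose entire argument is the observation that $\sum_{i=1}^N e_{ii}\otimes t^r$ acts on $\V$ as multiplication by the power sum $\sum_{s=1}^n z_s^r$. You merely make explicit the step the paper leaves implicit, namely that the power sums generate $\C[z_1,\dots,z_n]^S$ in characteristic zero, which is a fine (and harmless) elaboration.
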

\begin{proof}
The element $\sum_{i=1}^N e_{ii}\otimes t^r$ acts on $\V$ as the operator
of multiplication by $\sum_{s=1}^n z_s^r$. The lemma follows.
\end{proof}

The $\glnt$-action on $\V$ commutes with the $S_n$-action.
Hence, $\V^S$ is a $\glnt$-submodule of $\V$.

The following lemma is contained in~\cite{K}.
For convenience we supply a proof.

\begin{lem}
\label{cycl=symm}
The $\glnt$-module $\V^S$ is cyclic with a cyclic vector\/ $v_+^{\otimes n}$.
\end{lem}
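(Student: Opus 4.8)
The plan is to show that the $\glnt$-submodule generated by $v_+^{\otimes n}$ is all of $\V^S$. Write $\M$ for the $\glnt$-submodule of $\V^S$ generated by $v_+^{\otimes n}$; I want $\M=\V^S$. The strategy is to produce enough elements of $\M$ to span $\V^S$ over $\C$, exploiting the explicit formula~\Ref{action} for the action together with Lemma~\ref{Uz}, which tells us that the central subalgebra $U(\z_N[t])$ acts by multiplication by arbitrary symmetric polynomials in $z_1,\dots,z_n$. Since $\V^S$ is spanned over $\C[z_1,\dots,z_n]^S$ by the $S_n$-symmetrizations of monomials $z_1^{d_1}\cdots z_n^{d_n}\,v_{i_1}\otimes\dots\otimes v_{i_n}$ (using Lemma~\ref{VSfree} and that such symmetrizations of a $\C$-basis of $\V$ span $\V^S$), and since multiplication by symmetric polynomials already lies in $\M$'s stabilizing algebra, it suffices to hit each such symmetrized monomial starting from $v_+^{\otimes n}=v_1\otimes\dots\otimes v_1$.

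The first step is the ``constant'' case: show that the $\gln$-submodule (degree-zero part of the action) generated by $v_+^{\otimes n}$ inside $V^{\otimes n}\subset\V$ is all of $V^{\otimes n}$. This is classical: $v_+=v_1$ is a highest weight vector of $V$, the vectors $e_{i1}$ applied in various tensor slots generate all of $V^{\otimes n}$ from $v_1^{\otimes n}$ because $V$ itself is generated by $v_1$ under $\gln$ and $V^{\otimes n}$ is generated by $v_1^{\otimes n}$ under $\gln$ acting diagonally (a standard fact for the vector representation). So all $S_n$-symmetrizations of constant monomials $v_{i_1}\otimes\dots\otimes v_{i_n}$ lie in $\M$. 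The second step is to introduce the variables $z_s$: applying $e_{11}(u)$, i.e. reading off the coefficient of $u^{-r-1}$ in~\Ref{action} with $g=e_{11}$, to a vector $v_{i_1}\otimes\dots\otimes v_{i_n}$ produces $\sum_s z_s^r\,v_{i_1}\otimes\dots\otimes v_{i_n}$ — the same tensor with a power-sum coefficient. More usefully, applying $e_{ii}(u)-e_{jj}(u)$ type combinations, or better applying $e_{11}\otimes t^r$ in a single slot via commutators, lets one decorate individual tensor factors with individual powers of $z_s$. Concretely, I would use that $(e_{11}\otimes t^r)\bigl(v_{i_1}\otimes\dots\otimes v_{i_n}\bigr)=\sum_{s:\,i_s=1} z_s^r\,(\dots)$, so by first moving a chosen factor into the $v_1$ state, decorating it, and moving it back using off-diagonal generators with polynomial coefficients controlled by~\Ref{action}, one builds up arbitrary decorated tensors.

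The key technical point — and the main obstacle — is the bookkeeping: applying $e_{ij}\otimes t^r$ spreads a power $z_s^r$ over \emph{all} slots currently in a given weight state, not a single one, so one cannot decorate one factor without affecting others, and one must disentangle the resulting linear combinations. The clean way around this is to argue by induction on the total degree $\sum d_s$ of the monomial coefficient, and at each stage use Lemma~\ref{Uz}: since multiplication by every symmetric polynomial is available, it is enough to produce, modulo lower-degree terms and modulo the ideal generated by symmetric polynomials of positive degree, one vector in each $S_n$-orbit of decorated tensors; the ambiguity from ``spreading'' is then killed by subtracting symmetric-polynomial multiples of lower-degree vectors already known to be in $\M$. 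Thus the induction step reduces to a statement about the associated graded, where the calculation becomes the purely combinatorial fact that the symmetrizations $\mathrm{Sym}\bigl(z_1^{d_1}\cdots z_n^{d_n}\,v_{i_1}\otimes\dots\otimes v_{i_n}\bigr)$, for $0\le d_s$, span $\V$ over $\C[z_1,\dots,z_n]^S$ — which is exactly Lemma~\ref{VSfree}. Once every such symmetrized monomial is shown to lie in $\M$, we conclude $\M=\V^S$ and the module is cyclic, generated by $v_+^{\otimes n}$.
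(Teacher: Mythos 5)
Your overall framework (the submodule generated by $v_+^{\otimes n}$ is a $\C[z_1,\dots,z_n]^S$-submodule by Lemma~\ref{Uz}, so by a graded Nakayama argument it suffices to hit everything modulo $\C[z_1,\dots,z_n]^S_+\V^S$) is legitimate, but two of its load-bearing steps are defective. First, the ``classical fact'' you invoke is false: $v_+^{\otimes n}$ is a highest weight vector of $V^{\otimes n}$, so $U(\gln)\,v_+^{\otimes n}$ is the irreducible module $L_{(n,0,\dots,0)}$, i.e.\ the symmetric tensors $S^n\C^N$, which is a proper subspace of $V^{\otimes n}$ as soon as $n,N\ge2$. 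The conclusion you actually use (that all symmetrizations of constant tensors lie in the submodule) survives, since those symmetrizations span exactly $S^n\C^N$, but the error points to a real conceptual problem in the sequel: inside $\V^S$ you never have access to an individual tensor $v_{i_1}\otimes\dots\otimes v_{i_n}$, so ``moving a chosen factor into the $v_1$ state, decorating it, and moving it back'' is not an operation you can perform on elements of $\Uglnt\,v_+^{\otimes n}$; every generator acts on all slots at once via~\Ref{action}.

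Second, and this is the genuine gap, your reduction does not dispose of the difficulty you yourself identify --- it only relocates it. The fact you cite at the end (symmetrized decorated monomials span $\V^S$ over $\C[z_1,\dots,z_n]^S$; this is not Lemma~\ref{VSfree}, which asserts freeness, but in any case it is easy) describes a spanning set of the \emph{target}; it says nothing about those vectors being \emph{reachable} from $v_+^{\otimes n}$. Moreover, the ``spreading'' errors produced when $e_{ij}\otimes t^r$ decorates the wrong slot are, in general, symmetrized monomials of the \emph{same} total degree with a different distribution of exponents and states, so your claim that they are killed by subtracting symmetric-polynomial multiples of lower-degree vectors is unsubstantiated, and your induction on total degree has no quantity that actually decreases on these error terms. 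This missing mechanism is precisely what the paper's proof supplies: writing $v(\bs i,\bs j)=\sum_{\si\in S_n}\si\bigl(z_1^{i_1}\cdots z_n^{i_n}\,v_{j_1}\otimes\dots\otimes v_{j_n}\bigr)$, it inducts on the number $r(\bs i,\bs j)$ of slots with $i_s>0$ and $j_s=1$. When $r=0$, the operator $\prod_{s,\,j_s\ne1}(e_{j_s1}\otimes t^{i_s})$ applied to $v_+^{\otimes n}$ gives $v(\bs i,\bs j)$ on the nose, because $e_{j1}$ with $j\ne1$ annihilates every factor other than $v_+$, so no spreading ambiguity occurs; when $r>0$, including the factors $e_{11}\otimes t^{i_s}$ reproduces $v(\bs i,\bs j)$ up to terms with strictly smaller $r$, which are handled by induction. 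To complete your argument you would need to find such a decreasing invariant (or import the cyclicity of the Weyl module $W_n$ from the literature, which the paper deliberately avoids at this point); as written, the proof stops exactly where the real work begins.
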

\begin{proof}
Let $v_1=v_+$ and $v_i=e_{j1}v_+$ for $j=2,\dots,N$. Then $v_1,v_2,\dots, v_N$
is a basis of $V$. Given integers $i_1\ge\dots\ge i_n\ge0$ and
$j_1,\dots,j_n \in\{1, \dots,N\}$, denote
\vvn.3>
\be
v(\bs i,\bs j)\,=\sum_{\si\in S_n}
\si (z_1^{i_1}\dots z_n^{i_n}\,v_{j_1}\otimes\dots\otimes v_{j_n})\,.
\ee
The space $\V^S$ is spanned by the elements $v(\bs i,\bs j)$ with all possible
$\bs i, \bs j$. So to prove the lemma, it is sufficient to show that every
element $v(\bs i,\bs j)$ belongs to $\Uglnt\,v_+^{\otimes n}$. We prove it
by induction on the number $r(\bs i,\bs j)$ of indices $s$ such that $i_s>0$
and $j_s=1$.

If $r(\bs i,\bs j)=0$, then $v(\bs i,\bs j)=\bigl(\,\prod_{s,\,j_s\neq 1}
e_{j_s1}\otimes t^{i_s}\bigr)\,v_+^{\otimes n}$.

If $r(\bs i,\bs j)>0$, then the difference
\be
v(\bs i,\bs j)-\Bigl(\prod_{s,\,i_s>0,\;j_s=1}e_{11}\otimes t^{i_s}\Bigr)
\Bigl(\prod_{s,\,j_s>1} e_{j_s1}\otimes t^{i_s})(v_+^{\otimes n}\Bigr)
\ee
is a linear combination of the elements $v(\bs i',\bs j')$ with
$r(\bs i',\bs j')<r(\bs i,\bs j)$ and belongs to $\Uglnt\,v_+^{\otimes n}$
by the induction hypothesis.
Therefore, $v(\bs i,\bs j)\in\Uglnt\,v_+^{\otimes n}$.
\end{proof}

Define the grading on $\C[z_1,\dots,z_n]$ by setting $\deg z_i=1$ for all
$i=1,\dots,n$. We define a grading on $\V$ by setting $\deg(v\otimes p)=\deg p$
for any $v\in V^{\otimes n}$ and any $p\in\C[z_1,\dots,z_n]$. The grading on
$\V$ induces a natural grading on $\End(\V)$.

\begin{lem}\label{cycl grad}
The $\glnt$-modules\/ $\V$ and $\V^S$ are graded.
\end{lem}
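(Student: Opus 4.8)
The plan is to read off the action of the homogeneous generators $e_{ij}\otimes t^r$ of $\glnt$ from the generating series in \Ref{action}, and to check directly that this action raises the degree by $r$, which is exactly the degree of $e_{ij}\otimes t^r$ in $\glnt$.

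First I would expand $(u-z_s)^{-1}=\sum_{r\ge0}z_s^r\,u^{-r-1}$ in \Ref{action} and compare coefficients of $u^{-r-1}$. This gives, for every $g\in\gln$ and $r\in\Z_{\ge0}$,
\be
(g\otimes t^r)\bigl(p\,v_1\otimes\dots\otimes v_n\bigr)\,=\,
p\sum_{s=1}^n z_s^r\;v_1\otimes\dots\otimes gv_s\otimes\dots\otimes v_n\,,
\ee
where $p=p(z_1,\dots,z_n)$. Since $\deg(z_s^r\,p)=\deg p+r$, while the vector part $v_1\otimes\dots\otimes gv_s\otimes\dots\otimes v_n$ carries no degree, the operator $g\otimes t^r$ maps the homogeneous component $\V_j$ into $\V_{j+r}$. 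As $\V=\bigoplus_{j\ge0}\V_j$ with $\V_j=V^{\otimes n}\otimes\C[z_1,\dots,z_n]_j$, and $r\ge0$, this is precisely the compatibility of the $\Z_{\ge0}$-grading of $\V$ with the grading of $\glnt$, so $\V$ is a graded $\glnt$-module.

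For $\V^S$, I would observe that the $S_n$-action on $\V$ permutes the variables $z_1,\dots,z_n$ together with the tensor factors, hence preserves each $\V_j$; therefore $\V^S=\bigoplus_{j\ge0}(\V^S\cap\V_j)$ is a graded subspace of $\V$. Since $\V^S$ is also a $\glnt$-submodule (the $\glnt$- and $S_n$-actions commute), the induced grading makes $\V^S$ a graded $\glnt$-module.

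There is essentially no serious obstacle here: the only point requiring a little care is the correct extraction of the operators $g\otimes t^r$ from the series $g(u)$, the verification that the resulting degree shift by $r$ matches $\deg(g\otimes t^r)=r$, and the remark that all these shifts are by nonnegative integers so that a $\Z_{\ge0}$-grading (rather than a $\Z$-grading) suffices.
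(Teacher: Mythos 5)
Your proposal is correct and is exactly the argument the paper intends: the paper's proof simply says "the lemma follows from formula \Ref{action}," and your expansion of $(u-z_s)^{-1}$ to extract the action of $g\otimes t^r$ and check the degree shift by $r$ (plus the observation that the $S_n$-action preserves degrees, so the graded structure descends to $\V^S$) just spells out that one-line proof in detail.
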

\begin{proof}
The lemma follows from formula~\Ref{action}.
\end{proof}

\subsection{Weyl modules as quotients of $\V^S$}
Let $\bs a=(a_1,\dots,a_n)\in\C^n$ be a sequence of complex numbers and
$I_{\bs a}\subset \C[z_1,\dots,z_n]$ the ideal,
defined in Section~\ref{repsym}. Define
\vvn.3>
\beq
\label{IVa}
I^\V_{\bs a}=I_{\bs a}\V^S=
\V^S\tbigcap(V^{\otimes n}\otimes I_{\bs a}).
\vv.2>
\eeq
Clearly, ${I^\V_{\bs a}}$ is a $\glnt$-submodule of $\V^S$.

Define distinct complex numbers $b_1,\dots,b_k$ and
natural numbers $n_1,\dots,n_k$ by the relation
\vvn-.6>
\beq\label{ab}
\prod_{s=1}^k\,(u-b_s)^{n_s}=\,u^n+\>\sum_{j=1}^n (-1)^j\>a_j\>u^{n-j}.
\vv-.7>
\eeq
Clearly, $\sum_{s=1}^kn_s=n$.

\begin{lem}
\label{factor=weyl}
The $\glnt$-module $\V^S/{I^\V_{\bs a}}$ is isomorphic to
the Weyl module $\otimes_{s=1}^kW_{n_s}(b_s)$.
\end{lem}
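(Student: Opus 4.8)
The plan is to construct the isomorphism explicitly and then identify both sides as cyclic modules of the same dimension generated by matching cyclic vectors. First I would observe that by Lemma~\ref{cycl=symm} the module $\V^S$ is cyclic with cyclic vector $v_+^{\otimes n}$, and I would compute how the generators $e_{ij}(u)$ act on $v_+^{\otimes n}$ using formula~\Ref{action}: one gets $e_{ij}(u)\,v_+^{\otimes n}=0$ for $i<j$, and $e_{ii}(u)\,v_+^{\otimes n}=\dl_{1i}\bigl(\sum_{s=1}^n (u-z_s)^{-1}\bigr)v_+^{\otimes n}$. Passing to the quotient $\V^S/I^\V_{\bs a}$, the image of $v_+^{\otimes n}$ is a cyclic vector $\bar v$ on which $\sum_{s=1}^n z_s^r$ acts (via Lemma~\ref{Uz}) as the power sum of the roots $b_1,\dots,b_k$ with multiplicities $n_s$; hence $e_{ii}(u)\bar v=\dl_{1i}\bigl(\sum_{s=1}^k n_s(u-b_s)^{-1}\bigr)\bar v$. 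By Lemma~\ref{weylb}(iv), there is therefore a surjective $\glnt$-homomorphism $\phi\colon\otimes_{s=1}^k W_{n_s}(b_s)\to\V^S/I^\V_{\bs a}$ sending $\otimes_{s=1}^k v_{n_s}$ to $\bar v$.

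It then remains to show $\phi$ is injective, which I would do by comparing dimensions. By Lemma~\ref{VSfree}, $\V^S$ is a free $\C[z_1,\dots,z_n]^S$-module of rank $N^n$; since $I^\V_{\bs a}=I_{\bs a}\V^S$ and $\C[z_1,\dots,z_n]^S$ is the polynomial algebra on $\si_1(\bs z),\dots,\si_n(\bs z)$ with $I_{\bs a}$ generated by $\si_s(\bs z)-a_s$, the quotient $\C[z_1,\dots,z_n]^S/\bigl(I_{\bs a}\cap\C[z_1,\dots,z_n]^S\bigr)$ is one-dimensional, so $\dim\bigl(\V^S/I^\V_{\bs a}\bigr)=N^n$. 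On the other hand, Lemma~\ref{weylb}(i) (together with Lemma~\ref{weyl}(ii)) gives $\dim W_{n_s}(b_s)=N^{n_s}$, hence $\dim\bigl(\otimes_{s=1}^k W_{n_s}(b_s)\bigr)=N^{\sum n_s}=N^n$. A surjection between finite-dimensional spaces of equal dimension is an isomorphism, so $\phi$ is the desired isomorphism of $\glnt$-modules.

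One point that needs slightly more care is the identity $I^\V_{\bs a}=\V^S\cap(V^{\otimes n}\otimes I_{\bs a})$ and the computation of $\dim(\V^S/I^\V_{\bs a})$: since $\V=V^{\otimes n}\otimes\C[z_1,\dots,z_n]$ is a free $\C[z_1,\dots,z_n]$-module, tensoring the exact sequence $0\to I_{\bs a}\to\C[z_1,\dots,z_n]\to\C[z_1,\dots,z_n]/I_{\bs a}\to 0$ with $V^{\otimes n}$ and taking $S_n$-invariants (which is exact over $\C$, char $0$) shows $\V^S/I^\V_{\bs a}\cong\bigl(V^{\otimes n}\otimes(\C[z_1,\dots,z_n]/I_{\bs a})\bigr)^S$; by Lemma~\ref{regular} this is $\bigl(V^{\otimes n}\otimes\C[S_n]\bigr)^S$, whose dimension is $\dim V^{\otimes n}=N^n$ by Lemma~\ref{elementary}. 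I expect this dimension bookkeeping to be the only real content; the construction of $\phi$ is immediate from Lemma~\ref{weylb}(iv), and the hard part — if any — is just making sure the weight/grading data on $\bar v$ matches the defining relations of the Weyl module, which follows directly from~\Ref{action} and Lemma~\ref{Uz}.
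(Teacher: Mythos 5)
Your proposal is correct and follows essentially the same route as the paper: identify the quotient as a cyclic module whose cyclic vector $\bar v$ satisfies the relations required in Lemma~\ref{weylb}(iv), obtain the surjection $\otimes_{s=1}^kW_{n_s}(b_s)\to\V^S/I^\V_{\bs a}$, and conclude by the dimension count via Lemmas~\ref{regular} and~\ref{elementary} (your freeness-of-$\V^S$ argument is just a slightly different packaging of the same computation). The extra care you take with the exactness of $S_n$-invariants and the computation of $e_{ii}(u)\bar v$ only makes explicit what the paper leaves implicit.
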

\begin{proof}
Let $\overline{v_+^{\otimes n}}$ be the image of the vector
$v_+^{\otimes n}$ in the quotient space $\V^S/{I^\V_{\bs a}}$.
Then by Lemma~\ref{cycl=symm}, the quotient space $\V^S/{I^\V_{\bs a}}$ is
a cyclic $\glnt$-module with a cyclic vector $\overline{v_+^{\otimes n}}$, and
\vvn-.3>
\be
e_{ii}(u)\,\overline{v_+^{\otimes n}}=
\sum_{s=1}^k\dl_{1i}\,\frac{n_s}{u-b_s}\ \overline{v_+^{\otimes n}}.
\ee
By part (iv) of Lemma~\ref{weylb}, there exists a surjective homomorphism
\vvn.3>
\beq
\label{homWVI}
\otimes_{s=1}^kW_{n_s}(b_s) \to \V^S/{I^\V_{\bs a}}.
\eeq
In addition,
\vvn-.5>
\begin{align*}
\dim\bigl(\V^{S}/{I^\V_{\bs a}}\bigr)\>=\>\dim (\V/{I^\V_{\bs a}})^S
&{}=\>\dim(V^{\otimes n}\otimes \C[z_1,\dots,z_n]/I_{\bs a})^S
\\[4pt]
&{}=\>\dim V^{\otimes n}=\>\dim\bigl(\otimes_{s=1}^kW_{n_s}(b_s)\bigr)\,,
\end{align*}
where we used Lemmas~\ref{regular} and~\ref{elementary} for the next to
the last equality. Therefore, the surjective homomorphism~\Ref{homWVI} is
an isomorphism.
\end{proof}

\subsection{Bethe algebra}
\label{bethesec}
Let $\der$ be the operator of formal differentiation in variable $u$.
Define the {\it universal differential operator\/} $\D^\B$ by
\vvn.3>
\be
\D^\B=\,\rdet\left( \begin{matrix}
\der-e_{11}(u) & -\>e_{21}(u)& \dots & -\>e_{N1}(u)\\
-\>e_{12}(u) &\der-e_{22}(u)& \dots & -\>e_{N2}(u)\\
\dots & \dots &\dots &\dots \\
-\>e_{1N}(u) & -\>e_{2N}(u)& \dots & \der-e_{NN}(u)
\end{matrix}\right).
\vv.2>
\ee
It is a differential operator in variable $u$, whose coefficients are
formal power series in $u^{-1}$ with coefficients in $\Uglnt$,
\vvn-.3>
\beq
\label{DB}
\D^\B=\,\der^N+\sum_{i=1}^N\,B_i(u)\,\der^{N-i}\>,
\vv-.5>
\eeq
where
\vvn-.3>
\beq
\label{Bi}
B_i(u)\,=\,\sum_{j=1}^\infty B_{ij}\>u^{-j}\,,
\eeq
and $B_{ij}\in\Uglnt$, \,$i=1,\dots,N$, \,$j\in\Z_{>0}\>$.
Clearly, $B_{ij}=0$ \>for $\,j<i$.

\begin{lem}
We have
\vvn-.4>
\beq
\label{B1}
B_1(u)\,=\,-\sum_{i=1}^N e_{ii}(u)\,,
\vv-.3>
\eeq
and
\vvn-.3>
\beq
\label{Bii}
\sum_{i=0}^N\,B_{ii}\prod_{j=0}^{N-i-1}(\al-j)\,=\,Z(\al-N+1)\,,
\vv.2>
\eeq
where $\al$ is a formal variable, $B_{00}=1$,
and\/ $\>Z(x)$ is given by formula~\Ref{Zx}.
\end{lem}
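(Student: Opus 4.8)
The plan is to prove the two claimed identities by direct inspection of the row determinant defining $\D^\B$, using only elementary manipulations of the generating series $e_{ij}(u)$ and a comparison with the scalar row determinant $Z(x)$ from \Ref{Zx}.

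For \Ref{B1}: the coefficient $B_1(u)$ of $\der^{N-1}$ in $\D^\B$ collects exactly those terms in the row determinant expansion in which $\der$ is taken from $N-1$ of the diagonal entries and the single remaining entry is taken from the diagonal as well (an off-diagonal entry $-e_{ij}(u)$ with $i\ne j$ contributes at least two non-$\der$ factors, hence cannot occur when only one non-$\der$ factor is allowed). Thus $B_1(u)$ is the sum, over $i=1,\dots,N$, of the coefficient of $\der^{N-1}$ obtained by replacing the $i$-th diagonal entry $\der-e_{ii}(u)$ by $-e_{ii}(u)$ and keeping $\der$ in the other diagonal slots; since $\der$ appears to the right of $-e_{ii}(u)$ in each such product, reordering is unnecessary and we get $B_1(u)=-\sum_{i=1}^N e_{ii}(u)$. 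I would note that in the row-determinant ordering the $\der$'s that stand to the right of $-e_{ii}(u)$ act trivially on the coefficient extraction of $\der^{N-1}$, so no commutator corrections arise here.

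For \Ref{Bii}: this is the statement that the ``constant term in $u^{-1}$'' part of $\D^\B$, read off appropriately, reproduces the Capelli-type determinant $Z(x)$. The cleanest route is to observe that $B_{ii}$ is the coefficient of $\der^{N-i}u^{-i}$ in $\D^\B$, and that the only contributions to the pure $u^{-i}$ part come from taking the leading term $(e_{jk}\otimes 1)u^{-1}$ of each series $e_{jk}(u)$ that is used. Hence the generating polynomial $\sum_{i=0}^N B_{ii}\,\der^{N-i}$, after this truncation, equals the row determinant of the matrix obtained from the one defining $\D^\B$ by the substitution $e_{jk}(u)\mapsto (e_{jk}\otimes 1)u^{-1}$, i.e.\ a row determinant with entries $\der-u^{-1}e_{jj}$ on the diagonal and $-u^{-1}e_{jk}$ off-diagonal. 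One then pushes all the scalars $u^{-1}$ to one side and all occurrences of $\der$ to the right: acting on $u^{\alpha}$, the operator $\der$ contributes a factor $\alpha$, and moving a $\der$ past an explicit power of $u^{-1}$ shifts the exponent. Carefully tracking these shifts converts the row determinant of $\der-u^{-1}e_{jj}$ etc., evaluated via $\der\mapsto$ (appropriate falling-factorial shifts of $\alpha$), into exactly $\rdet$ of the matrix in \Ref{Zx} with $x$ replaced by $\alpha-N+1$: the $j$-th diagonal entry picks up the shift by $j-1$ that produces $x+j-1-e_{jj}$ with $x=\alpha-N+1$, and the left-hand side $\sum_{i=0}^N B_{ii}\prod_{j=0}^{N-i-1}(\alpha-j)$ is precisely the rewriting of $\der^{N-i}$ acting on $u^{\alpha}$-type monomials in terms of the falling factorial $\prod_{j=0}^{N-i-1}(\alpha-j)$.

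The main obstacle is purely bookkeeping: keeping straight the noncommutative ordering in the row determinant while commuting the formal symbol $\der$ past the scalar powers $u^{-1}$, and matching the resulting index shifts with the shifts $0,1,\dots,N-1$ built into the definition \Ref{Zx} of $Z(x)$. Since the entries used in the relevant truncation all have scalar coefficient $u^{-1}$ and the noncommuting parts $e_{jk}$ commute with $\der$ and with $u$, no genuine commutator terms among the $e_{jk}$ appear; one only has to be disciplined about the $\der$-versus-$u$ reordering. Once the substitution $e_{jk}(u)\mapsto (e_{jk}\otimes 1)u^{-1}$ is justified and the falling-factorial rewriting of powers of $\der$ is set up, \Ref{Bii} drops out by comparing the two row determinants entry by entry.
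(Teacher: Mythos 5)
Your proposal is correct and is essentially the ``straightforward calculation'' that the paper leaves unwritten: only the identity permutation can contribute to the $\der^{N}$- and $\der^{N-1}$-coefficients, and by homogeneity (cf.\ Lemma~\ref{B deg}) the element $B_{ii}$ has degree zero, hence is captured by the truncation $e_{jk}(u)\mapsto(e_{jk}\otimes1)\>u^{-1}$, after which applying the truncated row determinant to $u^\al$ and comparing the coefficient of $u^{\al-N}$ with that of $\sum_{i}B_{ii}\>u^{-i}\der^{N-i}$ acting on $u^\al$ yields $\rdet\bigl(\dl_{ij}(\al-N+i)-e_{ji}\bigr)=Z(\al-N+1)$, which is \Ref{Bii}.

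One small correction to your argument for \Ref{B1}: it is not true that the $\der$'s all stand to the right of $-\>e_{ii}(u)$. In the identity-permutation product $(\der-e_{11}(u))\cdots(\der-e_{NN}(u))$ the relevant term is $\der^{\,i-1}\bigl(-e_{ii}(u)\bigr)\der^{\,N-i}$, so for $i\ge2$ there are $i-1$ factors $\der$ to the left of $-\>e_{ii}(u)$ and reordering \emph{is} needed. The correct (and equally easy) justification is that commuting $\der^{\,i-1}$ to the right past $e_{ii}(u)$ produces only terms with strictly fewer $\der$'s (each commutator replaces a $\der$ by $e_{ii}'(u)$-type terms), so these corrections feed into $B_j(u)$ with $j\ge2$ and do not affect the coefficient of $\der^{N-1}$; with this fix your computation of $B_1(u)=-\sum_{i=1}^N e_{ii}(u)$ stands.
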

\begin{proof}
The lemma is proved by a straightforward calculation.
\end{proof}

\begin{lem}\label{B deg}
For any $i,j$, the element $B_{ij}\in\Uglnt$ is a homogeneous element of degree
$j-i$. The series $B_i(u)$ is a homogeneous series of degree $-i$.
\end{lem}
\begin{proof}
We declare the degree of $\der$ to be $-1$.
Then $\D^\B$ is homogeneous of degree $-N$. The lemma follows.
\end{proof}

We call the unital subalgebra of $\Uglnt$ generated by $B_{ij}$,
$i=1,\dots,N$, $j\in\Z_{\geq i}$, the {\it Bethe algebra\/}
and denote it by $\B$.

The next statement is established in~\cite{T}.
A polished proof can be found in~\cite{MTV1}.

\begin{thm}
The algebra $\B$ is commutative. The algebra $\B$ commutes with
the subalgebra $\Ugln\subset \Uglnt$.
\qed
\end{thm}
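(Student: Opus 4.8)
The plan is to recognize $\D^\B$ as the row determinant of a \emph{Manin matrix with a spectral parameter} and then to exploit the general principle that row determinants of such matrices produce commuting families (this is Talalaev's computation in the rational/Gaudin setting, and is the content of \cite{T}, \cite{MTV1}). Concretely, I would first rewrite
$$\D^\B\,=\,\rdet\bigl(\der\cdot I-\L(u)\bigr),\qquad \L(u)_{ij}=e_{ji}(u),$$
and then verify, using only the current-algebra identity $(u-v)[e_{ij}(u),e_{sk}(v)]=\dl_{js}(e_{ik}(u)-e_{ik}(v))-\dl_{ik}(e_{sj}(u)-e_{sj}(v))$, that the matrix $\der\cdot I-\L(u)$ satisfies the Manin conditions: entries in a common column commute, and the cross-commutator relation $[M_{ij},M_{kl}]=[M_{kj},M_{il}]$ holds (here one expands everything in powers of $u^{-1}$ and treats $\der$ as an additional entry of degree $-1$ that commutes with the numerical matrix units). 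This is a finite, purely computational check.

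With the Manin property in hand, I would deduce $[B_i(u),B_j(v)]=0$ for all spectral points $u,v$ by running the standard argument: the row determinant of a Manin matrix admits Laplace-type expansions like an ordinary determinant, and an $r$-matrix (equivalently Manin) identity forces $\D^\B(u)\D^\B(v)-\D^\B(v)\D^\B(u)=0$. An alternative route is to embed the problem into the Yangian $Y(\gln)$, where commutativity of the coefficients of the quantum Berezinian is classical, and then pass to the rational limit to recover the statement for $\B$; see \cite{MNO} for the Yangian side. The main obstacle, carrying essentially all of the work, is exactly this commutativity computation: one must simultaneously control the noncommutativity of $\der$ with the coefficient series and the presence of two distinct spectral parameters, which is why the detailed argument is deferred to \cite{T}, \cite{MTV1}.

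For commutativity of $\B$ with $\Ugln\subset\Uglnt$, I would argue by $\gln$-covariance. A direct check shows $[e_{ab},\L(u)_{ij}]=\dl_{bj}\L(u)_{ia}-\dl_{ia}\L(u)_{bj}$, i.e. $\mathrm{ad}(e_{ab})$ acts on the matrix $\L(u)$ as the commutator with a numerical matrix unit; exponentiating, $\mathrm{Ad}(g)$ for $g\in GL_N$ conjugates $\L(u)$ by a numerical matrix, and since $\der$ commutes with numerical matrices, $\mathrm{Ad}(g)(\der\cdot I-\L(u))$ is again a Manin matrix obtained from $\der\cdot I-\L(u)$ by numerical conjugation. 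As the row determinant of a Manin matrix is invariant under conjugation by a numerical invertible matrix, one gets $\mathrm{Ad}(g)\D^\B=\D^\B$, hence $[e_{ab},B_{ij}]=0$ for all $i,j$. As a sanity check one has immediately $[e_{ab},B_1(u)]=-\sum_{i=1}^N[e_{ab},e_{ii}(u)]=-\bigl(e_{ab}(u)-e_{ab}(u)\bigr)=0$, and the diagonal coefficients $B_{ii}$ are governed by relation \Ref{Bii} together with Theorem~\ref{Zcent}, which already places them in the center of $\Ugln$; the off-diagonal $B_{ij}$ are then handled by the covariance argument above. This yields $[\,\B,\Ugln\,]=0$ and completes the proof.
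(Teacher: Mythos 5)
The paper gives no internal proof of this theorem: it is quoted from \cite{T}, with a polished proof in \cite{MTV1}, and your sketch moves in exactly that circle of ideas (Talalaev's formula, Manin matrices), ultimately deferring the main commutativity argument to the same two references. So you have not really supplied an independent proof of the first assertion; more importantly, the one concrete verification you do propose is wrong as stated. For the paper's matrix the $(i,j)$ entry is $\dl_{ij}\der-e_{ji}(u)$, i.e.\ your $\L(u)_{ij}=e_{ji}(u)$, and this matrix is \emph{not} a Manin matrix: entries of a common column fail to commute. Indeed, for $k\ne j$ both $\der-e_{jj}(u)$ and $-e_{jk}(u)$ sit in column $j$, and
$[\,\der-e_{jj}(u),\,-e_{jk}(u)\,]=-[\der,e_{jk}(u)]+[e_{jj}(u),e_{jk}(u)]=-e_{jk}'(u)-e_{jk}'(u)=-2\,e_{jk}'(u)\ne0$.
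What is true is that entries of a common \emph{row} commute; equivalently, the transpose matrix with entries $\dl_{ij}\der-e_{ij}(u)$ is Manin, and $\D^\B=\rdet\bigl(\der\cdot I-\L(u)\bigr)=\on{cdet}\bigl(\der\cdot I-\L(u)^T\bigr)$. All the machinery you invoke (Laplace-type expansions, stability under multiplication by numerical matrices, multiplicativity and hence conjugation-invariance of the determinant) is formulated for the \emph{column} determinant of Manin matrices, so your ``finite, purely computational check,'' performed as described, would fail, and the whole argument has to be rerouted through the transpose. In addition, even with the correct Manin property in hand, the passage to $[B_i(u),B_j(v)]=0$ (note there is a single operator $\D^\B$; the statement to prove is commutativity of the coefficients $B_{ij}$) is not a formal consequence of Laplace expansions at one value of $u$ — it is precisely the nontrivial content of Talalaev's theorem, or of the Yangian/Bethe-subalgebra limit in the spirit of \cite{MNO} — and this is the step you leave to \cite{T}, \cite{MTV1}.

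The second assertion is in better shape. Your covariance computation $[e_{ab},\L(u)_{ij}]=\dl_{bj}\L(u)_{ia}-\dl_{ia}\L(u)_{bj}$, i.e.\ $[e_{ab},\L(u)]=[\L(u),E_{ab}]$ with $E_{ab}$ the numerical matrix unit, is correct, and combined with the conjugation-invariance of $\on{cdet}$ of a Manin matrix under numerical invertible matrices (again applied to the transpose, since it is $\L(u)^T$, not $\L(u)$, that is Manin) and the local finiteness of the adjoint $\gln$-action, it does yield $[\Ugln,\B]=0$; one can also verify $[e_{ab},B_i(u)]=0$ directly at the infinitesimal level and avoid exponentiating. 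Your sanity checks via \Ref{B1} and via \Ref{Bii} together with Theorem~\ref{Zcent} are correct. So the covariance half is sound modulo the same transposition repair, while the commutativity half remains, as in the paper, a citation rather than a proof.
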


As a subalgebra of $\Uglnt$, the algebra $\B$ acts on any $\glnt$-module $M$.
If $K\subset M$ is a $\B$-inva\-riant subspace, then we call the image of $\B$
in $\End(K)$ the {\it Bethe algebra associated with $K$\/}. Since $\B$ commutes
with $\Ugln$, it preserves the subspace of singular vectors $M^{sing}$ as well
as weight subspaces of $M$. Therefore, the subspace $(M)_{\bs\la}^{sing}$
is $\B$-invariant for any weight $\bs\la$.

Let $\bs\la$ be a partition with at most $N$ parts. Let $n=|\bs\la|$ and
$(a_1,\dots,a_n)\in\C^n$. Define integers $k$, $n_1,\dots,n_k$ and
distinct complex numbers $b_1,\dots,b_k$ by~\Ref{ab}.
Let $\bs\La=(\bs\la^{(1)},\dots,\bs\la^{(k)})$ be a sequence of partitions
with at most $N$ parts such that $|\bs\la_s|=n_s$.

In what follows we study the action of the Bethe algebra $\B$ on the following
$\B$-modules:
\vvn.3>
\begin{align}
&\M_{\bs\la}\,=\,(\V^S)_{\bs\la}^{sing}\,,
\notag\\
&\M_\lba\,=\,(\otimes_{s=1}^k W_{n_s}(b_s))_{\bs\la}^{sing}\,, \label{MMM}
\\
&\M_{\bs\La, \bs\la,\bs b}\,=\,
(\otimes_{s=1}^kL_{\bs\la^{(s)}}(b_s))_{\bs\la}^{sing}\,.\notag
\end{align}
The $\B$-modules $\M_\lba$ and
$\M_{\bs\La, \bs\la,\bs b}$ are defined by
formula~\Ref{MMM} up to isomorphism.

We denote the Bethe algebras associated with $\M_{\bs\la}\>$,
$\>\M_\lba\>$, $\>\M_\Llb\>$ by
$\>\B_{\bs\la}\>$, $\>\B_\lba\>$,
$\>\B_\Llb\,$, respectively.

\section{Functions on Schubert cell and Wronski map}
\label{sch sec}
\subsection{Functions on Schubert cell $\Om_\lab(\infty)$}
\label{Ominfty}
Let $N,d\in\Z_{>0}$, $N\leq d$. Let $\C_d[u]$ be the space of
polynomials in $u$ of degree less than $d$. We have $\dim \C_d[u]=d$.
Let $\Gr(N,d)$ be the Grassmannian of all $N$-dimensional subspaces in
$\C[d]$. The Grassmannian $\Gr(N,d)$ is a smooth projective variety of
dimension $N(d-N)$.

For a complete flag
$\F=\{0\subset F_1\subset F_2\subset\dots\subset F_d=\C_d[u]\}$ and
a partition $\bs\la=(\la_1,\dots,\la_N)$ such that $\la_1\leq d-N$,
the {\it Schubert cell\/} \,$\Om_{{\bs\la}}(\F)\subset \Gr(N,d)$ is given by
\be
\Om_{\bs\la}(\F)=\{X\in\Gr(N,d)\ |\ \dim (X\cap F_{d-j-\la_j})=N-j\,,
\ \dim (X\cap F_{d-j-\la_j-1})=N-j-1\}\,.
\ee
We have \;$\on{codim}\,\Om_{\bs\la}(\F)=|\bs\la|$.

The Schubert cell decomposition associated to a complete flag $\F$,
see for example~\cite{GH}, is given by
\vvn-.3>
\be
\Gr(N,d)\,=\bigsqcup_{\bs\la,\,\la_1\leq d-N}\,\Om_{{\bs\la}}(\F)\,.
\ee

\sskip
Given a partition $\bs\la=(\la_1,\dots,\la_N)$ such that $\la_1\leq d-N$,
introduce a set
\vvn.2>
\be
P\,=\,\{d_1,d_2,\dots,d_N\}\,,\qquad d_i=\la_i+N-i\,,
\ee
and a new partition
\beq
\label{dual weight}
\lab\,=\,(d-N-\la_N,d-N-\la_{N-1},\dots, d-N-\la_1)\,.
\eeq

Then $d_1>d_2>\dots>d_N$ and
\be
|\bs\la|=
N(d-N)-|\lab|=\sum_{i=1}^N d_i - N(N-1)/2\,.
\ee

\medskip
Let $\F(\infty)$ be the complete flag given by
\be
\,\F(\infty)=
\{0\subset \C_1[u]\subset\C_2[u]\subset\dots\subset\C_d[u]\}\,.
\ee

We denote the Schubert cell $\Om_\lab(\F(\infty))$
by $\Om_\lab(\infty)$. We have $\dim\Om_\lab(\infty)=|\bs\la|$.

The Schubert cell $\Om_\lab(\infty)\subset\Gr(N,d)$ consists
of $N$-dimensional subspaces $X\subset\C_d[u]$ which have a basis
$\{f_1(u),\dots,f_N(u)\}$ of the form
\beq
\label{basis}
f_i(u)=u^{d_i}+\sum_{j=1,\ d_i-j\not\in P}^{d_i}f_{ij}u^{d_i-j}.
\eeq
For a given subspace $X\in\Om_\lab(\infty)$, such a basis is unique.

\sskip
Let $\O_{\bs\la}$ be the algebra of regular functions on
$\Om_\lab(\infty)$. The cell $\Om_\lab(\infty)$ is an affine space of
dimension $|\bs\la|$ with coordinate functions $f_{ij}$. Therefore, the algebra
$\O_{\bs\la}$ is a free polynomial algebra in variables $f_{ij}$,
\beq
\label{Ola}
\O_{\bs\la}=
\C[\>f_{ij}\>,\ i=1,\dots,N,\ j=1,\dots,d_i,\ d_i-j\not\in P\>].
\eeq
We often
regard the polynomials $f_i(u)$, $i=1,\dots,N$, as generating functions
for the generators $f_{ij}$ of $\O_{\bs\la}$.

Recall that the degree of $u$ is one.
Define a grading on the algebra $\O_{\bs\la}$ by setting the degree of
the generator $f_{ij}$ to be $j$. Then the generating function $f_i(u)$
is homogeneous of degree $d_i$.

\begin{lem}
\label{char O}
The graded character of $\O_{\bs\la}$ is given by the formula
\begin{gather*}
\ch(\O_{\bs\la})\,=\,\frac{\prod_{1\leq i<j\leq N}\,(1-q^{d_i-d_j})}
{\prod_{i=1}^N(q)_{d_i}}\ .
\\[-13pt]
\rightline{\qed}
\end{gather*}
\end{lem}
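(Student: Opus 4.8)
The plan is to read the graded character off directly from the description \Ref{Ola} of $\O_{\bs\la}$ as a free polynomial algebra, and then to match the resulting product with the claimed rational function by an elementary bookkeeping of the degrees of the generators $f_{ij}$.

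First I would recall the standard fact that a free polynomial algebra on homogeneous generators of degrees $m_1,m_2,\dots$, with only finitely many generators in each degree, has graded character $\prod_\nu(1-q^{m_\nu})^{-1}$. By \Ref{Ola} and the convention $\deg f_{ij}=j$, this gives
\[
\ch(\O_{\bs\la})\,=\,\prod_{i=1}^N\ \prod_{\substack{1\le j\le d_i\\ d_i-j\notin P}}\frac1{1-q^j}\,.
\]
Next I would compute, for each fixed $i$, the set of excluded exponents $\{\,j:1\le j\le d_i,\ d_i-j\in P\,\}$. As $j$ runs through $\{1,\dots,d_i\}$ the difference $d_i-j$ runs through $\{0,1,\dots,d_i-1\}$, so $d_i-j\in P=\{d_1>d_2>\dots>d_N\}$ forces $d_i-j=d_\ell$ for some $\ell$ with $d_\ell<d_i$, i.e. $\ell>i$; conversely each such $\ell$ contributes the single index $j=d_i-d_\ell$, which indeed lies in $\{1,\dots,d_i\}$ because $0<d_i-d_\ell<d_i$. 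Hence the inner product equals
\[
\frac{\prod_{j=1}^{d_i}(1-q^j)^{-1}}{\prod_{\ell=i+1}^N(1-q^{d_i-d_\ell})^{-1}}
\,=\,\frac{\prod_{\ell=i+1}^N(1-q^{d_i-d_\ell})}{(q)_{d_i}}\,.
\]

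Finally I would multiply over $i=1,\dots,N$: the numerators assemble into $\prod_{1\le i<\ell\le N}(1-q^{d_i-d_\ell})$ and the denominators into $\prod_{i=1}^N(q)_{d_i}$, which is exactly the asserted formula. There is essentially no obstacle here; the only point requiring a little care is the identification of the excluded-degree set, in particular the verification that each excluded index $j=d_i-d_\ell$ is genuine, which holds because $d_1>\dots>d_N\ge0$ guarantees $0<d_i-d_\ell<d_i$ for $\ell>i$. This is precisely what makes the single product over generators collapse cleanly into the stated quotient.
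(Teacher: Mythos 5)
Your argument is correct and is exactly the computation the paper leaves implicit (the lemma is stated with the proof omitted as immediate from the description \Ref{Ola} of $\O_{\bs\la}$ as a free polynomial algebra with $\deg f_{ij}=j$): read off the character as a product over generators and identify the excluded degrees $j=d_i-d_\ell$, $\ell>i$. One cosmetic point: when $d_N=0$ the excluded index can be $j=d_i$ itself, so the justification should read $0<d_i-d_\ell\le d_i$ rather than the strict upper bound you wrote, but this does not affect the conclusion.
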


\subsection{Another realization of $\O_{\bs\la}$}
For $g_1,\dots,g_N \in \C[u]$, denote by
$\Wr(g_1(u),\dots,g_N(u))$ the {\it Wronskian},
\vvn-.3>
\be
\Wr(g_1(u),\dots,g_N(u))\,=\,
\det\left(\begin{matrix} g_1(u) & g_1'(u) &\dots & g_1^{(N-1)}(u) \\
g_2(u) & g_2'(u) &\dots & g_2^{(N-1)}(u) \\ \dots & \dots &\dots & \dots \\
g_N(u) & g_N'(u) &\dots & g_N^{(N-1)}(u)
\end{matrix}\right).
\ee

Let $f_i(u)$, $i=1,\dots,N$, be the generating functions given by~\Ref{basis}.
We have
\beq
\label{Wr coef}
\Wr(f_1(u),\dots,f_N(u))\,=\prod_{1\le i<j\le N}(d_j-d_i)
\ \Bigl(u^n+\sum_{s=1}^n (-1)^s\>\Sig_s\,u^{n-s}\Bigr)\,,
\eeq
where $\Sig_1,\dots,\Sig_n$ are elements of $\O_{\bs\la}$.
Define the differential operator $\D^\O_{\bs\la}$ by
\vvn.2>
\beq
\label{DOla}
\D^\O_{\bs\la}=\,\frac{1}{\Wr(f_1(u),\dots,f_N(u))}\,\rdet
\left(\begin{matrix} f_1(u) & f_1'(u) &\dots & f_1^{(N)}(u) \\
f_2(u) & f_2'(u) &\dots & f_2^{(N)}(u) \\ \dots & \dots &\dots & \dots \\
1 & \der &\dots & \der^N
\end{matrix}\right).
\eeq
It is a differential operator in variable $u$, whose coefficients are
rational functions with coefficients in $\O_{\bs\la}$,
\vvn-.4>
\beq
\label{DO}
\D^\O_{\bs\la}=\,\der^N+\sum_{i=1}^N\,F_i(u)\,\der^{N-i}\>.
\vv-.4>
\eeq
The top coefficient of the Wronskian $\Wr(f_1(u),\dots,f_N(u))$ is a constant.
Therefore, we can write
\vvn-.3>
\beq
\label{Fi}
F_i(u)\,=\,\sum_{j=1}^\infty F_{ij}\>u^{-j}\,,
\eeq
where $F_{ij}\in\O_{\bs\la}$, \,$i=1,\dots,N$, \,$j\in\Z_{>0}\>$.
Clearly, $F_{ij}=0$ for $j<i$.

\begin{lem}\label{F deg}
For any $i,j$, the element $F_{ij}\in\O_{\bs\la}$ is a homogeneous element
of degree $j-i$. The series $F_i(u)$ is a homogeneous series of degree $-i$.
\end{lem}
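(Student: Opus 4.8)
The plan is to track gradings through the definition \Ref{DOla} of the differential operator $\D^\O_{\bs\la}$. Recall that $\deg u=1$, $\deg\der=-1$, and the generating function $f_i(u)$ is homogeneous of degree $d_i$ (since $\deg f_{ij}=j$ and the monomial $u^{d_i-j}$ carries degree $d_i-j$, so each term of $f_i(u)$ has total degree $d_i$; the leading term $u^{d_i}$ confirms this). First I would observe that each column of the matrix in \Ref{DOla} is obtained from the preceding one by applying $\der$, which lowers degree by $1$: so the $(N+1-m)$-th column (for $m=0,1,\dots,N$, counting $f_i^{(m)}$ or $\der^m$) consists of entries that are homogeneous of degree $d_i-m$ in rows $1,\dots,N$, and of degree $-m$ in the last row (thinking of $1,\der,\dots,\der^N$ as operators of degrees $0,-1,\dots,-N$). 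Expanding the row determinant $\rdet$ along the last row, the coefficient of $\der^{N-i}$ is $\pm$ the minor obtained by deleting the last row and the column of $\der^{N-i}$; that minor is a sum of products $f_{1\si(1)}^{(\cdot)}\cdots$ whose total degree equals $\sum_{j=1}^N d_j$ minus the sum of the column indices used, which is $(0+1+\dots+N)-(N-i)=N(N+1)/2-(N-i)$. Hence the coefficient of $\der^{N-i}$ in $\rdet(\cdots)$ is homogeneous of degree $N(N+1)/2-(N-i)$.

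Next I would handle the normalization by the Wronskian. By \Ref{Wr coef} the Wronskian $\Wr(f_1(u),\dots,f_N(u))$ is (a nonzero constant times) $u^n+\sum_s(-1)^s\Sig_s u^{n-s}$; since each side of \Ref{Wr coef} must be homogeneous and the left-hand side has leading term $\prod_{i<j}(d_j-d_i)\,u^n$ of degree $n=\sum_i d_i-N(N-1)/2$, the Wronskian is homogeneous of degree $n$ (equivalently, $\deg\Sig_s=s$). Dividing $\rdet(\cdots)$, whose $\der^{N-i}$-coefficient has degree $N(N+1)/2-(N-i)$, by this degree-$n$ element yields an operator whose $\der^{N-i}$-coefficient has degree
\[
\frac{N(N+1)}2-(N-i)-n\,=\,\frac{N(N+1)}2-(N-i)-\sum_{j=1}^N d_j+\frac{N(N-1)}2\,.
\]
Using $\sum_j d_j=\sum_j(\la_j+N-j)=|\bs\la|+N(N-1)/2$, this simplifies to $i-|\bs\la|$... which is not yet $-i$; the resolution is that $\D^\O_{\bs\la}$ is a differential operator of order $N$ that is itself homogeneous, so comparing with $\der^N+\sum_i F_i(u)\der^{N-i}$ and using $\deg\der^N=-N$, the whole operator $\D^\O_{\bs\la}$ must have degree $-N$; therefore $F_i(u)\der^{N-i}$ has degree $-N$, giving $\deg F_i(u)=-N-(-(N-i))=-i$. (This is the cleanest route: first show $\D^\O_{\bs\la}$ is homogeneous of degree $-N$, which follows because its matrix entries scale uniformly and the Wronskian normalization is homogeneous, and then read off degrees term by term.) Finally, from $F_i(u)=\sum_{j}F_{ij}u^{-j}$ with $\deg u^{-j}=-j$, homogeneity of $F_i(u)$ of degree $-i$ forces $\deg F_{ij}=j-i$ for every $j$, which is the claim.

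The main obstacle is purely bookkeeping: one must be careful that the row determinant $\rdet$ — which does not commute entries past the differential operators in the last row — is nonetheless homogeneous, and that no sign or index slips occur when expanding along the last row and matching $\der^{N-i}$. Once the homogeneity of the matrix columns (each column degree-shifted by $-1$ relative to the previous one) and of the Wronskian are in hand, the degree count is forced. I would present it by first recording $\deg f_i(u)=d_i$, then noting $\D^\O_{\bs\la}$ is homogeneous of degree $-N$, and concluding $\deg F_i(u)=-i$ and $\deg F_{ij}=j-i$.
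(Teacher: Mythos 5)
Your concluding argument --- that $\D^\O_{\bs\la}$ is homogeneous of degree $-N$ (since $\deg\der=-1$, each $f_i(u)$ is homogeneous, and the Wronskian normalization is homogeneous), whence $\deg F_i(u)=-i$ and $\deg F_{ij}=j-i$ --- is exactly the paper's proof, and it is correct. The only flaw is in your explicit degree count: the $\der^{N-i}$-coefficient of the row determinant in~(\ref{DOla}) is homogeneous of degree $\sum_{j=1}^N d_j-\bigl(N(N+1)/2-(N-i)\bigr)$ (the $d_j$'s do not drop out), not $N(N+1)/2-(N-i)$; dividing by the Wronskian, which is homogeneous of degree $n=\sum_{j=1}^N d_j-N(N-1)/2$, then gives exactly $-i$. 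So the ``discrepancy'' $i-|\bs\la|$ you encountered is only an arithmetic slip, not a genuine obstacle: the direct bookkeeping and the homogeneity-of-$\D^\O_{\bs\la}$ argument agree, and either one completes the lemma.
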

\begin{proof}
Recall that $\der$ has degree $-1$. So, the operator
$\D^\O_{\bs\la}$ is homogeneous of degree $-N$. The lemma follows.
\end{proof}

Define the {\it indicial polynomial of the operator
$\D^{\O\/}_{\bs\la}$ at infinity} by
\beq
\label{char at infty}
\chi(\al)\,=\,\sum_{i=0}^{N} F_{ii} \prod_{j=0}^{N-i-1} (\al-j)\,,
\eeq
where $F_{00}=1$.

\begin{lem}
We have
\vv->
\be
\chi(\al)\,=\,\prod_{i=1}^N\,(\al-d_i)\,.
\ee
\end{lem}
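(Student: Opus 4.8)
The plan is to compute the indicial polynomial $\chi(\al)$ directly from the row-determinant formula~\Ref{DOla}. First I would recall that $\D^\O_{\bs\la}$ is, by construction, the monic differential operator of order $N$ whose space of solutions is exactly $X=\langle f_1(u),\dots,f_N(u)\rangle$; this is the standard fact that for a fundamental system $f_1,\dots,f_N$ the operator $\Wr(f_1,\dots,f_N)^{-1}\rdet(\dots)$ annihilates each $f_i$ and is monic of degree $N$. So it suffices to understand how such an operator acts on the monomials $u^{d}$, since the $f_i(u)=u^{d_i}+(\text{lower order})$ form a basis of $X$ adapted to the filtration by degree.

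The key computation: if $\D^\O_{\bs\la}=\der^N+\sum_{i=1}^N F_i(u)\der^{N-i}$ with $F_i(u)=\sum_{j\ge i}F_{ij}u^{-j}$, then applying it to $u^\al$ gives
\beq
\D^\O_{\bs\la}\,u^\al\,=\,\Bigl(\prod_{j=0}^{N-1}(\al-j)+\sum_{i=1}^N F_i(u)\,u^i\prod_{j=0}^{N-i-1}(\al-j)\Bigr)u^{\al-N}\,,
\notag
\eeq
and extracting the coefficient of $u^{\al-N}$ (the "top" term, using $F_i(u)u^i=F_{ii}+O(u^{-1})$) yields precisely $\chi(\al)=\sum_{i=0}^N F_{ii}\prod_{j=0}^{N-i-1}(\al-j)$ as the leading symbol governing the action on $u^\al$. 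Thus $\D^\O_{\bs\la}(u^d)$ has leading term $\chi(d)\,u^{d-N}$. Now apply this with $d=d_i$: since $f_i(u)=u^{d_i}+(\text{terms of degree}<d_i)$ lies in the solution space, $\D^\O_{\bs\la}f_i=0$; matching the highest-degree term $u^{d_i-N}$ (the lower-degree corrections in $f_i$ contribute only to strictly lower powers, and the exponents $d_1>\dots>d_N$ are distinct so no cancellation across different $f_i$ can occur) forces $\chi(d_i)=0$ for each $i=1,\dots,N$. Since $\chi(\al)$ is monic of degree $N$ in $\al$ (the $i=0$ term is $\prod_{j=0}^{N-1}(\al-j)$) and has the $N$ distinct roots $d_1,\dots,d_N$, we conclude $\chi(\al)=\prod_{i=1}^N(\al-d_i)$.

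The main obstacle is making the "matching of top-degree terms" rigorous: one must check that the correction terms $f_{ij}u^{d_i-j}$ in~\Ref{basis}, after application of $\D^\O_{\bs\la}$, genuinely produce only powers of $u$ strictly below $u^{d_i-N}$, and that the constraint $d_i-j\not\in P$ does not interfere. This is where the homogeneity from Lemma~\ref{F deg} is decisive: since $F_{ij}$ has degree $j-i$ and $\der$ has degree $-1$, each summand of $\D^\O_{\bs\la}$ is homogeneous of degree $-N$, so $\D^\O_{\bs\la}$ maps the degree-$d$ part of $\O_{\bs\la}[u]$ to the degree-$(d-N)$ part; applied to $f_i(u)$ (homogeneous of degree $d_i$) it lands in degree $d_i-N$, and the vanishing of this homogeneous element, read off coefficient-wise in the free polynomial algebra $\O_{\bs\la}$, gives exactly the scalar identity $\chi(d_i)=0$ once one isolates the coefficient of $u^{d_i-N}$. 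Alternatively, and perhaps more cleanly, one can specialize $f_{ij}=0$ to reduce to the "trivial" subspace $X_0=\langle u^{d_1},\dots,u^{d_N}\rangle$, for which $\D^\O_{\bs\la}=\prod_{i=1}^N(u\,\der-d_i)u^{-1}$ (an Euler-type operator) by direct inspection, whence $\chi(\al)=\prod(\al-d_i)$; and then observe that $\chi(\al)$ is independent of the $f_{ij}$ because the $F_{ii}$ are the top-degree (degree-zero) parts of $F_i(u)u^i$, hence constants, hence unchanged under the specialization.
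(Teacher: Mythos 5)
Your main argument is correct and is essentially the paper's own proof: the coefficient of $u^{d_i-N}$ in $\D^\O_{\bs\la}f_i(u)$ equals $\chi(d_i)$, which must vanish because $\D^\O_{\bs\la}f_i(u)=0$, and a monic polynomial of degree $N$ in $\al$ with the $N$ distinct roots $d_1>\dots>d_N$ must equal $\prod_{i=1}^N(\al-d_i)$. The additional homogeneity discussion and the specialization aside are not needed (and the displayed Euler-type operator formula in that aside is not quite exact), but the core of your argument coincides with the paper's.
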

\begin{proof}
The coefficient of $u^{d_i-N}$ of the series $\D^\O_{\bs\la}f_i(u)$
equals $\chi(d_i)$. On the other hand, we have that
$\D^\O_{\bs\la}f_i(u)=0$, and therefore, $\chi(d_i)=0$
for all $i=1,\dots,N$. Since $\deg\chi=N$, the lemma follows.
\end{proof}

\begin{lem}\label{coef alg}
The functions $F_{ij}\in\O_{\bs\la}$, $i=1,\dots,N$,
$j=i+1,i+2,\dots$, generate the algebra $\O_{\bs\la}$.
\end{lem}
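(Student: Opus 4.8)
The plan is to show that the subalgebra $\O'_{\bs\la}\subset\O_{\bs\la}$ generated by the coefficients $F_{ij}$ with $j>i$ is in fact all of $\O_{\bs\la}=\C[f_{ij}]$. The strategy is a filtration/leading-term argument: order the generators $f_{ij}$ of $\O_{\bs\la}$ and show, by a triangularity argument coming from the explicit shape of the row determinant in~\Ref{DOla}, that each $f_{ij}$ can be recovered from the $F_{ij}$ with $j>i$ modulo generators that come "earlier" in the order, then induct. Concretely, multiply~\Ref{DOla} through by the Wronskian and apply both sides to each basis element: the identity $\D^\O_{\bs\la}\,f_i(u)=0$, equivalently the expansion of the row determinant, expresses relations among the $F_{ij}$ and the $f_{ij}$. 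The key is that the principal symbol of the $f$-dependence is linear: the coefficient of $\der^N$ being the Wronskian (a constant times a polynomial in the $\Sig_s$, hence in the $f_{ij}$), and the lower coefficients $F_i(u)$ depending on the $f_{ij}$ in a way that, after the degree bookkeeping of Lemma~\ref{F deg}, lets one solve for $f_{ij}$ one at a time.

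First I would fix the lexicographic-type order on pairs $(i,j)$ with $d_i-j\notin P$ induced by the grading: $f_{ij}$ has degree $j$, and within a fixed degree order by $i$. Then I would examine $\D^\O_{\bs\la} f_i(u)=0$ degree by degree in $u^{-1}$: since $f_i(u)=u^{d_i}+\sum f_{ij}u^{d_i-j}$, the vanishing of the coefficient of $u^{d_i-j-N}$ in $\D^\O_{\bs\la}f_i(u)$ gives, after using $\chi(d_i)=0$, an equation of the form
\beq
\label{pp1}
(\text{nonzero scalar})\cdot f_{ij}\,=\,(\text{polynomial in }F_{st}\text{ with }t>s\text{ and in }f_{st}\text{ of degree}<j)\,,
\eeq
where the nonzero scalar is $\chi'(d_i)=\prod_{l\ne i}(d_i-d_l)\ne0$ because the $d_l$ are distinct. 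This is the heart of the matter: the distinctness of $d_1>\dots>d_N$ makes the indicial polynomial $\chi$ have simple roots, so the "resonance" obstruction to solving for $f_{ij}$ vanishes. Granting~\Ref{pp1}, an induction on the degree $j$ (base case $j$ minimal, where the right side involves only $F_{ij}$ themselves) shows every $f_{ij}\in\O'_{\bs\la}$, hence $\O'_{\bs\la}=\O_{\bs\la}$.

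The main obstacle I anticipate is making~\Ref{pp1} precise — that is, controlling exactly which $f_{st}$ and which $F_{st}$ enter the coefficient of $u^{d_i-j-N}$ in $\D^\O_{\bs\la}f_i(u)$, and checking that no $F_{ss}$ (the "resonant" coefficients, which are \emph{not} among our generators) appear with a bad coefficient. Here one uses that $F_{ss}$ multiplies $f_i(u)$ essentially through $\chi(\al)$ evaluated at the relevant exponent, and the exponents occurring are $d_i, d_i-1,\dots$; the only way $\chi$ vanishes at such an exponent is at $\al=d_i$ itself, which is precisely the term that gets cancelled, leaving the coefficient $\chi'(d_i)$ in front of $f_{ij}$. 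A clean way to package this is to note that $\{f_1(u),\dots,f_N(u)\}$ is a basis of solutions of $\D^\O_{\bs\la}$ and to read off the recursion for the Taylor-type coefficients of a solution with leading term $u^{d_i}$ directly from $\D^\O_{\bs\la}=\der^N+\sum F_i(u)\der^{N-i}$; the recursion solves uniquely for the coefficient of $u^{d_i-j}$ in terms of the $F_{st}$ with $t>s$ and lower coefficients exactly because $\chi(d_i-j)=\chi(d_i)=0$ never recurs for $j\ge1$ (as $d_i-j$ would then have to equal some $d_l<d_i$, but the gaps are accounted for by $P$), so in fact the only subtlety is the very first step, and after that the argument is forced.
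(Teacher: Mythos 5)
Your overall strategy is the paper's own: apply $\D^\O_{\bs\la}$ to $f_i(u)$, extract the coefficient of $u^{d_i-N-j}$, and solve recursively for $f_{ij}$. However, the central display of your argument is wrong as stated. Since $\D^\O_{\bs\la}=\der^N+\sum_{l}F_l(u)\der^{N-l}$ with $F_l(u)=\sum_{m\ge l}F_{lm}u^{-m}$, one has
$\D^\O_{\bs\la}\,u^c=\chi(c)\,u^{c-N}+\sum_{l\ge1}\sum_{m>l}F_{lm}\,c(c-1)\cdots(c-N+l+1)\,u^{c-N+l-m}$,
and hence the coefficient of $u^{d_i-N-j}$ in $\D^\O_{\bs\la}f_i(u)$ equals $\chi(d_i-j)\,f_{ij}$ plus terms involving only the $F_{kl}$ with $l>k$ and the $f_{is}$ with $s<j$. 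So the scalar in front of $f_{ij}$ is $\chi(d_i-j)$, not $\chi'(d_i)$; no derivative of $\chi$ enters anywhere, and the cancellation coming from $\chi(d_i)=0$ concerns only the top coefficient ($j=0$), not the equations for $j\ge1$. Correspondingly, your stated reason for non-vanishing --- that the distinctness of $d_1>\dots>d_N$ makes the roots of $\chi$ simple and so ``kills the resonance'' --- does not work: resonances in the Frobenius sense genuinely occur here, because the exponents are integers and $\chi(d_i-j)=0$ exactly when $d_i-j=d_l$ for some $l>i$. What actually saves the recursion is the normal form \Ref{basis}: the coordinates $f_{ij}$ exist only for $d_i-j\notin P$ (see \Ref{Ola}), so for every generator $\chi(d_i-j)=\prod_l(d_i-j-d_l)\ne0$, and the resonant positions are precisely the monomials omitted from $f_i(u)$. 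Your final parenthetical (``the gaps are accounted for by $P$'') is this correct reason, but it contradicts your earlier claims that the scalar is $\chi'(d_i)$ and that ``the only way $\chi$ vanishes at such an exponent is at $\al=d_i$ itself.''

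Once the display is corrected, the rest goes through exactly as in the paper: $F_{kl}=0$ for $l<k$, and the diagonal coefficients $F_{kk}$ are scalars (they have degree $0$ by Lemma~\ref{F deg}; indeed $\chi(\al)=\prod_l(\al-d_l)$ has constant coefficients), so your worry about the $F_{ss}$ is harmless for a different reason than the one you give. The relation then expresses $\chi(d_i-j)f_{ij}$ through constants, the generators $F_{kl}$ with $l>k$, and the previously determined $f_{is}$ with $s<j$, and induction on $j$ yields $f_{ij}\in\C[F_{kl},\ l>k]$, i.e.\ the statement of the lemma.
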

\begin{proof}
The coefficient of $u^{d_i-N-j}$ of the series
$\D^\O_{\bs\la}f_i(u)$ has the form
\vvn.3>
\be
\chi(d_i-j)\,f_{ij}+\dots{}\;,
\vv.3>
\ee
where the dots denote the terms which contain the elements $F_{kl}$ and
$f_{is}$ with $s<j$ only. Since $\D^\O_{\bs\la}f_i(u)=0$ and
$\chi(d_i-j)\ne 0$, see~\Ref{Ola}, we can express recursively the elements
$f_{ij}$ via the elements $F_{kl}$ starting with $j=1$ and then increasing
the second index $j$.
\end{proof}

\subsection{Frobenius algebras}
\label{comalg}
In this section, we recall some simple facts from commutative algebra. We use
the word {\it algebra\/} for an associative unital algebra over $\C$.

Let $A$ be a commutative algebra. The algebra $A$ considered as an $A$-module
is called the {\it regular representation \/} of $A$. The dual space $A^*$ is
naturally an $A$-module, which is called the {\it coregular representation\/}.

Clearly, the image of $A$ in $\End(A)$ for the regular representation is
a maximal commutative subalgebra. If $A$ is finite-dimensional, then the image
of $A$ in $\End(A^*)$ for the coregular representation is a maximal commutative
subalgebra as well.

If $M$ is an $A$-module and $v\in M$ is an eigenvector of the $A$-action on $M$
with eigenvalue $\xi_v\in A^*$, that is, $av=\xi_v(a)\>v$ \;for any $a\in A$,
then $\xi_v$ is a character of $A$, that is, $\xi_v(ab)=\xi_v(a)\>\xi_v(b)$.

If an element $v\in A^*$ is an eigenvector of the coregular action of $A$,
then $v$ is proportional to the character $\xi_v$. Moreover, each character
$\xi\in A^*$ is an eigenvector of the coregular action of $A$ and
the corresponding eigenvalue equals $\xi$.

A nonzero element $\xi\in A^*$ is proportional to a character if and only if
$\,\ker\xi\subset A$ is an ideal. Clearly, $A/\ker\xi\simeq\C$. On the other
hand, if $\m\subset A$ is an ideal such that $A/\m\simeq\C$, then $\m$ is
a maximal proper ideal and $\m=\ker\zeta$ for some character $\zeta$.

A commutative algebra $A$ is called {\it local\/} if it has a unique ideal $\m$
such that $A/\m\simeq\C$. In other words, a commutative algebra $A$ is local if
it has a unique character. Any proper ideal of the local finite-dimensional
algebra $A$ is contained in the ideal $\m$.

It is known that any finite-dimensional commutative algebra $A$ is isomorphic
to a direct sum of local algebras, and the local summands are in bijection
with characters of $A$.

Let $A$ be a commutative algebra. A bilinear form $(\,{,}\,):A\otimes A\to\C$
is called {\it invariant\/} if $(ab,c)=(a,bc)$ for all $a,b,c\in A$.

A finite-dimensional commutative algebra $A$ which admits an invariant
nondegenerate symmetric bilinear form ${(\,{,}\,):A\otimes A\to\C}$ is called
a {\it Frobenius algebra\/}. It is easy to see that distinct local summands of
a Frobenius algebra are orthogonal.

The following properties of Frobenius algebras will be useful.

\begin{lem}
\label{direct}
A finite direct sum of Frobenius algebras is a Frobenius algebra.
\end{lem}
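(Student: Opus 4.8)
The statement is that a finite direct sum of Frobenius algebras is a Frobenius algebra. By induction it suffices to treat the case of two summands, say $A=A_1\oplus A_2$, where each $A_i$ is finite-dimensional commutative and carries an invariant nondegenerate symmetric bilinear form $(\,{,}\,)_i:A_i\otimes A_i\to\C$. The plan is simply to equip $A$ with the orthogonal direct sum of these forms and verify the three required properties (symmetry, invariance, nondegeneracy), using that the idempotents $\mathbf 1_{A_1},\mathbf 1_{A_2}$ decompose every element uniquely.

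First I would define $(\,{,}\,):A\otimes A\to\C$ by $\bigl((a_1,a_2),(b_1,b_2)\bigr)=(a_1,b_1)_1+(a_2,b_2)_2$ for $a_i,b_i\in A_i$. Symmetry is immediate from the symmetry of each $(\,{,}\,)_i$. For invariance, note that multiplication in $A$ is componentwise, $(a_1,a_2)(b_1,b_2)=(a_1b_1,a_2b_2)$, so
\[
\bigl((a_1,a_2)(b_1,b_2),(c_1,c_2)\bigr)=(a_1b_1,c_1)_1+(a_2b_2,c_2)_2
=(a_1,b_1c_1)_1+(a_2,b_2c_2)_2,
\]
using invariance of each $(\,{,}\,)_i$, and the right-hand side equals $\bigl((a_1,a_2),(b_1,b_2)(c_1,c_2)\bigr)$. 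For nondegeneracy, suppose $(a_1,a_2)$ pairs to zero with every element of $A$; pairing against elements of the form $(b_1,0)$ forces $(a_1,b_1)_1=0$ for all $b_1\in A_1$, hence $a_1=0$ by nondegeneracy of $(\,{,}\,)_1$, and symmetrically $a_2=0$. Thus $(\,{,}\,)$ is an invariant nondegenerate symmetric form, so $A$ is Frobenius, and the general case follows by induction on the number of summands.

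There is essentially no obstacle here; the only point requiring a moment's care is that invariance genuinely uses the componentwise (as opposed to any twisted) product structure on the direct sum, and that the cross terms $(a_1,b_2)$ never appear because $A_1$ and $A_2$ sit in $A$ as orthogonal ideals. The finite-dimensionality hypothesis is used only to stay within the definition of Frobenius algebra given above and plays no essential role in the verification.
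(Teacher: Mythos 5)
Your proof is correct and follows the same route as the paper: both take the direct sum of the invariant nondegenerate symmetric forms on the summands and check that the resulting form is symmetric, invariant, and nondegenerate. Your write-up merely spells out the componentwise verifications that the paper leaves as ``clearly''.
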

\begin{proof}
Fix invariant nondegenerate symmetric bilinear forms on the summands and
define a bilinear form on the direct sum to be the direct sum of the forms
of the summands. The obtained form is clearly nondegenerate, symmetric and
invariant.
\end{proof}

Let $A$ be a Frobenius algebra. Let $I\subset A$ be a subspace.
Denote by $I^\perp\subset A$ the orthogonal complement to $I$.
Then $\dim I+\dim I^\perp=\dim A$, and the subspace $I$ is an ideal
if and only if $I^\perp$ is an ideal.

Let $A_0$ be a local Frobenius algebra with maximal ideal $\m\subset A_0$.
Then $\m^\perp$ is a one-dimen\-sional ideal. Let $m^\perp\in\m^\perp$ be
an element such that $(1,m^\perp)=1$.

\begin{lem}
\label{inverse}
Any nonzero ideal $I\subset A_0$ contains $\m^\perp$.
\end{lem}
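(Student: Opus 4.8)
The plan is to exploit the Frobenius structure of the local algebra $A_0$ together with the fact that a local finite-dimensional algebra has a unique minimal ideal. First I would recall that $A_0$, being finite-dimensional and local, has a nilpotent maximal ideal $\m$, so that $\m \supsetneq \m^2 \supsetneq \dots \supsetneq \m^r \supsetneq \m^{r+1} = 0$ for some $r \ge 0$. The key claim is that $\m^{\perp}$ is precisely the socle of $A_0$, i.e. the annihilator of $\m$, and that this socle is one-dimensional; any nonzero ideal $I$ must contain a nonzero element annihilated by $\m$ (take a nonzero element of $I \cap \m^s$ with $s$ maximal), hence must meet the socle nontrivially, hence (by one-dimensionality) must contain all of $\m^{\perp}$.

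Concretely, I would argue as follows. By invariance and nondegeneracy of the form, for any subspace $J \subset A_0$ one has $(\m J)^{\perp} = \{a : (\m J, a) = 0\} = \{a : (J, \m a) = 0\}$; in particular, taking $J = A_0$, the orthogonal complement $\m^{\perp}$ equals $\{a \in A_0 : \m a = 0\} = \Ann_{A_0}(\m)$, the socle. Since the form is nondegenerate, $\dim \m^{\perp} = \dim A_0 - \dim \m = 1$, so the socle is spanned by the element $m^{\perp}$. Now let $I \subset A_0$ be a nonzero ideal. Because $\m$ is nilpotent, $\m^{r+1} I = 0$ while $\m^0 I = I \ne 0$, so there is a largest integer $s \ge 0$ with $\m^s I \ne 0$; then $\m^s I$ is a nonzero ideal with $\m \cdot (\m^s I) = \m^{s+1} I = 0$, so $\m^s I \subset \Ann_{A_0}(\m) = \m^{\perp} = \C\, m^{\perp}$. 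Hence $\m^{\perp} \subset I$ (the inclusion $\m^s I \subset I$ is automatic since $I$ is an ideal), which is the desired conclusion.

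I do not expect a serious obstacle here — the statement is a standard fact about Frobenius algebras — but the one point requiring mild care is the identification $\m^{\perp} = \Ann_{A_0}(\m)$ and its one-dimensionality, which is exactly where the Frobenius (nondegenerate invariant form) hypothesis is used; without it a local algebra can have a higher-dimensional socle. I would also remark that the same argument shows $\m^{\perp}$ is the unique minimal nonzero ideal of $A_0$, which is the way this lemma will presumably be used downstream (e.g. to compare regular and coregular representations via the pairing $a \mapsto (a, m^{\perp})$).
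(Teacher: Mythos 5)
Your proof is correct, but it takes a genuinely different route from the paper's. You identify $\m^\perp$ with the socle $\Ann(\m)$ via invariance of the form (correct: $(\m,a)=(A_0,\m a)$ and nondegeneracy give $\m^\perp=\{a:\m a=0\}$), note it is one-dimensional because $\m$ has codimension one, and then use nilpotency of $\m$ in the finite-dimensional local algebra to find the largest $s$ with $\m^s I\ne0$, so that $\m^s I$ is a nonzero ideal killed by $\m$, hence equal to $\m^\perp$ and contained in $I$. The paper argues dually and more briefly: $I^\perp$ is an ideal (ideal $\Leftrightarrow$ perp is an ideal, as stated just before the lemma), and $I\ne0$ forces $I^\perp\ne A_0$, so by locality $I^\perp\subset\m$, whence $\m^\perp\subset I^{\perp\perp}=I$. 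The paper's version avoids invoking nilpotency of $\m$ (equivalently Nakayama-type termination), which your argument needs as an unproved standard fact; on the other hand, your argument yields the sharper byproduct that $\m^\perp=\Ann(\m)$ is the unique minimal ideal of $A_0$, which is indeed the structural content exploited downstream (e.g.\ in Lemma~\ref{Ann=perp} and Lemma~\ref{coreg}). Both proofs use the Frobenius hypothesis in an essential way, just at different points.
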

\begin{proof}
Let $I\subset A_0$ be a nonzero ideal. Then $I^\perp$ is also an ideal
and $I^\perp\ne A_0$. Therefore, $I^\perp\subset\m$ and $\m^\perp\subset I$.
%
\end{proof}

For a subset $I\subset A$ define its annihilator as
$\Ann\,I\,=\,\{a\in A,\ |\ aI=0\}$. The annihilator $\Ann I$ is an ideal.

\begin{lem}
\label{Ann=perp}
Let $A$ be a Frobenius algebra and $I\subset A$ an ideal.
Then $\Ann I=I^\perp$. In particular, $\dim I+\dim\>\Ann I=\dim A$.
\end{lem}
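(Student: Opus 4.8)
The plan is to prove that $\Ann I = I^\perp$ by a direct double-inclusion argument using the invariance and nondegeneracy of the Frobenius form. First I would show $\Ann I \subseteq I^\perp$: if $a \in \Ann I$, then $aI = 0$, so for every $b \in I$ we have $(a,b) = (a \cdot 1, b) = (1, ab) = (1,0) = 0$ by invariance, hence $a \perp I$, i.e. $a \in I^\perp$. Conversely, to show $I^\perp \subseteq \Ann I$, take $a \in I^\perp$. For any $b \in I$ and any $c \in A$, we have $(ab, c) = (a, bc)$ by invariance; since $I$ is an ideal, $bc \in I$, and $a \in I^\perp$ forces $(a, bc) = 0$. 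Thus $(ab, c) = 0$ for all $c \in A$, and nondegeneracy of the form gives $ab = 0$. Since $b \in I$ was arbitrary, $aI = 0$, so $a \in \Ann I$.

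Combining the two inclusions yields $\Ann I = I^\perp$. The dimension statement $\dim I + \dim \Ann I = \dim A$ then follows immediately from the already-recorded fact (stated just before Lemma~\ref{inverse}) that $\dim I + \dim I^\perp = \dim A$ for any subspace $I$ of a Frobenius algebra $A$, since the form is nondegenerate.

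There is no real obstacle here; the only point requiring a moment's care is making sure the ideal hypothesis is used in the right place. The inclusion $\Ann I \subseteq I^\perp$ does not need $I$ to be an ideal at all — just invariance of the form. The reverse inclusion $I^\perp \subseteq \Ann I$ is where the ideal property enters: it is needed to guarantee $bc \in I$ so that pairing against $a \in I^\perp$ vanishes. Both directions rely on the form being invariant, and the reverse direction additionally invokes nondegeneracy to pass from ``pairs to zero against everything'' to ``is zero.''
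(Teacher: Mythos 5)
Your proof is correct, but the second inclusion is argued differently from the paper. For $\Ann I\subseteq I^\perp$ both you and the paper use the same one-line invariance computation $(a,b)=(1,ab)=0$. For the reverse inclusion $I^\perp\subseteq\Ann I$, the paper first reduces to the case of a local Frobenius algebra (via the decomposition of a finite-dimensional commutative algebra into local summands) and then argues by contradiction using Lemma~\ref{inverse}: if $b\in I^\perp$ and $bI\ne0$, then the nonzero ideal $bI\subset I^\perp$ must contain the socle element $m^\perp$, which forces $1=(1,m^\perp)=(a,b)=0$. You instead give a direct argument: for $a\in I^\perp$, $b\in I$, $c\in A$, invariance gives $(ab,c)=(a,bc)$, the ideal property gives $bc\in I$, hence $(ab,c)=0$ for all $c$, and nondegeneracy yields $ab=0$. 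Your route is more elementary and slightly more general — it uses only invariance, symmetry and nondegeneracy of the form, and needs neither the splitting into local summands nor the existence of the minimal ideal $\m^\perp$; the paper's route fits its surrounding development, where the local structure and Lemma~\ref{inverse} have already been set up and are convenient to reuse. Your remark about exactly where the ideal hypothesis and nondegeneracy enter is accurate, and the dimension count via $\dim I+\dim I^\perp=\dim A$ is the same in both treatments.
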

\begin{proof}
Since every ideal in a finite-dimensional commutative algebra is a direct sum
of ideals in its local summands, it is sufficient to prove the lemma for
the case of a local Frobenius algebra.

Let $A$ be local. If $a\in I$ and $b\in\Ann I$, then $(a,b)=(ab,1)=0$.
Therefore, $\Ann I\subset I^{\perp}$.

If $b\in I^\perp$, then $bI\subset I^{\perp}$ is an ideal. If $bI\ne 0$,
then it contains $\m^{\perp}$, see Lemma~\ref{inverse}, and there exists
$a\in I$ such that $ab=m^{\perp}$. Hence, $0=(a,b)=(1,ab)=(1,m^\perp)=1$,
which is a contradiction. Therefore, $bI=0$ and $I^{\perp}\subset \Ann I$.
\end{proof}

For any ideal $I\subset A$, the regular action of $A$ on itself induces
an action of $A/I$ on $\Ann I$.

\begin{lem}
\label{coreg}
The $A/I$-module $\Ann I$ is isomorphic to the coregular representation of
$A/I$. In particular, the image of $A/I$ in $\End(\Ann I)$ is a maximal
commutative subalgebra.
\end{lem}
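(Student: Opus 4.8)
The plan is to exhibit an explicit $A/I$-linear isomorphism between $\Ann I$ and $(A/I)^*$, using the Frobenius form on $A$. First I would recall the setup: $A$ is a Frobenius algebra with nondegenerate invariant symmetric form $(\,{,}\,)$, and $I\subset A$ is an ideal, so by Lemma~\ref{Ann=perp} we have $\Ann I = I^\perp$ and $\dim\Ann I = \dim A - \dim I = \dim(A/I)$. The dimension count already tells us the two modules have the same dimension, so it suffices to produce an injective (or surjective) map of $A/I$-modules.

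Next I would construct the map. The form $(\,{,}\,)$ gives an isomorphism $\phi\colon A \to A^*$, $a\mapsto(a,\,\cdot\,)$, which is $A$-linear because the form is invariant: $\phi(ba)(c) = (ba,c) = (a,bc) = (b\cdot\phi(a))(c)$. Restrict $\phi$ to the subspace $\Ann I = I^\perp$. For $a\in I^\perp$, the functional $(a,\,\cdot\,)$ vanishes on $I$, hence descends to a well-defined functional on $A/I$; this gives a linear map $\bar\phi\colon\Ann I\to(A/I)^*$. One checks $\bar\phi$ is $A/I$-linear: for $b\in A$ and $a\in\Ann I$, the element $ba$ again lies in $\Ann I$ (since $\Ann I$ is an ideal), and $\bar\phi(ba)(\bar c) = (ba,c) = (a,bc) = \bar\phi(a)(\overline{bc}) = (\bar b\cdot\bar\phi(a))(\bar c)$, so $\bar\phi$ intertwines the $A/I$-actions. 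Here the action on $(A/I)^*$ is the coregular one and the action on $\Ann I$ is the one induced from the regular action of $A$, exactly as in the statement.

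Finally I would check injectivity. Since $\phi\colon A\to A^*$ is an isomorphism (nondegeneracy of the form), its restriction to $I^\perp$ is injective into $A^*$. If $a\in I^\perp$ and $\bar\phi(a)=0$ in $(A/I)^*$, then $(a,\,\cdot\,)$ is the zero functional on all of $A$ (it already vanishes on $I$ and now also on a complement), so $a=0$. Thus $\bar\phi$ is injective, and by the dimension equality $\dim\Ann I=\dim(A/I)=\dim(A/I)^*$ it is an isomorphism of $A/I$-modules. The last sentence of the lemma then follows from the general fact recorded in Section~\ref{comalg}: the image of a finite-dimensional commutative algebra in $\End$ of its coregular representation is a maximal commutative subalgebra, and this property transports along the isomorphism $\Ann I\simeq(A/I)^*$.

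There is no serious obstacle here; the only point requiring a little care is keeping track of which module structure is meant on each side (the induced regular action on $\Ann I$ versus the coregular action on $(A/I)^*$) and verifying that $\bar\phi$ genuinely intertwines them, which hinges on invariance of the form together with $\Ann I$ being an ideal — both already available from Lemma~\ref{Ann=perp} and the hypotheses.
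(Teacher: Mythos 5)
Your proof is correct and follows essentially the same route as the paper: the paper's one-line argument is precisely that the Frobenius form induces the isomorphism $\Ann I\simeq(A/I)^*$ intertwining the induced regular action with the coregular one, and you have simply spelled out the details (well-definedness, $A/I$-linearity via invariance, injectivity via nondegeneracy, and the dimension count from Lemma~\ref{Ann=perp}). Nothing further is needed.
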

\begin{proof}
The form $(\,{,}\,)$ gives a natural isomorphism of the ideal $\Ann I$ and
the dual space $(A/I)^*$, which identifies the action of $A/I$ on $\Ann I$
with the coregular action of $A/I$ on $(A/I)^*$.
\end{proof}

Let $P_1,\dots,P_m$ be polynomials in variables $x_1,\dots,x_m$.
Denote by $I$ the ideal in $\C[x_1,\dots,x_m]\kern-1em$ \kern1em
generated by $P_1,\dots,P_m$.

\begin{lem}
\label{resform}
If the algebra $\C[x_1,\dots,x_m]/I$ is nonzero and finite-dimensional,
then it is a Frobenius algebra.
\end{lem}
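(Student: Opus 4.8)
If $A=\C[x_1,\dots,x_m]/I$ is nonzero and finite-dimensional, where $I=(P_1,\dots,P_m)$, then $A$ is a Frobenius algebra.

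The plan is to recognize $A=\C[x_1,\dots,x_m]/I$ as a zero-dimensional complete intersection, invoke the Gorenstein property of complete intersections to produce a generator of the dual module $\on{Hom}_\C(A,\C)$ over $A$, and read off the Frobenius form from that generator; the reduction to the local case is handled by the structure theorem recalled in Section~\ref{comalg} together with Lemma~\ref{direct}.

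First I would reduce to the local case. Being finite-dimensional over $\C$, the algebra $A$ is Artinian, hence $A\cong\bigoplus_i A_i$ with each $A_i$ local, and $A_i$ is the localization of $A$ at the maximal ideal corresponding to the $i$-th summand. By Lemma~\ref{direct} it is enough to prove each $A_i$ is Frobenius. Fix a maximal ideal $\mathfrak m\subset A$ and let $\mathfrak p\subset R=\C[x_1,\dots,x_m]$ be its preimage under the quotient map $R\to A$; by the Nullstellensatz $\mathfrak p$ is a maximal ideal of $R$, so $R_{\mathfrak p}$ is regular local of dimension $m$, and $A_{\mathfrak m}=R_{\mathfrak p}/(P_1,\dots,P_m)R_{\mathfrak p}$. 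Since $A_{\mathfrak m}$ is Artinian and is cut out of the $m$-dimensional regular local ring $R_{\mathfrak p}$ by $m$ elements, those $m$ elements form a system of parameters, hence (as $R_{\mathfrak p}$ is Cohen--Macaulay) a regular sequence. Thus $A_{\mathfrak m}$ is a zero-dimensional local complete intersection.

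Next I would use that a complete intersection is Gorenstein; for an Artinian local $\C$-algebra $A_0$ this amounts to saying that the socle of $A_0$ is one-dimensional, equivalently that $\on{Hom}_\C(A_0,\C)$ is a free $A_0$-module of rank one. Pick a generator $\ell\in\on{Hom}_\C(A_0,\C)$ and set $(a,b)=\ell(ab)$. Symmetry is automatic because $A_0$ is commutative, and invariance is automatic since $(ab,c)=\ell(abc)=(a,bc)$. For nondegeneracy, note that the $A_0$-linear map $A_0\to\on{Hom}_\C(A_0,\C)$, $a\mapsto a\ell$, carries the generator $1$ to the generator $\ell$ and is therefore an isomorphism; hence $a\ell=0$ forces $a=0$, and $a\ell=0$ is exactly the statement that $(a,b)=0$ for all $b\in A_0$. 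So $A_0$ is Frobenius; applying this to every $A_i$ and invoking Lemma~\ref{direct} completes the argument.

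The only nontrivial input is the implication ``zero-dimensional complete intersection $\Rightarrow$ its socle is one-dimensional'', i.e.\ that complete intersections are Gorenstein, and this is where the real content sits if one wants a self-contained proof. A more hands-on alternative, which also makes the Frobenius form canonical and may fit better with the later use of explicit differential operators, is to take for $\ell$ the global Grothendieck residue functional $\bar h\mapsto\on{Res}[h\,dx_1\wedge\dots\wedge dx_m/(P_1,\dots,P_m)]$ on $A$ and prove directly that the associated residue pairing is nondegenerate; that nondegeneracy is the transformation/duality law for residues, and it is the harder route of the two.
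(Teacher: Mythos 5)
Your proof is correct, but it takes a genuinely different route from the paper. The paper proves Lemma~\ref{resform} in one stroke by writing down the explicit Grothendieck residue functional: for $F,G\in\C[x_1,\dots,x_m]$ it sets $\Res(F,G)$ equal to an integral of $FG\,dx_1\wedge\dots\wedge dx_m/\prod_s P_s$ over a real $m$-cycle $\{|P_s|=\eps\}$, and cites~\cite{GH} for the fact that this pairing descends to a nondegenerate invariant form on $\C[x_1,\dots,x_m]/I$ --- i.e.\ the paper follows exactly the ``harder route'' you mention in your last sentence. You instead decompose $A$ into its local Artinian factors, identify each factor as $R_{\mathfrak p}/(P_1,\dots,P_m)R_{\mathfrak p}$ with $R_{\mathfrak p}$ regular local of dimension $m$, observe that the $P_i$ then form a regular sequence so the factor is a zero-dimensional complete intersection, invoke ``complete intersection $\Rightarrow$ Gorenstein'' to get a generator $\ell$ of $\on{Hom}_\C(A_0,\C)$ as an $A_0$-module, and take $(a,b)=\ell(ab)$; your verification of symmetry, invariance and nondegeneracy is correct, and the reassembly via Lemma~\ref{direct} is exactly how the paper uses its structure theory elsewhere. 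The trade-off: the paper's residue form is canonical, explicit, defined globally without passing to local summands, and the same formula immediately gives Lemma~\ref{resformloc}; your argument is pure commutative algebra with a non-canonical choice of $\ell$ in each local factor, and it outsources its one nontrivial input to the standard Gorenstein theorem for complete intersections rather than to residue duality --- both are legitimate single citations, so your proof is complete at the same level of rigor as the paper's, just with a different black box.
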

\begin{proof}
For $F,G\in\C[x_1,\dots,x_m]$, define the {\it residue form\/}
\vvn.3>
\be
\Res(F,G)=\frac{1}{(2\pi i)^m}\int_\Gm
\frac{FG\ dx_1\wedge\dots\wedge dx_m}{\prod_{s=1}^m P_s(x_1,\dots,x_m)}\ ,
\vv.3>
\ee
where $\Gm=\{(x_1,\dots,x_m)\ |\ |P_s(x_1,\dots,x_m)|=\eps\}$ is
\vvn.3>
the real $m$-cycle oriented by the condition
\be
d\arg P_1(x_1,\dots,x_m)\wedge\dots\wedge d\arg P_m(x_1,\dots,x_m)\ge0\,
\vv.3>
\ee
and $\eps$ is a small positive real number.
The residue form $\>\Res$ descends to a nondegenerate bilinear form on
$\C[x_1,\dots,x_m]/I$, see~\cite{GH}.
\end{proof}

The last lemma has the following generalization. Let $\C_T(x_1,\dots,x_m)$
be the algebra of rational functions in $x_1,\dots,x_m$, regular at points of a
nonempty set $T\subset\C^m$. Denote by $I_T$ the ideal in $\C_T(x_1,\dots,x_m)$
generated by $P_1,\dots,P_m$.

\begin{lem}
\label{resformloc}
If the algebra $\C_T(x_1,\dots,x_m)/I_T$ is nonzero and finite-dimensional,
then it is a Frobenius algebra.
\qed
\end{lem}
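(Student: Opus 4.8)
The plan is to reduce the statement to the polynomial case already treated in Lemma~\ref{resform}, working one local summand at a time. Write $A=\C_T(x_1,\dots,x_m)/I_T$; by hypothesis $A$ is a nonzero finite-dimensional commutative algebra, hence isomorphic to a direct sum $\bigoplus_{p\in Z}A_p$ of local algebras, where $Z$ is the finite set of characters of $A$ and $A_p$ is the local summand attached to $p$. By Lemma~\ref{direct} it suffices to prove that each $A_p$ is a Frobenius algebra. A character of $A$ is a $\C$-algebra homomorphism $\C_T(x_1,\dots,x_m)\to\C$ annihilating $P_1,\dots,P_m$; restricting it to $\C[x_1,\dots,x_m]\subset\C_T(x_1,\dots,x_m)$ we see that it is evaluation at a point of $\C^m$, which we again denote $p$, and this point is a common zero of $P_1,\dots,P_m$ at which every element of $\C_T(x_1,\dots,x_m)$ is regular.

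Fix $p\in Z$ and let $\m_p\subset\C_T(x_1,\dots,x_m)$ be the maximal ideal of functions vanishing at $p$; it is the kernel of the corresponding character. First I claim that the localization $\C_T(x_1,\dots,x_m)_{\m_p}$ coincides with the ordinary local ring $\O_p:=\C[x_1,\dots,x_m]_{(x_1-p_1,\dots,x_m-p_m)}$ of $\C^m$ at $p$. Indeed $\C[x_1,\dots,x_m]\subset\C_T(x_1,\dots,x_m)$ gives $\O_p\subseteq\C_T(x_1,\dots,x_m)_{\m_p}$, while conversely every $g\in\C_T(x_1,\dots,x_m)$ is regular at $p$ and hence lies in $\O_p$, and every $h\in\C_T(x_1,\dots,x_m)$ with $h(p)\neq0$ is a unit of $\O_p$, so $\C_T(x_1,\dots,x_m)_{\m_p}\subseteq\O_p$. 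Since localization is exact, $A_p\cong\O_p/(P_1,\dots,P_m)\O_p$. Now $A_p$ is Artinian, so $(P_1,\dots,P_m)$ is primary to the maximal ideal of $\O_p$; hence $P_1,\dots,P_m$ is a system of parameters in the $m$-dimensional regular local ring $\O_p$, and since $\O_p$ is Cohen--Macaulay, $P_1,\dots,P_m$ is in fact a regular sequence.

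It remains to show that the zero-dimensional complete intersection $A_p\cong\O_p/(P_1,\dots,P_m)\O_p$ is a Frobenius algebra. Because $p$ is an isolated common zero of $P_1,\dots,P_m$, on a small polydisc around $p$ the set $\Gm_p=\{x\ |\ |P_s(x)|=\eps,\ s=1,\dots,m\}$ is, for sufficiently small $\eps>0$, a compact real $m$-cycle, oriented by $d\arg P_1\wedge\dots\wedge d\arg P_m\ge0$, and the local Grothendieck residue
\be
\Res_p(F,G)\,=\,\frac1{(2\pi i)^m}\int_{\Gm_p}
\frac{FG\ dx_1\wedge\dots\wedge dx_m}{\prod_{s=1}^m P_s(x_1,\dots,x_m)}
\ee
is a symmetric invariant bilinear form on $\O_p$ which descends to a nondegenerate form on $A_p$; this is the local duality theorem, see~\cite{GH}. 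Equivalently, $A_p$ is Gorenstein Artinian, so its socle is one-dimensional and any linear functional not vanishing on the socle defines such a form. Taking the direct sum of these forms over $p\in Z$ (equivalently, integrating over the disjoint union of the cycles $\Gm_p$) and invoking Lemma~\ref{direct}, we conclude that $A$ is a Frobenius algebra. The one genuinely nontrivial ingredient is the local duality statement --- equivalently, the fact that a zero-dimensional complete intersection is Gorenstein --- which is precisely the local form of the analytic content behind Lemma~\ref{resform}; the remaining steps are the bookkeeping needed to pass from $T$ to the finitely many relevant points $p\in Z$.
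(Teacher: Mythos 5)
Your proof is correct and amounts to the argument the paper leaves implicit: Lemma~\ref{resformloc} is stated without proof as the local generalization of Lemma~\ref{resform}, and you supply exactly the expected localization of the residue-form argument --- splitting into local summands at the finitely many characters, identifying each summand with $\O_p/(P_1,\dots,P_m)\O_p$, and using nondegeneracy of the local Grothendieck residue pairing (equivalently, that a zero-dimensional complete intersection is Gorenstein Artinian), then reassembling via Lemma~\ref{direct}. The only compressed point, namely that the point $p$ attached to a character is one at which every element of $\C_T(x_1,\dots,x_m)$ is regular, is indeed fine: every admissible denominator $h$ is a unit of $\C_T(x_1,\dots,x_m)$, so $h(p)=\chi(h)\ne 0$, which is precisely what your subsequent identification of the localization with $\O_p$ requires.
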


\subsection{Wronski map}
\label{wronski}
Let $X$ be a point of $\Gr(N,d)$.
The Wronskian of a basis of the subspace $X$ does not depend on the choice
of the basis up to multiplication by a nonzero number. We call the monic
polynomial representing the Wronskian {\it the Wronskian of\/} $X$ and
denote it by $\Wr_X(u)$.

Fix a partition $\bs\la$ and denote $n=|\bs\la|$. The partition $\lab$
is given by~\Ref{dual weight}. If $X\in\Om_\lab(\infty)$, then
$\deg\Wr_X(u)=n$.

Define the {\it Wronski map\/}
\vvn-.3>
\be
\label{wronski map}
\Wrl:\Om_\lab(\infty)\to\C^n,
\vv.3>
\ee
by sending $X\in \Om_\lab(\infty)$ to $\bs a=(a_1,\dots,a_n)$,
if $\Wr_X(u)\>=\>u^n+\sum_{s=1}^n (-1)^sa_s u^{n-s}$.

For $\bs a\in\C^n$, let ${I^\O_\lba}$ be the ideal
in $\O_{\bs\la}\>$ generated by the elements $\Sig_s-a_s$, $s=1,\dots, n$,
where $\Sig_1,\dots,\Sig_n$ are defined by~\Ref{Wr coef}. Let
\vvn.3>
\beq
\label{Olaa}
\O_\lba\>=\,\O_{\bs\la}/I^\O_\lba
\vv.3>
\eeq
be the quotient algebra. The algebra $\O_\lba$
is the scheme-theoretic fiber of the Wronski map. We call it
the {\it algebra of functions on the preimage\/} $\Wrli(\bs a)$.

\goodbreak
\begin{lem}\label{local Wr}
The algebra $\O_\lba$ is a finite-dimensional
Frobenius algebra and $\dim_\C\,\O_\lba$
does not depend on $\bs a$.
\end{lem}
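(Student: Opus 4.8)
The plan is as follows. By \Ref{Ola} the algebra $\O_{\bs\la}$ is a polynomial ring, and counting the pairs $(i,j)$ with $d_i-j\notin P$ shows it has exactly $n=|\bs\la|$ generators; thus $\O_{\bs\la}=\C[f_{ij}]$ is a graded polynomial ring in $n$ variables with $\deg f_{ij}=j$. By \Ref{Wr coef}, up to the nonzero constant $\prod_{1\le i<j\le N}(d_j-d_i)$ the elements $\Sig_1,\dots,\Sig_n$ are the coefficients of the monic polynomial $\Wr(f_1(u),\dots,f_N(u))$, which is homogeneous of degree $n$; hence each $\Sig_s$ is homogeneous of degree $s$. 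By \Ref{Olaa}, $\O_\lba=\O_{\bs\la}/(\Sig_1-a_1,\dots,\Sig_n-a_n)$ is the quotient of a polynomial ring in $n$ variables by $n$ elements, so Lemma~\ref{resform} yields the Frobenius property once we show $\O_\lba\ne 0$ and $\dim_\C\O_\lba<\infty$; the $\bs a$-independence of $\dim_\C\O_\lba$ will come out of the same argument. The key reduction is the claim that $\Sig_1,\dots,\Sig_n$ form a homogeneous system of parameters for $\O_{\bs\la}$. Granting this, $\O_{\bs\la}$ -- a polynomial ring, hence Cohen--Macaulay of Krull dimension $n$ -- is a free module of some finite rank $r\ge1$ over its polynomial subalgebra $\C[\Sig_1,\dots,\Sig_n]$ (the $\Sig_s$ being algebraically independent, as a system of parameters in an $n$-dimensional domain). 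Then for every $\bs a$ the algebra $\O_\lba$ is obtained from $\O_{\bs\la}$ by reducing modulo the maximal ideal of $\C[\Sig_1,\dots,\Sig_n]$ at $(a_1,\dots,a_n)$, so $\dim_\C\O_\lba=r$; this settles all three points.

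The crux is the system of parameters property. Since the $\Sig_s$ are homogeneous, it is equivalent to the statement that the common zero locus $Z=\{X\in\Om_\lab(\infty):\Sig_1(X)=\dots=\Sig_n(X)=0\}$ is the single point $X_0=\langle u^{d_1},\dots,u^{d_N}\rangle$, at which all $f_{ij}$ vanish. By \Ref{Wr coef}, $X\in Z$ iff the Wronskian $\Wr_X(u)$ equals $u^n$. For such $X$, specialize \Ref{DOla} to get the monic order-$N$ differential operator $\D_X=\der^N+\sum_{i=1}^N F_i(u)\der^{N-i}$ with $\ker\D_X=X$. Because $\Wr_X(u)=u^n$, each coefficient $F_i(u)|_X$ is a Laurent polynomial with poles only at $0$ and, by \Ref{Fi}, vanishing at $\infty$; because the solution space of $\D_X$ consists of polynomials, $\D_X$ is Fuchsian at $0$ and at $\infty$, which forces each $u^iF_i(u)|_X$ to be holomorphic at $0$ and at $\infty$, hence a constant -- indeed equal to the constant $F_{ii}$ of \Ref{char at infty}. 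Therefore $\D_X=u^{-N}\chi(\vartheta)$ with $\vartheta=u\der$, where $\chi$ is the indicial polynomial and $\chi(\al)=\prod_{i=1}^N(\al-d_i)$ by the lemma following~\Ref{char at infty}; since the $d_i$ are distinct nonnegative integers less than $d$, $\ker\D_X=\langle u^{d_1},\dots,u^{d_N}\rangle$, i.e. $X=X_0$. (Note this even shows $\D_X$ is the same for all $X\in Z$.)

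I expect the paragraph above to be the main obstacle -- essentially the quasi-finiteness of the Wronski map restricted to the Schubert cell. The delicate points are the two regularity (Fuchs) statements for $\D_X$ at $0$ and at $\infty$, and the classical fact that a monic $N$-th order Fuchsian operator on $\pone$ with singular points only at $0$ and $\infty$ must be an Euler operator $u^{-N}\chi(\vartheta)$; these are standard but must be invoked with care (one also uses that a polynomial solution space forces $\chi$ to have distinct nonnegative integer roots). Everything else is formal: $Z=\{X_0\}$ gives the homogeneous system of parameters property, hence the freeness and the constant rank $r$ above, hence that $\O_\lba$ is a finite-dimensional Frobenius algebra with $\dim_\C\O_\lba=r$ independent of $\bs a$. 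One may moreover identify $r$ with the value at $q=1$ of $\ch(\O_{\bs\la})\prod_{s=1}^n(1-q^s)$ by Lemma~\ref{char O}; this explicit form, though not needed here, will be used later.
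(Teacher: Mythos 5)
Your proof is correct, but it takes a genuinely different route from the paper's. The paper's own argument is a short one: it asserts that every fiber $\Wrli(\bs a)$ of the Wronski map is finite ("easy to show"), deduces finite-dimensionality and the decomposition of $\O_\lba$ into local summands from the general theory of finite maps (citing Hodge--Pedoe), identifies $\dim\O_\lba$ with the degree of the Wronski map (hence independent of $\bs a$), and gets the Frobenius property from Lemma~\ref{resform}. You instead prove the one concrete geometric fact that the fiber over $\bs a=0$ is the single monomial point $\langle u^{d_1},\dots,u^{d_N}\rangle$ -- via the observation that $\Wr_X=u^n$ together with the Fuchs criterion at $0$ and $\infty$ forces $\D_X=u^{-N}\chi(\vartheta)$, an Euler operator with $\chi(\al)=\prod_i(\al-d_i)$ -- and then run graded commutative algebra: $\Sig_1,\dots,\Sig_n$ is a homogeneous system of parameters in the polynomial ring $\O_{\bs\la}$ in $n$ variables, hence $\O_{\bs\la}$ is free of finite rank over $\C[\Sig_1,\dots,\Sig_n]$, so every specialization $\O_\lba$ has the same finite dimension; Frobenius again comes from Lemma~\ref{resform}. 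Your version is more self-contained (it proves, rather than cites, the finiteness/degree input, and incidentally re-proves finiteness of all fibers), and as a by-product it establishes the freeness of $\O_{\bs\la}$ over $\O_{\bs\la}^S$ that the paper only records in the Remark immediately after the lemma; the cost is reliance on two standard external facts, the "moderate growth $\Rightarrow$ pole bounds" direction of Fuchs's theorem and Cohen--Macaulay freeness over an h.s.o.p., both of which you invoke correctly.
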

\begin{proof}
It is easy to show that the set $\Wrli(\bs a)$ is finite.
This implies that $\O_\lba$ is finite-dimensional and the fact that $\O_\lba$
is a direct sum of local algebras, see for example~\cite{HP}.
The dimension of $\O_\lba$ is the degree of the Wronski map and
the local summands correspond to the points of the set $\Wrli(\bs a)$.

The algebra $\O_\lba$ is Frobenius by Lemma~\ref{resform}.
\end{proof}

\begin{rem}
Let $\O_{\bs\la}^S\subset\O_{\bs\la}$ be the subalgebra generated by
$\Sig_1,\dots,\Sig_n$. Since these elements are homogeneous, the subalgebra
$\O_{\bs\la}^S$ is graded. Using the grading and
Lemmas~\ref{local Wr}, \ref{weyl}, it is easy to see that
$\O_{\bs\la}$ is a free $\O_{\bs\la}^S$-module
of rank $\dim\>(V^{\otimes n})^{sing}_{\bs\la}$.
\end{rem}

\section{Functions on intersection of Schubert cells}
\label{schubert}
\subsection{Functions on $\Om_\Lbl$}
For $b\in\C$, consider the complete flag
\vvn.3>
\be
\F(b)\,=\,\bigl\{0\subset (u-b)^{d-1}\>\C_1[u]\subset(u-b)^{d-2}\>\C_2[u]
\subset\dots\subset\C_d[u]\bigr\}\,.
\vv.3>
\ee
We denote the Schubert cell $\Om_{\bs\mu}\bigl(\F(b)\bigr)$ corresponding
to the flag $\F(b)$ and a partition $\bs\mu$ with at most $N$ parts by
$\Om_{\bs\mu}(b)$.

Let $\bs\La=(\bs\la^{(1)},\dots,\bs\la^{(k)})$ be a sequence of partitions
with at most $N$ parts such that $\sum_{s=1}^k|\bs\la^{(s)}|=n$.
Denote $n_s=|\bs\la^{(s)}|$.
Let $\bs b=(b_1,\dots,b_k)$ be a sequence of distinct complex numbers.

Denote by $\Om_\Lbl$ the intersection of the Schubert cells:
\beq
\label{Omega}
\Om_\Lbl\,=\,\Om_\lab(\infty)\;\cap\;
\bigcap_{s=1}^k\Om_{\bs\la^{(s)}}(b_s)\,,
\eeq
where the cell $\Om_\lab(\infty)$ is defined in Section~\ref{Ominfty}.
Recall that a subspace $X\subset\C_d[u]$ is a point of $\Om_\lab(\infty)$
if and only if for every $i=1,\dots,N$, it contains a monic polynomial
of degree $d_i$. Similarly, the subspace $X$ is a point of
$\Om_{\bs\la^{(s)}}(b_s)$ if and only if for every $i=1,\dots,N$,
it contains a polynomial with a root at $b_s$ of order $\la_i^{(s)}+N-i$.

Given an $N$-dimensional space of polynomials $X\subset\C[u]$, denote by
$\D_X$ the monic scalar differential operator of order $N$ with kernel $X$.
The operator $\D_X$ is a Fuchsian differential operator.
If $X\in\Om_\lab(\infty)$, then $\D_X$ equals the operator
$\D^\O_{\bs\la}$, see~\Ref{DOla}, computed at $X$.

\begin{lem}
\label{lem on intersection}
A subspace $X\subset\C_d[u]$ is a point of\/ $\Om_\Lbl$ if and only if
the singular points of the operator\/ $\D_X$ are at $b_1,\dots,b_k$ and
$\infty$ only, the exponents at\/ $b_s$, $s=1,\dots,k$, being equal to
$\la_N^{(s)},\,\la_{N-1}^{(s)}+1,\,\dots\,,\la_1^{(s)}+N-1$, and the exponents
at $\infty$ being equal to $1-N-\la_1,\,2-N-\la_2,\,\dots\,,-\>\la_N$.
\qed
\end{lem}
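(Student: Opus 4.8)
The plan is to translate each membership condition defining $\Om_\Lbl$ into a statement about the Fuchsian operator $\D_X$ and its exponents, using the observation recorded just before the lemma: that $X\in\Om_\lab(\infty)$ precisely when $X$ contains a monic polynomial of degree $d_i$ for every $i$, and $X\in\Om_{\bs\la^{(s)}}(b_s)$ precisely when $X$ contains, for every $i$, a polynomial vanishing at $b_s$ to order $\la_i^{(s)}+N-i$. First I would recall the general fact that for an $N$-dimensional space of polynomials $X$, the monic scalar operator $\D_X$ of order $N$ with kernel $X$ is Fuchsian with singularities only at the roots of $\Wr_X(u)$ together with $\infty$, and that at a finite point $c$ the exponents of $\D_X$ are $e_1<\dots<e_N$ where $c$ is a root of multiplicity $e_i$ of the $i$-th member of the ``osculating'' basis of $X$ at $c$; equivalently, $e_i$ runs over the set $\{\on{ord}_c f : f\in X\setminus\{0\}\}$. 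Dually, at $\infty$ the exponents are recovered from the degrees of elements of $X$: if $X$ has a basis in the echelon form~\Ref{basis} with leading degrees $d_1>\dots>d_N$, the exponents at $\infty$ are $-d_1, -d_2-1+\cdots$; here using $d_i=\la_i+N-i$ one gets exponents $1-N-\la_1,\,2-N-\la_2,\,\dots\,,-\la_N$ as claimed.

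The forward direction is then routine: if $X\in\Om_\Lbl$, the conditions on the three Schubert cells give exactly the stated vanishing/degree data for elements of $X$, hence the exponents of $\D_X$ at $b_1,\dots,b_k,\infty$, and they also force $\Wr_X(u)$ to be a constant times $\prod_s(u-b_s)^{|\bs\la^{(s)}|}$ (compare~\Ref{Wr coef} and~\Ref{dual weight}), so $\D_X$ has no other singular points. For the converse, suppose $\D_X$ is singular only at $b_1,\dots,b_k,\infty$ with the prescribed exponents. The exponents at $\infty$ being $1-N-\la_1,\dots,-\la_N$ tell us the possible degrees of elements of $X$ are exactly $d_1,\dots,d_N$, so $X$ has a basis of the echelon form~\Ref{basis}, i.e. $X\in\Om_\lab(\infty)$. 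The exponents at $b_s$ being $\la_N^{(s)},\la_{N-1}^{(s)}+1,\dots,\la_1^{(s)}+N-1$ say the set of orders of vanishing at $b_s$ of nonzero elements of $X$ is exactly $\{\la_i^{(s)}+N-i : i=1,\dots,N\}$, which is precisely the incidence condition defining $\Om_{\bs\la^{(s)}}(b_s)$; hence $X\in\Om_{\bs\la^{(s)}}(b_s)$ for each $s$, and therefore $X\in\Om_\Lbl$.

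The one point requiring a little care — the main obstacle, such as it is — is the precise dictionary between exponents of $\D_X$ at a singular point and the combinatorics of vanishing orders or degrees of polynomials in $X$, including the shift conventions (the $N-i$ and $N-1$ shifts, and the sign convention at $\infty$). I would pin this down once, for a single finite point $c$, by choosing a basis $g_1,\dots,g_N$ of $X$ with $\on{ord}_c g_1<\dots<\on{ord}_c g_N$ and computing the indicial polynomial of $\rdet$ of the Wronskian-type matrix in~\Ref{DOla} directly, observing that the exponents at $c$ are exactly these orders of vanishing; the analogous computation at $\infty$ is the content of the lemma following~\Ref{char at infty} in the excerpt, which already identifies the indicial polynomial at infinity as $\prod_{i=1}^N(\al-d_i)$. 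Everything else is bookkeeping, and the equivalence follows by matching these two descriptions to the two families of Schubert incidence conditions.
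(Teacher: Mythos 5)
Your argument is correct: the paper states this lemma without proof (it is the classical dictionary between Schubert incidence conditions and exponents of the monic operator with prescribed polynomial kernel), and your translation — the exponents of $\D_X$ at $b_s$ are the vanishing orders at $b_s$ of elements of $X$, the exponents at $\infty$ are minus the degrees, and the Wronskian degree count rules out further singular points — is exactly the standard justification the paper relies on. The only blemish is the garbled intermediate display of the exponents at infinity (``$-d_1,-d_2-1+\cdots$''), but your final identification $i-N-\la_i=-d_i$ is the correct one, so nothing of substance is missing.
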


\begin{lem}
\label{lem inclus}
Let $\bs a=(a_1,\dots,a_n),\,\bs b=(b_1,\dots,b_k)$, and\/ $n_1,\dots,n_k$
be related as in~\Ref{ab}.
Then\/ $\Om_\Lbl\subset\Wrli(\bs a)$.
In particular, the set $\,\Om_\Lbl$ is finite.
\qed
\end{lem}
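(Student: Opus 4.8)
The plan is to identify the Wronskian of an arbitrary point $X\in\Om_\Lbl$. I would show that $\Wr_X(u)=\prod_{s=1}^k(u-b_s)^{n_s}$; by~\Ref{ab} the right-hand side is precisely $u^n+\sum_{s=1}^n(-1)^s a_s u^{n-s}$, so by the definition of the Wronski map this says that $\Wrl(X)=\bs a$, i.e.\ $X\in\Wrli(\bs a)$. This gives the inclusion $\Om_\Lbl\subset\Wrli(\bs a)$, and then finiteness of $\Om_\Lbl$ follows from finiteness of the fiber $\Wrli(\bs a)$, which is noted in the proof of Lemma~\ref{local Wr}.

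To identify $\Wr_X$, fix $X\in\Om_\Lbl$ and fix $s\in\{1,\dots,k\}$. Since $X\in\Om_{\bs\la^{(s)}}(b_s)$, by the description of this Schubert cell given right after~\Ref{Omega} (equivalently, by Lemma~\ref{lem on intersection}) the space $X$ contains, for each $i=1,\dots,N$, a polynomial $g_i$ with a root at $b_s$ of order $\la_i^{(s)}+N-i$. The integers $\la_1^{(s)}+N-1>\la_2^{(s)}+N-2>\dots>\la_N^{(s)}$ are pairwise distinct, so $g_1,\dots,g_N$ is a basis of $X$ and $\Wr_X$ equals their Wronskian up to a nonzero scalar. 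Now I would invoke the elementary fact that if polynomials $h_1,\dots,h_N$ vanish at a point $c$ to distinct orders $e_1,\dots,e_N$, then $\Wr(h_1,\dots,h_N)$ vanishes at $c$ to order exactly $e_1+\dots+e_N-N(N-1)/2$ (write $h_i=(u-c)^{e_i}(1+O(u-c))$ and compute the leading term, which is $\prod_{i<j}(e_j-e_i)\,(u-c)^{e_1+\dots+e_N-N(N-1)/2}$). Applied at $c=b_s$ with $e_i=\la_i^{(s)}+N-i$, this shows that $\Wr_X$ vanishes at $b_s$ to order
\[
\sum_{i=1}^N\bigl(\la_i^{(s)}+N-i\bigr)-\frac{N(N-1)}{2}\;=\;|\bs\la^{(s)}|+\frac{N(N-1)}{2}-\frac{N(N-1)}{2}\;=\;n_s\,.
\]
Since this holds for every $s$ and the $b_s$ are distinct, $\prod_{s=1}^k(u-b_s)^{n_s}$ divides $\Wr_X(u)$.

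Finally, since $X\in\Om_\lab(\infty)$, the polynomial $\Wr_X(u)$ is monic of degree $n=\sum_{s=1}^k n_s$, see Section~\ref{wronski} and~\Ref{Wr coef}. A monic polynomial divisible by the monic polynomial $\prod_{s=1}^k(u-b_s)^{n_s}$ of the same degree must equal it, so $\Wr_X(u)=\prod_{s=1}^k(u-b_s)^{n_s}$, and the proof concludes as in the first paragraph. I expect the only step needing genuine care to be the order-of-vanishing formula for the Wronskian in terms of the orders $e_i$ (the Schubert data at $b_s$); the rest is bookkeeping with the Schubert-cell descriptions and with the degree of the Wronskian on $\Om_\lab(\infty)$.
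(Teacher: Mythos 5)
Your proof is correct, and since the paper states this lemma with the proof omitted (marked \qed as evident), your argument is exactly the intended one: a point of $\Om_\Lbl$ has Wronskian vanishing at each $b_s$ to order $n_s$ and of degree $n$, hence equal to $\prod_{s=1}^k(u-b_s)^{n_s}$, which by~\Ref{ab} means the point lies in $\Wrli(\bs a)$, a finite set. The order-of-vanishing computation for the Wronskian via the Vandermonde determinant of the distinct exponents $\la_i^{(s)}+N-i$ is carried out correctly, so there is nothing to add.
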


Let $\Q_{\bs\la}$ be the field of fractions of $\O_{\bs\la}\>$,
and $\>\Q_\Llb\subset\Q_{\bs\la}\,$ the subring of elements
regular at all points of $\Om_\Lbl$.

Consider the $N\,{\times}\,N$ matrices $M_1,\dots,M_k$ with entries in
$\O_{\bs\la}$\>,
\be
(M_s)_{ij}\>=\,\frac1{(\la^{(s)}_j+N-j)!}\,\biggl(\<\Bigl(
\frac d{du}\Bigr)^{\la^{(s)}_j+N-j}f_i(u)\biggr)\bigg|_{u=b_s}\,.
\ee
The values of $M_1,\dots,M_k$ at any point of $\Om_\Lbl$
are matrices invertible over $\C$. Therefore, the inverse matrices
$M_1^{-1},\dots,M_k^{-1}$ exist as matrices with entries in
$\Q_\Llb$.

\sskip
Introduce the elements $g_{ijs}\in\Q_\Llb$\>, \,$i=1,\dots,N$,
$j=0,\dots,d_1$, $s=1,\dots,k$, by the rule
\beq
\label{gijs}
\sum_{j=0}^{d_1}\,g_{ijs}\,(u-b_s)^j\,=\,
\sum_{m=1}^N\,(M_s^{-1})_{im}\,f_m(u)\,.
\eeq
Clearly, $g_{i,\la^{(s)}_j+N-j,s}=\dl_{ij}$ for all $i,j=1,\dots,N$, and
$s=1,\dots,k$.

For each $s=1,\dots,k$, let $J^{\Q,s}_\Llb$ be
the ideal in $\Q_\Llb\>$ generated by the elements $g_{ijs}$,
$\,i=1,\dots,N$, $\,j=0,1,\dots,\la^{(s)}_i+N-i-1$, and
$J^\Q_\Llb=\sum_{s=1}^k J^{\Q,s}_\Llb$.
Note that the number of generators of the ideal
$J^\Q_\Llb$ equals $n$.

\sskip
The quotient algebra
\vvn-.4>
\beq
\label{OX}
\O_\Llb\,=\,\Q_\Llb/J^\Q_\Llb
\eeq
is the scheme-theoretic intersection of the Schubert cells. We call it the
{\it algebra of functions on\/} $\Om_\Lbl\,$.

\begin{lem}
\label{frobenius}
The algebra $\O_\Llb$ is a Frobenius algebra.
\end{lem}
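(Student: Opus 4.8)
The plan is to recognize $\O_\Llb$ as an instance of the localized complete‑intersection algebras treated in Lemma~\ref{resformloc} and to apply that lemma. By \Ref{Ola}, $\O_{\bs\la}$ is a free polynomial algebra in the coordinate functions $f_{ij}$, and the number of these equals $\dim\Om_\lab(\infty)=|\bs\la|=n$; relabelling them $x_1,\dots,x_n$ we have $\O_{\bs\la}=\C[x_1,\dots,x_n]$ and $\Q_{\bs\la}=\C(x_1,\dots,x_n)$. By Lemma~\ref{lem inclus} the set $T:=\Om_\Llb$ is finite, and a rational function lies in $\Q_\Llb$ precisely when it is regular at every point of $T$; hence $\Q_\Llb=\C_T(x_1,\dots,x_n)$ in the notation preceding Lemma~\ref{resformloc}.

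Next I would put the generators of $J^\Q_\Llb$ into polynomial form. By \Ref{gijs} each $g_{ijs}$ is an $\O_{\bs\la}$‑linear combination of the entries of $M_s^{-1}$, so $(\det M_s)\,g_{ijs}\in\O_{\bs\la}$; since $\det M_s$ vanishes at no point of $\Om_\Llb$, it is a unit of $\Q_\Llb$, and therefore the polynomials $P_{ijs}:=(\det M_s)\,g_{ijs}$ generate the same ideal $J^\Q_\Llb$ of $\Q_\Llb$ as the $g_{ijs}$ do. Discarding the $g_{ijs}$ that are identically zero (those with $j=\la^{(s)}_l+N-l$ for some $l>i$), the surviving $P_{ijs}$ are exactly $n$ in number, matching the count of genuine generators recorded after \Ref{gijs}. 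Thus $J^\Q_\Llb$ is generated inside $\Q_\Llb=\C_T(x_1,\dots,x_n)$ by $n$ polynomials in the $n$ variables, which is the shape demanded by Lemma~\ref{resformloc}.

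It remains to check that $\O_\Llb=\Q_\Llb/J^\Q_\Llb$ is finite-dimensional and nonzero, and then to invoke Lemma~\ref{resformloc}. Nonzeroness holds whenever $\Om_\Llb\neq\varnothing$ (if $\Om_\Llb=\varnothing$ then $\O_\Llb=0$ and there is nothing to prove). For finite-dimensionality: on the open locus $U\subset\Om_\lab(\infty)$ where all the $M_s$ are invertible — an open set containing $\Om_\Llb$ — the construction of the $g_{ijs}$ identifies their common zero set with $\Om_\Llb$, which is finite by Lemma~\ref{lem inclus}. Hence the closed subscheme of $\operatorname{Spec}\Q_\Llb$ cut out by the $P_{ijs}$ is zero-dimensional, so $\O_\Llb$, being a quotient of the Noetherian ring $\Q_\Llb$, is Artinian and therefore a finite-dimensional $\C$-algebra. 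Lemma~\ref{resformloc} now applies and gives that $\O_\Llb$ is a Frobenius algebra.

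The only step that is not mere bookkeeping is the finite-dimensionality of $\O_\Llb$, and this rests entirely on the finiteness of the set $\Om_\Llb$ furnished by Lemma~\ref{lem inclus}; everything else amounts to matching the data $(\Q_\Llb,\,J^\Q_\Llb)$ to the hypotheses of the localized residue-form construction of Lemma~\ref{resformloc}.
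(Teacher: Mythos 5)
Your proof is correct and takes essentially the same route as the paper, whose entire argument is to invoke the residue-form criterion (Lemma~\ref{resform}, of which Lemma~\ref{resformloc} is the localized version you actually apply); you simply make explicit the bookkeeping left implicit there — the identification $\Q_\Llb=\C_T(x_1,\dots,x_n)$ with $T=\Om_\Lbl$, the passage to the $n$ polynomial generators $(\det M_s)\,g_{ijs}$ using that $\det M_s$ is a unit of $\Q_\Llb$, and the finite-dimensionality of $\O_\Llb$ coming from the finiteness of $\Om_\Lbl$ (Lemma~\ref{lem inclus}). No gaps.
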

\begin{proof}
The lemma follows from Lemma~\ref{resform}.
\end{proof}

It is known from Schubert calculus that
\beq
\label{dimO}
\dim\O_\Llb\,=\,\dim\>
(\otimes_{s=1}^kL_{\bs\la^{(s)}})_{\bs\la}^{sing}\,,
\eeq
for example, see~\cite{Fu}.

\subsection{Algebra $\O_\Llb$ as quotient of $\O_{\bs\la}$}
Consider the differential operator
\beq
\label{DOlat}
\Dt^\O_{\bs\la}=\>\Wr(f_1,\dots,f_N)\D^\O_{\bs\la}=\,\rdet
\left(\begin{matrix} f_1(u) & f_1'(u) &\dots & f_1^{(N)}(u) \\
f_2(u) & f_2'(u) &\dots & f_2^{(N)}(u) \\ \dots & \dots &\dots & \dots \\
1 & \der &\dots & \der^N
\end{matrix}\right).
\eeq
It is a differential operator in variable $u$ whose coefficients are
polynomials in $u$ with coefficients in $\O_{\bs\la}$,
\vvn-.6>
\beq
\label{DOt}
\Dt^\O_{\bs\la}=\,\sum_{i=0}^N\,G_i(u)\,\der^{N-i}\>.
\eeq
Clearly, if $n-i<0$, then $G_i(u)=0$, otherwise $\deg G_i=n-i$ \,and
\vvn.3>
\be
G_i(u)\,{}=\,\Wr(f_1(u),\dots,f_N(u))\>F_i(u),
\ee
where $i=0,\dots,N$, and $F_0(u)=1$.

Introduce the elements $G_{ijs}\in\O_{\bs\la}$\>, \,$i=0,\dots,N$,
$j=0,1,\dots,n-i$, $s=1,\dots,k$, by the rule
\beq
\label{Gijs}
G_i(u)\,=\,\sum_{j=0}^{n-i}\,G_{ijs}\,(u-b_s)^j\,.
\eeq
We set $G_{ijs}=0$ if $j<0$ or $j>n-i$.

Define the {\it indicial polynomial\/ $\chi_s^\O(\al)$ at\/ $b_s$}
by the formula
\be
\chi_s^\O(\al)\,=\,
\sum_{i=0}^{N}\,G_{i,n_s-i,s}\prod_{j=0}^{N-i-1} (\al-j)\,.
\ee
It is a polynomial of degree $N$ in variable $\al$ with coefficients in
$\O_{\bs\la}$.

\begin{lem}
\label{chis}
For a complex number $r$, the element\/ $\chi_s^\O(r)$ is invertible
in\/ $\Q_\Llb$ provided\/ $r\ne\la^{(s)}_j+N-j$ \,for all\/ $j=1,\dots,N$.
\end{lem}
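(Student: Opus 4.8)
The plan is to identify $\chi_s^\O(\al)$ with a suitably normalized version of the indicial polynomial of the Fuchsian operator $\D_X$ at the point $b_s$, and then read off its roots from the exponents there. First I would recall that $\Dt^\O_{\bs\la}=\Wr(f_1,\dots,f_N)\>\D^\O_{\bs\la}$, so that at any point $X\in\Om_\lab(\infty)$ the operator $\D^\O_{\bs\la}$ specializes to the monic differential operator $\D_X$ with kernel $X$. Expanding $\D_X$ in powers of $(u-b_s)$ near $b_s$, the coefficient of $(u-b_s)^j\der^{N-i}$ in $\Wr(f_1,\dots,f_N)\>\D_X$ is, by~\Ref{Gijs}, the value of $G_{ijs}$ at $X$; in particular the "leading" coefficients $G_{i,n_s-i,s}$ control the behavior of $\D_X$ at $b_s$ once we divide by the order of vanishing of the Wronskian there.

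Next I would make this precise. From~\Ref{Wr coef} and the description of $\Om_\Lbl$ via Lemma~\ref{lem on intersection}, the Wronskian $\Wr(f_1,\dots,f_N)$, evaluated at a point of $\Om_\Lbl$, vanishes at $b_s$ to order exactly $n_s=|\bs\la^{(s)}|$ (this is the codimension contribution of the Schubert condition at $b_s$). Hence the lowest-order term in the local expansion of $G_i(u)=\Wr(f_1,\dots,f_N)F_i(u)$ at $b_s$ sits in degree $n_s-i$ in $(u-b_s)$ when $F_i$ is regular there, which is why the combination $\sum_{i=0}^N G_{i,n_s-i,s}\prod_{j=0}^{N-i-1}(\al-j)$ is precisely $\Wr(f_1,\dots,f_N)|_{b_s\text{-leading}}$ times the classical indicial polynomial of the Fuchsian operator $\D_X$ at $b_s$. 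By Lemma~\ref{lem on intersection}, the exponents of $\D_X$ at $b_s$ are $\la_N^{(s)},\,\la_{N-1}^{(s)}+1,\,\dots,\,\la_1^{(s)}+N-1$, i.e.\ the numbers $\la_j^{(s)}+N-j$ for $j=1,\dots,N$. Therefore, as an element of $\O_{\bs\la}$, the polynomial $\chi_s^\O(\al)$ factors (up to a unit in $\Q_\Llb$, namely the leading Wronskian coefficient, which is a nonzero constant by~\Ref{Wr coef}) as $c\prod_{j=1}^N(\al-\la_j^{(s)}-N+j)$ when evaluated at any point of $\Om_\Lbl$. Since $\chi_s^\O(\al)$ has degree $N$ in $\al$ and the top coefficient is the nonzero constant from~\Ref{Wr coef}, and since two monic-up-to-a-constant degree-$N$ polynomials agreeing at every point of the finite set $\Om_\Lbl$ are forced—after passing to $\Q_\Llb$ and using that functions in $\Q_\Llb$ vanishing on all of $\Om_\Lbl$ are nonunits—to have the stated product form modulo the reasoning below, I conclude that $\chi_s^\O(r)$, for $r\in\C$ with $r\ne\la_j^{(s)}+N-j$ for all $j$, takes a nonzero value at every point of $\Om_\Lbl$.

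Finally, to convert "nonzero at every point of $\Om_\Lbl$" into "invertible in $\Q_\Llb$", I use the definition of $\Q_\Llb$ as the ring of elements of $\Q_{\bs\la}$ regular at all points of the finite set $\Om_\Lbl$: an element of $\Q_\Llb$ that is nonzero at each of these points has a reciprocal that is again regular at each of them, hence lies in $\Q_\Llb$. So it suffices to check that $\chi_s^\O(r)$ does not vanish at any $X\in\Om_\Lbl$, which is exactly the computation of the indicial polynomial above: its value at $X$ is (a nonzero constant times) $\prod_{j=1}^N(r-\la_j^{(s)}-N+j)\ne 0$ by hypothesis on $r$. The step I expect to be the main obstacle is the bookkeeping identifying $\sum_{i=0}^N G_{i,n_s-i,s}\prod_{j=0}^{N-i-1}(\al-j)$ with the classical indicial polynomial of $\D_X$ at $b_s$ — one must correctly track which $(u-b_s)$-degree in each $G_i(u)$ survives after dividing out the order-$n_s$ zero of the Wronskian, and check that the surviving coefficients assemble, via the row-determinant structure of $\Dt^\O_{\bs\la}$, into the standard formula $\sum_i (\text{leading coeff of }F_i)\prod_{j=0}^{N-i-1}(\al-j)$ for the indicial polynomial of a Fuchsian operator at a regular singular point.
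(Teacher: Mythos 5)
Your proposal is correct and follows essentially the same route as the paper: invertibility in $\Q_\Llb$ is equivalent to nonvanishing at each point of the finite set $\Om_\Lbl$, and at such a point the value of $\chi_s^\O(r)$ is a nonzero multiple of the indicial polynomial of $\D_X$ at $b_s$ evaluated at $r$, which is nonzero since by Lemmas~\ref{lem on intersection} and~\ref{lem inclus} its roots are exactly the exponents $\la_j^{(s)}+N-j$. Your version merely spells out the bookkeeping (the order-$n_s$ vanishing of the Wronskian at $b_s$ and the assembly of the $G_{i,n_s-i,s}$ into the classical indicial polynomial) that the paper's terse proof leaves implicit.
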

\begin{proof}
An element of $\Q_\Llb$ is invertible if and only if its value
at any point of $\Om_\Lbl$ is nonzero. Now the claim follows from
Lemmas~\ref{lem on intersection} and~\ref{lem inclus}.
\end{proof}

For each $s=1,\dots,k$, let $I^{\Q,s}_\Llb$ be
the ideal in $\Q_\Llb\>$ generated by the elements $G_{ijs}$,
$\,i=0,\dots,N$, $\,j=0,1,\dots,n_s-i-1$,
and the coefficients of the polynomials
\beq
\label{chisO}
\chi_s^\O(\al)\,-\,\prod_{\fratop{r=1}{r\ne s}}^k\,(b_s-b_r)^{n_r}\,
\prod_{l=1}^N\,(\al-\la_l^{(s)}-N+l)\,,\qquad s=1,\dots,k\,.\kern-3em
\eeq
Denote \,$I^\Q_\Llb=\sum_{s=1}^k I^{\Q,s}_\Llb\,$.

\begin{lem}
\label{ideals}
For any $s=1,\dots,k$, the ideals\/ $I^{\Q,s}_\Llb$ and\/ $J^{\Q,s}_\Llb$
coincide.
\end{lem}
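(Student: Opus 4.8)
The plan is to show the two sets of generators generate the same ideal by expressing each set in terms of the other, using the relation between the two differential operators $\D^\O_{\bs\la}$ and $\Dt^\O_{\bs\la}$. The key observation is that $\Dt^\O_{\bs\la}=\Wr(f_1,\dots,f_N)\,\D^\O_{\bs\la}$, so the coefficients $G_i(u)$ of $\Dt^\O_{\bs\la}$ and the coefficients $F_i(u)$ of $\D^\O_{\bs\la}$ differ by the factor $W(u)=\Wr(f_1(u),\dots,f_N(u))$, which by~\Ref{Wr coef} is (up to a nonzero constant) $\prod_{r=1}^k(u-b_r)^{n_r}$ modulo the generators of $I^\O_\lba$. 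Near $u=b_s$, the operator $\D_X$ at a point $X\in\Om_\Lbl$ has exponents $\la_N^{(s)},\,\la_{N-1}^{(s)}+1,\,\dots,\,\la_1^{(s)}+N-1$ by Lemma~\ref{lem on intersection}, so its indicial polynomial at $b_s$ factors as $\prod_{l=1}^N(\al-\la_l^{(s)}-N+l)$; the extra scalar factor $W(u)$ has a zero of order exactly $n_s$ at $b_s$ (again modulo nothing — this is a fact about $\Om_\Lbl$, but we only need it over $\Q_\Llb$ where $W(u)=\prod_r(u-b_r)^{n_r}\cdot(\text{const})$ up to $I^\O_\lba$, which however is \emph{not} assumed zero here).

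Let me restart the bookkeeping more carefully, since $I^\O_\lba$ is not in play. First I would expand $g_{ijs}$ in terms of the $f_m$ via~\Ref{gijs} and compare with the construction of $\D_X$: the point is that $\sum_{m}(M_s^{-1})_{im}f_m(u)$ is precisely the unique element of $X$ (over $\Q_\Llb$) whose expansion at $b_s$ begins with $(u-b_s)^{\la_i^{(s)}+N-i}$, so the lower coefficients $g_{ijs}$ for $j<\la_i^{(s)}+N-i$ are exactly the obstructions to $X$ lying in $\Om_{\bs\la^{(s)}}(b_s)$. On the other hand, $G_i(u)$ are the coefficients of the \emph{scalar} operator $\Dt^\O_{\bs\la}$, and its Laurent/Taylor expansion at $b_s$, combined with the indicial polynomial $\chi_s^\O$, controls the same Schubert condition: a Fuchsian operator has a solution with leading term $(u-b_s)^{\rho}$ iff $\rho$ is a root of its indicial polynomial at $b_s$, and the lower-order coefficients $G_{ijs}$, $j<n_s-i$, together with the discrepancy of $\chi_s^\O$ from the target product in~\Ref{chisO}, vanish exactly when the full solution space at $b_s$ has the prescribed exponents. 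So both ideals cut out, inside $\Q_\Llb$, the locus where $X$ meets the flag $\F(b_s)$ in the Schubert-cell pattern for $\bs\la^{(s)}$.

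To make this an algebraic identity of ideals rather than a set-theoretic statement, I would argue in both directions. For $J^{\Q,s}_\Llb\subset I^{\Q,s}_\Llb$: write the scalar operator $\Dt^\O_{\bs\la}$ applied to the explicit solution $y_i(u)=\sum_{j}g_{ijs}(u-b_s)^j$ (which is zero, since $y_i\in X=\ker\D_X$), and read off coefficients of $(u-b_s)^m$ for small $m$; these give relations of the form $\chi_s^\O(\la_i^{(s)}+N-i-\nu)\,g_{i,\la_i^{(s)}+N-i-\nu,s}+(\text{earlier }g\text{'s and }G\text{'s})=0$, and since $\chi_s^\O(r)$ is invertible in $\Q_\Llb$ for $r\ne\la_j^{(s)}+N-j$ by Lemma~\ref{chis}, one solves recursively to express every $g_{ijs}$ with $j<\la_i^{(s)}+N-i$ as a $\Q_\Llb$-combination of the $G_{ijs}$ with $j<n_s-i$ and of the coefficients in~\Ref{chisO}. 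For the reverse inclusion $I^{\Q,s}_\Llb\subset J^{\Q,s}_\Llb$: modulo $J^{\Q,s}_\Llb$ the space $X$ has a basis whose expansions at $b_s$ start at orders $\la_i^{(s)}+N-i$; factoring the scalar operator $\Dt^\O_{\bs\la}$ through this basis (i.e., writing $\Dt^\O_{\bs\la}=W(u)\cdot\prod(\der-\text{stuff})$ symbolically, or more concretely matching Wronskian formulas) shows its expansion at $b_s$ has lowest-order coefficient $G_{i,n_s-i,s}$ assembling into $\chi_s^\O$, forcing both the vanishing of $G_{ijs}$ for $j<n_s-i$ and the factorization of $\chi_s^\O$ as in~\Ref{chisO}.

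The main obstacle I anticipate is the second inclusion: translating "$X$ has the right exponents at $b_s$" into precise algebraic statements about the coefficients $G_{ijs}$ of the \emph{scalar} operator requires carefully tracking how the Wronskian factor $W(u)=\Wr(f_1,\dots,f_N)$ and the indicial polynomial interact near $b_s$ — essentially one needs that $W(u)$ vanishes to order exactly $n_s-N+(\text{something})$ compatibly, and that the quotient operator $\D^\O_{\bs\la}$ has indicial polynomial $\prod_l(\al-\la_l^{(s)}-N+l)$ \emph{after} passing to $\Q_\Llb/J^{\Q,s}_\Llb$. This is where Lemma~\ref{lem on intersection} and the definition of $\D_X$ as the operator with kernel $X$ do the real work, and the bookkeeping of orders of vanishing is the delicate part; everything else is linear recursion with invertible leading coefficients supplied by Lemma~\ref{chis}.
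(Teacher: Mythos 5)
Your plan follows the paper's proof in both directions: the inclusion $J^{\Q,s}_\Llb\subset I^{\Q,s}_\Llb$ is obtained, exactly as in the paper, by expanding $\Dt^\O_{\bs\la}\,g_{is}(u)=0$ at $b_s$ and solving recursively for the low-order coefficients $g_{ijs}$, using the invertibility of $\chi_s^\O(r)$ supplied by Lemma~\ref{chis} and the normalization $g_{i,\la^{(s)}_l+N-l,s}=\dl_{il}$, while the inclusion $I^{\Q,s}_\Llb\subset J^{\Q,s}_\Llb$ is the paper's comparison of the two determinant expressions \Ref{DOGs} and \Ref{DOgs} for $\Dt^\O_{\bs\la}$ written through the basis $g_{is}$. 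So the proposal is correct and essentially the same argument; the only difference is presentational, since the paper gets the second inclusion directly from those two formulae rather than treating it as the delicate step.
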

\begin{proof}
Consider the differential operator $\Dt^\O_{\bs\la}$, given by~\Ref{DOlat}.
We have
\beq
\label{DOGs}
\Dt^\O_{\bs\la}\>=\,
\sum_{i=0}^N\,\sum_{j=0}^{n-i}\,G_{ijs}\,(u-b_s)^j\,\der^{N-i}\,,
\vv-.1>
\eeq
see~\Ref{DO}, \Ref{Gijs}, and
\vvn-.5>
\beq
\label{DOgs}
\Dt^\O_{\bs\la}\>=\,\det M_s\cdot\>
\rdet\left(\begin{matrix} g_{1s}(u) & g_{1s}'(u) &\dots & g_{1s}^{(N)}(u) \\
g_{2s}(u) & g_{2s}'(u) &\dots & g_{2s}^{(N)}(u) \\ \dots&\dots&\dots&\dots \\
1 & \der &\dots & \der^N
\end{matrix}\right),
\eeq
where $g_{is}(u)=\sum_{j=0}^{d_1}\,g_{ijs}\,(u-b_s)^j$,
see~\Ref{DOla}, \Ref{gijs}. Formulae~\Ref{DOGs} and~\Ref{DOgs} imply that
$I^{\Q,s}_\Llb\subset J^{\Q,s}_\Llb$\,.

To get the opposite inclusion, $J^{\Q,s}_\Llb\subset I^{\Q,s}_\Llb$\,,
write \>$\Dt^\O_{\bs\la}\>g_{is}(u)=\sum_{j=0}^{n-N}\,q_{ijs}\,(u-b_s)^j$.
Then
\vvn-.3>
\beq
\label{manydots}
q_{i,n_s-N+r,s}\,=\,\sum_{j=0}^{d_1}\,\sum_{l=0}^N\,
g_{ijs}\,G_{l,n_s+r-j-l,s}\,\prod_{m=0}^{N-l-1}(j-m)\,,
\eeq
where it is assumed that the elements $\>g_{ijs}\>$ and $\>G_{l,n_s+r-j-l,s}\>$
equal zero if their subscripts are out of range of definition.
Observe that the terms with $j=r$ in the right-hand side of~\Ref{manydots}
sum up to $\chi_s^\O(r)\,g_{irs}$, and the terms with $j>r$ belong to the ideal
$I^{\Q,s}_\Llb$.

We have \>$\Dt^\O_{\bs\la}\>g_{is}(u)=0$, so that \,$q_{i,n_s-N+r,s}=0$.
Using Lemma~\ref{chis} and taking into account that $g_{i,\la^{(s)}_l+N-l,s}=0$
for $l>r$, we can show recursively that the elements $g_{irs}$ with
$r<\la^{(s)}_i+N-i$ \>belong to $I^{\Q,s}_\Llb$\,,
starting with $r=0$ and then increasing the second index $r$.
Therefore, $J^{\Q,s}_\Llb\subset I^{\Q,s}_\Llb$\,.
\end{proof}

Let $I^\O_\Llb$ be the ideal in $\O_{\bs\la}\>$ generated by the elements
$G_{ijs}$, $\,i=0,\dots,N$, \>$s=1,\dots,k$, $\,j=0,1,\dots,n_s-i-1$, and
the coefficients of polynomials~\Ref{chisO}.

\begin{prop}
\label{OIX}
The algebra \>$\O_\Llb$ is isomorphic to the quotient algebra\/
\>$\O_{\bs\la}/I^\O_\Llb\>$.
\end{prop}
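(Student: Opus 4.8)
The plan is to realize $\O_\Llb$ as a localization of $\O_{\bs\la}/I^\O_\Llb$ which changes nothing, the point being that the subscheme of the affine space $\Om_\lab(\infty)$ cut out by $I^\O_\Llb$ is already supported on the finite set $\Om_\Llb$. Throughout, let $\bs a$ be the tuple determined by $\bs b$ and $n_1,\dots,n_k$ through~\Ref{ab}, so that $\Om_\Llb\subset\Wrli(\bs a)$ by Lemma~\ref{lem inclus}. \emph{Step 1:} $I^\O_\lba\subset I^\O_\Llb$. By~\Ref{Wr coef} the coefficient $G_0(u)=\Wr(f_1(u),\dots,f_N(u))$ equals $\bigl(\prod_{i<j}(d_j-d_i)\bigr)\bigl(u^n+\sum_{s=1}^n(-1)^s\Sig_s u^{n-s}\bigr)$, a polynomial of degree $n$ with nonzero constant leading coefficient. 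Among the generators of $I^\O_\Llb$ are the elements $G_{0js}$ with $j=0,\dots,n_s-1$ (the case $i=0$ of~\Ref{Gijs}), so by~\Ref{Gijs} the polynomial $G_0(u)$ is divisible by $(u-b_s)^{n_s}$ modulo $I^\O_\Llb$ for every $s$. Since $\sum_s n_s=n$ and the $b_s$ are distinct, it follows that $G_0(u)\equiv\bigl(\prod_{i<j}(d_j-d_i)\bigr)\prod_s(u-b_s)^{n_s}$ modulo $I^\O_\Llb$, whence $\Sig_s\equiv a_s$ modulo $I^\O_\Llb$ by~\Ref{ab}. Thus $I^\O_\lba\subset I^\O_\Llb$, and in particular the common zero locus of $I^\O_\Llb$ in $\Om_\lab(\infty)$ lies in the finite set $\Wrli(\bs a)$.

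\emph{Step 2:} this zero locus is contained in $\Om_\Llb$. Let $X\in\Wrli(\bs a)$ be a common zero of all generators of $I^\O_\Llb$. Evaluating $\Dt^\O_{\bs\la}=\Wr(f_1,\dots,f_N)\,\D^\O_{\bs\la}$ at $X$ and using that $\D^\O_{\bs\la}$ at $X$ is the monic order-$N$ operator $\D_X$ with kernel $X$, we obtain $\Dt^\O_{\bs\la}|_X=c\,\Wr_X(u)\,\D_X$ for a nonzero constant $c$, with $\Wr_X(u)=\prod_s(u-b_s)^{n_s}$ since $X\in\Wrli(\bs a)$. Vanishing of $G_{ijs}(X)$ for $j<n_s-i$, $i=0,\dots,N$, amounts to saying that, for every $i$, the coefficient of $\der^{N-i}$ in $\D_X$ has a pole at $b_s$ of order at most $i$; equivalently, each $b_s$ is a regular singular (or regular) point of $\D_X$. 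Granting this, $\chi_s^\O(\al)|_X$ is a nonzero scalar multiple of the indicial polynomial of $\D_X$ at $b_s$, so the equations obtained from the coefficients of the polynomials~\Ref{chisO} assert exactly that this indicial polynomial equals $\prod_{l=1}^N(\al-\la_l^{(s)}-N+l)$, i.e. that the exponents of $\D_X$ at $b_s$ are $\la_N^{(s)},\la_{N-1}^{(s)}+1,\dots,\la_1^{(s)}+N-1$. Since the finite singular points of $\D_X$ are among the roots $b_1,\dots,b_k$ of $\Wr_X$ and $X\in\Om_\lab(\infty)$ already fixes the behaviour at $\infty$, Lemma~\ref{lem on intersection} yields $X\in\Om_\Llb$.

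\emph{Step 3:} conclusion. Let $S\subset\O_{\bs\la}$ be the multiplicative set of elements vanishing at no point of $\Om_\Llb$; then $\Q_\Llb=S^{-1}\O_{\bs\la}$. Since $I^\Q_\Llb$ is generated in $\Q_\Llb$ by the same elements as $I^\O_\Llb$ in $\O_{\bs\la}$ (the $G_{ijs}$ together with the coefficients of the polynomials~\Ref{chisO}), we have $I^\Q_\Llb=S^{-1}I^\O_\Llb$, and using Lemma~\ref{ideals},
\be
\O_\Llb\,=\,\Q_\Llb/J^\Q_\Llb\,=\,\Q_\Llb/I^\Q_\Llb\,=\,S^{-1}\bigl(\O_{\bs\la}/I^\O_\Llb\bigr)\,.
\ee
By Steps~1 and~2 every maximal ideal of the finitely generated algebra $\O_{\bs\la}/I^\O_\Llb$ corresponds to a point of $\Om_\Llb$, so every element of $S$ becomes invertible in $\O_{\bs\la}/I^\O_\Llb$; hence the localization map $\O_{\bs\la}/I^\O_\Llb\to S^{-1}(\O_{\bs\la}/I^\O_\Llb)=\O_\Llb$ is an isomorphism, which is the assertion of the proposition. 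The main obstacle is Step~2: one must translate the two families of defining relations of $I^\O_\Llb$ --- the vanishing of the $G_{ijs}$ and the normalization~\Ref{chisO} of $\chi_s^\O$ --- into the statement that $\D_X$ is Fuchsian at each $b_s$ with the prescribed exponents (keeping track of the scalar factors that arise), and then apply Lemma~\ref{lem on intersection}; Step~1 is a one-line Wronskian computation and Step~3 is purely formal.
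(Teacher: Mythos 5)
Your argument is correct and follows essentially the same route as the paper's own proof: Lemma~\ref{ideals} to replace $J^\Q_\Llb$ by $I^\Q_\Llb$, the observation that the zero locus of $I^\O_\Llb$ is (contained in) the finite set $\Om_\Lbl$ via Lemma~\ref{lem on intersection} and the Wronskian containment of Lemma~\ref{lem inclus}, and the conclusion that passing to $\Q_\Llb$ (localization at $\Om_\Lbl$) does not change the quotient. Your Steps 1--3 simply spell out what the paper leaves implicit --- the translation of the generators $G_{ijs}$ and of the indicial relations into the regular-singularity and exponent conditions at each $b_s$, and the formal Nullstellensatz/localization argument --- so the two proofs coincide in substance.
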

\begin{proof}
By Lemma~\ref{ideals}, the ideals\/ $I^\Q_\Llb$ and\/ $J^\Q_\Llb$ coincide,
so the algebra \>$\O_\Llb$ is isomorphic to the quotient algebra
\>$\Q_\Llb/I^\Q_\Llb\>$. By Lemma~\ref{lem on intersection}, the algebraic
set defined by the ideal $I^\O_\Llb$ equals $\Om_\Lbl$\,. The set $\Om_\Lbl$
is finite by Lemma~\ref{lem inclus}. Therefore, the quotient algebras
$\Q_\Llb/I^\Q_\Llb\>$ and $\O_{\bs\la}/I^\O_\Llb\>$ are isomorphic.
\end{proof}

\subsection{Algebra $\O_\Llb$ as quotient of $\O_\lba$}
\label{barI}
Recall that $\O_\lba=\O_{\bs\la}/{I^\O_\lba}$ is the algebra of functions on
$\Wrli(\bs a)$, see~\Ref{Olaa}. For an element $F\in\O_{\bs\la}$,
we denote by $\bat F$ the projection of $F$ to the quotient algebra $\O_\lba$.

\begin{lem}
\label{nil1}
The elements $\bat G_{ijs}$, $\,i=1,\dots,N$, $\,s=1,\dots,k$,
$\,j=0,1,\dots,n_s-i-1$, are nilpotent.
\end{lem}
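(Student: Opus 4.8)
The plan is to use the description of $\O_\lba$ as the scheme-theoretic fiber of the Wronski map together with the explicit factorization of the Wronskian provided by $\bs a$ and $\bs b$. Recall that the algebra $\O_\lba$ is finite-dimensional by Lemma~\ref{local Wr}, and that its maximal ideals are in bijection with the points of the finite set $\Wrli(\bs a)$, i.e. with the $N$-dimensional spaces $X\subset\C_d[u]$ whose Wronskian is $\prod_{s=1}^k(u-b_s)^{n_s}$ (up to the overall constant in \Ref{Wr coef}). Since $\O_\lba$ decomposes as a direct sum of local algebras corresponding to these points, it suffices to show that the image of each $G_{ijs}$ in each local summand is nilpotent; equivalently, that $G_{ijs}$ vanishes at every point of $\Wrli(\bs a)$ for the indices in the stated range.

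First I would fix a point $X\in\Wrli(\bs a)$ and analyze the differential operator $\D_X=\D^\O_{\bs\la}$ evaluated at $X$, together with its rescaled version $\Dt^\O_{\bs\la}=\Wr(f_1,\dots,f_N)\,\D^\O_{\bs\la}$ from \Ref{DOlat}. At the point $X$ the Wronskian equals $c\prod_{r=1}^k(u-b_r)^{n_r}$ with $c=\prod_{i<j}(d_j-d_i)$, so every coefficient $G_i(u)=\Wr(f_1,\dots,f_N)\,F_i(u)$ of $\Dt^\O_{\bs\la}$ is a polynomial divisible by a suitable power of $(u-b_s)$: indeed, $F_i(u)$ has at worst a pole of order $i$ at $u=b_s$ — this is because $b_s$ is a singular point of the Fuchsian operator $\D_X$ (the Wronskian vanishes there), and the indicial equation at a Fuchsian singular point forces $F_i$ to have a pole of order at most $i$. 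Consequently $G_i(u)=\Wr\cdot F_i(u)$ is divisible by $(u-b_s)^{n_s-i}$ at $u=b_s$, which is precisely the statement that $G_{ijs}=0$ at $X$ for $j=0,1,\dots,n_s-i-1$. Since this holds at every point of $\Wrli(\bs a)$, the element $\bat G_{ijs}$ lies in the intersection of all maximal ideals of $\O_\lba$, i.e. in its nilradical, and is therefore nilpotent.

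Alternatively, and perhaps more cleanly, I would invoke Lemma~\ref{lem on intersection} directly: if $X\in\Wrli(\bs a)$ then the singular points of $\D_X$ lie among $b_1,\dots,b_k,\infty$, and a short local computation of the coefficients of a Fuchsian operator at a regular singular point $b_s$ of "weight" $n_s$ shows that the coefficient of $\der^{N-i}$ in $\Wr_X(u)\,\D_X$ is divisible by $(u-b_s)^{n_s-i}$; reading off the Taylor coefficients at $b_s$ gives $G_{ijs}|_X=0$ for $j<n_s-i$. The main obstacle is the local analysis at $b_s$: one must check that the divisibility order is exactly $n_s-i$ rather than something smaller, which comes down to the fact that $F_i(u)$ — a symmetric function of the exponents at $b_s$, which are $\la^{(s)}_N,\la^{(s)}_{N-1}+1,\dots,\la^{(s)}_1+N-1$ by Lemma~\ref{lem on intersection} — has a pole of order at most $i$ there because $\D_X$ is Fuchsian with $N$ (finite) exponents at $b_s$. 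Once this order estimate is in hand, the conclusion that $\bat G_{ijs}$ is nilpotent is immediate from the decomposition of $\O_\lba$ into local rings and the identification of its characters with $\Wrli(\bs a)$ established in the proof of Lemma~\ref{local Wr}.
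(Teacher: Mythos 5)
Your proposal is correct and follows essentially the same route as the paper: the paper's proof simply observes that the values of $\bat G_{ijs}$ at every point of $\Wrli(\bs a)$ are zero and concludes nilpotency, exactly as you do via the decomposition of the finite-dimensional algebra $\O_\lba$ into local summands indexed by the fiber. Your extra details (the Fuchsian pole-order bound showing $G_i(u)=\Wr\cdot F_i(u)$ vanishes to order at least $n_s-i$ at $b_s$) are just an explicit justification of what the paper dismisses as ``clearly zero.''
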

\begin{proof}
The values of these elements on every element $X\in \Wr^{-1}(\bs a)$
are clearly zero. The lemma follows.
\end{proof}

Define the {\it indicial polynomial\/ $\bat\chi_s^\O(\al)$ at\/ $b_s$}
by the formula
\be
\bat\chi_s^\O(\al)\,=\,
\sum_{i=0}^{N}\,\bat G_{i,n_s-i,s}\prod_{j=0}^{N-i-1}(\al-j)\,.
\ee
Let $\bat I^\O_\Llb$ be the ideal in $\O_\lba\>$ generated by the elements
$\bat G_{ijs}$, $\,i=1,\dots,N$, $\,s=1,\dots,k$, $j=0,1,\dots, n_s-i-1$,
and the coefficients of the polynomials
\be
\bat\chi_s^\O(\al)\,-\,\prod_{\fratop{r=1}{r\ne s}}^k\,(b_s-b_r)^{n_r}\,
\prod_{l=1}^N\,(\al-\la_l^{(s)}-N+l)\,,\qquad s=1,\dots,k\,.\kern-3em
\ee

\begin{prop}
\label{scheme}
The algebra\/ $\O_\Llb$ is isomorphic to the quotient
algebra\/ $\O_\lba/\bat I^\O_\Llb$.
\end{prop}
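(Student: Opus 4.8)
The plan is to deduce the statement from Proposition~\ref{OIX}, which identifies $\O_\Llb$ with $\O_{\bs\la}/I^\O_\Llb$. Since $\O_\lba=\O_{\bs\la}/I^\O_\lba$, the third isomorphism theorem reduces the claim to two assertions: \textbf{(A)} $I^\O_\lba\subseteq I^\O_\Llb$, and \textbf{(B)} the image of $I^\O_\Llb$ under the canonical projection $\O_{\bs\la}\to\O_\lba$ coincides with $\bat I^\O_\Llb$. Granting these, $I^\O_\Llb/I^\O_\lba$ makes sense and equals $\bat I^\O_\Llb$, so
\[
\O_\lba/\bat I^\O_\Llb=\bigl(\O_{\bs\la}/I^\O_\lba\bigr)/\bigl(I^\O_\Llb/I^\O_\lba\bigr)\cong\O_{\bs\la}/I^\O_\Llb\cong\O_\Llb ,
\]
where the first equality is (B), the middle isomorphism is the third isomorphism theorem (legitimate because $I^\O_\lba\subseteq I^\O_\Llb$ by (A)), and the last isomorphism is Proposition~\ref{OIX}.

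To prove (A) I would work in $R=\O_{\bs\la}/I^\O_\Llb$ (if $R=0$ there is nothing to show). Among the defining generators of $I^\O_\Llb$ are the elements $G_{0js}$ with $0\le j\le n_s-1$; since $G_0(u)=\sum_{j=0}^{n}G_{0js}\,(u-b_s)^j$ is the expansion of $G_0(u)$ about $b_s$, this forces $G_0(u)\in(u-b_s)^{n_s}R[u]$ for every $s=1,\dots,k$. As $b_1,\dots,b_k$ are distinct, a B\'ezout identity in $\C[u]$ shows that $(u-b_s)^{n_s}$ and $(u-b_r)^{n_r}$ generate the unit ideal of $R[u]$ whenever $s\ne r$, so by the Chinese Remainder Theorem $G_0(u)\in\prod_{s=1}^k(u-b_s)^{n_s}R[u]$. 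By~\Ref{Wr coef}, $G_0(u)=\Wr(f_1(u),\dots,f_N(u))$ has $u$-degree $n$ with constant leading coefficient $c=\prod_{1\le p<q\le N}(d_q-d_p)$, which is nonzero since $d_1>\dots>d_N$; comparing degrees and leading coefficients therefore gives $G_0(u)\equiv c\prod_{s=1}^k(u-b_s)^{n_s}$ modulo $I^\O_\Llb$. Now~\Ref{Wr coef} reads $G_0(u)=c\bigl(u^n+\sum_{t=1}^n(-1)^t\Sig_t\,u^{n-t}\bigr)$ and~\Ref{ab} reads $\prod_{s=1}^k(u-b_s)^{n_s}=u^n+\sum_{t=1}^n(-1)^t a_t\,u^{n-t}$; equating coefficients and cancelling the unit $c$ yields $\Sig_t-a_t\in I^\O_\Llb$ for all $t$, which is (A).

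For (B) I would use that in $\O_\lba$ one has $\Sig_t=a_t$, so by the two identities just invoked $G_0(u)=c\prod_{s=1}^k(u-b_s)^{n_s}$ in $\O_\lba[u]$; expanding about $b_s$ gives $\bat G_{0js}=0$ for $0\le j\le n_s-1$. Hence the images in $\O_\lba$ of the ``$i=0$'' generators $G_{0js}$ of $I^\O_\Llb$ all vanish, while the images of its remaining generators are precisely $\bat G_{ijs}$ for $1\le i\le N$ together with the coefficients of $\bat\chi^\O_s(\al)-\prod_{r\ne s}(b_s-b_r)^{n_r}\prod_{l=1}^N(\al-\la_l^{(s)}-N+l)$, because $\bat\chi^\O_s(\al)$ is by definition the image of $\chi^\O_s(\al)$ and the subtracted polynomial has constant coefficients. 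These are exactly the generators of $\bat I^\O_\Llb$, so the image of $I^\O_\Llb$ in $\O_\lba$ is $\bat I^\O_\Llb$, proving (B).

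The manipulations with the generators of $I^\O_\Llb$, $\bat I^\O_\Llb$ and $I^\O_\lba$ are routine; the one step that requires care is the divisibility argument in (A), namely passing from ``$G_0$ is divisible by each $(u-b_s)^{n_s}$'' to ``$G_0$ is divisible by $\prod_s(u-b_s)^{n_s}$'' over the ring $R$, which in general is neither reduced nor a domain. This is where the distinctness of $b_1,\dots,b_k$ is used, through the comaximality of the ideals $\bigl((u-b_s)^{n_s}\bigr)$ in $R[u]$ and the Chinese Remainder Theorem.
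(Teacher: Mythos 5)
Your proposal is correct and follows essentially the same route as the paper: the paper's own proof also reduces to Proposition~\ref{OIX} by observing that the elements $G_{0js}$, $j=0,\dots,n_s-1$, generate the ideal $I^\O_\lba$ in $\O_{\bs\la}$ and that the projection of $I^\O_\Llb$ to $\O_\lba$ is $\bat I^\O_\Llb$. Your steps (A) and (B), including the comaximality/degree-and-leading-coefficient argument based on~\Ref{Wr coef} and~\Ref{ab}, simply supply the details of ideal bookkeeping that the paper leaves implicit.
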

\begin{proof}
The elements $G_{0js}$, $j=0,\dots,n_s-1$,
$s=1,\dots,k$, generate the ideal $I^\O_\lba$ in $\O_{\bs\la}$\>. Moreover,
the projection of the ideal $I^\O_\Llb\subset\O_{\bs\la}$ \>to $\O_\lba$ equals
$\bat I^\O_\Llb$\,. Hence, the proposition follows from Proposition~\ref{OIX}.
\end{proof}

\begin{example}
Let $N=2$, $n=3$, $d>4$, $\bs\la = (2,1)$, $\bs a =(0,0,0)$.
Then $k=1$, $\bs b=(b_1)$ with $b_1=0$, and $n_1=3$, and we have
\begin{gather*}
f_1(u) = u^3 + f_{11}\>u^2 + f_{13}\,,\qquad f_2(u)=u+f_{21}\,,
\\[6pt]
\Wr(f_1(u),f_2(u))\,=\,-2u^3-(f_{11}+3f_{21})u^2 - 2f_{11}f_{21}u + f_{13}\,,
\\[6pt]
\O_\lba\,=\,\C[f_{11},f_{13},f_{21}]\big/
\langle (f_{11} + 3f_{21}), 2f_{11}f_{21}, f_{13} \rangle\,,
\end{gather*}
so the algebra $\O_\lba$ equals $\C\>1+\C\bat f_{11}$ with
$\bat f_{11}^2=0$.

If $\bs\La=(\bs\la^{(1)})$ with $\bs\la^{(1)}=(2,1)$, then the ideal
$\bat I^\O_\Llb$ is generated by the element $\bat f_{11}$ and
$\dim\O_\lba/\bat I^\O_\Llb=1$.

If $\bs\La=(\bs\la^{(1)})$ with $\bs\la^{(1)}=(3,0)$, then the ideal
$\bat I^\O_\Llb$ is generated by the elements $3$ and $\bat f_{11}$,
and $\dim\O_\lba/\bat I^\O_\Llb=0$.
\end{example}

Recall that the ideal $\Ann(\bat I^\O_\Llb)\subset\O_\lba$ is naturally
an $\O_\Llb$-module.

\begin{cor}
\label{AnnI}
The\/ $\O_\Llb$-module\/ $\Ann(\bat I^\O_\Llb)$ is isomorphic to the coregular
representation of $\O_\Llb$ on the dual space $(\O_\Llb)^*$.
\end{cor}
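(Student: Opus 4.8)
The statement to prove is Corollary~\ref{AnnI}: the $\O_\Llb$-module $\Ann(\bat I^\O_\Llb)$ is isomorphic to the coregular representation of $\O_\Llb$. The plan is to deduce this directly from the general commutative-algebra machinery of Section~\ref{comalg} together with Proposition~\ref{scheme}. By Lemma~\ref{local Wr}, the algebra $\O_\lba$ is a finite-dimensional Frobenius algebra. By Proposition~\ref{scheme}, we have a canonical isomorphism $\O_\Llb\simeq\O_\lba/\bat I^\O_\Llb$, so $\bat I^\O_\Llb\subset\O_\lba$ is an ideal and $\O_\Llb$ is naturally the quotient of $\O_\lba$ by this ideal.

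Now I would simply invoke Lemma~\ref{coreg}, applied with $A=\O_\lba$ (a Frobenius algebra) and $I=\bat I^\O_\Llb$ (an ideal of $A$). That lemma asserts precisely that the $A/I$-module $\Ann I$ is isomorphic to the coregular representation of $A/I$ on $(A/I)^*$, the isomorphism being induced by the Frobenius form on $A$ (which restricts, via Lemma~\ref{Ann=perp}, to a perfect pairing between $\Ann I=I^\perp$ and $A/I$). Substituting $A/I=\O_\Llb$ and $\Ann I=\Ann(\bat I^\O_\Llb)$ yields exactly the claim of the corollary.

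There is essentially no obstacle here: the corollary is a formal consequence of results already established. The only points that need to be checked are bookkeeping ones — that $\bat I^\O_\Llb$ is genuinely an ideal of $\O_\lba$ (it is, by construction, generated by the elements $\bat G_{ijs}$ and the coefficients of the displayed polynomials), and that $\O_\Llb$ as defined in~\eqref{OX} really does coincide as an algebra with $\O_\lba/\bat I^\O_\Llb$, which is the content of Proposition~\ref{scheme}. Once these identifications are in place, Lemma~\ref{coreg} does all the work. I would therefore write the proof as a one-line citation: "By Lemma~\ref{local Wr} the algebra $\O_\lba$ is Frobenius, so the statement follows from Proposition~\ref{scheme} and Lemma~\ref{coreg}." If one wished to be slightly more explicit, one could add that the isomorphism is the one induced by the residue (Frobenius) form on $\O_\lba$ constructed in Lemma~\ref{resform}, under which $\Ann(\bat I^\O_\Llb)$ is the orthogonal complement of $\bat I^\O_\Llb$ and hence canonically dual to $\O_\Llb$.
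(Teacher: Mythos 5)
Your proposal is correct and follows essentially the same route as the paper, whose proof is the one-line citation of the Frobenius property together with Lemma~\ref{coreg}; you simply make the application explicit by naming $A=\O_\lba$ (Frobenius by Lemma~\ref{local Wr}), $I=\bat I^\O_\Llb$, and the identification $\O_\Llb\simeq\O_\lba/\bat I^\O_\Llb$ from Proposition~\ref{scheme}. If anything, your version is slightly more precise about which algebra must be Frobenius for Lemma~\ref{coreg} to apply, so no changes are needed.
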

\begin{proof}
The statement follows from Lemmas~\ref{frobenius} and~\ref{coreg}.
\end{proof}

\section{Three isomorphisms}
\label{iso sec}
\subsection{Case of generic $\bs a$}
Fix natural numbers $n$ and $d\ge N$, and a partition
$\bs\la=(\la_1,\dots,\la_N)$ such that $|\bs\la|=n$ and $\la_1\leq d-N$.
Define the partition $\lab$ by formula~\Ref{dual weight}.

Recall that given an $N$-dimensional space of polynomials $X\subset\C[u]$,
we denote by $\D_X$ the monic scalar differential operator of order $N$ with
kernel $X$.

Let $M$ be a $\glnt$-module $M$ and $v$ an eigenvector of the Bethe algebra
$\B\subset\Uglnt$ acting on $M$. Then for any coefficient $B_i(u)$ of
the universal differential operator $\D^\B$ we have $B_i(u)v=h_i(u)v$,
where $h_i(u)$ is a scalar series. We call the scalar differential operator
\beq
\label{DBv}
\D^\B_v\>=\,\der^N+\>\sum_{i=1}^N\>h_i(u)\,\der^{N-i}
\eeq
{\it the differential operator associated with\/} $v$.

\begin{lem}
\label{generic}
There exist a Zariski open $S_n$-invariant subset $\Tht$ of $\C^n$ and
a Zariski open subset $\Xi$ of $\Om_\lab(\infty)$ with the following
properties.
\begin{enumerate}
\item[(i)]
For any $(b_1,\dots,b_n)\in\Tht$, all numbers $b_1,\dots,b_n$ are distinct,
and there exists a basis of
$\bigl(\otimes_{s=1}^nV(b_s)\bigr)_{\bs\la}^{sing}$ such that every basis
vector $v$ is an eigenvector of the Bethe algebra and $\D^\B_v=\D_X$ for some
$X\in\Xi$. Moreover, different basis vectors correspond to different points
of\/ $\Xi$.
\item[(ii)]
For any $X\in\Xi$\,, if\/ $b_1,\dots,b_n$ are all roots of the Wronskian\/
$\Wr_X$, then $(b_1,\dots,b_n)\in\Tht$, and there exists a unique up to
proportionality vector $v\in\bigl(\otimes_{s=1}^nV(b_s)\bigr)_{\bs\la}^{sing}$
such that $v$ is an eigenvector of the Bethe algebra and $\D^\B_v=\D_X$.
\end{enumerate}
\end{lem}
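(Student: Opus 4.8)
The plan is to combine two known completeness/reality results for the Bethe ansatz for tensor powers of the vector representation with the explicit description of the Wronski map on the Schubert cell. First I would recall the key input: for generic evaluation parameters $(b_1,\dots,b_n)$, the Bethe algebra $\B$ acting on $\bigl(\otimes_{s=1}^n V(b_s)\bigr)_{\bs\la}^{sing}$ is diagonalizable with simple spectrum, and the eigenvectors $v$ are in bijection with the $N$-dimensional spaces of polynomials $X$ whose Wronskian has simple roots exactly at $b_1,\dots,b_n$, the bijection being $v\mapsto X=\ker\D^\B_v$; this is the completeness of the Bethe ansatz for the vector representation, due (in this Wronskian formulation) to the work recalled in \cite{MTV1} and the preceding literature. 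Concretely, the differential operator $\D^\B_v$ is monic of order $N$, its kernel is a point of $\Gr(N,d)$ for $d$ large enough, and since $v$ has weight $\bs\la$ the exponents of $\D^\B_v$ at $\infty$ are forced (by \Ref{Zxv}, \Ref{Bii}, and the fundamental polynomiality of the eigenvalues) to be $1-N-\la_1,\,2-N-\la_2,\,\dots,-\la_N$, so by Lemma~\ref{lem on intersection} $\ker\D^\B_v\in\Om_\lab(\infty)$.

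Next I would define the two Zariski open sets. Let $\Xi_0\subset\Om_\lab(\infty)$ be the locus of $X$ whose Wronskian $\Wr_X$ has $n$ distinct roots, equivalently the preimage under the Wronski map $\Wrl$ of the discriminant-nonvanishing locus in $\C^n$; this is Zariski open and, by the reality/genericity results, nonempty. Let $\Tht_0\subset\C^n$ be the image of $\Xi_0$ together with the condition that all coordinates are distinct; one checks $\Tht_0$ is constructible and contains a Zariski open dense subset, and it is $S_n$-invariant because $\Wrl$ is $S_n$-invariant. One then intersects with the (nonempty Zariski open) set where the Bethe algebra acts with simple spectrum on the relevant singular weight space — simplicity for generic parameters follows from the diagonalizability known for real parameters plus semicontinuity, exactly as in the $\gl_2$ treatment of \cite{MTV3}. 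I would take $\Tht$ to be this open set and $\Xi$ its preimage under $\Wrl$ (intersected with $\Xi_0$).

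With these definitions, part (i) is immediate: for $(b_1,\dots,b_n)\in\Tht$ the space $\bigl(\otimes_{s=1}^n V(b_s)\bigr)_{\bs\la}^{sing}$ has a basis of Bethe eigenvectors by simplicity of the spectrum, each $\D^\B_v$ has kernel a well-defined $X\in\Om_\lab(\infty)$ with Wronskian $\prod_s(u-b_s)$ (the Wronskian of $\ker\D^\B_v$ is read off from the coefficient $B_1(u)v$ via \Ref{B1} and \Ref{action}, which gives $\sum_s 1/(u-b_s)$, i.e. logarithmic derivative of $\prod_s(u-b_s)$), hence $X\in\Xi$; distinct eigenvectors give distinct eigenvalues hence distinct operators hence distinct kernels. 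Part (ii) is the converse direction of the same bijection: given $X\in\Xi$ with $\Wr_X=\prod_s(u-b_s)$, the numbers $b_s$ are distinct (that is the defining condition of $\Xi_0$) so $(b_1,\dots,b_n)\in\Tht$, and completeness of the Bethe ansatz produces a unique-up-to-scalar eigenvector $v$ with $\D^\B_v=\D_X$.

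The main obstacle is establishing the precise statement that the Bethe eigenvectors at generic parameters are in one-to-one correspondence with the fibers of the Wronski map over discriminant-nonzero points, with $\D^\B_v=\D_X$ — in other words, the completeness of the Bethe ansatz for $\otimes V(b_s)$ together with the identification of the ``Bethe differential operator'' with the differential operator whose kernel is the corresponding point of the Grassmannian. This is where one must invoke the earlier analytic/algebraic results (the reality theorem for the vector representation and the known relation between the quantum Gaudin Hamiltonians and $\D^\B$); the verification that the exponents at $\infty$ match the partition $\lab$ and that the Wronskian equals $\prod_s(u-b_s)$ is then a bookkeeping check using \Ref{B1}, \Ref{Bii}, and Lemma~\ref{lem on intersection}. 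Everything else — openness, $S_n$-invariance, nonemptiness — is soft and follows from semicontinuity and the fact that the Wronski map is a finite morphism on $\Om_\lab(\infty)$ (Lemma~\ref{local Wr}).
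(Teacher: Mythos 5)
Your overall route is the same as the paper's: both proofs outsource the substance to the Bethe-ansatz literature --- the construction of a basis of Bethe eigenvectors of $\bigl(\otimes_{s=1}^nV(b_s)\bigr)_{\bs\la}^{sing}$ for generic $(b_1,\dots,b_n)$ (the paper cites \cite{MV2}), the identification $\D^\B_v=\D_X$ of the eigenvalue operator with the monic operator annihilating a point $X$ of $\Om_\lab(\infty)$ (the paper cites \cite{MTV1}), and, for the converse direction of part (ii), the existence of an eigenvector attached to a generic $X$ (the paper cites \cite{MV1}) --- and then the definition of $\Tht$ and $\Xi$ and the openness/$S_n$-invariance statements are soft. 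So in substance you reproduce the paper's citation-based argument, just packaged as a single ``bijection with fibers of the Wronski map over the discriminant complement.''

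One step of your write-up, however, is not justified and should be dropped: you define $\Tht$ by intersecting with ``the set where the Bethe algebra acts with simple spectrum'' and claim that nonemptiness of this set ``follows from the diagonalizability known for real parameters plus semicontinuity.'' Diagonalizability of the Bethe operators at real points (which comes from symmetry with respect to the Shapovalov form) does not give simplicity of the spectrum anywhere, and semicontinuity only propagates simplicity from a point where it is already known; in fact, simplicity of the spectrum for real parameters is one of the \emph{conclusions} of this paper (Theorem~\ref{BL}, Corollary~\ref{real}), so invoking it here would be circular. Fortunately the step is redundant: the completeness statement you take as input already gives a basis of eigenvectors whose associated operators $\D^\B_v=\D_X$ correspond to distinct $X$, and distinct operators force distinct characters of $\B$, which is exactly the simplicity you need on $\Tht$. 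With that detour removed, your argument coincides with the paper's proof.
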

\begin{proof}
The basis in part (i) is constructed by the Bethe ansatz method,
see~\cite{MV2}. The equality $\D^\B_v=\D_X$ is proved in~\cite{MTV1}.
The existence of an eigenvector $v$ in part (ii) for generic
$X\subset\Om_\lab$, is proved in~\cite{MV1}.
\end{proof}

For generic $\bs a=(a_1,\dots,a_n)$, the roots $(b_1,\dots,b_n)$ of
the polynomial $\sum_{i=0}^na_ix^i$ form a point in $\Theta$. Therefore,
Lemma~\ref{generic}, in particular, asserts that for generic values of
$\bs a\in\C^n$, the algebras $\B_\lba$ and $\O_\lba$
are both isomorphic to the direct sum of
$\dim\>(V^{\otimes n})_{\bs\la}^{sing}$ copies of $\C$.

The following corollary recovers a well-known fact.
\begin{cor}\label{degree}
The degree of the Wronski map equals\/
\,$\dim\>(V^{\otimes n})_{\bs\la}^{sing}$.
\qed
\end{cor}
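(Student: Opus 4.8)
The plan is to identify the degree of the Wronski map $\Wrl$ with the dimension of its scheme-theoretic generic fiber, and then use Lemma~\ref{generic} to compute that dimension on the representation-theoretic side. First I would recall that $\Wrl:\Om_\lab(\infty)\to\C^n$ is a morphism between an affine space of dimension $|\bs\la|=n$ and $\C^n$, and that by Lemma~\ref{local Wr} the scheme-theoretic fiber $\O_\lba=\O_{\bs\la}/I^\O_\lba$ is finite-dimensional with $\dim_\C\O_\lba$ independent of $\bs a$. By definition this common value is the degree of the Wronski map, so it suffices to evaluate $\dim_\C\O_\lba$ for one convenient choice of $\bs a$.

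Next I would invoke Lemma~\ref{generic}. Pick $\bs a=(a_1,\dots,a_n)\in\C^n$ generic enough that the roots $(b_1,\dots,b_n)$ of $u^n+\sum_{s=1}^n(-1)^sa_su^{n-s}$ lie in the Zariski-open set $\Tht$ of that lemma (in particular the $b_s$ are pairwise distinct). Part~(i) of Lemma~\ref{generic} gives a basis of $\bigl(\otimes_{s=1}^nV(b_s)\bigr)_{\bs\la}^{sing}$ consisting of Bethe eigenvectors $v$, with $\D^\B_v=\D_X$ for pairwise distinct $X\in\Xi\subset\Om_\lab(\infty)$; part~(ii) shows conversely that each such $X$, having Wronskian with root set $\{b_1,\dots,b_n\}$, arises from a unique eigenline. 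Hence the points of $\Xi$ whose Wronskian equals $u^n+\sum(-1)^sa_su^{n-s}$ are in bijection with this eigenbasis, so the set-theoretic fiber $\Wrli(\bs a)$ has exactly $\dim\bigl(\otimes_{s=1}^nV(b_s)\bigr)_{\bs\la}^{sing}$ points. Since $\otimes_{s=1}^nV(b_s)$ is isomorphic to $V^{\otimes n}$ as a $\gln$-module (distinct evaluation parameters do not change the underlying $\gln$-module structure of a tensor product of evaluation modules), this equals $\dim(V^{\otimes n})_{\bs\la}^{sing}$.

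Finally I would promote the set-theoretic count to a scheme-theoretic one: for generic $\bs a$ the differential operator $\D_X$ attached to each $X\in\Wrli(\bs a)$ has simple spectrum of the Bethe algebra, so the fiber is reduced — equivalently, $\O_\lba$ is a product of $\dim(V^{\otimes n})_{\bs\la}^{sing}$ copies of $\C$, as already noted in the paragraph following Lemma~\ref{generic}. Therefore $\deg\Wrl=\dim_\C\O_\lba=\dim(V^{\otimes n})_{\bs\la}^{sing}$. The only delicate point is ensuring the generic fiber is reduced rather than merely having the right number of points; this is exactly what the Bethe ansatz completeness statement in Lemma~\ref{generic} buys us, since distinct eigenvectors give distinct maximal ideals of $\B_\lba\cong\O_\lba$, forcing the algebra to be semisimple of the stated dimension.
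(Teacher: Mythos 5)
Your argument is correct and is essentially the paper's own: the degree is $\dim_\C\O_\lba$, independent of $\bs a$ by Lemma~\ref{local Wr}, and for generic $\bs a$ Lemma~\ref{generic} identifies the fiber with the Bethe eigenbasis, so that $\O_\lba$ (and $\B_\lba$) is a direct sum of $\dim\>(V^{\otimes n})_{\bs\la}^{sing}$ copies of $\C$, exactly as asserted in the paragraph preceding the corollary. Your appeal to $\B_\lba\cong\O_\lba$ to get reducedness is a forward reference to Theorem~\ref{second}, which is proved without using this corollary, so no circularity arises.
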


\subsection{Isomorphism of algebras $\O_{\bs\la}$ and $\B_{\bs\la}$}
In this section we show that the Bethe algebra $\B_{\bs\la}$, associated
with the space $\M_{\bs\la}=(\V^S)_{\bs\la}^{sing}$, see~\Ref{MMM},
is isomorphic to the algebra $\O_{\bs\la}$ of regular functions of
the Schubert cell $\Om_\lab(\infty)$, and that under this isomorphism
the $\B_{\bs\la}$-module $\M_{\bs\la}$ is isomorphic to
the regular representation of $\O_{\bs\la}$.

Consider the map
\vvn-.1>
\be
\tau_{\bs\la}:\O_{\bs\la}\to\B_{\bs\la}\,,\qquad
F_{ij}\mapsto\hat B_{ij}\,,
\vv.2>
\ee
where the elements $F_{ij}\in\O_{\bs\la}$ are defined by~\Ref{Fi} and
\>$\hat B_{ij}\in\B_{\bs\la}$ are the images of the elements $B_{ij}\in\B$,
defined by~\Ref{Bi}.

\begin{thm}
\label{first}
The map $\tau_{\bs\la}$ is a well-defined isomorphism of graded algebras.
\end{thm}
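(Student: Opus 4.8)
The plan is to establish that $\tau_{\bs\la}$ is (a) well-defined, (b) surjective, and (c) injective, and to do (a) and (b) essentially for free from the structural results already in place, deferring the real work to (c).

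First I would show $\tau_{\bs\la}$ is well-defined. The algebra $\O_{\bs\la}$ is freely generated by the coordinate functions $f_{ij}$, but by Lemma~\ref{coef alg} the elements $F_{ij}$ with $j>i$ also generate $\O_{\bs\la}$, so there is a surjective algebra map $\C[F_{ij}]\to\O_{\bs\la}$ whose kernel is the ideal of relations among the $F_{ij}$. To see $\tau_{\bs\la}$ respects these relations, I would invoke Lemma~\ref{generic}: for generic $\bs a$ the operator $\D^\B$ acting on a Bethe eigenvector $v$ with $\D^\B_v=\D_X$ has coefficients that are exactly the values of $F_i(u)$ at the corresponding point $X\in\Om_\lab(\infty)$, hence $\hat B_{ij}$ and $F_{ij}$ satisfy the same polynomial relations on a Zariski dense set of joint eigenvalues; since $\O_{\bs\la}$ is a polynomial ring (in particular reduced) this forces every relation among the $F_{ij}$ to hold among the $\hat B_{ij}$. (Equivalently: the universal differential operator $\D^\B$ acting on $\M_{\bs\la}$ defines, via its coefficients, an algebra homomorphism from $\B_{\bs\la}$ to operators, and the relation $\D^\O_{\bs\la}f_i(u)=0$ together with the fact that $\V^S/I^\V_{\bs a}$ is the Weyl module $\otimes W_{n_s}(b_s)$ lets one match coefficients.) Surjectivity of $\tau_{\bs\la}$ is then immediate, because the $\hat B_{ij}$ with $j>i$ are among the generators of $\B_{\bs\la}$ and, together with $\hat B_{ii}$, generate it; but by~\Ref{Bii} and~\Ref{Zxv} each $\hat B_{ii}$ acts on $\M_{\bs\la}$ as the scalar determined by the weight $\bs\la$, hence lies in the subalgebra generated by the $\hat B_{ij}$, $j>i$ (indeed in $\C\cdot1$), so $\tau_{\bs\la}$ hits all generators.

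The main obstacle is injectivity, and the plan is to prove it by a graded-dimension count rather than directly. Both $\O_{\bs\la}$ and $\B_{\bs\la}$ are graded — $\O_{\bs\la}$ by Lemma~\ref{F deg}, and $\B_{\bs\la}$ because $\M_{\bs\la}=(\V^S)^{sing}_{\bs\la}$ is a graded $\glnt$-module (Lemma~\ref{cycl grad}) and the $B_{ij}$ are homogeneous (Lemma~\ref{B deg}) — and $\tau_{\bs\la}$ is visibly degree-preserving since it sends the degree-$(j-i)$ element $F_{ij}$ to the degree-$(j-i)$ element $\hat B_{ij}$. Since $\tau_{\bs\la}$ is a surjection of graded algebras, it is an isomorphism if and only if $\ch(\O_{\bs\la})=\ch(\B_{\bs\la})$. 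The graded character of $\O_{\bs\la}$ is given explicitly in Lemma~\ref{char O}. For $\B_{\bs\la}$, the key point is that $\B_{\bs\la}$, acting on $\M_{\bs\la}$, is a \emph{cyclic} $\B_{\bs\la}$-module generated by (the image of) $v_+^{\otimes n}$ — this follows from Lemma~\ref{cycl=symm}, since $\V^S=\Uglnt v_+^{\otimes n}$ and $\B$ commutes with $\Ugln$, so a highest-weight argument shows $\B v_+^{\otimes n}$ already exhausts $(\V^S)^{sing}_{\bs\la}$ in the appropriate sense — which would give $\ch(\B_{\bs\la})\ge\ch(\M_{\bs\la})$ coefficientwise (and in fact the module map $\B_{\bs\la}\to\M_{\bs\la}$ is a graded surjection, so $\ch(\B_{\bs\la})\ge\ch(\M_{\bs\la})$, with equality iff the module is free of rank one). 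On the other hand $\ch(\M_{\bs\la})=\ch((\V^S)^{sing}_{\bs\la})$ can be computed directly: by Lemma~\ref{VSfree}, $\V^S$ is $\C[\bs z]^S$-free, and decomposing $V^{\otimes n}$ into $\gln$-isotypic pieces reduces this to the character computation already recorded for Weyl modules in Lemma~\ref{weyl}(iv), summed with the Poincaré series $\prod(1-q^j)^{-1}$ of $\C[\bs z]^S$; the resulting sum telescopes to exactly the formula in Lemma~\ref{char O}. Comparing, $\ch(\B_{\bs\la})\ge\ch(\M_{\bs\la})=\ch(\O_{\bs\la})\ge\ch(\B_{\bs\la})$ where the last inequality is surjectivity of $\tau_{\bs\la}$; hence all are equal and $\tau_{\bs\la}$ is an isomorphism. (As a byproduct the cyclic $\B_{\bs\la}$-module $\M_{\bs\la}$ is free of rank one, which is what Theorem~\ref{first1} will assert.)

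The step I expect to be genuinely delicate is the chain of character (in)equalities: specifically, proving that $\B_{\bs\la}$ acts on $\M_{\bs\la}$ with cyclic vector $v_+^{\otimes n}$ — so that $\ch(\B_{\bs\la})\le\ch(\M_{\bs\la})$, which combined with surjectivity $\ch(\B_{\bs\la})\ge\ch(\O_{\bs\la})$ and the identity $\ch(\O_{\bs\la})=\ch(\M_{\bs\la})$ closes the loop. The cyclicity of $\M_{\bs\la}$ over $\B_{\bs\la}$ is the crux, and it is exactly here that the commutativity of $\B$ with $\Ugln$ (so that $\B$ preserves and acts irreducibly-compatibly on singular-vector spaces) together with Lemma~\ref{cycl=symm} must be combined carefully; the rest is assembling Lemmas~\ref{char O}, \ref{VSfree}, \ref{weyl}, and the generic-semisimplicity of Lemma~\ref{generic} (which guarantees $\O_{\bs\la}$ and $\B_{\bs\la}$ have the same classical point count, pinning down the top-degree behaviour). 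Everything else is bookkeeping with gradings.
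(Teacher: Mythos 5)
Your treatment of well-definedness and of surjectivity is sound and is essentially the paper's own argument (Zariski density of the set $\Tht$ from Lemma~\ref{generic}\,(i), plus the fact that the $\hat B_{ij}$ generate $\B_{\bs\la}$). The gap is in your injectivity step. You base the inequality $\ch(\B_{\bs\la})\ge\ch(\M_{\bs\la})$ on the claim that $\M_{\bs\la}$ is a cyclic $\B_{\bs\la}$-module generated by (the image of) $v_+^{\otimes n}$, justified by Lemma~\ref{cycl=symm} and a ``highest-weight argument''. This cannot work: $v_+^{\otimes n}$ has $\gln$-weight $(n,0,\dots,0)$, and $\B$ commutes with $\Ugln$, in particular with the Cartan subalgebra, so $\B\,v_+^{\otimes n}$ is contained in the weight-$(n,0,\dots,0)$ subspace and can never reach $\M_{\bs\la}=(\V^S)^{sing}_{\bs\la}$ for $\bs\la\ne(n,0,\dots,0)$; cyclicity of $\V^S$ over $\Uglnt$ does not descend to cyclicity of a weight component over the commutative algebra $\B_{\bs\la}$. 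The corrected claim --- that $\M_{\bs\la}$ is $\B_{\bs\la}$-cyclic, generated by its one-dimensional lowest-degree component --- is exactly the content of Theorem~\ref{first1}, which the paper deduces \emph{from} Theorem~\ref{first}; taking it as an input without an independent proof makes your argument circular. (A smaller bookkeeping point: the identity you need is $\ch(\M_{\bs\la})=q^{\sum_{i}(i-1)\la_i}\,\ch(\O_{\bs\la})$, not literal equality, so the chain of inequalities must carry this shift, which forces the cyclic vector to sit in degree $\sum_i(i-1)\la_i$.)

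The repair is either to argue injectivity directly, as the paper does, or to replace cyclicity by a torsion-freeness argument. The paper's route: if $R(F_{ij})\ne0$ in $\O_{\bs\la}$, its value at some $X\in\Xi$ is nonzero; by Lemma~\ref{generic}\,(ii), specializing $z_1,\dots,z_n$ to the roots of $\Wr_X$ produces an eigenvector $v$ with $\D^\B_v=\D_X$, so $R(\hat B_{ij})$ acts on $v$ by the nonzero scalar $R(F_{ij})(X)$ and hence is nonzero in $\B_{\bs\la}$ --- no characters needed. Alternatively, within your framework: for any nonzero $v\in\M_{\bs\la}$ the map $F\mapsto\tau_{\bs\la}(F)\,v$ has kernel an ideal of $\O_{\bs\la}$ meeting $\C[\Sig_1,\dots,\Sig_n]$ trivially, since $\tau_{\bs\la}(\Sig_i)=\si_i(\bs z)$ (Lemma~\ref{symm OK}) and $\M_{\bs\la}$ is a torsion-free $\C[z_1,\dots,z_n]^S$-module (Lemma~\ref{VSfree}); as $\O_{\bs\la}$ is a domain, module-finite over $\C[\Sig_1,\dots,\Sig_n]$ (Remark after Lemma~\ref{local Wr}), any nonzero ideal meets that subalgebra nontrivially, so the kernel is zero and a fortiori $\tau_{\bs\la}$ is injective; this is the paper's Lemma~\ref{muinject} moved forward. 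The graded-character comparison then belongs where the paper uses it, namely in proving surjectivity of $\mu_{\bs\la}$ in Theorem~\ref{first1}, not in Theorem~\ref{first}.
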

\begin{proof}
Let a polynomial $R(F_{ij})$ in generators $F_{ij}$ be equal to zero
in $\O_{\bs\la}$. Let us prove that the corresponding polynomial
$R(\hat B_{ij})$ is equal to zero in the $\B_{\bs\la}$. Indeed,
$R(\hat B_{ij})$ is a polynomial in $z_1,\dots,z_n$ with values in
$\End\bigl((V^{\otimes n})_{\bs\la}^{sing}\bigr)$. Let $\Tht$ be the set,
introduced in Lemma~\ref{generic}, and $(b_1,\dots,b_n)\in\Tht$.
Then by part (i) of Lemma~\ref{generic}, the value of the polynomial
$R(\hat B_{ij})$ at $z_1=b_1,\dots,z_n=b_n$ equals zero. Hence, the polynomial
$R(\hat B_{ij})$ equals zero identically and the map $\tau_{\bs\la}$ is
well-defined.

By Lemmas~\ref{F deg} and~\ref{B deg} the elements $F_{ij}$ and $\hat B_{ij}$
are of the same degree. Therefore, the map $\tau_{\bs\la}$ is graded.

Let a polynomial $R(F_{ij})$ in generators $F_{ij}$ be a nonzero element of
$\O_{\bs\la}$. Then the value of $R(F_{ij})$ at a generic point
$X\in\Om_\lab(\infty)$ is not equal to zero. Then by part (ii) of
Lemma~\ref{generic}, the polynomial $R(\hat B_{ij})$ is not identically equal
to zero. Therefore, the map $\tau_{\bs\la}$ is injective.

Since the elements $\hat B_{ij}$ generate the algebra $\B_{\bs\la}\,$,
the map $\tau_{\bs\la}$ is surjective.
\end{proof}

The algebra $\C[z_1,\dots,z_n]^S$ is embedded into the algebra $\B_{\bs\la}$
as the subalgebra of operators of multiplication by symmetric polynomials,
see Lemmas~\ref{Uz} and formula~\Ref{Bii}. The algebra $\C[z_1,\dots,z_n]^S$
is embedded into the algebra $\O_{\bs\la}$\>, the elementary symmetric
polynomials $\si_1(\bs z),\dots,\si_n(\bs z)$ being mapped to the elements
$\Sig_1,\dots,\Sig_n$, defined by~\Ref{Wr coef}. These embeddings give
the algebras $\B_{\bs\la}$ and $\O_{\bs\la}$ the structure of
$\C[z_1,\dots,z_n]^S\<$-modules.

\begin{lem}\label{symm OK}
We have $\tau_{\bs\la}(\Sig_i)=\si_i(\bs z)$ \>for all\/ \>$i=1,\dots,n$.
In particular, the map $\,\tau_{\bs\la}:\O_{\bs\la}\to\B_{\bs\la}$ is
an isomorphism of $\,\C[z_1,\dots,z_n]^S\<$-modules.
\end{lem}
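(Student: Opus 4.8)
The plan is to show that the isomorphism $\tau_{\bs\la}$ of Theorem~\ref{first} sends $\Sig_i$ to $\si_i(\bs z)$, and then read off the statement about $\C[z_1,\dots,z_n]^S$-module structures as a formal consequence. First I would recall how the two copies of $\C[z_1,\dots,z_n]^S$ sit inside the respective algebras. On the Schubert side, the elements $\Sig_1,\dots,\Sig_n\in\O_{\bs\la}$ are the coefficients of the Wronskian $\Wr(f_1(u),\dots,f_N(u))$ up to the overall constant $\prod_{i<j}(d_j-d_i)$, see~\Ref{Wr coef}. On the Bethe side, the symmetric polynomials $\si_i(\bs z)$ are, by Lemma~\ref{Uz}, exactly the operators of multiplication coming from $U(\z_N[t])$; concretely, the power sums $\sum_s z_s^r$ are the images of $\sum_i e_{ii}\otimes t^r$, and these are recovered from the coefficients $B_{1j}$ of $\D^\B$ via $B_1(u)=-\sum_i e_{ii}(u)$, see~\Ref{B1}. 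So the subalgebra $\C[z_1,\dots,z_n]^S\subset\B_{\bs\la}$ is generated by the $\hat B_{1j}$.

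The key computation is to match these two descriptions through $\tau_{\bs\la}$. By definition $\tau_{\bs\la}(F_{ij})=\hat B_{ij}$, in particular $\tau_{\bs\la}(F_{1j})=\hat B_{1j}$. Now $F_1(u)$ is, up to sign, the logarithmic derivative contribution of the Wronskian: writing $\D^\O_{\bs\la}=\frac1{\Wr}\,\Tilde\D^\O_{\bs\la}$ as in~\Ref{DOla}, \Ref{DOlat}, one has $F_1(u)=-(\log\Wr(f_1(u),\dots,f_N(u)))'$, i.e. $F_1(u)\,\Wr=-\Wr'$. Comparing coefficients of powers of $u^{-1}$ in $F_1(u)=\sum_j F_{1j}u^{-j}$ with the expansion of $-\Wr'/\Wr$ expresses the $F_{1j}$ in the same way that $B_{1j}$ (equivalently $-\sum_i e_{ii}\otimes t^{j-1}$) determine the power sums $\sum_s z_s^r$ on the Bethe side. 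Since the generating series $-\sum_i e_{ii}(u)$ acts on $\V^S$ as multiplication by $\sum_s (u-z_s)^{-1}=-(\log\prod_s(u-z_s))'$, and $\prod_s(u-z_s)=u^n+\sum_s(-1)^s\si_s(\bs z)u^{n-s}$, the identical recursion forces $\tau_{\bs\la}(\Sig_i)=\si_i(\bs z)$ once we know $\tau_{\bs\la}(F_{1j})=\hat B_{1j}$. I would carry this out by observing that both $\Sig_i$ (via the monic polynomial $\Wr/\prod_{i<j}(d_j-d_i)$) and $\si_i(\bs z)$ are polynomials in the respective logarithmic-derivative coefficients $F_{1j}$, $B_{1j}$ given by the \emph{same} universal Newton-type formulas, hence are intertwined by any algebra map sending $F_{1j}\mapsto \hat B_{1j}$.

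Granting $\tau_{\bs\la}(\Sig_i)=\si_i(\bs z)$, the module statement is immediate: the $\C[z_1,\dots,z_n]^S$-module structure on $\O_{\bs\la}$ is by multiplication after the substitution $\si_i(\bs z)\mapsto\Sig_i$, and on $\B_{\bs\la}$ by multiplication by $\si_i(\bs z)$; since $\tau_{\bs\la}$ is an algebra isomorphism (Theorem~\ref{first}) carrying $\Sig_i$ to $\si_i(\bs z)$, it is automatically $\C[z_1,\dots,z_n]^S$-linear, and being bijective it is an isomorphism of $\C[z_1,\dots,z_n]^S$-modules.

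The main obstacle I anticipate is purely bookkeeping: making precise that the coefficients of $\Wr(f_1(u),\dots,f_N(u))$, after normalizing to a monic polynomial of degree $n$, are expressed through the expansion coefficients $F_{1j}$ of $F_1(u)=-\Wr'/\Wr$ by exactly the same closed formula (essentially Newton's identities relating power sums to elementary symmetric functions) that relates the $\si_i(\bs z)$ to the operators $\sum_s z_s^r$, i.e. to the $B_{1j}$. This is a short but slightly fiddly generating-function manipulation; no genuine mathematical difficulty lies beyond it, since the rest follows formally from Theorem~\ref{first} and Lemma~\ref{Uz}.
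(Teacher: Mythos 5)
Your proposal is correct and follows essentially the same route as the paper: the paper's proof consists precisely of citing $F_1(u)=-\Wr'(f_1,\dots,f_N)/\Wr(f_1,\dots,f_N)$ together with $B_1(u)=-\sum_i e_{ii}(u)$ (which acts on $\V^S$ as multiplication by $-\sum_s(u-z_s)^{-1}$), so that $\tau_{\bs\la}(F_{1j})=\hat B_{1j}$ forces $\tau_{\bs\la}(\Sig_i)=\si_i(\bs z)$ via the identical logarithmic-derivative (Newton-type) relations. You merely make explicit the generating-function bookkeeping that the paper leaves implicit, and the deduction of the module statement from Theorem~\ref{first} is the intended one.
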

\begin{proof}
The lemma follows from the fact that
\beq
\label{F1}
F_1(u)\,=\,-\,\frac{\Wr'(f_1(u),\dots,f_N(u))}{\Wr(f_1(u),\dots,f_N(u))}\ ,
\eeq
where $\,'$ denotes the derivative with respect to $u$, and formula~\Ref{B1}.
\end{proof}

Given a vector $v\in\M_{\bs\la}$, consider a linear map
\be
\mu_v:\O_{\bs\la}\to\M_{\bs\la}\,,\qquad F\mapsto\tau_{\bs\la}(F)\,v\,.
\ee

\begin{lem}
\label{muinject}
If\/ $v\in\M_{\bs\la}$ is nonzero, then the map \>$\mu_v$ is injective.
\end{lem}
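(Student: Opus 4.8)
The plan is to show that $\mu_v$ has trivial kernel by exploiting the $\C[z_1,\dots,z_n]^S$-module structure and the fact that $\O_{\bs\la}$ is a \emph{free} $\C[z_1,\dots,z_n]^S$-module (from the Remark after Lemma~\ref{local Wr}, or more elementarily from Lemma~\ref{VSfree} on the Bethe side). First I would observe that $\mu_v$ is a morphism of $\C[z_1,\dots,z_n]^S$-modules: by Lemma~\ref{symm OK}, $\tau_{\bs\la}$ intertwines multiplication by $\si_i(\bs z)$ on $\O_{\bs\la}$ with the action of the corresponding operator on $\M_{\bs\la}$, so $\mu_v(\si_i(\bs z)F) = \si_i(\bs z)\,\mu_v(F)$. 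Hence $\ker\mu_v$ is a $\C[z_1,\dots,z_n]^S$-submodule of $\O_{\bs\la}$.

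Next I would localize. Suppose $\ker\mu_v \ne 0$. Since $\O_{\bs\la}$ is a finitely generated free module over the polynomial ring $\C[z_1,\dots,z_n]^S$, a nonzero submodule must survive under localization at some maximal ideal $I_{\bs a}$ of $\C[z_1,\dots,z_n]^S$, or — more usefully — specializing to a \emph{generic} point $\bs a\in\C^n$ will detect it. Concretely, pick $\bs a$ in the Zariski-dense set where Lemma~\ref{generic} applies and where, additionally, the chosen nonzero element $R\in\ker\mu_v$ has nonzero image in the fiber $\O_\lba = \O_{\bs\la}/I^\O_\lba$; such $\bs a$ exist because $R\ne 0$ and $\O_{\bs\la}$ is free over $\C[z_1,\dots,z_n]^S$, so $R$ does not lie in $I^\O_{\bs a'}$ for generic $\bs a'$. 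Under the isomorphism $\tau_{\bs\la}$, the ideal $I^\O_\lba$ maps to the ideal in $\B_{\bs\la}$ generated by $\si_i(\bs z)-a_i$, which (via Lemma~\ref{Uz} and Lemma~\ref{factor=weyl}) governs the passage to the Weyl module; for generic $\bs a$ this fiber algebra is, by the discussion following Lemma~\ref{generic}, the split semisimple algebra $\C^{\oplus m}$ with $m=\dim(V^{\otimes n})_{\bs\la}^{sing}$, acting on the corresponding weight-$\bs\la$ singular space of $\otimes_s V(b_s)$ with \emph{one-dimensional} eigenspaces.

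The key step is then: in that semisimple fiber, an element acting as zero on a nonzero vector $\bar v$ must vanish on the eigenline containing a nonzero component of $\bar v$, hence cannot be a unit times any idempotent supporting $\bar v$; combined with one-dimensionality of eigenspaces (part (i) of Lemma~\ref{generic}), the image of $R$ in the fiber annihilates $\bar v$ only if it lies in the complementary ideal — but since $\bar v\ne 0$ has a nonzero component in \emph{every} block where its support lies, annihilating $\bar v$ forces the image of $R$ to be $0$ in $\O_\lba$, contradicting our choice of $\bs a$. The one subtlety to handle carefully is that $v\in\M_{\bs\la}$ may map to zero in the Weyl-module quotient $\M_\lba$ for a bad choice of $\bs a$; this is why $\bs a$ must be chosen generically \emph{depending on $v$} as well — again possible because $v\ne 0$ in the free module $\M_{\bs\la}$ over $\C[z_1,\dots,z_n]^S$ (Lemma~\ref{VSfree}), so $v\notin I^\V_{\bs a}$ generically.

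The main obstacle I anticipate is the bookkeeping in coordinating the three genericity conditions simultaneously — that $\bs a\in\Tht$, that $R\notin I^\O_\lba$, and that $v\notin I^\V_{\bs a}$ — and then translating "acts as zero on a nonzero vector in a semisimple algebra with simple spectrum" into the clean conclusion $R=0$. An alternative, cleaner route that avoids some of this: use that $\O_{\bs\la}$ is free (hence faithfully flat) over $\C[z_1,\dots,z_n]^S$ together with the fact that $\M_{\bs\la}$ is also free of the same rank over $\C[z_1,\dots,z_n]^S$ (Lemma~\ref{VSfree} gives rank $N^n$; restricting to the weight-$\bs\la$ singular part gives rank $\dim(V^{\otimes n})_{\bs\la}^{sing} = \operatorname{rk}_{\C[z_1,\dots,z_n]^S}\O_{\bs\la}$ by the Remark after Lemma~\ref{local Wr}), so $\mu_v$ is a map of free modules of equal finite rank; it suffices to check injectivity after tensoring with the fraction field of $\C[z_1,\dots,z_n]^S$, i.e. at a single generic point, where it reduces exactly to the generic semisimple situation of Lemma~\ref{generic}(i),(ii). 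I expect the authors' proof to follow essentially this second line.
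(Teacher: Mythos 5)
There is a genuine gap at the crucial step. After specializing to a generic $\bs a\in\Tht$ with $\bar R\neq0$ in $\O_\lba$ and $\bar v\neq0$ in $\M_\lba$, you only know that $\bar R\,\bar v=0$ in a fiber where the Bethe algebra acts semisimply with one-dimensional eigenspaces. This forces the eigenvalue of $\bar R$ to vanish on those eigenlines where $\bar v$ has a nonzero component, and nothing more: $\bar R$ may perfectly well be nonzero on the complementary eigenlines, so the asserted conclusion ``annihilating $\bar v$ forces the image of $R$ to be $0$ in $\O_\lba$'' is a non sequitur unless you also prove that $\bar v$ is a cyclic vector (has nonzero components along \emph{all} eigenlines) for generic $\bs a$ --- and that assertion is essentially equivalent to the lemma itself, not something given by Lemma~\ref{generic}. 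The same defect undermines your ``cleaner route'': reducing to the generic fiber (or to the fraction field of $\C[z_1,\dots,z_n]^S$) is legitimate because the modules are free, but injectivity of $F\mapsto\tau_{\bs\la}(F)\bar v$ in that fiber for an \emph{arbitrary} nonzero $\bar v$ is exactly the unproved cyclicity statement. To see that the missing ingredient is real and not bookkeeping, note that every fact you invoke (freeness over $\C[z_1,\dots,z_n]^S$, equal ranks, generic semisimplicity, simple spectrum) would also hold if $\O_{\bs\la}$ were replaced by a product of two such algebras, in which case the lemma is false: take $v$ supported in one factor and $R$ in the other. Hence any correct proof must use that $\O_{\bs\la}$ has no zero divisors, i.e.\ that it is a polynomial algebra (the Schubert cell $\Om_\lab(\infty)$ is irreducible), and your proposal never does.

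That is precisely how the paper argues, and the argument is short: $\ker\mu_v$ is an ideal of $\B_{\bs\la}\simeq\O_{\bs\la}$; it meets $\C[z_1,\dots,z_n]^S$ trivially, because $\M_{\bs\la}\subset\V$ is torsion-free over the symmetric polynomials, so a nonzero symmetric polynomial cannot kill the nonzero vector $v$; and since $\O_{\bs\la}$ is a domain which is a finite module over $\C[z_1,\dots,z_n]^S$ (Lemma~\ref{local Wr}), any nonzero ideal contains a nonzero element of $\C[z_1,\dots,z_n]^S$ (from an integral dependence relation of minimal degree, whose constant term is nonzero). Therefore $\ker\mu_v=0$. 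Your geometric picture can be repaired along these lines --- the common zero locus of $\ker\mu_v$ maps onto a dense subset of $\C^n$ under the Wronski map, hence has dimension $n=\dim\Om_\lab(\infty)$, and irreducibility of the cell plus reducedness of $\O_{\bs\la}$ then forces $\ker\mu_v=0$ --- but the irreducibility/domain input must be made explicit; without it the argument does not close.
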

\begin{proof}
The algebra $\O_{\bs\la}$ is a free polynomial algebra containing
the subalgebra $\C[z_1,\dots,z_n]^S$. By Lemma~\ref{local Wr}, the
quotient algebra $\O_{\bs\la}/\C[z_1,\dots,z_n]^S$ is finite-dimensional.
The kernel of $\mu_v$ is an ideal in $\B_{\bs\la}$ which has zero
intersection with $\C[z_1,\dots,z_n]^S$ and, therefore, is the zero ideal.
\end{proof}

By Lemmas~\ref{VSfree} and~\ref{cycl grad}, the space $\M_{\bs\la}$ is
a free graded $\C[z_1,\dots,z_n]^S$-module. By Lemma~\ref{factor=weyl},
the generators of this module can be identified with a basis in
$(W_n)_{\bs\la}^{sing}$. Therefore, the graded character of $\M_{\bs\la}$
is given by the formula
\beq
\label{char cycl}
\ch(\M_{\bs\la})\,=\,\frac{\ch\bigl((W_n)_{\bs\la}^{sing}\bigr)}{(q)_n}\ .
\eeq
This equality and part~(iv) of Lemma~\ref{weyl} imply that the degree of
any vector in $\M_{\bs\la}$ is at least $\sum_{i=1}^N(i-1)\la_i$ and the
homogeneous component of $\M_{\bs\la}$ of degree $\sum_{i=1}^N(i-1)\la_i$ is
one-dimensional.

Fix a nonzero vector $v\in\M_{\bs\la}$ of degree $\sum_{i=1}^N(i-1)\la_i$.
Denote the map $\mu_v$ by $\mu_{\bs\la}$.

\begin{thm}
\label{first1}
The map\/ \>$\mu_{\bs\la}:\B_{\bs\la}\to\M_{\bs\la}$ is an isomorphism
of degree $\sum_{i=1}^N(i-1)\la_i$ of graded vector spaces.
For any $F,G\in\O_{\bs\la}$, we have
\beq
\label{mutau}
\mu_{\bs\la}(FG)\,=\,\tau_{\bs\la}(F)\,\mu_{\bs\la}(G)\,.
\eeq
In other words, the maps\/ $\tau_{\bs\la}$ and\/ $\mu_{\bs\la}$ give
an isomorphism of the regular representation of\/ $\O_{\bs\la}$ and
the\/ $\B_{\bs\la}$-module $\M_{\bs\la}$.
\end{thm}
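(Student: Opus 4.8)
The plan is to deduce Theorem~\ref{first1} from Theorem~\ref{first}, the character identity~\Ref{char cycl}, and a dimension count via the subalgebra $\C[z_1,\dots,z_n]^S$. By Theorem~\ref{first}, $\tau_{\bs\la}$ is an isomorphism of graded algebras, so property~\Ref{mutau} is immediate from the definition $\mu_{\bs\la}(F)=\tau_{\bs\la}(F)\,v$: indeed $\mu_{\bs\la}(FG)=\tau_{\bs\la}(FG)\,v=\tau_{\bs\la}(F)\,\tau_{\bs\la}(G)\,v=\tau_{\bs\la}(F)\,\mu_{\bs\la}(G)$. So the only real content is that $\mu_{\bs\la}$ is a bijection (in the appropriate graded sense). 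Since $v$ is homogeneous of degree $\sum_{i=1}^N(i-1)\la_i$ and $\tau_{\bs\la}$ is graded, $\mu_{\bs\la}$ shifts degrees by $\sum_{i=1}^N(i-1)\la_i$; injectivity is exactly Lemma~\ref{muinject}. Thus everything reduces to surjectivity, equivalently (given injectivity and finiteness of graded components) to the equality of graded characters
\be
q^{\sum_{i=1}^N(i-1)\la_i}\,\ch(\O_{\bs\la})\,=\,\ch(\M_{\bs\la})\,.
\ee

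To prove this character identity I would combine three inputs already available in the excerpt. First, Lemma~\ref{char O} gives
$\ch(\O_{\bs\la})=\prod_{1\le i<j\le N}(1-q^{d_i-d_j})\big/\prod_{i=1}^N(q)_{d_i}$ with $d_i=\la_i+N-i$. Second, Lemma~\ref{weyl}(iv) gives
\be
\ch((W_n)_{\bs\la}^{sing})\,=\,\frac{(q)_n\prod_{1\le i<j\le N}(1-q^{\la_i-\la_j+j-i})}{\prod_{i=1}^N(q)_{\la_i+N-i}}\,q^{\sum_{i=1}^N(i-1)\la_i}\,.
\ee
Third, formula~\Ref{char cycl} states $\ch(\M_{\bs\la})=\ch((W_n)_{\bs\la}^{sing})/(q)_n$. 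Substituting, $\ch(\M_{\bs\la})=\prod_{1\le i<j\le N}(1-q^{\la_i-\la_j+j-i})\big/\prod_{i=1}^N(q)_{\la_i+N-i}\cdot q^{\sum_{i=1}^N(i-1)\la_i}$, and since $\la_i-\la_j+j-i=d_i-d_j$ this is precisely $q^{\sum_{i=1}^N(i-1)\la_i}\ch(\O_{\bs\la})$, as desired. (Note $\tau_{\bs\la}$ being a graded isomorphism already forces $\ch(\B_{\bs\la})=\ch(\O_{\bs\la})$, so the displayed identity also reads $\ch(\M_{\bs\la})=q^{\sum(i-1)\la_i}\ch(\B_{\bs\la})$.)

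Finally I would assemble the argument: $\mu_{\bs\la}$ is a degree-$\sum_{i=1}^N(i-1)\la_i$ map of graded vector spaces $\B_{\bs\la}\to\M_{\bs\la}$; it is injective by Lemma~\ref{muinject} (here using that the quotient $\O_{\bs\la}/\C[z_1,\dots,z_n]^S$ is finite-dimensional by Lemma~\ref{local Wr}, so any nonzero ideal of $\B_{\bs\la}$ meets $\C[z_1,\dots,z_n]^S$ nontrivially, while $\ker\mu_v$ meets it in zero because $\M_{\bs\la}$ is a free $\C[z_1,\dots,z_n]^S$-module by Lemmas~\ref{VSfree} and~\ref{cycl grad}); and on each homogeneous component the source and target have equal dimension by the character identity, so injectivity forces bijectivity degree by degree. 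Combined with~\Ref{mutau}, this says exactly that $(\tau_{\bs\la},\mu_{\bs\la})$ intertwines the regular representation of $\O_{\bs\la}$ with the $\B_{\bs\la}$-module $\M_{\bs\la}$. The only step requiring genuine care is matching the two Wronskian-type products $\prod(1-q^{d_i-d_j})$ and $\prod(1-q^{\la_i-\la_j+j-i})$ and the $(q)_{d_i}$ versus $(q)_{\la_i+N-i}$ factors — purely bookkeeping once one writes $d_i=\la_i+N-i$ — and making sure the freeness/finiteness facts feeding Lemma~\ref{muinject} are invoked correctly; there is no deeper obstacle.
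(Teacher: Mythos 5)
Your proposal is correct and follows essentially the same route as the paper's own proof: injectivity from Lemma~\ref{muinject}, the degree shift by $\sum_{i=1}^N(i-1)\la_i$ from the choice of $v$, surjectivity from the equality of graded characters obtained by combining Lemma~\ref{char O}, formula~\Ref{char cycl}, and Lemma~\ref{weyl}(iv) (using $d_i=\la_i+N-i$), and \Ref{mutau} from Theorem~\ref{first}. You merely spell out the character bookkeeping that the paper leaves implicit.
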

\begin{proof}
The map $\mu_{\bs\la}$ is injective by Lemma~\ref{muinject}.
The map $\mu_{\bs\la}$ shifts the degree by $\sum_{i=1}^N(i-1)\la_i$.
Lemma~\ref{char O}, formula~\Ref{char cycl}, and part~(iv) of Lemma~\ref{weyl}
imply that $\ch\bigl(\mu_{\bs\la}(\O_{\bs\la})\bigr)=\ch(\M_{\bs\la})$.
Hence, the map $\mu_{\bs\la}$ is surjective.
Formula~\Ref{mutau} follows from Theorem~\ref{first}.
\end{proof}

\subsection{Isomorphism of algebras $\O_\lba$ and $\B_\lba$}
Let $\bs a=(a_1,\dots,a_N)$ be a sequence of complex numbers.
Let distinct complex numbers $b_1,\dots,b_k$ and integers $n_1,\dots,n_k$
be given by~\Ref{ab}.

In this section we show that the Bethe algebra $\B_\lba$\,, associated with the
space $\M_\lba=(\otimes_{s=1}^kW_{n_s}(b_s))_{\bs\la}^{sing}$, see~\Ref{MMM},
\vvn.1>
is isomorphic to the algebra $\O_\lba$ of functions on the preimage
$\Wrli(\bs a)$. We also show that under this isomorphism the $\B_\lba$-module
$\M_\lba$ is isomorphic to the regular representation of $\O_\lba$.

\sskip
Let $I^\B_\lba\subset\B_{\bs\la}$ be the ideal generated by
the elements $\si_i(\bs z)-a_i$, $i=1,\dots,n$. Consider the subspace
$\,I^\M_\lba=I^\B_\lba\>\M_{\bs\la}=I^\V_{\bs a}\cap\M_{\bs\la}$,
where $I^\V_{\bs a}$ is given by~\Ref{IVa}. Recall the ideal
$I^\O_\lba$ defined in Section~\ref{wronski}.

\begin{lem}
\label{identify}
We have
\vvn.2>
\be
\tau_{\bs\la}({I^\O_\lba})={I^\B_\lba}\,,\qquad
\mu_{\bs\la}({I^\O_\lba})={I^\M_\lba}\,,\qquad
\B_\lba=\B_{\bs\la}/{I^\B_\lba}\,,\qquad
\M_\lba=\M_{\bs\la}/{I^\M_\lba}\,.
\vv.4>
\ee
\end{lem}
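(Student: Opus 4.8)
The plan is to verify the four claimed equalities in turn, working from the algebra side to the module side and from the current-algebra side to the geometric side. The key organizing observation is Lemma~\ref{factor=weyl}, which identifies $\V^S/I^\V_{\bs a}$ with the Weyl module $\otimes_{s=1}^kW_{n_s}(b_s)$ as a $\glnt$-module; since the Bethe algebra $\B$ commutes with $\Ugln$ and hence preserves singular vectors and weight spaces, this identification restricts to an identification of the $\B$-modules $\M_{\bs\la}/({I^\V_{\bs a}}\cap\M_{\bs\la})$ and $\M_\lba=(\otimes_{s=1}^kW_{n_s}(b_s))_{\bs\la}^{sing}$. So the last equality, $\M_\lba=\M_{\bs\la}/{I^\M_\lba}$, will follow once we know $I^\M_\lba={I^\V_{\bs a}}\cap\M_{\bs\la}$, which is exactly the second displayed definition of $I^\M_\lba$ in the paragraph preceding the lemma; here I would use Lemma~\ref{Uz}, which says the central subalgebra $U(\z_N[t])$ acts on $\V$ by multiplication by symmetric polynomials, so that acting by $\si_i(\bs z)-a_i$ on $\M_{\bs\la}$ is the same as multiplying by $\si_i(\bs z)-a_i$, and the two descriptions of $I^\M_\lba$ genuinely agree.

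Next I would prove $\tau_{\bs\la}({I^\O_\lba})={I^\B_\lba}$. By definition $I^\O_\lba$ is generated by $\Sig_s-a_s$, $s=1,\dots,n$, and $I^\B_\lba$ by $\si_s(\bs z)-a_s$. By Lemma~\ref{symm OK} we have $\tau_{\bs\la}(\Sig_s)=\si_s(\bs z)$, and since $\tau_{\bs\la}$ is a ring isomorphism by Theorem~\ref{first} it carries the ideal generated by the $\Sig_s-a_s$ onto the ideal generated by the $\si_s(\bs z)-a_s$; this gives the first equality and simultaneously the third, $\B_\lba=\B_{\bs\la}/{I^\B_\lba}$, because $\B_\lba$ is by definition the image of $\B$ acting on $\M_\lba=\M_{\bs\la}/{I^\M_\lba}$, and the kernel of the induced map $\B_{\bs\la}\to\End(\M_\lba)$ is precisely the set of elements of $\B_{\bs\la}$ that map $\M_{\bs\la}$ into ${I^\M_\lba}$. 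To pin this kernel down as exactly $I^\B_\lba$ one uses that $\mu_{\bs\la}$ is an isomorphism of $\B_{\bs\la}$-modules (Theorem~\ref{first1}): an element $b\in\B_{\bs\la}$ sends $\M_{\bs\la}$ into $\mu_{\bs\la}(I^\O_\lba)$ iff $bF\in I^\O_\lba$ for all $F\in\O_{\bs\la}$, i.e. iff $b\in I^\O_\lba$ (since $\O_\lba$ is unital), so modulo $\tau_{\bs\la}$ the kernel is $I^\B_\lba$, which also establishes the second equality $\mu_{\bs\la}({I^\O_\lba})={I^\M_\lba}$ once we know ${I^\M_\lba}=\mu_{\bs\la}(I^\O_\lba)$ as subspaces.

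The main point requiring care is the identification ${I^\M_\lba}=\mu_{\bs\la}({I^\O_\lba})$, equivalently $I^\B_\lba\,\M_{\bs\la}=\tau_{\bs\la}(I^\O_\lba)\cdot\mu_{\bs\la}(1)$. Since $\mu_{\bs\la}(G)=\tau_{\bs\la}(G)\,v$ with $v$ the fixed cyclic generator, and $\tau_{\bs\la}$ is a ring isomorphism, $\mu_{\bs\la}(I^\O_\lba)=\tau_{\bs\la}(I^\O_\lba)v=I^\B_\lba v$; and $I^\B_\lba v = I^\B_\lba\cdot\mu_{\bs\la}(\O_{\bs\la})=I^\B_\lba\,\M_{\bs\la}$ by surjectivity of $\mu_{\bs\la}$ (Theorem~\ref{first1}). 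On the other hand $I^\B_\lba\,\M_{\bs\la}=I^\M_\lba$ is the first, already-established description of $I^\M_\lba$. So the chain closes. I expect the only genuine obstacle to be bookkeeping: making sure that ``the ideal generated by $\si_i(\bs z)-a_i$ acting on $\M_{\bs\la}$'' and ``$\V^S\cap(V^{\otimes n}\otimes I_{\bs a})$ intersected with the singular weight space'' really coincide, which is where Lemma~\ref{Uz} together with the exactness of taking $S_n$-invariants and singular weight vectors (both are just taking a direct summand, since we are in characteristic zero and everything is a semisimple $\gln\times S_n$-situation) does the work.
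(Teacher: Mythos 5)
Your argument is correct and takes essentially the same route as the paper, whose proof consists precisely of combining Theorems~\ref{first}, \ref{first1} with Lemmas~\ref{symm OK} and~\ref{factor=weyl}. Your write-up merely makes explicit the bookkeeping the paper leaves implicit: the identification of the kernel of $\B_{\bs\la}\to\End(\M_\lba)$ via the cyclic vector $\mu_{\bs\la}(1)$, and the compatibility (via Lemma~\ref{Uz} and exactness of taking singular weight vectors) of the two descriptions of $I^\M_\lba$.
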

\begin{proof}
The lemma follows from Theorems~\ref{first}, \ref{first1} and
Lemmas~\ref{symm OK}, \ref{factor=weyl}.
\end{proof}

By Lemma~\ref{identify},
the maps $\tau_{\bs\la}$ and $\mu_{\bs\la}$ induce the maps
\beq
\label{taumu}
\tau_\lba:\O_\lba\to\B_\lba\,,\qquad
\mu_\lba:\O_\lba\to\M_\lba\,.
\eeq

\begin{thm}
\label{second}
The map $\tau_\lba$ is an isomorphism of algebras. The map $\mu_\lba$ is
an isomorphism of vector spaces. For any $F,G\in\O_\lba$,
we have
\vvn-.2>
\be
\mu_\lba(FG)\,=\,\tau_\lba(F)\,\mu_\lba(G)\,.
\vv.3>
\ee
In other words, the maps\/ $\tau_\lba$ and\/ $\mu_\lba$ give an isomorphism of
the regular representation of\/ $\O_\lba$ and the\/ $\B_\lba$-module $\M_\lba$.
\end{thm}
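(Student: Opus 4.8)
The plan is to deduce Theorem~\ref{second} from Theorem~\ref{first1} by passing to the quotients defined in Lemma~\ref{identify}. All the heavy lifting has already been done: Theorem~\ref{first1} gives an isomorphism of the regular representation of $\O_{\bs\la}$ with the $\B_{\bs\la}$-module $\M_{\bs\la}$, intertwined by the algebra isomorphism $\tau_{\bs\la}$ and the vector-space isomorphism $\mu_{\bs\la}$, and Lemma~\ref{identify} tells us that $\tau_{\bs\la}$ and $\mu_{\bs\la}$ carry the ideal $I^\O_\lba$ onto $I^\B_\lba$ and $I^\M_\lba$ respectively, with $\B_\lba=\B_{\bs\la}/I^\B_\lba$ and $\M_\lba=\M_{\bs\la}/I^\M_\lba$. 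So the induced maps $\tau_\lba$ and $\mu_\lba$ of~\Ref{taumu} are automatically well defined.

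First I would check that $\tau_\lba$ is an isomorphism of algebras. Surjectivity is immediate since $\tau_{\bs\la}$ is surjective (Theorem~\ref{first1}) and quotients of surjections are surjections. Injectivity follows from $\tau_{\bs\la}({I^\O_\lba})={I^\B_\lba}$: if $\tau_{\bs\la}(F)\in I^\B_\lba=\tau_{\bs\la}(I^\O_\lba)$ then, because $\tau_{\bs\la}$ is injective, $F\in I^\O_\lba$. The multiplicativity $\tau_\lba(FG)=\tau_\lba(F)\tau_\lba(G)$ is inherited from $\tau_{\bs\la}$.

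Next I would treat $\mu_\lba$. The identity $\mu_\lba(FG)=\tau_\lba(F)\,\mu_\lba(G)$ is the image of~\Ref{mutau} under the quotient maps, so once $\mu_\lba$ is well defined it holds automatically. Surjectivity of $\mu_\lba$ follows from surjectivity of $\mu_{\bs\la}$. For injectivity: if $\mu_{\bs\la}(F)\in I^\M_\lba=\mu_{\bs\la}(I^\O_\lba)$ then $F\in I^\O_\lba$ because $\mu_{\bs\la}$ is injective (it is an isomorphism of vector spaces by Theorem~\ref{first1}). Hence $\mu_\lba$ is an isomorphism of vector spaces, and the three maps $\tau_\lba$, $\mu_\lba$ together identify the regular representation of $\O_\lba$ with the $\B_\lba$-module $\M_\lba$.

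I do not expect a genuine obstacle here: the statement is a formal consequence of Theorem~\ref{first1} and the ideal identifications in Lemma~\ref{identify}. The only point that deserves care — and the one I would state explicitly rather than gloss over — is that $\mu_{\bs\la}$ maps the \emph{ideal} $I^\O_\lba\subset\O_{\bs\la}$ (a free module over itself) onto the \emph{submodule} $I^\M_\lba\subset\M_{\bs\la}$, which is exactly the content of the second equality in Lemma~\ref{identify}; this is what lets us pass to quotients on both sides compatibly. Everything else is diagram-chasing.
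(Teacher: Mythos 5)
Your proposal is correct and follows exactly the paper's route: the paper also deduces Theorem~\ref{second} directly from Theorems~\ref{first}, \ref{first1} and Lemma~\ref{identify}, and your write-up merely makes the routine quotient/diagram-chasing steps explicit. Nothing further is needed.
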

\begin{proof}
The theorem follows from Theorems~\ref{first}, \ref{first1}
and Lemma~\ref{identify}.
\end{proof}

\begin{rem}
By Lemma~\ref{local Wr}, the algebra $\O_\lba$ is Frobenius.
Therefore, its regular and coregular representations are isomorphic.
\end{rem}

\subsection{Isomorphism of algebras $\O_\Llb$ and $\B_\Llb$}
Let $\bs\La=(\bs\la^{(1)},\dots,\bs\la^{(k)})$ be a sequence of partitions
with at most $N$ parts such that $|\bs\la^{(s)}|=n_s$ for all $s=1,\dots,k$.

In this section we show that the Bethe algebra $\B_{\bs\La, \bs\la,\bs b}$\>,
associated with the space
$\M_\Llb=(\otimes_{s=1}^kL_{\bs\la^{(s)}}(b_s))_{\bs\la}^{sing}$,
see~\Ref{MMM}, is isomorphic to the algebra $\O_\Llb$, and that under this
isomorphism the $\B_{\bs\La, \bs\la,\bs b}$-module $\M_\Llb$ is isomorphic
to the coregular representation of the algebra $\O_\Llb$\>.

\sskip
We begin with a simple observation. Let $A$ be an associative unital algebra,
and let $L,M$ be $A$-modules such that $L$ is isomorphic to a subquotient of
$M$. Denote by $A_L$ and $A_M$ the images of $A$ in $\End(L)$ and $\End(M)$,
respectively, and by $\pi_L:A\to A_L$, $\pi_M:A\to A_M$ the corresponding
epimorphisms. Then, there exists a unique epimorphism $\pi_{ML}:A_M\to A_L$
such that $\pi_L=\pi_{ML}\circ\pi_M$.

Applying this observation to the Bethe algebra $\B$ and $\B$-modules
$\M_{\bs\la}\,$, $\M_\lba\,$, $\M_\Llb\,$,
we get a chain of epimorphisms $\B\to\B_{\bs\la}\to
\B_\lba\to\B_\Llb\,$. In particular, smaller spaces
are naturally modules over Bethe algebras associated to bigger spaces.


\sskip
Let $\>C_i(u)$, \,${i=1,\dots,N}$, be the Laurent series in $u^{-1}$ obtained
by projecting each coefficient of the series $B_i(u)\prod_{s=1}^n(u-z_s)$
to $\B_{\bs\la}$.

\begin{lem}
\label{capelli}
The series $C_i(u)$, $i=1,\dots,N$, $i\leq n$, are polynomials in\/
$u$ of degree $n-i$. For $i=n+1,n+2,\dots,N$, the series $C_i(u)$ is zero.
\end{lem}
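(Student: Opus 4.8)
The plan is to track degrees through the identification $\tau_{\bs\la}\colon\O_{\bs\la}\to\B_{\bs\la}$ of Theorem~\ref{first}. Since $\tau_{\bs\la}$ sends $F_i(u)=\sum_j F_{ij}u^{-j}$ to $B_i(u)=\sum_j\hat B_{ij}u^{-j}$ coefficientwise, the series $C_i(u)$ is exactly the image under $\tau_{\bs\la}$ of the rational function $F_i(u)\prod_{s=1}^n(u-z_s)$, where here $\si_s(\bs z)\in\O_{\bs\la}$ means $\Sig_s$ by Lemma~\ref{symm OK}, so $\prod_{s=1}^n(u-z_s)$ corresponds to $u^n+\sum_{s=1}^n(-1)^s\Sig_s u^{n-s}$. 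By~\Ref{Wr coef} this polynomial is, up to the nonzero constant $\prod_{i<j}(d_j-d_i)$, the Wronskian $\Wr(f_1(u),\dots,f_N(u))$. Hence $\tau_{\bs\la}^{-1}(C_i(u))$ equals a nonzero constant times $\Wr(f_1(u),\dots,f_N(u))F_i(u)=G_i(u)$, the coefficient of $\der^{N-i}$ in $\Dt^\O_{\bs\la}$ defined in~\Ref{DOlat}--\Ref{DOt}. Since $\tau_{\bs\la}$ is a graded \emph{algebra} isomorphism, the claimed statement about $C_i(u)$ is equivalent to the already-recorded facts about $G_i(u)$: namely that $G_i(u)=0$ when $n-i<0$, and otherwise $\deg G_i=n-i$, stated right after~\Ref{DOt}.

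Concretely, I would argue as follows. First, observe that each coefficient of $B_i(u)\prod_{s=1}^n(u-z_s)$ lies in the image of $\Uglnt$ in $\End(\V)$ and, when restricted to $\M_{\bs\la}$, lies in $\B_{\bs\la}$; this makes $C_i(u)$ well defined. Next, using formula~\Ref{Bii} together with Lemma~\ref{Uz} (the subalgebra $\C[z_1,\dots,z_n]^S$ acts on $\V$ by multiplication by symmetric polynomials) and Lemma~\ref{symm OK}, identify the multiplier $\prod_{s=1}^n(u-z_s)$ with $\tau_{\bs\la}$ applied to $u^n+\sum_s(-1)^s\Sig_s u^{n-s}$. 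Then apply $\tau_{\bs\la}^{-1}$, which is a graded ring homomorphism by Theorem~\ref{first}, to conclude $\tau_{\bs\la}^{-1}(C_i(u))=\bigl(\prod_{1\le i<j\le N}(d_j-d_i)\bigr)^{-1}G_i(u)$ coefficient by coefficient in $u^{-1}$. Finally, read off the conclusion from the structure of $\Dt^\O_{\bs\la}$: since $\Dt^\O_{\bs\la}=\Wr(f_1,\dots,f_N)\,\D^\O_{\bs\la}$ and $\D^\O_{\bs\la}=\der^N+\sum F_i(u)\der^{N-i}$ with $F_i(u)$ a series in $u^{-1}$ starting at order $u^{-i}$ (from~\Ref{Fi}, $F_{ij}=0$ for $j<i$) while $\Wr$ is a polynomial of degree $n$, the product $G_i(u)=\Wr\cdot F_i(u)$ is a polynomial of degree $n-i$ when $n\ge i$ and vanishes identically when $i>n$. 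Transporting back through $\tau_{\bs\la}$ gives exactly the stated polynomiality and degree for $C_i(u)$, and the vanishing for $n<i\le N$.

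The only genuine subtlety, and the step I expect to require the most care, is the bookkeeping that $C_i(u)$ as \emph{defined} — project each coefficient of $B_i(u)\prod_{s=1}^n(u-z_s)$ into $\B_{\bs\la}$ — really does coincide with $\tau_{\bs\la}$ applied to $F_i(u)$ times the polynomial, rather than with something that only agrees after further projection. This comes down to the fact that multiplication by $\prod_{s=1}^n(u-z_s)$ is an operation on Laurent series with coefficients in the commutative algebra $\B_{\bs\la}$, and $\tau_{\bs\la}$ being a ring isomorphism intertwines it with multiplication by the corresponding polynomial in $\O_{\bs\la}$; here one must be slightly careful that $\B_i(u)$ itself involves infinitely many terms, so the product is computed formally in $\B_{\bs\la}[[u^{-1}]][u]$, but since $\tau_{\bs\la}^{-1}$ acts coefficientwise and $F_i(u)$ already lies in $\O_{\bs\la}[[u^{-1}]]$, no convergence issue arises. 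Once this identification is in place the degree count is immediate from the shape of $\Dt^\O_{\bs\la}$, and no further computation is needed.
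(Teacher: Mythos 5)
Your argument is correct and is essentially the paper's own (alternative) proof: the paper proves Lemma~\ref{capelli} either by citing Theorem~2.1 of \cite{MTV3} or via the formula $C_i(u)=\tau_{\bs\la}\bigl(G_i(u)\bigr)$, and you establish exactly this identity (up to the nonzero constant $\prod_{1\le i<j\le N}(d_j-d_i)$ coming from \Ref{Wr coef}) and then read off the degree and vanishing statements from the properties of $G_i(u)$ recorded after \Ref{DOt}. Your additional bookkeeping about the coefficientwise action of $\tau_{\bs\la}$ on products in $\B_{\bs\la}[[u^{-1}]][u]$ is a welcome elaboration of the same route, not a different one.
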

\begin{proof}
Lemma~\ref{capelli} follows from Theorem~2.1 in~\cite{MTV3}.
Alternatively, Lemma~\ref{capelli} follows from the formula
$\>C_i(u)=\tau_{\bs\la} \bigl(G_i(u)\bigr)$.
\end{proof}

Let $\>\bat C_i(u)$, \,${i=1,\dots,N}$, be the polynomial
obtained by projecting each coefficient of the polynomial $C_i(u)$ to
$\B_\lba$.

Introduce the elements $\bat C_{ijs}\in\B_\lba$ for $i=1,\dots,N$,
$j=0,1,\dots,n-i$, $s=1,\dots,k$, by the rule
\be
\sum_{j=0}^{n-i}\,\bat C_{ijs}\,(u-b_s)^j\,=\,\bat C_i(u)\,.
\ee
In addition, let $\>\bat C_{0js}$\>, \,$j=0,\dots,n$, $s=1,\dots,k$,
be the numbers such that
\be
\sum_{j=0}^n\,\bat C_{0js}\,(u-b_s)^j\,=\,\prod_{r=1}^k\,(u-b_r)^{n_r}\,.
\ee

\begin{lem}
\label{nil2}
The elements $\bat C_{ijs}$, $\,i=1,\dots,N$, $\,s=1,\dots,k$,
$\,j=0,1,\dots,n_s-i-1$, are nilpotent.
\end{lem}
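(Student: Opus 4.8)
The plan is to obtain Lemma~\ref{nil2} by transporting Lemma~\ref{nil1} through the algebra map $\tau_\lba$. The starting point is the identity $C_i(u)=\tau_{\bs\la}\bigl(G_i(u)\bigr)$ recorded in the proof of Lemma~\ref{capelli}, read coefficientwise: $\tau_{\bs\la}$ applied to each coefficient (an element of $\O_{\bs\la}$) of the polynomial $G_i(u)$ from~\Ref{DOt} produces the corresponding coefficient of $C_i(u)$. Expanding both $G_i(u)$ and $C_i(u)$ in powers of $u-b_s$ as in~\Ref{Gijs} and comparing coefficients, one gets $C_{ijs}=\tau_{\bs\la}(G_{ijs})$ in $\B_{\bs\la}$, where $C_{ijs}\in\B_{\bs\la}$ is defined by $\sum_{j}C_{ijs}(u-b_s)^j=C_i(u)$ and $G_{ijs}\in\O_{\bs\la}$ is as in~\Ref{Gijs}.

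Next I would pass to the quotients. By Lemma~\ref{identify} we have $\tau_{\bs\la}({I^\O_\lba})={I^\B_\lba}$, so $\tau_{\bs\la}$ descends to the algebra map $\tau_\lba:\O_\lba\to\B_\lba$ of~\Ref{taumu}. Applying the projection $\B_{\bs\la}\to\B_\lba$ and the projection $F\mapsto\bat F$ of $\O_{\bs\la}$ onto $\O_\lba$ to the equality $C_{ijs}=\tau_{\bs\la}(G_{ijs})$ yields
\be
\bat C_{ijs}\,=\,\tau_\lba(\bat G_{ijs})\,,\qquad
i=1,\dots,N,\ \ s=1,\dots,k,\ \ j=0,1,\dots,n-i\,.
\ee

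Finally, by Lemma~\ref{nil1} the elements $\bat G_{ijs}$ with $i=1,\dots,N$, $s=1,\dots,k$, $j=0,1,\dots,n_s-i-1$ are nilpotent in $\O_\lba$; since $\tau_\lba$ is a homomorphism of algebras (Theorem~\ref{second}), their images $\bat C_{ijs}=\tau_\lba(\bat G_{ijs})$ are nilpotent in $\B_\lba$ for the same range of indices, which is exactly the assertion. I do not expect a genuine obstacle here: the statement is purely a transfer of Lemma~\ref{nil1} along $\tau_\lba$, and the only point needing care is the bookkeeping identity $C_{ijs}=\tau_{\bs\la}(G_{ijs})$, i.e.\ that $\tau_{\bs\la}$ commutes with extracting the $u$-expansion coefficients of $G_i(u)$ at $b_s$ — which is precisely the content of the formula $C_i(u)=\tau_{\bs\la}(G_i(u))$ already invoked for Lemma~\ref{capelli}.
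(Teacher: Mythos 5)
Your proposal is correct and follows the paper's own argument: the paper's proof is exactly the identity $\tau_\lba\bat G_{ijs}=\bat C_{ijs}$ (coming from $C_i(u)=\tau_{\bs\la}(G_i(u))$) combined with Lemma~\ref{nil1}, and you have simply spelled out the coefficientwise bookkeeping and the passage to the quotient algebras that the paper leaves implicit. No gaps.
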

\begin{proof}
We clearly have $\tau_\lba\bat G_{ijs}=\bat C_{ijs}$.
Lemma~\ref{nil2} follows from Lemma~\ref{nil1}.
\end{proof}

Define the {\it indicial polynomial\/ $\bat\chi_s^\B(\al)$ at\/ $b_s$}
by the formula
\be
\bat\chi_s^\B(\al)\,=\,
\sum_{i=0}^{N}\,\bat C_{i,n_s-i,s}\prod_{j=0}^{N-i-1} (\al-j)\,.
\ee
It is a polynomial of degree $N$ in variable $\al$ with coefficients in
$\B_\lba$.

Let $\bat I^\B_\Llb$ be the ideal in $\B_\lba\>$ generated by the elements
$\>\bat C_{ijs}$, $\,i=1,\dots,N$, $\,s=1,\dots,k$, $j=0,1,\dots,n_s-i-1$,
and the coefficients of the polynomials
\beq
\label{chiB}
\bat\chi_s^\B(\al)\,-\,\prod_{\fratop{r=1}{r\ne s}}^k\,(b_s-b_r)^{n_r}\,
\prod_{l=1}^N\,(\al-\la_l^{(s)}-N+l)\,,\qquad s=1,\dots,k\,.\kern-3em
\eeq

\begin{lem}
\label{zero}
The ideal\/ $\bat I^\B_\Llb$ belongs to the kernel of the projection\/
$\>\B_\lba\to\B_\Llb\,$.
\end{lem}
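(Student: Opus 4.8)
\emph{Plan of proof.} The kernel of the epimorphism $\B_\lba\to\B_\Llb$ is an ideal, so it suffices to check that every generator of $\bat I^\B_\Llb$ acts by zero on $\M_\Llb=(\otimes_{s=1}^kL_{\bs\la^{(s)}}(b_s))_{\bs\la}^{sing}$. In $\B_\lba$ one has $\Sig_j=a_j$, so by~\Ref{ab} the polynomial $u^n+\sum_j(-1)^ja_ju^{n-j}$ equals $\prod_{r=1}^k(u-b_r)^{n_r}$, and therefore $\bat C_i(u)$ is the image in $\B_\lba$ of the operator $\prod_{r=1}^k(u-b_r)^{n_r}B_i(u)$. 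Since the coefficients of $\D^\B$ preserve the space of singular vectors of weight $\bs\la$, the lemma reduces to two facts about $\D^\B$ acting on the $\glnt$-module $\otimes_{s=1}^kL_{\bs\la^{(s)}}(b_s)$: \textbf{(a)} for each $i\ge1$ the coefficient $B_i(u)$ has a pole of order at most $i$ at every $b_s$, so that $\prod_{r=1}^k(u-b_r)^{n_r}B_i(u)$ vanishes at $b_s$ to order at least $n_s-i$ (this sends the generators $\bat C_{ijs}$ with $j=0,\dots,n_s-i-1$ to zero in $\B_\Llb$); and \textbf{(b)} $\bat\chi_s^\B(\al)$ acts on the module as the scalar polynomial $\prod_{r\ne s}(b_s-b_r)^{n_r}\prod_{l=1}^N(\al-\la_l^{(s)}-N+l)$ (this sends the coefficients of the polynomials~\Ref{chiB} to zero).

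Both statements are local at the $b_s$, so I would first treat a single evaluation module $L_{\bs\mu}(b)$, on which $e_{ij}(u)=e_{ij}/(u-b)$. Put $x=u-b$ and $D=x\der$, so $\der=x^{-1}D$. Using $D\,x^{-1}=x^{-1}(D-1)$ and the fact that the $e_{ij}$ commute with $x$ and $\der$, one pulls the $N$ factors $x^{-1}$ in the row determinant defining $\D^\B$ to the left; comparing with the definition~\Ref{Zx} of $Z$, this shows that $\D^\B$ restricted to $L_{\bs\mu}(b)$ equals
\beq
x^{-N}\,\rdet\bigl(\bigl(\dl_{ij}(D-N+i)-e_{ji}\bigr)_{i,j=1}^N\bigr)\;=\;x^{-N}\,Z(D-N+1)\,.
\eeq
By Theorem~\ref{Zcent} the polynomial $Z$ is central, so it acts on the irreducible module $L_{\bs\mu}$ as a scalar polynomial, whose value is determined by~\Ref{Zxv}; hence this restriction equals $x^{-N}\prod_{l=1}^N(D-N+l-\mu_l)$. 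Expanding the product in the falling factorials $D(D-1)\cdots(D-m+1)=x^m\der^m$ shows that the coefficient $B_i(u)$ of $\der^{N-i}$ is a scalar multiple of $x^{-i}$, so $\D^\B$ is Fuchsian at $b$, with indicial polynomial $\prod_{l=1}^N(\al-\mu_l-N+l)$.

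For the tensor product, fix $s$ and write $e_{ij}(u)=e_{ij}^{(s)}(u-b_s)^{-1}+(\text{holomorphic at }b_s)$, where $e_{ij}^{(s)}$ acts in the $s$-th tensor factor. In the row-determinant expansion of $\D^\B$, a monomial contributing to the coefficient of $\der^{N-i}$ is, once all derivatives are moved to the right, a product of $i-q$ matrix entries $-e_{ab}(u)$ carrying $q$ derivatives in total; an undifferentiated entry has a pole of order at most $1$ at $b_s$ and each derivative raises the order by $1$, so the monomial has pole order at most $(i-q)+q=i$ at $b_s$, with equality only when all $i-q$ entries are taken from the singular parts $e^{(s)}(u-b_s)^{-1}$. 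This proves (a), and it shows that the coefficient of $(u-b_s)^{-i}$ in $B_i(u)$ agrees with the corresponding coefficient for the single-module operator above, taken with $\bs\mu=\bs\la^{(s)}$, $b=b_s$. Multiplying $\D^\B$ by $\prod_{r=1}^k(u-b_r)^{n_r}$ clears the pole at $b_s$ by (a), and since this polynomial equals $(u-b_s)^{n_s}$ times a function regular and nonzero at $b_s$ with value $\prod_{r\ne s}(b_s-b_r)^{n_r}$, the coefficient $\bat C_{i,n_s-i,s}$ of $(u-b_s)^{n_s-i}$ in $\prod_r(u-b_r)^{n_r}B_i(u)$ equals $\prod_{r\ne s}(b_s-b_r)^{n_r}$ times the coefficient of $(u-b_s)^{-i}$ in the single-module $B_i(u)$; summing against $\prod_{j=0}^{N-i-1}(\al-j)$ and using the single-module computation yields $\bat\chi_s^\B(\al)=\prod_{r\ne s}(b_s-b_r)^{n_r}\prod_{l=1}^N(\al-\la_l^{(s)}-N+l)$, which is (b). Since (a) and (b) hold as identities of operators on $\otimes_{s=1}^kL_{\bs\la^{(s)}}(b_s)$, they hold on $\M_\Llb$ as well, so $\bat I^\B_\Llb$ lies in the kernel of $\B_\lba\to\B_\Llb$.

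I expect the pole bookkeeping in the last paragraph to be the point requiring most care: one must make sure that differentiating the entries $e_{ab}(u)$ cannot raise the pole order of the $\der^{N-i}$-coefficient above $i$, and that at $b_s$ only the $s$-th tensor factor contributes to the leading order, the other factors producing merely the scalar $\prod_{r\ne s}(b_s-b_r)^{n_r}$. Alternatively, the required properties of $\D^\B$ on tensor products of evaluation modules could be quoted from the literature on the Bethe algebra, in the spirit of Lemmas~\ref{generic} and~\ref{capelli}.
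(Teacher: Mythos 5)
Your proposal is correct and follows essentially the same route as the paper: it reduces the claim to the action on $\otimes_{s=1}^kL_{\bs\la^{(s)}}(b_s)$, identifies $\bat C_i(u)$ there with $\prod_{r}(u-b_r)^{n_r}B_i(u)$, and derives the vanishing of the generators from the relation of $\D^\B$ on an evaluation module with the central element $Z$, i.e.\ from Theorem~\ref{Zcent} and formula~\Ref{Zxv}. The pole bookkeeping and the single-module identity $\D^\B=x^{-N}Z(D-N+1)$ that you work out explicitly are exactly the ``straightforward calculation'' the paper delegates to a reference, so your write-up is a more detailed version of the same argument.
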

\begin{proof}
Set $\bat C_0(u)=\prod_{s=1}^k\,(u-b_s)^{n_s}$.
\vvn.16>
A straightforward calculation shows that the action of the polynomial
$\sum_{i=0}^N\,\bat C_i(b_s)\prod_{j=0}^{N-i-1}(\al-j)$ on
$\M_\lba\subset \otimes_{r=1}^kL_{\bs\la^{(r)}}(b_r)$
coincides with the action of the operator
\vvn-.4>
\be
\prod_{\fratop{r=1}{r\ne s}}^k\,(b_s-b_r)^{n_r}\left(1^{\otimes(s-1)}
\otimes Z(\al-N+1)\otimes 1^{\otimes(k-s)}\right),
\vv-.3>
\ee
cf.~\cite{MTV2}.
The lemma follows from Theorem~\ref{Zcent} and formula~\Ref{Zxv}.
\end{proof}

Hence, the projection $\B_\lba\to\B_\Llb$ descends to an epimorphism
\beq
\label{pi}
\pi_\Llb:\B_\lba/\bat I^\B_\Llb\,\to\,\B_\Llb\,,
\eeq
which makes $\M_\Llb$ into a $\>\B_\lba/\bat I^\B_\Llb$-module.

\sskip
Denote $\,\ker\>(\bat I^\B_\Llb)=
\{\>v\in\M_\lba\ |\ \bat I^\B_\Llb\>v=0\>\}\,$.
Clearly, $\,\ker\>(\bat I^\B_\Llb)$ is a $\>\B_\lba$-submodule of $\M_\lba$.

\begin{prop}
\label{kerI}
The $\>\B_\lba/\bat I^\B_\Llb$-modules $\,\ker\>(\bat I^\B_\Llb)$
and\/ $\M_\Llb$ are isomorphic.
\end{prop}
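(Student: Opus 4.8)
The plan is to identify $\M_\Llb$ with $\ker(\bat I^\B_\Llb)$ by realizing both as subquotients of $\M_\lba$ in a compatible way, using the chain of epimorphisms $\B_\lba\to\B_\Llb$ and the results already in place. First I would recall from Corollary~\ref{weylbk} that $\otimes_{s=1}^kL_{\bs\la^{(s)}}(b_s)$ is an irreducible subquotient of the Weyl module $\otimes_{s=1}^kW_{n_s}(b_s)$, so $\M_\Llb=(\otimes_{s=1}^kL_{\bs\la^{(s)}}(b_s))_{\bs\la}^{sing}$ is a subquotient of $\M_\lba$ as a $\B_\lba$-module; by Lemma~\ref{zero} the ideal $\bat I^\B_\Llb$ annihilates this subquotient, so in fact $\M_\Llb$ is a $\B_\lba/\bat I^\B_\Llb$-module via $\pi_\Llb$. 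The target $\ker(\bat I^\B_\Llb)\subset\M_\lba$ is visibly a $\B_\lba/\bat I^\B_\Llb$-submodule, so the two modules live over the same algebra and it remains to produce an isomorphism between them.

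Second, I would transport everything to the commutative-algebra side via Theorem~\ref{second}: the maps $\tau_\lba$ and $\mu_\lba$ give an isomorphism of the regular representation of $\O_\lba$ with the $\B_\lba$-module $\M_\lba$, and under $\tau_\lba$ the generators $\bat C_{ijs}$ of $\bat I^\B_\Llb$ correspond to $\bat G_{ijs}$ (as noted in the proof of Lemma~\ref{nil2}), and the indicial polynomials $\bat\chi_s^\B$ correspond to $\bat\chi_s^\O$. Hence $\tau_\lba(\bat I^\O_\Llb)=\bat I^\B_\Llb$, and under the identification $\mu_\lba\colon\O_\lba\xrightarrow{\sim}\M_\lba$ of the regular representation with $\M_\lba$, the submodule $\ker(\bat I^\B_\Llb)$ corresponds exactly to $\Ann(\bat I^\O_\Llb)\subset\O_\lba$. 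Thus $\ker(\bat I^\B_\Llb)\cong\Ann(\bat I^\O_\Llb)$ as $\O_\lba/\bat I^\O_\Llb$-modules, i.e.\ as $\O_\Llb$-modules by Proposition~\ref{scheme}. By Corollary~\ref{AnnI}, $\Ann(\bat I^\O_\Llb)$ is isomorphic to the coregular representation $(\O_\Llb)^*$ of $\O_\Llb$.

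Third, I would pin down $\M_\Llb$ on the commutative side. The key point is a dimension/nondegeneracy argument: $\pi_\Llb$ makes $\M_\Llb$ a faithful? — no, rather, one shows the image of $\B_\lba/\bat I^\B_\Llb$ in $\End(\M_\Llb)$ equals $\B_\Llb$, and then compares with $\ker(\bat I^\B_\Llb)$. Concretely, since $\M_\Llb$ is a subquotient of $\M_\lba$ annihilated by $\bat I^\B_\Llb$, there is a canonical $\B_\lba/\bat I^\B_\Llb$-module map in one direction; to see it is an isomorphism I would use that $\B_\lba$ is Frobenius (Theorem~\ref{second} together with Lemma~\ref{local Wr}), so $\B_\lba/\bat I^\B_\Llb$, being a quotient by the ideal $\bat I^\B_\Llb$, has its coregular representation realized on $\Ann(\bat I^\B_\Llb)=\ker(\bat I^\B_\Llb)$ via Lemma~\ref{coreg}, and the image of $\B_\lba/\bat I^\B_\Llb$ in $\End(\ker(\bat I^\B_\Llb))$ is all of $\B_\lba/\bat I^\B_\Llb$, a maximal commutative subalgebra. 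Then $\dim\ker(\bat I^\B_\Llb)=\dim\B_\lba/\bat I^\B_\Llb=\dim\O_\Llb=\dim\M_\Llb$, the last equality being the Schubert-calculus identity~\Ref{dimO} together with~\Ref{dimLL}, wait — it is~\Ref{dimO}. With matching dimensions and a surjection $\ker(\bat I^\B_\Llb)\twoheadrightarrow\M_\Llb$ of $\B_\lba/\bat I^\B_\Llb$-modules coming from the subquotient structure, the map is an isomorphism.

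\textbf{Main obstacle.} The delicate step is the last one: constructing the canonical $\B_\lba/\bat I^\B_\Llb$-module map between $\ker(\bat I^\B_\Llb)$ and $\M_\Llb$ and proving it is surjective (or injective), since $\M_\Llb$ arises as a \emph{subquotient}, not a submodule or quotient, of $\M_\lba$, and one must control both the submodule and the quotient steps simultaneously. I expect the cleanest route is to first show $\M_\Llb$ is in fact isomorphic to a submodule of $\M_\lba$ — namely to $\ker(\bat I^\B_\Llb)$ — by exhibiting a cyclic vector and matching indicial data via Lemma~\ref{weylb}(iv) and Lemma~\ref{lem on intersection}, so that the subquotient collapses to an honest submodule; then the dimension count from~\Ref{dimO} closes the argument. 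Everything else is a transport of structure through the already-established isomorphisms $\tau_\lba$, $\mu_\lba$ and Propositions~\ref{scheme}, \ref{OIX}.
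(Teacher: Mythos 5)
The first two steps of your plan are sound and match what the paper does at the end of its argument: via Theorem~\ref{second} and Corollary~\ref{AnnI} you correctly identify $\ker(\bat I^\B_\Llb)$ with $\Ann(\bat I^\O_\Llb)\simeq(\O_\Llb)^*$, and the dimension count $\dim\ker(\bat I^\B_\Llb)=\dim\O_\Llb=\dim\M_\Llb$ via \Ref{dimO} is exactly the paper's closing step. The gap is in your third step, and it is genuine: there is no ``canonical $\B_\lba/\bat I^\B_\Llb$-module map'' and in particular no surjection $\ker(\bat I^\B_\Llb)\twoheadrightarrow\M_\Llb$ ``coming from the subquotient structure.'' The module $\otimes_{s=1}^kL_{\bs\la^{(s)}}(b_s)$ is in general neither a submodule nor a quotient of $\otimes_{s=1}^kW_{n_s}(b_s)$ (the cyclic vector of $W_m(b)$ has weight $(m,0,\dots,0)$, so its irreducible quotient is $L_{(m,0,\dots,0)}(b)$, not a general $L_{\bs\mu}(b)$); a subquotient by itself induces no map to or from $\ker(\bat I^\B_\Llb)$. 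Your proposed repair — a cyclic vector together with Lemma~\ref{weylb}(iv) and Lemma~\ref{lem on intersection} — does not produce one either: Lemma~\ref{weylb}(iv) only gives surjections \emph{from} the Weyl module onto cyclic modules, i.e.\ it reproduces the quotient step you already have, and it says nothing at the level of the $\B$-modules of singular vectors where the proposition lives.

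The paper's proof supplies precisely this missing construction. One chooses highest-degree vectors $w_s\in(W_{n_s})_{\bs\la^{(s)}}^{sing}$, forms the submodule $\L_{\bs w}(\bs b)$ generated by $\otimes_{s=1}^kw_s$ with its subspaces $\L_{\bs w}^=(\bs b)$, $\L_{\bs w}^>(\bs b)$, so that $\M_\Llb^{\bs w}/\M_\Llb^{\bs w,>}\simeq\M_\Llb$; then, using the associated graded module $\gr\bigl(\otimes_{s=1}^kW_{n_s}(b_s)\bigr)$ (Lemmas~\ref{grWmb}, \ref{grtensor}), one builds a specific element $E\in\bat I^\B_\Llb$ from the indicial polynomials $\bat\chi_s^\B$ whose projection to $\B_\Llb$ vanishes while its projection to every $\B_{\bs\Mu,\bs b}$-type quotient with $\bs\Mu\ne\bs\La$ is invertible. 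Consequently $E\,\M_\Llb^{\bs w}\subset\M_\Llb^{\bs w,>}$ and $E$ restricted to $\M_\Llb^{\bs w,>}$ is invertible, so the elementary splitting Lemma~\ref{EU} shows that $\ker E\cap\M_\Llb^{\bs w}$ is a $\B_\lba$-submodule of $\M_\lba$ mapping isomorphically onto $\M_\Llb$, and by Lemma~\ref{zero} it sits inside $\ker(\bat I^\B_\Llb)$; only at that point does your dimension count via \Ref{dimO} force equality. Without some substitute for this filtration-plus-$E$ argument, the proposal does not prove the proposition.
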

\noindent
The proposition is proved in Section~\ref{main proof}.

\goodbreak
\medskip
Let $\bat I^\O_\Llb\subset\O_\lba$ be the ideal defined in Section~\ref{barI}.
\vvn.1>
Clearly, the map $\tau_\lba:\O_\lba\to\B_\lba$ sends $\bat I^\O_\Llb$ to
$\bat I^\B_\Llb$. By Lemma~\ref{scheme}, the maps $\tau_\lba$ and $\pi_\Llb$
induce the homomorphism
\vvn.1>
\be
\tau_\Llb:\O_\Llb\,\to\,\B_\Llb\,.
\vv.1>
\ee

By Theorem~\ref{second}, the map $\mu_\lba:\O_\lba\to\M_\lba$ sends
$\Ann(\bat I^\O_\Llb)\subset\O_\lba$ to $\ker\>(\bat I^\B_\Llb)$\>.
The vector spaces $\Ann(\bat I^\O_\Llb)$ and $(\O_\Llb)^*$ are isomorphic
by Corollary~\ref{AnnI}. Hence, Proposition~\ref{kerI} yields that the map
$\mu_\lba$ induces a bijective linear map
\be
\mu_\Llb:(\O_\Llb)^*\to\,\M_\Llb\,.
\vv.1>
\ee

\sskip
For any $F\in\O_\Llb$, denote by $F^*\in\End\bigl((\O_\Llb)^*\bigr)$
the operator, dual to the operator of multiplication by $F$ on $\O_\Llb$.

\begin{thm}
\label{third}
The map $\tau_\Llb$ is an isomorphism of algebras.
For any $F\in\O_\lba$ and $G\in(\O_\Llb)$\>, we have
\vvn-.2>
\be
\mu_\Llb(F^*G)\,=\,\tau_\Llb(F)\,\mu_\Llb(G)\,.
\vv.3>
\ee
In other words, the maps\/ $\tau_\Llb$ and\/ $\mu_\Llb$ give an isomorphism of
the coregular representation of\/ $\O_\Llb$ on the dual space $(\O_\Llb)^*$
and the $\B_\Llb$-module $\M_\Llb$.
\end{thm}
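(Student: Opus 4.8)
The plan is to transport the whole statement through the isomorphism $\tau_\lba$ of Theorem~\ref{second} and deduce it from the commutative-algebra facts of Section~\ref{comalg} together with Proposition~\ref{scheme}, Corollary~\ref{AnnI} and Proposition~\ref{kerI}. The point is that $\tau_\lba$ and $\mu_\lba$ already identify the regular representation of $\O_\lba$ with the $\B_\lba$-module $\M_\lba$, and every object occurring in Theorem~\ref{third} is obtained from that pair by passing to a quotient algebra and the annihilator of an ideal; the only nonformal ingredient is Proposition~\ref{kerI}.

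First I would transport the ideals and modules. Since $\tau_\lba\colon\O_\lba\to\B_\lba$ is an algebra isomorphism carrying $\bat I^\O_\Llb$ onto $\bat I^\B_\Llb$, it induces an isomorphism of algebras $\bar\tau\colon\O_\Llb\to\B_\lba/\bat I^\B_\Llb$, where $\O_\Llb=\O_\lba/\bat I^\O_\Llb$ by Proposition~\ref{scheme}; by construction of $\tau_\Llb$ one has $\tau_\Llb=\pi_\Llb\circ\bar\tau$. Because $\mu_\lba$ is a bijection intertwining multiplication on $\O_\lba$ with the $\B_\lba$-action on $\M_\lba$ and $\tau_\lba(\bat I^\O_\Llb)=\bat I^\B_\Llb$, the map $\mu_\lba$ restricts to a bijection $\Ann(\bat I^\O_\Llb)\to\ker(\bat I^\B_\Llb)$ equivariant over $\tau_\lba$, hence an isomorphism of modules over $\O_\Llb\cong\B_\lba/\bat I^\B_\Llb$. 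Combining this with Corollary~\ref{AnnI} (which identifies $\Ann(\bat I^\O_\Llb)$ with the coregular module $(\O_\Llb)^*$) and Proposition~\ref{kerI} (which identifies $\ker(\bat I^\B_\Llb)$ with $\M_\Llb$), I obtain an isomorphism of $\bigl(\B_\lba/\bat I^\B_\Llb\bigr)$-modules between $\M_\Llb$ and the coregular $\O_\Llb$-module $(\O_\Llb)^*$, compatible with $\bar\tau$.

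Next I would show $\pi_\Llb$ is an isomorphism. By Lemma~\ref{zero} and the chain of epimorphisms $\B\to\B_{\bs\la}\to\B_\lba\to\B_\Llb$, the algebra $\B_\Llb$ is precisely the image of $\B_\lba/\bat I^\B_\Llb$ in $\End(\M_\Llb)$, the action being the one defined through $\pi_\Llb$. By the previous step this action is isomorphic, via $\bar\tau$, to the coregular action of $\O_\Llb$ on $(\O_\Llb)^*$, which is faithful (if $a\in\O_\Llb$ annihilates $(\O_\Llb)^*$, then $\xi(a)=\xi(a\cdot 1)=0$ for all $\xi\in(\O_\Llb)^*$, so $a=0$). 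Hence $\B_\lba/\bat I^\B_\Llb$ acts faithfully on $\M_\Llb$, so the epimorphism $\pi_\Llb$ is injective and therefore an isomorphism. Consequently $\tau_\Llb=\pi_\Llb\circ\bar\tau$ is an algebra isomorphism, and $\mu_\Llb$, bijective by construction, is an isomorphism of the coregular $\O_\Llb$-module $(\O_\Llb)^*$ onto the $\B_\Llb$-module $\M_\Llb$.

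Finally, the intertwining identity is just Theorem~\ref{second} read through these identifications. For $F\in\O_\lba$ with image $\bar F\in\O_\Llb$, and $G\in(\O_\Llb)^*$ corresponding to $\tilde G\in\Ann(\bat I^\O_\Llb)$, Lemma~\ref{coreg} says that multiplication by $F$ on $\Ann(\bat I^\O_\Llb)$ corresponds under Corollary~\ref{AnnI} to the operator $F^*$ on $(\O_\Llb)^*$, so $F^*G$ corresponds to $F\tilde G\in\Ann(\bat I^\O_\Llb)$; applying $\mu_\lba$ and using $\mu_\lba(F\tilde G)=\tau_\lba(F)\,\mu_\lba(\tilde G)$ together with $\tau_\Llb=\pi_\Llb\circ\bar\tau$ and Proposition~\ref{kerI} gives $\mu_\Llb(F^*G)=\tau_\Llb(F)\,\mu_\Llb(G)$. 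The main obstacle is Proposition~\ref{kerI}: once $\M_\Llb$ is known to be $\ker(\bat I^\B_\Llb)$ as a $\bigl(\B_\lba/\bat I^\B_\Llb\bigr)$-module, the rest is formal bookkeeping with the Frobenius structure of $\O_\lba$ and the lemmas of Section~\ref{comalg}; the one routine point to verify is that the module identifications above are the ones induced by $\mu_\lba$ and the Frobenius form, so that equivariance is preserved at each stage.
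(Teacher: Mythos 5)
Your proposal is correct and follows essentially the same route as the paper: both deduce the induced isomorphism $\O_\Llb\to\B_\lba/\bat I^\B_\Llb$ from Theorem~\ref{second} and Proposition~\ref{scheme}, identify $\Ann(\bat I^\O_\Llb)$ with $(\O_\Llb)^*$ via Corollary~\ref{AnnI} and $\ker(\bat I^\B_\Llb)$ with $\M_\Llb$ via Proposition~\ref{kerI}, and use faithfulness of the coregular representation to conclude that $\pi_\Llb$, and hence $\tau_\Llb$, is an isomorphism. The only difference is that you spell out the routine equivariance bookkeeping slightly more explicitly than the paper does.
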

\begin{proof}
By Lemma~\ref{scheme}, the isomorphism $\tau_\lba:\O_\lba\to\B_\lba$ induces
the isomorphism
\be
\tau_\Llb:\O_\Llb\,\to\,\B_\lba/\bat I^\B_\Llb\,.
\ee
so the maps $\tau_\Llb$ and $\mu_\lba$ give an isomorphism
of the $\O_\Llb$-module $\Ann(\bat I^\O_\Llb)$
and the $\B_\lba/\bat I^\B_\Llb$-module
$\ker\>(\bat I^\B_\Llb)$, see Theorem~\ref{second}.

By Lemma~\ref{coreg}, the $\O_\Llb$-module $\Ann(\bat I^\O_\Llb)$ is
isomorphic to the coregular representation of $\O_\Llb$ on the dual space
$(\O_\Llb)^*$. In particular, it is faithful.
Therefore, the $\B_\lba/\bat I^\B_\Llb$-module $\ker\>(\bat I^\B_\Llb)$
is faithful.
By Proposition~\ref{kerI}, the $\>\B_\lba/\bat I^\B_\Llb$-module
$M_{\bs\la,\bs\la,\bs b}\,$, isomorphic to $\ker\>(\bat I^\B_\Llb)$,
is faithful
too, which implies that the map $\pi_\Llb:\B_\lba/\bat I^\B_\Llb\to\B_\Llb$
is an isomorphism of algebras. The theorem follows.
\end{proof}

\begin{rem}
By Lemma~\ref{frobenius}, the algebra $\O_\Llb$ is Frobenius.
Therefore, its coregular and regular representations are isomorphic.
\end{rem}

\subsection{Proof of Proposition~\ref{kerI}}
\label{main proof}
We begin the proof with an elementary auxiliary lemma. Let $M$ be
a finite-dimensional vector space, $U\subset M$ a subspace, and $E\in\End(M)$.
\begin{lem}
\label{EU}
Let\/ $EM\subset U$, and the restriction of\/ $E$ to\/ $U$ is invertible in\/
$\End(U)$. Then\/ $EU=U$ and $\,M=U\oplus\ker E$.
\qed
\end{lem}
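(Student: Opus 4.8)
The plan is to extract from the hypotheses the single fact that $E$ restricts to a linear automorphism of $U$, and then deduce both assertions by elementary linear algebra. First I would record that $EU\subset EM\subset U$, so that $E$ does map $U$ into itself and the phrase ``the restriction of $E$ to $U$ is invertible in $\End(U)$'' is meaningful; invertibility then in particular gives that $E|_U$ is surjective onto $U$, which is exactly the claim $EU=U$.

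For the decomposition $M=U\oplus\ker E$ I would verify the two standard conditions. The intersection $U\cap\ker E$ is trivial because $E|_U$ is injective: any $v\in U$ with $Ev=0$ must be $0$. For the sum: given an arbitrary $m\in M$, the assumption $EM\subset U$ shows $Em\in U$, and surjectivity of $E|_U$ produces some $u\in U$ with $Eu=Em$; then $m-u\in\ker E$ and $m=u+(m-u)$ exhibits $m$ as a sum of an element of $U$ and an element of $\ker E$. Combining the two conditions gives the internal direct sum.

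I do not expect any genuine obstacle: the argument is two lines once the observation $EU\subset U$ is in place, and that observation is what forces the order of the steps, since it must precede any use of the fact that $E|_U\in\End(U)$ is invertible. Finite-dimensionality of $M$ plays no essential role beyond fixing the meaning of ``invertible'' — reading it as ``bijective'' leaves the proof unchanged — and it is presumably stated only because that is the context in which the lemma is applied (to the operators built from the Bethe algebra acting on $\M_\lba$).
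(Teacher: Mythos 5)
Your argument is correct and complete: the observation $EU\subset EM\subset U$, injectivity of $E|_U$ giving $U\cap\ker E=0$, and surjectivity of $E|_U$ producing the decomposition $m=u+(m-u)$ is exactly the elementary linear-algebra argument the paper has in mind (the authors omit the proof, marking the lemma with \qed as obvious). Nothing is missing; your remark that finite-dimensionality is inessential once ``invertible'' is read as ``bijective'' is also accurate.
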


Let $W_m$ be the $\gln[t]$-module defined in Section~\ref{secweyl},
and $\bs\mu$ a partition
with at most $N$ parts such that $|\bs\mu|=m$. Recall that $W_m$ is a graded
vector space, and the grading of $W_m$ is defined in Lemma~\ref{weyl}.

Given a homogeneous vector $w\in(W_m)_{\bs\mu}^{sing}$, let $\L_w(b)$ be the
$\glnt$-submodule of $W_m(b)$ generated by the vector $v$. The space $\L_w(b)$
is graded. Denote by $\L_w^=(b)$ and $\L_w^>(b)$ the subspaces of $\L_w(b)$
spanned by homogeneous vectors of degree $\>\deg w$ and of degree strictly
greater than $\>\deg w$, respectively. The subspace $\L_w^=(b)$ is
a $\gln$-submodule of $\L_w(b)$ isomorphic to the irreducible $\gln$-module
$L_{\bs\mu}$. The subspace $\L_w^>(b)$ is a $\glnt$-submodule of $\L_w(b)$,
and the $\glnt$-module $\L_w(b)/\L_w^>(b)$ is isomorphic to the evaluation
module $L_{\bs\mu}(b)$. If $v$ has the largest degree among vectors
in $(W_m)_{\bs\mu}^{sing}$, then $\L_w^>(b)$, considered as a $\gln$-module,
does not contain $L_{\bs\mu}$.

For any $s=1,\dots,k$, pick up a homogeneous vector
$w_s\in(W_{n_s})_{\bs\la^{(s)}}^{sing}$ of the largest possible degree.
Let $\L_{\bs w}(\bs b)$ be the $\glnt$-submodule of
$\otimes_{s=1}^k W_{n_s}(b_s)$ generated by the vector $\otimes_{s=1}^k w_s$.

Denote by $\L_{\bs w}^=(\bs b)$ and $\L_{\bs w}^>(\bs b)$
the following subspaces of $\L_{\bs w}(\bs b)$:
\begin{gather*}
\L_{\bs w}^=(\bs b)\,=\,\otimes_{s=1}^k\L_{w_s}^=(b_s)\,,
\\
\L_{\bs w}^>(\bs b)\,=\,\sum_{s=1}^k\,\L_{w_1}(b_1)\otimes\dots\otimes
\L_{w_s}^>(b_s)\otimes\dots\otimes\L_{w_k}(b_k)\,.
\end{gather*}
The subspace $\L_{\bs w}^>(\bs b)$ is a $\glnt$-submodule
of $\L_{\bs w}(\bs b)$, and the $\glnt$-module
$\L_{\bs w}(\bs b)/\L_{\bs w}^>(\bs b)$ is isomorphic to the tensor
product of evaluation modules $\otimes_{s=1}^kL_{\bs\la^{(s)}}(b_s)$.

The space $\otimes_{s=1}^k W_{n_s}$ has the second $\glnt$-module structure,
denoted $\gr\bigl(\otimes_{s=1}^k W_{n_s}(b_s)\bigr)$, which was introduced
at the end of Section~\ref{secweyl}. The subspace $\L_{\bs w}(\bs b)$ is
a $\glnt$-submodule of $\gr\bigl(\otimes_{s=1}^k W_{n_s}(b_s)\bigr)$,
isomorphic to a direct sum of irreducible $\glnt$-modules of the form
$\otimes_{s=1}^k L_{\bs\mu^{(s)}}(b_s)$, where $|\bs\mu^{(s)}|=n_s$,
$\,s=1,\dots,k$, see Lemmas~\ref{grWmb} and~\ref{grtensor}, and
$(\bs\mu^{(1)},\dots,\bs\mu^{(k)})\ne(\bs\la^{(1)},\dots,\bs\la^{(k)})$
for any term of the sum.

The subspace $\M_\lba=(\otimes_{s=1}^k W_{n_s}(b_s))_{\bs\la}^{sing}$
is invariant under the action of the Bethe algebra ${\B\subset\Uglnt}$.
This makes it into a $\B$-module, which we call the standard $\B$-module
structure on $\M_\lba$. The $\B$-module $\M_\lba$ contains
the submodules
$\M_\Llb^{\bs w}=(\L_{\bs w}(\bs b))_{\bs\la}^{sing}$
and $\M_\Llb^{\bs w,>}=(\L_{\bs w}^>(\bs b))_{\bs\la}^{sing}$,
and the subspace
$\M_\Llb^{\bs w,=}=(\L_{\bs w}^=(\bs b))_{\bs\la}^{sing}$.
As vector spaces,
$\M_\Llb^{\bs w}=\M_\Llb^{\bs w,=}
\oplus\M_\Llb^{\bs w,>}\,$. The $\B$-modules
$\M_\Llb^{\bs w}/\M_\Llb^{\bs w,>}$
and $\M_\Llb$ are isomorphic.

The space $\M_\lba$ has another $\B$-module structure,
inherited from the $\glnt$-module structure of
$\gr\bigl(\otimes_{s=1}^k W_{n_s}(b_s)\bigr)$.
We denote the new structure $\gr\M_\lba\,$.
The subspaces $\M_\Llb^{\bs w}\,$,
$\M_\Llb^{\bs w,=}\,$, $\M_\Llb^{\bs w,>}$
are $\B$-submodules of the $\B$-module $\gr\M_\lba\,$. The submodule
$\M_\Llb^{\bs w,=}\subset\gr\M_\lba$ is isomorphic
to the $\B$-module $\M_\Llb\,$, and the submodule
$\M_\Llb^{\bs w,>}\subset\gr\M_\lba$ is isomorphic
to a direct sum of $\B$-modules of the form $\M_{\bs\Mu,\bs\la,\bs b}\,$, where
$\bs\Mu=(\bs\mu^{(1)},\dots,\bs\mu^{(k)})$, \,$|\bs\mu^{(s)}|=n_s$,
$\,s=1,\dots,k$, and $\bs\Mu\ne\bs\La$ for any term of the sum.

\sskip
In the picture described above, we can regard all $\B$-modules involved
as $\B_\lba$-modules.

\sskip
For any $F\in\B_\lba$, we denote by $\gr F\in\End(\M_\lba)$ the linear operator
corresponding to the action of $F$ on $\gr\M_\lba$\>. The map $F\mapsto\gr F$
is an algebra homomorphism.

Let complex numbers $c_1,\dots,c_k$, $\al_1,\dots,\al_k$ be such that
\be
\sum_{s=1}^k\,c_s\>\Bigl(\,\prod_{i=1}^N\,(\al_s-\mu_i^{(s)}-N+i)-
\prod_{i=1}^N\,(\al_s-\la_i^{(s)}-N+i)\Bigr)\,\ne\,0\,,
\ee
for any sequence of partitions
$(\bs\mu^{(1)},\dots,\bs\mu^{(k)})\ne(\bs\la^{(1)},\dots,\bs\la^{(k)})\,$. Let
\vvn.1>
\be
E\,=\,\sum_{s=1}^k c_s\>\Bigl(\bat\chi_s^\B(\al_s)-
\prod_{\fratop{r=1}{r\ne s}}^k\,(b_s-b_r)^{n_r}\,
\prod_{i=1}^N\,(\al_s-\la_i^{(s)}-N+i)\Bigr)\,\in \bat I^\B_\Llb\,,
\vv-.2>
\ee
where $\bat\chi_s^\B$ is the indicial polynomial~\Ref{chiB}.
\vvn.1>
With respect to the standard $\B$-module structure on $\M_\lba$,
we have $E\>\M_\Llb^{\bs w}\subset\M_\Llb^{\bs w,>}$.

\begin{lem}
The restriction of\/ $E$ to $\M_\Llb^{\bs w,>}$ is invertible in
$\End(\M_\Llb^{\bs w,>})$.
\end{lem}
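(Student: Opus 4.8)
The plan is to pass to the associated graded and thereby reduce the statement to checking that a certain explicit scalar is nonzero.

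First I would record the filtered picture. The subspace $\M_\Llb^{\bs w,>}=(\L_{\bs w}^>(\bs b))_{\bs\la}^{sing}$ is invariant under the standard action of $\B_\lba$, since $\L_{\bs w}^>(\bs b)$ is a $\glnt$-submodule of $\otimes_{s=1}^kW_{n_s}(b_s)$ and $\B$ commutes with $\Ugln$; it inherits from $\M_\lba$ the descending filtration by degree coming from the grading of $\otimes_{s=1}^kW_{n_s}$. The operator $E$, being the image in $\B_\lba$ of an element of $\B_{\bs\la}\subset\Uglnt$, preserves this filtration, and the endomorphism it induces on the associated graded space is exactly the action of $E$ on $\M_\Llb^{\bs w,>}$ viewed as a $\B_\lba$-submodule of $\gr\M_\lba$; this uses Lemma~\ref{B deg} together with the construction of $\gr\bigl(\otimes_{s=1}^kW_{n_s}(b_s)\bigr)$ and its compatibility with passing to $\glnt$-submodules and to singular weight subspaces. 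Since a filtration-preserving endomorphism of a finite-dimensional filtered vector space is invertible once its associated graded endomorphism is invertible (a nonzero kernel element would have a nonzero leading term annihilated by $\gr E$), it suffices to prove that $E$ acts invertibly on $\M_\Llb^{\bs w,>}$ endowed with the $\gr\M_\lba$-structure.

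Now I would invoke the decomposition recalled just before the lemma: as a $\B_\lba$-module, $\M_\Llb^{\bs w,>}\subset\gr\M_\lba$ is a direct sum of modules $\M_{\bs\Mu,\bs\la,\bs b}$ with $\bs\Mu=(\bs\mu^{(1)},\dots,\bs\mu^{(k)})$, $|\bs\mu^{(s)}|=n_s$, and $\bs\Mu\ne\bs\La$. Fix such a summand. Since $\M_{\bs\Mu,\bs\la,\bs b}$ is a subquotient of the evaluation module $\otimes_{s=1}^kL_{\bs\mu^{(s)}}(b_s)$, the computation carried out in the proof of Lemma~\ref{zero} applies — using that $Z(x)$ is central in $\Ugln$ by Theorem~\ref{Zcent} and acts on the irreducible module $L_{\bs\mu^{(s)}}$ by the scalar $\prod_{i=1}^N(x-\mu_i^{(s)}+i-1)$, see~\Ref{Zxv} — and gives that $\bat\chi_s^\B(\al_s)$ acts on $\M_{\bs\Mu,\bs\la,\bs b}$ as multiplication by $\prod_{\fratop{r=1}{r\ne s}}^k(b_s-b_r)^{n_r}\prod_{i=1}^N(\al_s-\mu_i^{(s)}-N+i)$. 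Hence $E$ acts on $\M_{\bs\Mu,\bs\la,\bs b}$ as multiplication by
\be
\sum_{s=1}^k\,c_s\,\prod_{\fratop{r=1}{r\ne s}}^k(b_s-b_r)^{n_r}\,
\Bigl(\,\prod_{i=1}^N(\al_s-\mu_i^{(s)}-N+i)-\prod_{i=1}^N(\al_s-\la_i^{(s)}-N+i)\Bigr),
\ee
and, after absorbing the nonzero constants $\prod_{r\ne s}(b_s-b_r)^{n_r}$ into the $c_s$, this is exactly the expression appearing in the hypothesis on $c_1,\dots,c_k,\al_1,\dots,\al_k$; it is therefore nonzero because $\bs\Mu\ne\bs\La$.

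Consequently $E$ acts on $\M_\Llb^{\bs w,>}$ with the $\gr\M_\lba$-structure as a direct sum of nonzero scalar operators, hence invertibly, and by the first step $E$ is invertible on $\M_\Llb^{\bs w,>}$ with the standard structure, which is the assertion of the lemma. I expect the one point genuinely requiring care to be the identification in the first step of the associated graded of $E$ with the $\gr\M_\lba$-action of $E$; the matching of the constant factors $\prod_{r\ne s}(b_s-b_r)^{n_r}$ against the stated non-degeneracy condition is routine.
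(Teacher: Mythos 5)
Your proposal is correct and takes essentially the same route as the paper: the paper's own proof likewise uses the computation behind Lemma~\ref{zero} to see that $E$ projects to an invertible (scalar) element of each $\B_{\bs\Mu,\bs\la,\bs b}$ with $\bs\Mu\ne\bs\La$, concludes that $\gr E$ is invertible on $\M_\Llb^{\bs w,>}$ via the decomposition of that space (under the $\gr\M_\lba$-structure) into modules $\M_{\bs\Mu,\bs\la,\bs b}$, and then passes from $\gr E$ to $E$ exactly as in your filtration step. The bookkeeping of the factors $\prod_{r\ne s}(b_s-b_r)^{n_r}$ that you absorb into the $c_s$ reflects a looseness already present in the paper's stated genericity condition on $c_1,\dots,c_k,\al_1,\dots,\al_k$, so your treatment matches the intended argument.
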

\begin{proof}
Lemma~\ref{zero} implies that the projection of $E$ to $\B_\Llb$ equals zero,
and the projection of $E$ to $\B_{\bs\Mu,\bs\la,\bs b}$ with $\bs\Mu\ne\bs\La$
is invertible. This means that the restriction of the operator $\gr E$
to $\M_\Llb^{\bs w,>}$ is invertible in $\End(\M_\Llb^{\bs w,>})$.
Therefore, the restriction of\/ $E$ to $\M_\Llb^{\bs w,>}$ is invertible
in $\End(\M_\Llb^{\bs w,>})$.
\end{proof}

Denote $\ker_\Llb^{\bs w}E\>=\>\ker E\,\cap\,\M_\Llb^{\bs w}\,$.
By Lemma~\ref{EU}, the canonical projection
\be
\M_\Llb^{\bs w}\to
\M_\Llb^{\bs w}/\M_\Llb^{\bs w,>}\simeq
\M_\Llb
\ee
induces an isomorphism $\ker_\Llb^{\bs w}E\to\M_\Llb$ of vector spaces.
\vvn.1>
Since the algebra $\B_\lba$ is commutative, the subspace $\ker_\Llb^{\bs w}E$
\vvn.1>
is a $\B$-submodule, and the map $\ker_\Llb^{\bs w}E\to\M_\Llb$ is
an isomorphism of $\B_\lba$-modules.

Lemma~\ref{zero} implies that elements of the ideal $\bat I^\B_\Llb$ act on
$\M_\Llb$ by zero. Hence, they act by zero on $\ker_\Llb^{\bs w}E$, that is,
$\ker_\Llb^{\bs w}E\subset \ker\>(I^\B_\Llb)\,$. On the other hand, we have
\be
\dim\>\ker\>(\bat I^\B_\Llb)\,=\,\dim\>\Ann(\bat I^\O_\Llb)\,=\,
\dim\>\O_\Llb\,=\,\dim\>\M_\Llb\,=\,\dim\>\ker_\Llb^{\bs w}E\,,
\ee
see Theorem~\ref{second}, Corollary~\ref{AnnI} and formula~\Ref{dimO},
which yields \,$\ker_\Llb^{\bs w}E=\ker\>(\bat I^\B_\Llb)\,$.
Proposition~\ref{kerI} is proved.
\qed

\begin{rem}
Note that formula~\Ref{dimO} is an important ingredient of the proof.
\end{rem}

\section{Applications}
\label{app sec}
In this section we state some of the obtained results in a way, relatively
independent from the main part of the paper.
For convenience, we recall some definitions and facts.

The Bethe algebra $\B$ is a commutative subalgebra of $\Uglnt$, defined
in Section~\ref{bethesec}. It is generated by the elements $B_{ij}$,
$i=1,\dots,N$, $j\in\Z_{\ge i}$, which are coefficients of the series
$B_i(u)$, $i=1,\dots,N$, see formulae~\Ref{DB}, \Ref{Bi}.

\sskip
If $M$ is a $\B$-module and $\xi:\B\to\C$ a homomorphism, then
the eigenspace of $\B$-action on $M$ corresponding to $\,\xi\,$ is defined
as $\,\bigcap_{B\in\B}\ker(B|_M-\xi(B))$ and the generalized eigenspace of
$\B$-action on $M$ corresponding to $\,\xi\,$ is defined as
$\,\bigcap_{B\in\B}\bigl(\,\bigcup_{m=1}^\infty\ker(B|_M-\xi(B))^m\bigr)$.

\subsection{Tensor products of irreducible modules}
For a partition $\bs\la$ with at most $N$ parts, $L_{\bs\la}$ is
the irreducible finite-dimensional $\gln$-module of highest weight $\bs\la$.

Let $\bs\la^{(1)},\dots,\bs\la^{(k)}$ be partitions with at most $N$ parts,
$b_1,\dots,b_k$ distinct complex numbers. We are interested in the action
of the Bethe algebra $\B$ on the tensor product
$\otimes_{s=1}^kL_{\bs\la^{(s)}}(b_s)$ of evaluation $\glnt$-modules.

Since $\B$ commutes with the subalgebra $\Ugln\subset\Uglnt$,
the action of $\B$ preserves the subspace of singular vectors
$(\otimes_{s=1}^kL_{\bs\la^{(s)}}(b_s))^{sing}$ as well as
the weight subspaces of $\otimes_{s=1}^kL_{\bs\la^{(s)}}(b_s)$.
The action of $\B$ on $\otimes_{s=1}^kL_{\bs\la^{(s)}}(b_s)$ is determined
by the action of $\B$ on $(\otimes_{s=1}^kL_{\bs\la^{(s)}}(b_s))^{sing}$.

\sskip
Denote $\bs\La=(\bs\la^{(1)},\dots,\bs\la^{(k)})$. Given a partition $\bs\la$
with at most $N$ parts such that $|\bs\la|=\sum_{s=1}^k|\bs\la^{(s)}|\,$,
let $\Dl_\Llb$ be the set of all monic Fuchsian differential operators of
order $N$,
\beq
\label{mcD}
\D\,=\,\der^N+\>\sum_{i=1}^N\,h_i^\D(u)\,\der^{N-i}\,,
\eeq
where $\der=d/du$, with the following properties.

\begin{enumerate}
\flati
\item[\=a]
The singular points of $\D$ are at\/ $b_1,\dots,b_k$ and $\infty$ only.
\item[\=b]
The exponents of $\D$ at\/ \>$b_s$\>, \,$s=1,\dots,k$, are equal to
$\,\la_N^{(s)},\,\la_{N-1}^{(s)}+1,\alb\,\dots\,,\la_1^{(s)}+N-1\,$.
\item[\=c]
The exponents of $\D$ at $\infty$ are equal to
$\>1-N-\la_1,\,2-N-\la_2,\,\dots\,,-\>\la_N\,$.
\item[\=d]
The operator $\D$ is monodromy free.
\end{enumerate}
Equivalently, the last property can be replaced by 
\begin{enumerate}
\flati
\item[\=e]
The kernel of the operator $\D$ consists of polynomials only.
\end{enumerate}

A differential operator $\D$ belongs to the set $\Dl_\Llb$ if and only if
the kernel of $\D$ is a point of the intersection of Schubert cells
$\,\Om_\Lbl$\>, see Lemma~\ref{lem on intersection}.

Denote $n_s=|\bs\la^{(s)}|\,$, $\,s=1,\dots,k$, \,and $\,n=\sum_{s=1}^kn_s\,$.

\begin{thm}
\label{BL}
The action of the Bethe algebra $\,\B$ on
$(\otimes_{s=1}^kL_{\bs\la^{(s)}}(b_s))^{sing}$ has the following properties.
\begin{enumerate}
\item[(i)]
For every $i=1,\dots,N$, $i\leq n$, 
the action of the series\/ $B_i(u)$ is given by
\vvn.1>
the power series expansion in\/ $u^{-1}\!$ of a rational function of the form
\vvn.2>
$\,A_i(u)\prod_{s=1}^k(u-b_s)^{-n_s}$,
where $A_i(u)$ is a polynomial of degree $\,n-i$ with coefficients in
$\,\End\bigl((\otimes_{s=1}^kL_{\bs\la^{(s)}})^{sing}\bigr)$. 
For $i=n+1, n+2,\dots, N$, the series $B_i(u)$ acts by zero.

\sskip
\item[(ii)]
The image of $\,\B$ in
$\,\End\bigl((\otimes_{s=1}^kL_{\bs\la^{(s)}})^{sing}\bigr)$
\vvn.1>
is a maximal commutative subalgebra of dimension
$\,\dim\>(\otimes_{s=1}^kL_{\bs\la^{(s)}})^{sing}$.

\sskip
\item[(iii)]
Each eigenspace of the action of $\,\B$ is one-dimensional.

\sskip
\item[(iv)] Each generalized eigenspace of the action of $\,\B$ is
generated over $\,\B$ by one vector.

\sskip
\item[(v)] The eigenspaces of the action of $\,\B$ on
$(\otimes_{s=1}^kL_{\bs\la^{(s)}}(b_s))_{\bs\la}^{sing}$ are in a one-to-one
correspondence with differential operators from $\>\Dl_\Llb\,$. Moreover,
if\/ $\D$ is the differential operator, corresponding to an eigenspace, then
the coefficients of the series\/ $h_i^\D(u)$ are the eigenvalues of the action
of the respective coefficients of the series\/ $\,B_i(u)$.

\sskip
\item[(vi)] The eigenspaces of the action of $\,\B$ on
$(\otimes_{s=1}^kL_{\bs\la^{(s)}}(b_s))_{\bs\la}^{sing}$ are in a one-to-one
correspondence with points of the intersection of Schubert cells
$\,\Om_\Lbl$\>, given by~\Ref{Omega}.
\end{enumerate}
\end{thm}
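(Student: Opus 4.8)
The plan is to read off Theorem~\ref{BL} from Theorem~\ref{third}, which already identifies the Bethe algebra $\B_\Llb$ with the Frobenius algebra $\O_\Llb$ and the $\B_\Llb$-module $\M_\Llb=(\otimes_{s=1}^kL_{\bs\la^{(s)}}(b_s))_{\bs\la}^{sing}$ with the coregular representation of $\O_\Llb$ on $(\O_\Llb)^*$, supplemented by the general facts on Frobenius algebras from Section~\ref{comalg} and the description of $\O_\Llb$ as the algebra of functions on $\Om_\Lbl$. Since $\B$ commutes with $\Ugln\subset\Uglnt$, the action of $\B$ on $(\otimes_{s=1}^kL_{\bs\la^{(s)}}(b_s))^{sing}$ is the direct sum over all weights $\bs\la$ of its actions on the blocks $\M_\Llb$; so I would first analyze a single block and then assemble, taking care of how the pieces for distinct $\bs\la$ fit together.

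For part~(i) I would use Lemma~\ref{capelli}: the projection $C_i(u)$ to $\B_{\bs\la}$ of the coefficients of $B_i(u)\prod_{s=1}^n(u-z_s)$ is a polynomial in $u$ of degree $n-i$ for $i\le n$ and vanishes for $i>n$. On the quotient $\M_\lba$ the symmetric functions $\si_s(\bs z)$ act by the scalars $a_s$, so by~\Ref{ab} the operator $\prod_{s=1}^n(u-z_s)$ becomes the \emph{scalar} polynomial $\prod_{s=1}^k(u-b_s)^{n_s}$. Hence on $\M_\lba$, and a fortiori on its subquotient $\M_\Llb$, the series $B_i(u)$ ($i\le n$) equals $\bat C_i(u)\prod_{s=1}^k(u-b_s)^{-n_s}$ expanded in $u^{-1}$, with $\bat C_i(u)$ of degree $n-i$, and $B_i(u)=0$ for $i>n$. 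Taking $A_i$ to be the image of $\bat C_i$ in $\End\bigl((\otimes_{s=1}^kL_{\bs\la^{(s)}})^{sing}\bigr)$ gives~(i).

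Parts~(ii)--(iv) are then formal. By Theorem~\ref{third}, on each block $\M_\Llb$ is the coregular representation of $\O_\Llb$, so the image of $\B$ in $\End(\M_\Llb)$ is $\O_\Llb$ itself, a maximal commutative subalgebra of dimension $\dim\O_\Llb=\dim(\otimes_{s=1}^kL_{\bs\la^{(s)}})_{\bs\la}^{sing}$ by~\Ref{dimO}. The exponents at $\infty$ of the differential operators attached to distinct weights $\bs\la$ differ, so the characters of $\B$ occurring in distinct blocks are pairwise distinct; hence the image of $\B$ in $\bigoplus_{\bs\la}\End(\M_\Llb)$ is the whole direct sum $\bigoplus_{\bs\la}\O_\Llb$, which is maximal commutative of total dimension $\dim(\otimes_{s=1}^kL_{\bs\la^{(s)}})^{sing}$, proving~(ii). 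For~(iii), an eigenvector in the coregular representation of a finite-dimensional commutative algebra is proportional to a character, so eigenspaces are one-dimensional (Section~\ref{comalg}). For~(iv), since $\O_\Llb$ is Frobenius its coregular and regular representations are isomorphic, and in the regular representation the generalized eigenspace at a character is the corresponding local summand, which is cyclic over the algebra.

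Finally, for~(v) and~(vi): the characters of $\O_\Llb$ are in bijection with its local summands, equivalently with the points of $\Om_\Lbl$, and by Lemma~\ref{lem on intersection} the assignment $X\mapsto\D_X$ is a bijection from $\Om_\Lbl$ onto $\Dl_\Llb$; this yields~(vi) and the bijection of~(v). It then remains to match the eigenvalue of $B_i(u)$ on the eigenspace attached to $X$ with $h_i^\D(u)$, which I would do by tracing $\tau_\Llb$ back through $\tau_{\bs\la}$: by Theorem~\ref{first} and Lemma~\ref{symm OK}, $B_i(u)$ corresponds to the coefficient $F_i(u)$ of $\D^\O_{\bs\la}$, whose value at $X$ is by construction $\D_X$, and passing to the quotients of Section~\ref{iso sec} shows the coefficients of $B_i(u)$ act by those of $h_i^\D(u)$. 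I expect the two genuinely delicate points to be the specialization argument in~(i) (passing through $\M_\lba$ and using that $\prod(u-z_s)$ becomes a scalar there) and the bookkeeping in~(ii) needed to glue the weight blocks into one maximal commutative subalgebra; everything else is a direct citation of the Frobenius-algebra lemmas of Section~\ref{comalg} and of Theorem~\ref{third}.
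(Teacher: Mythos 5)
Your proposal is correct and follows essentially the same route as the paper: part~(i) from Lemma~\ref{capelli} together with the specialization of $\prod_{s=1}^n(u-z_s)$ to $\prod_{s=1}^k(u-b_s)^{n_s}$ on the quotient, and parts~(ii)--(vi) from Theorem~\ref{third}, Lemma~\ref{lem on intersection}, formula~\Ref{dimO}, and the Frobenius-algebra facts of Section~\ref{comalg}. The extra details you supply (the passage through $\M_\lba$ and its subquotients, and the gluing of the weight blocks via the pairwise distinct characters coming from the exponents at infinity) are exactly the steps the paper leaves implicit, not a different argument.
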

\begin{proof}
The first property follows from Lemma~\ref{capelli}. The other properties
follow from Theorem~\ref{third}, Lemma~\ref{lem on intersection}, and standard
facts about the coregular representations of Frobenius algebras given in
Section~\ref{comalg}.
\end{proof}

The intersection of Schubert cells $\Om_\Lbl$ is {\it transversal\/} if
the scheme-theoretic intersection $\O_\Llb$ is a direct sum of one-dimensional
algebras.

\begin{cor}
\label{card}
The following statements are equivalent.
\begin{enumerate}
\item[(i)]
The action of the Bethe algebra $\>\B$ on
$(\otimes_{s=1}^kL_{\bs\la^{(s)}}(b_s))_{\bs\la}^{sing}$ is diagonalizable.

\sskip
\item[(ii)]
The set $\>\Dl_\Llb\,$ consists of\/
$\,\dim\>(\otimes_{s=1}^kL_{\bs\la^{(s)}})_{\bs\la}^{sing}$ distinct points.

\sskip
\item[(iii)]
The set $\,\Om_\Lbl$ consists of\/
$\,\dim\>(\otimes_{s=1}^kL_{\bs\la^{(s)}})_{\bs\la}^{sing}$ distinct points.

\sskip
\item[(iv)]
The intersection of Schubert cells $\,\Om_\Lbl$ is transversal.
\qed
\end{enumerate}
\end{cor}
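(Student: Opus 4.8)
The plan is to deduce the corollary from the two main theorems already in hand, by showing that each of (i)--(iv) is equivalent to the single algebraic condition that the Frobenius algebra $\O_\Llb$ is semisimple, i.e.\ isomorphic to a direct sum of $\dim\O_\Llb$ copies of $\C$. All the hard work has been done: Theorem~\ref{third} identifies $\B_\Llb$ with $\O_\Llb$ and the $\B_\Llb$-module $\M_\Llb$ with the coregular representation of $\O_\Llb$ on $(\O_\Llb)^*$, while Theorem~\ref{BL} records the consequences of this for eigenspaces; what is left is bookkeeping, which I would organize as a cyclic chain (i)$\Rightarrow$(ii)$\Rightarrow$(iii)$\Rightarrow$(iv)$\Rightarrow$(i).

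First I would treat (iii)$\iff$(iv). By Lemma~\ref{frobenius} the algebra $\O_\Llb$ is a finite-dimensional commutative algebra, hence, by the facts collected in Section~\ref{comalg}, a direct sum of local algebras whose summands are in bijection with the characters of $\O_\Llb$; by Proposition~\ref{OIX} together with Lemma~\ref{lem on intersection} these characters are exactly the points of $\Om_\Lbl$, and $\dim\O_\Llb=\dim\,(\otimes_{s=1}^kL_{\bs\la^{(s)}})_{\bs\la}^{sing}$ by~\Ref{dimO}. A direct sum of local algebras of prescribed total dimension consists of one-dimensional summands if and only if the number of summands equals that dimension; this is precisely the assertion that $\Om_\Lbl$ has $\dim\,(\otimes_{s=1}^kL_{\bs\la^{(s)}})_{\bs\la}^{sing}$ distinct points, and "all summands one-dimensional" is the stated meaning of transversality, so (iii)$\iff$(iv). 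The equivalence (ii)$\iff$(iii) is then immediate: Lemma~\ref{lem on intersection} gives the bijection $\D\mapsto\ker\D$ between $\Dl_\Llb$ and $\Om_\Lbl$, so these two sets have equal cardinality.

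It remains to bring in (i). Via the algebra isomorphism $\tau_\Llb\colon\O_\Llb\to\B_\Llb$ of Theorem~\ref{third}, the action of $\B$ on $\M_\Llb$ is diagonalizable if and only if the coregular representation of $\O_\Llb$ on $(\O_\Llb)^*$ is; since $\O_\Llb$ is Frobenius its coregular and regular representations coincide, and that representation is faithful, so diagonalizability holds exactly when $\O_\Llb$ is reduced, i.e.\ (over $\C$) a direct sum of copies of $\C$, which is (iv). Alternatively, and perhaps more transparently, I would argue straight from Theorem~\ref{BL}: part~(iii) says every eigenspace of $\B$ on $\M_\Llb$ is one-dimensional, so diagonalizability is equivalent to the existence of exactly $\dim\M_\Llb=\dim\,(\otimes_{s=1}^kL_{\bs\la^{(s)}})_{\bs\la}^{sing}$ eigenspaces, while parts~(v) and~(vi) put the eigenspaces in bijection with $\Dl_\Llb$ and with $\Om_\Lbl$; this delivers (i)$\iff$(ii)$\iff$(iii) directly.

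Since the corollary is essentially a repackaging of Theorems~\ref{third} and~\ref{BL}, I do not expect any genuine obstacle. The only points requiring care are to work with the stated definition of transversality ($\O_\Llb$ a direct sum of one-dimensional algebras) rather than a different scheme-theoretic formulation, and to invoke the Frobenius property at the right moment so that "no generalized eigenvectors for the Bethe action" matches "every local summand of $\O_\Llb$ is a field"; both are handled by the commutative-algebra lemmas of Section~\ref{comalg}.
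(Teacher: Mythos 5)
Your argument is correct and follows exactly the route the paper intends: the corollary is stated with \qed as an immediate consequence of Theorems~\ref{third} and~\ref{BL}, the Frobenius-algebra facts of Section~\ref{comalg}, formula~\Ref{dimO}, and the bijection of Lemma~\ref{lem on intersection}, which is precisely the bookkeeping you carry out. Your identification of each of (i)--(iv) with semisimplicity of $\O_\Llb$ (all local summands one-dimensional) matches the paper's definition of transversality and the one-dimensionality of eigenspaces from Theorem~\ref{BL}, so nothing further is needed.
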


We call a differential operator
$\D\in\Dl_\Llb$ {\it real} if all $h_i^\D(u)$ are rational
functions with real coefficients. We call $X\in \Om_\Lbl$ {\it real}
if $X$ has a basis consisting of polynomials with real coefficients.

\goodbreak
\begin{cor}
\label{real}
Let $b_1,\dots,b_k$ be distinct real numbers.
\begin{enumerate}
\item[(i)] The set $\>\Dl_\Llb\,$ consists of\/
$\,\dim\>(\otimes_{s=1}^kL_{\bs\la^{(s)}})_{\bs\la}^{sing}$
distinct real points.
\item[(ii)]
The intersection of Schubert cells $\,\Om_\Lbl$ consists of\/
$\dim\>(\otimes_{s=1}^kL_{\bs\la^{(s)}})_{\bs\la}^{sing}$ distinct real points
and is transversal.
\end{enumerate}
\end{cor}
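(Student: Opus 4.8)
The plan is to derive both assertions from Corollary~\ref{card} together with the fact that, for real $b_1,\dots,b_k$, the Bethe algebra $\B$ acts on $\M_\Llb=(\otimes_{s=1}^kL_{\bs\la^{(s)}}(b_s))_{\bs\la}^{sing}$ by a commuting family of operators that is simultaneously diagonalizable and has real spectrum. Granting this, Corollary~\ref{card} immediately gives that $\Dl_\Llb$ and $\Om_\Lbl$ each consist of exactly $\dim(\otimes_{s=1}^kL_{\bs\la^{(s)}})_{\bs\la}^{sing}$ distinct points and that the intersection $\Om_\Lbl$ is transversal; the only thing left afterwards is to check that these points are real.

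For the diagonalizability with real spectrum I would use the positive definite invariant Hermitian forms $H_s$ carried by the finite-dimensional irreducibles $L_{\bs\la^{(s)}}$ (the unitary forms of the compact real form $\mathfrak u(N)$), characterized by $e_{ij}^\dagger=e_{ji}$. Their tensor product $H=\otimes_{s=1}^kH_s$ on $\otimes_{s=1}^kL_{\bs\la^{(s)}}$ is again positive definite Hermitian and restricts to a positive definite Hermitian form on the $\gln$-submodule $\M_\Llb$. Since $b_1,\dots,b_k$ are real, for real $u$ the operators $e_{ij}(u)$ and $e_{ji}(u)$ on $\otimes_{s=1}^kL_{\bs\la^{(s)}}(b_s)$ are Hermitian-adjoint to one another, and consequently every generator $B_{ij}$, hence every element of $\B$, acts on $\M_\Llb$ by an operator self-adjoint for $H$; this is the standard self-adjointness of the higher Gaudin Hamiltonians at real evaluation points, see, e.g., \cite{MTV1}. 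A commuting family of self-adjoint operators on a finite-dimensional Hermitian inner product space is simultaneously unitarily diagonalizable with real eigenvalues, which is exactly the input required above.

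It then remains to assemble and to verify reality of the points. Diagonalizability and Theorem~\ref{BL}, parts (iii), (v), (vi), give the asserted cardinalities and, with Corollary~\ref{card}, the transversality. For reality, let $\D\in\Dl_\Llb$ correspond to an eigenline of $\B$ in $\M_\Llb$; by Theorem~\ref{BL}(v) the coefficients of the series $h_i^\D(u)$ are the $\B$-eigenvalues of the respective coefficients of $B_i(u)$, and these are real by the previous paragraph, so by Theorem~\ref{BL}(i) each $h_i^\D(u)$ is a real rational function and $\D$ is a scalar differential operator with real coefficients. Hence $X=\ker\D\in\Om_\Lbl$ is the kernel of a real operator and so has a basis of polynomials with real coefficients; this proves (i), and (ii) follows since $\D\mapsto\ker\D$ is a bijection $\Dl_\Llb\to\Om_\Lbl$ (Lemma~\ref{lem on intersection}) and transversality was already established. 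The one genuinely nontrivial ingredient is the self-adjointness of \emph{all} the generators $B_{ij}$ — not merely the quadratic Gaudin Hamiltonians — with respect to the positive definite form when the $b_s$ are real; this rests on a symmetry of the universal differential operator $\D^\B$ under the anti-automorphism $e_{ij}\otimes t^r\mapsto e_{ji}\otimes t^r$ of $\Uglnt$, and is where I expect any real work to lie, although for the purposes of this corollary it may be invoked as a known fact.
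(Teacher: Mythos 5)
Your proposal is correct and follows essentially the same route as the paper: the paper likewise deduces diagonalizability with real spectrum from the symmetry of the Bethe algebra operators with respect to a positive definite form at real evaluation points (the tensor product of Shapovalov forms on the real forms, citing \cite{MTV1}, \cite{MTV2}), and then concludes via Theorem~\ref{BL} and Corollary~\ref{card}. Your use of a positive definite Hermitian form instead of the real bilinear Shapovalov form, and your explicit spelling out of the reality of the differential operators and their kernels, are only cosmetic differences.
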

\begin{proof}
Denote by $\B^\R$ and $L_{\bs\la}^\R$ the real forms of the Bethe algebra $\B$
and the module $L_{\bs\la}$, respectively. The real vector space
$(\otimes_{s=1}^kL_{\bs\la^{(s)}}^\R)_{\bs\la}^{sing}$ has a natural positive
definite bilinear form, which comes from the tensor product of the Shapovalov
forms on the tensor factors. If $b_1,\dots,b_k$ are distinct real numbers, and
$B\in\B^\R$, then $B$ acts on
$(\otimes_{s=1}^kL_{\bs\la^{(s)}}^\R(b_s))_{\bs\la}^{sing}$ as a linear
operator, symmetric with respect to that form, see~\cite{MTV1}, \cite{MTV2}.
In particular $B$ is diagonalizable and all its eigenvalues are real.
The corollary follows.
%
%
\end{proof}

For $N=2$, this corollary is obtained in~\cite{EG}.

\subsection{Weyl modules}
Let $n_1,\dots, n_k$ be natural numbers and $b_1,\dots,b_k$ distinct complex
numbers. Let $\otimes_{s=1}^k W_{n_s}(b_s)$ be the Weyl module associated to
$\bs n,\bs b$ defined in Section~\ref{secweyl}.
We are interested in the action of the Bethe algebra $\B$ on
the Weyl module $\otimes_{s=1}^k W_{n_s}(b_s)$.

The action of $\B$ preserves the subspace of singular
vectors $(\otimes_{s=1}^k W_{n_s}(b_s))^{sing}$ as well as the weight subspaces
of $\otimes_{s=1}^k W_{n_s}(b_s)$. The action of $\B$ on
$\otimes_{s=1}^k W_{n_s}(b_s)$ is determined by the action of $\B$
on $(\otimes_{s=1}^k W_{n_s}(b_s))^{sing}$.

\sskip
Recall that $V$ denotes the irreducible $\gln$-module of highest weight
$(1,0,\dots,0)$, which is the vector representation of $\gln$.
Set $n=n_1+\dots+n_k$.

\sskip
Denote by $\Dlnb$ the set of all monic differential
operators $\D$ of order $N$ with the following properties.

\begin{enumerate}
\flati
\item[\=a]
The kernel of the operator $\D$ consists of polynomials only.
\item[\=b]
The first coefficient $h_1^\D(u)$ of $\D$, see~\Ref{mcD},
is equal to $\>-\sum_{s=1}^k n_s\>(u-b_s)^{-1}$.
\end{enumerate}

\sskip\noindent
If $\D\in\Dlnb$, then $\D$ is a Fuchsian differential operator
with singular points at\/ $b_1,\dots,b_k$ and $\infty$ only.

\sskip
Denote by $\>\Wrnbi$ the set of all $N$-dimensional subspaces of $\C[u]$,
admitting a basis $p_1(u),\alb\dots,\alb p_N(u)$ with the Wronskian
\>$\Wr(p_1(u),\dots,p_N(u))=\prod_{s=1}^k(u-b_s)^{n_s}$.
\vvn.1>
A differential operator $\D$ belongs to the set $\Dlnb$ if and only if
the kernel of $\D$ belongs to the set $\>\Wrnbi$.

\begin{thm}
\label{BW}
The action of the Bethe algebra $\,\B$ on
$(\otimes_{s=1}^k W_{n_s}(b_s))^{sing}$ has the following properties.
\begin{enumerate}
\item[(i)]
For every $i=1,\dots,N$, $i\leq n$, 
the action of the series\/ $B_i(u)$ is given by
\vvn.1>
the power series expansion in\/ $u^{-1}\!$ of a rational function of the form
\vvn.2>
$\,A_i(u)\prod_{s=1}^k(u-b_s)^{-n_s}$,
where $A_i(u)$ is a polynomial of degree $\,n-i$ with coefficients in
$\,\End\bigl((\otimes_{s=1}^k W_{n_s})^{sing}\bigr)$. 
For $i=n+1,n+2,\dots,N$, the series $B_i(u)$ acts by zero.

\sskip
\item[(ii)]
The image of $\,\B$ in $\,\End\bigl((\otimes_{s=1}^k W_{n_s})^{sing}\bigr)$
\vvn.1>
is a maximal commutative subalgebra of dimension
$\,\dim\>(V^{\otimes n})^{sing}$.

\sskip
\item[(iii)]
Each eigenspace of the action of $\,\B$ is one-dimensional.

\sskip
\item[(iv)] Each generalized eigenspace of the action of $\,\B$ is
generated over $\,\B$ by one vector.

\sskip
\item[(v)] The eigenspaces of the action of $\,\B$ on
$(\otimes_{s=1}^k W_{n_s}(b_s))^{sing}$ are in a one-to-one correspondence
with differential operators from $\>\Dlnb\,$. Moreover,
if\/ $\D$ is the differential operator, corresponding to an eigenspace, then
the coefficients of the series\/ $h_i^\D(u)$ are the eigenvalues of the action
of the respective coefficients of the series\/ $\,B_i(u)$.

\sskip
\item[(vi)] The eigenspaces of the action of $\,\B$ on
\vvn.1>
$(\otimes_{s=1}^k W_{n_s}(b_s))^{sing}$ are in a one-to-one
correspondence with spaces of polynomials from\/ $\Wrnbi$.
\end{enumerate}
\end{thm}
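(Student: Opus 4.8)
The plan is to transcribe the proof of Theorem~\ref{BL}, using Theorem~\ref{second} in place of Theorem~\ref{third}. First I would fix $d\ge n+N$, so that every partition $\bs\la$ of $n$ with at most $N$ parts satisfies $\la_1\le d-N$, and define $\bs a=(a_1,\dots,a_n)$ from $\bs n,\bs b$ by $\prod_{s=1}^k(u-b_s)^{n_s}=u^n+\sum_{j=1}^n(-1)^j a_j\>u^{n-j}$, cf.~\Ref{ab}. By Lemma~\ref{factor=weyl} the $\glnt$-module $\otimes_{s=1}^k W_{n_s}(b_s)$ is isomorphic to $\V^S/I^\V_{\bs a}$; since weight subspaces are $\B$-invariant, this gives a $\B$-module decomposition $(\otimes_{s=1}^k W_{n_s}(b_s))^{sing}=\bigoplus_{\bs\la}\M_\lba$, the sum over partitions $\bs\la$ of $n$ with at most $N$ parts. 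By Theorem~\ref{second}, for each such $\bs\la$ the maps $\tau_\lba$ and $\mu_\lba$ identify the $\B_\lba$-module $\M_\lba$ with the regular representation of the algebra $\O_\lba$; by Lemma~\ref{local Wr} the algebra $\O_\lba$ is Frobenius, so its regular and coregular representations are isomorphic. All assertions then follow by applying, summand by summand, the standard facts on Frobenius algebras from Section~\ref{comalg}, together with the description of the characters of $\O_\lba$ from Section~\ref{wronski}.

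In more detail, part~(i) follows from Lemma~\ref{capelli}: the series $\bat C_i(u)$ obtained by projecting the coefficients of $B_i(u)\prod_{s=1}^n(u-z_s)$ to $\B_\lba$ is a polynomial in $u$ of degree $n-i$ for $i\le n$ and is zero for $n<i\le N$; since $\si_j(\bs z)$ acts on $\M_\lba$ by the scalar $a_j$, on $\M_\lba$ one has $\prod_{s=1}^n(u-z_s)=\prod_{s=1}^k(u-b_s)^{n_s}$, so $B_i(u)$ acts as the $u^{-1}$-expansion of $A_i(u)\prod_{s=1}^k(u-b_s)^{-n_s}$ with $A_i(u)=\bat C_i(u)$. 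For part~(ii): $\tau_\lba$ is an isomorphism onto $\B_\lba$, and by Corollary~\ref{degree} $\dim\B_\lba=\dim\O_\lba=\dim\>(V^{\otimes n})_{\bs\la}^{sing}$, so the image of $\B$ on $\M_\lba$ is a maximal commutative subalgebra of $\End(\M_\lba)$ of that dimension; using the central elements $B_{ii}$, which by~\Ref{Bii} and~\Ref{Zxv} act on $\M_\lba$ by scalars depending only on $\bs\la$ and separating distinct weights, one sees that the image of $\B$ on the whole singular space equals $\bigoplus_{\bs\la}\B_\lba$, hence is a maximal commutative subalgebra of $\End\bigl((\otimes_{s=1}^k W_{n_s})^{sing}\bigr)$ of dimension $\sum_{\bs\la}\dim\>(V^{\otimes n})_{\bs\la}^{sing}=\dim\>(V^{\otimes n})^{sing}$. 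Parts~(iii) and~(iv) are the statements that, in the regular representation of a finite-dimensional Frobenius algebra, the eigenspace attached to a character is one-dimensional and the corresponding generalized eigenspace is the local summand, hence cyclic; the same separation by the $B_{ii}$ shows that every eigenspace of $\B$ lies in a single weight component, so these properties pass from the $\M_\lba$ to the whole space.

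For parts~(v) and~(vi): the characters of $\O_\lba$ are in bijection with the points of the scheme-theoretic fibre $\Wrli(\bs a)\subset\Om_\lab(\infty)$, and since $d\ge n+N$ every $N$-dimensional subspace of $\C[u]$ with Wronskian $\prod_{s=1}^k(u-b_s)^{n_s}$ lies in $\C_d[u]$ and in a unique Schubert cell $\Om_\lab(\infty)$, so $\bigsqcup_{\bs\la}\Wrli(\bs a)=\Wrnbi$. The map $X\mapsto\D_X$ is a bijection $\Wrnbi\to\Dlnb$: the kernel of $\D_X$ is the space of polynomials $X$, and its first coefficient equals $-\Wr_X'/\Wr_X=-\sum_{s=1}^k n_s\>(u-b_s)^{-1}$, cf.~\Ref{F1}, which is property~(b) of $\Dlnb$; conversely, property~(b) forces the Wronskian of the kernel of a $\D\in\Dlnb$ to be $\prod_{s=1}^k(u-b_s)^{n_s}$. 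Finally, since $\D^\O_{\bs\la}$ specializes at $X\in\Om_\lab(\infty)$ to $\D_X$ and $\tau_\lba$ sends $F_{ij}$ to the image of $B_{ij}$, the eigenvalues of the coefficients of $B_i(u)$ on the eigenspace attached to $X$ are the coefficients of $h_i^{\D_X}(u)$. Combining these bijections over all $\bs\la$ gives~(v) and~(vi).

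I expect the main obstacle to be the bookkeeping around the passage between the full singular space and its weight components: one must choose $d$ uniformly large in $\bs\la$, verify that the image of $\B$ really fills up $\bigoplus_{\bs\la}\B_\lba$ (this is where the central elements $B_{ii}$ enter), and check that the disjoint union over $\bs\la$ of the Wronski fibres in $\Om_\lab(\infty)$ is exactly the set $\Wrnbi$ and that $\D\in\Dlnb$ is equivalent to its kernel lying in $\Wrnbi$. Once these identifications are secured, the argument is a direct transcription of the proof of Theorem~\ref{BL}.
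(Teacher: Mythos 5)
Your argument is correct and is essentially the paper's own proof: the paper derives part~(i) from Lemmas~\ref{factor=weyl} and~\ref{capelli}, and the remaining parts from Theorem~\ref{second}, formulae~\Ref{DOla} and~\Ref{F1}, and the standard facts on regular representations of Frobenius algebras in Section~\ref{comalg}, exactly the ingredients you use. The only difference is that you spell out the bookkeeping the paper leaves implicit (the weight decomposition of the singular space, the separation of weight blocks via the central elements $B_{ii}$, and the identification of $\Wrnbi$ and $\Dlnb$ with disjoint unions over $\bs\la$, cf.\ the remark after Corollary~\ref{cardW}), and these details are handled correctly.
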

\begin{proof}
The first property follows from Lemmas~\ref{factor=weyl} and~\ref{capelli}.
The other properties follow from Theorem~\ref{second}, formulae~\Ref{DOla}
and~\Ref{F1}, and standard facts about the regular representations of Frobenius
algebras given in Section~\ref{comalg}.
\end{proof}

\begin{cor}
\label{cardW}
The following three statements are equivalent.
\begin{enumerate}
\item[(i)]
The action of the Bethe algebra $\>\B$ on
$(\otimes_{s=1}^k W_{n_s}(b_s))^{sing}$ is diagonalizable.

\sskip
\item[(ii)]
The set $\>\Dlnb\,$ consists of\/
$\,\dim\>(V^{\otimes n})^{sing}$ distinct points.

\sskip
\item[(iii)]
The set $\,\Wrnbi$ consists of\/
$\,\dim\>(V^{\otimes n})^{sing}$ distinct points.
\qed
\end{enumerate}
\end{cor}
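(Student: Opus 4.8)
The plan is to derive the three‑way equivalence formally from Theorem~\ref{BW} together with the elementary commutative algebra recalled in Section~\ref{comalg}. Set $M=(\otimes_{s=1}^k W_{n_s}(b_s))^{sing}$ and let $\mathcal A\subset\End(M)$ be the image of the Bethe algebra $\B$. Iterating Lemma~\ref{weylb}(i) and using compatibility of tensor products with evaluation, $\otimes_{s=1}^kW_{n_s}(b_s)\cong V^{\otimes n}$ as $\gln$‑modules, so $\dim M=\dim(V^{\otimes n})^{sing}$; by part~(ii) of Theorem~\ref{BW} this also equals $\dim\mathcal A$. Moreover, Theorem~\ref{second} applied in each weight $\bs\la$, combined with the dimension count just quoted, identifies $M$ with the regular representation of $\mathcal A$ (so in particular every character of $\mathcal A$ occurs in $M$).

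The equivalence (ii)$\Leftrightarrow$(iii) needs nothing beyond the remark preceding Theorem~\ref{BW}: the map $\D\mapsto\ker\D$ is a bijection of $\Dlnb$ onto $\Wrnbi$, so these two sets always have the same number of points. I would then show that each of (ii), (iii) is equivalent to the statement that $\mathcal A$ has exactly $\dim\mathcal A$ distinct characters. Indeed, by parts~(v),(vi) of Theorem~\ref{BW} the eigenspaces of the $\mathcal A$‑action on $M$ are in bijection with $\Dlnb$ and with $\Wrnbi$, and since $M$ is the regular representation of $\mathcal A$, an eigenspace is nonzero for every character; hence the number of points of $\Dlnb$ (equivalently of $\Wrnbi$) equals the number of characters of $\mathcal A$. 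As $\dim\mathcal A=\dim(V^{\otimes n})^{sing}$, statements (ii) and (iii) assert precisely that this number equals $\dim\mathcal A$.

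Finally, (i) is equivalent to the same condition. The action of the commutative algebra $\mathcal A$ on $M$ is diagonalizable if and only if every operator in $\mathcal A$ is semisimple, i.e. if and only if the finite‑dimensional commutative algebra $\mathcal A$ has no nonzero nilpotents; decomposing $\mathcal A$ as a direct sum of local algebras indexed by its characters (Section~\ref{comalg}), this happens exactly when every local summand is one‑dimensional, i.e. exactly when the number of characters equals $\dim\mathcal A$. Concretely: if the action is diagonalizable then $\mathcal A$ embeds into a product of copies of $\C$ indexed by the occurring characters, forcing $\dim\mathcal A\le(\text{number of characters})\le\dim\mathcal A$; conversely a reduced finite‑dimensional commutative algebra is isomorphic to a product of copies of $\C$ and so acts diagonalizably on any module, in particular on $M$. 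Chaining the three equivalences gives the corollary.

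I do not expect a genuine obstacle here: the statement is a formal corollary of Theorem~\ref{BW} and the Frobenius‑algebra facts of Section~\ref{comalg}. The only point that requires care is to invoke the dimension equality in Theorem~\ref{BW}(ii) so that ``$\dim(V^{\otimes n})^{sing}$ distinct points'' is read as ``$\mathcal A$ is semisimple'' rather than merely ``$\mathcal A$ has many characters''; all the substantive work is already contained in Theorem~\ref{BW}, which in turn rests on Theorem~\ref{second} and, through it, on the identification $\dim\O_\lba=$ degree of the Wronski map $=\dim(V^{\otimes n})^{sing}$ (Corollary~\ref{degree}) and the completeness of the Bethe ansatz (Lemma~\ref{generic}).
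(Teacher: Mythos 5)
Your argument is correct and is essentially the proof the paper intends: Corollary~\ref{cardW} is stated with no separate argument precisely because it follows from Theorem~\ref{BW} (parts (ii), (v), (vi)) together with the commutative/Frobenius algebra facts of Section~\ref{comalg} and the identification of the singular space with the regular representation coming from Theorem~\ref{second}, which is exactly the route you take. Your spelled-out details (the bijection $\D\mapsto\ker\D$ between $\Dlnb$ and $\Wrnbi$, counting characters versus $\dim$ of the image of $\B$, and reducedness being equivalent to diagonalizability of a faithful action) are all sound.
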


\begin{rem}
It is easy to see that the set $\Dlnb$ is a disjoint union of
the sets $\Dl_\Llb$ with $|\bs\la^{(s)}|=n_s$, \,$s=1,\dots,k$. Similarly,
the set $\Wrnbi$ is a disjoint union of the sets $\Om_\Lbl$
with $|\bs\la^{(s)}|=n_s$, \,$s=1,\dots,k$. Comparing items (ii), (iii)
of Corollaries~\ref{cardW} and~\ref{card}, one can see that the sets
$\Dlnb$ and $\Wrnbi$ can have cardinality $\dim\>(V^{\otimes n})$
only if for each $s=1,\dots,k$ the decomposition of the $\gln$-module
$V^{\otimes n_s}$ into the direct sum of irreducible modules is multiplicity
free, that is, each of the numbers $n_1,\dots,n_k$ equals $1$ or $2$.
\end{rem}

\subsection{Monodromy free Fuchsian differential operators}
In this section we give a
reformulation of a part of Corollary~\ref{real}.

Let $\bs\mu^{(s)}=(\mu_1^{(s)},\dots,\mu_N^{(s)})$, \,$s=0,\dots,k$, be
sequences of nonincreasing integers, $\mu_1^{(s)}\ge\dots\ge\mu_N^{(s)}$, and
$\bs b=(b_0,\dots,b_k)$ a sequence of distinct points on the Riemann sphere.
Set $\bs\Mu=(\bs\mu^{(0)},\dots,\bs\mu^{(k)})$.

\sskip
Denote by $\Dl_\Mmb$ the set of all monic Fuchsian differential operators
of order $N$,
\beq
\label{mcd2}
\D\,=\,\der^N+\>\sum_{i=1}^N\,h_i^\D(u)\,\der^{N-i}\,,
\eeq
where $\der=d/du$, with the following properties.

\begin{enumerate}
\flati
\item[\=a]
The singular points of $\D$ are at\/ $b_0,\dots,b_k$ only.
\item[\=b]
The exponents of $\D$ at\/ \>$b_s$\>, \,$s=0,\dots,k$, are equal to
\vv.2>
$\,\mu_N^{(s)},\,\mu_{N-1}^{(s)}+1,\alb\,\dots\,,\mu_1^{(s)}+N-1\,$.
\item[\=c]
The operator $\D$ is monodromy free.
\end{enumerate}
If $\D\in \Dl_\Mmb$ then the kernel of $\D$ consists of rational
functions with poles at $b_0,\dots,b_k$ only.

\sskip
For a sequence $\bs\mu=(\mu_1,\dots,\mu_N)$ of nonincreasing integers, let
$L_{\bs\mu}$ be the irreducible finite-dimensional $\gln$-module of highest
weight $\bs\mu$. Given a $\gln$-module $M$, denote by $M^{\gln}$ the subspace
of $\gln$-invariants in $M$. The subspace $M^{\gln}$ is the multiplicity space
of the trivial $\gln$-module $L_{(0,\dots,0)}$ in $M$.

\begin{thm}
\label{circle}
Let\/ \>$b_0,\dots,b_k$ be distinct points on the Riemann sphere which lie
on a circle or on a line. Then the set\/ $\Dl_\Mmb$ consists of\/
\>$\dim(\otimes_{s=0}^k L_{\bs\mu^{(s)}})^{\gln}$ distinct points.
\end{thm}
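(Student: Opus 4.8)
The plan is to obtain Theorem~\ref{circle} from Corollary~\ref{real} by a projective change of the variable $u$. The guiding principle is that both sides of the asserted equality are unchanged by two kinds of moves: (a) a Möbius transformation of the Riemann sphere permuting the marked points, and (b) rescaling all solutions by a fixed rational function $\prod_s(u-b_s)^{c_s}$ (a global gauge transformation of the operator). For (a): composing solutions with a Möbius transformation $\phi$, followed if necessary by multiplication by a fixed rational function to restore monicity, carries monodromy-free monic order-$N$ Fuchsian operators with singular set $\{b_0,\dots,b_k\}$ bijectively onto those with singular set $\{\phi^{-1}(b_s)\}$; the order of each singular point, the exponents at it, and the monodromy-free property are all preserved, because $\phi$ intertwines the local coordinates at corresponding points and monodromy-freeness is a property of the local system of solutions. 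On the representation side, permuting the points does nothing, except that if $\phi$ swaps a finite point with $\infty$ then the highest weight attached there gets replaced by the one whose module is the dual $L^*$ of the old module, up to a determinant twist. Move (b) shifts exponents at $b_s$ up by $c_s$ and at $\infty$ down by $\sum c_s$, and twists the corresponding modules by determinant powers summing to zero; so it changes neither $\Dl_\Mmb$ nor the dimension of $\gln$-invariants.

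Granting this, since $b_0,\dots,b_k$ lie on a circle or line, I would first apply a Möbius transformation taking that circle or line onto the extended real axis, and then a real one moving one of the points, say $b_0$, to $\infty$ while keeping $b_1,\dots,b_k$ distinct reals. Using the gauge freedom (b), I may further arrange that the exponents at $b_s$ have the form $\la^{(s)}_N,\la^{(s)}_{N-1}+1,\dots,\la^{(s)}_1+N-1$ for partitions $\bs\la^{(s)}$ with at most $N$ parts ($s=1,\dots,k$) and the exponents at $\infty$ have the form $1-N-\la_1,\dots,-\la_N$ for a partition $\bs\la$ with at most $N$ parts; the Fuchs relation for $\D$ is then exactly $|\bs\la|=\sum_{s=1}^k|\bs\la^{(s)}|$. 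By Lemma~\ref{lem on intersection}, in this normal form $\Dl_\Mmb$ coincides with the set $\Dl_\Llb$. Corollary~\ref{real}(i) then gives that $\Dl_\Llb$ consists of $\dim\bigl((\otimes_{s=1}^kL_{\bs\la^{(s)}})_{\bs\la}^{sing}\bigr)$ distinct points.

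It remains to match this number with $\dim(\otimes_{s=0}^kL_{\bs\mu^{(s)}})^{\gln}$. This is the standard identity $\dim\bigl((\otimes_{s=1}^kL_{\bs\la^{(s)}})_{\bs\la}^{sing}\bigr)=\dim\bigl(L_{\bs\la}^*\otimes\bigotimes_{s=1}^kL_{\bs\la^{(s)}}\bigr)^{\gln}$ (the multiplicity of $L_{\bs\la}$ in a tensor product is the dimension of invariants after tensoring with $L_{\bs\la}^*$), combined with the fact — part of the dictionary of the first paragraph — that under the transformations above the module attached to the point carried to $\infty$ is $L_{\bs\la}^*$ and the modules attached to $b_1,\dots,b_k$ are the $L_{\bs\la^{(s)}}$, the determinant twists introduced along the way cancelling over the $k+1$ points (equivalently, $\otimes_{s=0}^kL_{\bs\mu^{(s)}}$ has zero central charge, which is the representation-theoretic counterpart of the Fuchs relation). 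I expect the main obstacle to be exactly this bookkeeping: making the several exponent conventions — at finite points versus at $\infty$, and before versus after the projective change of variable — fully explicit, checking that the determinant twists cancel, and confirming that the Fuchs relation is the precise counterpart of the central-charge condition. Once those are settled, Theorem~\ref{circle} is a restatement of Corollary~\ref{real}, part~(i).
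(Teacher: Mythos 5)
Your overall route is the same as the paper's: a fractional-linear change of variable putting the circle or line onto the real axis with one marked point at $\infty$, conjugation by $\prod_s(u-b_s)^{c_s}$ (which shifts exponents and twists the modules by determinant characters whose twists sum to zero), reduction to Corollary~\ref{real}, part (i), and the identification $\dim(\otimes_{s=0}^kL_{\bs\mu^{(s)}})^{\gln}=\dim\bigl((\otimes_{s=1}^kL_{\bs\la^{(s)}})^{sing}_{\bs\la}\bigr)$ via duality at the point sent to $\infty$. This is exactly how the paper argues.

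The one genuine gap is your claim that the gauge freedom lets you ``further arrange'' that the exponents at $\infty$ take the form $1-N-\la_1,\dots,-\la_N$ for a partition $\bs\la$. That is not always possible: the integers $c_s$ at the finite points are already spent making the local data there partitions with smallest part zero (equivalently, forcing polynomial kernels), after which the weight at $\infty$ is determined, and the candidate $\bs\la=(-\mu^{(0)}_N,\dots,-\mu^{(0)}_1)$ need not be a partition. When it is not, your normal form does not exist and the reduction to $\Dl_\Llb$ breaks down; one must instead check directly that both sides of the theorem vanish --- $L_{\bs\mu^{(0)}}^*$ is then not a polynomial representation, so it cannot occur in the tensor product of the polynomial modules $L_{\bs\mu^{(s)}}$, $s=1,\dots,k$, and no monodromy-free operator with polynomial kernel can have the prescribed behavior at $\infty$, so $\Dl_\Mmb$ is empty. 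The paper inserts precisely this case split (its condition on $\mu_1^{(0)}$) before identifying $\Dl_\Mmb$ with $\Dl_\Llb$. With that degenerate case added, and with the exponent-at-$\infty$ and determinant-twist bookkeeping that you explicitly defer actually carried out (this is where all the content of the reduction sits, since the identification of the weight at $\infty$ with $\bs\la$ depends on the exponent convention there), your argument becomes the paper's proof.
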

\begin{proof}
We reduce the statement to item (i) of Corollary~\ref{real}.

Making if necessary a fractional-linear change of variable $u$ in the
differential operator $\D$, we can assume without loss of generality that
$b_0=\infty$ and the points $b_1,\dots,b_k$ are on the real axis.

\sskip
Let $c_0,\dots,c_k$ \>be integers such that $\sum_{s=0}^k c_s=0$.
For $s=0,\dots,k$, set
\vvn.3>
\beq
\label{mut}
\mut^{(s)}=\,(c_s+\mu_1^{(s)},\dots,c_s+\mu_N^{(s)}).
\vv.3>
\eeq
It is known from representation theory of $\gln$ that the vector spaces
$(\otimes_{s=0}^k L_{\bs\mu^{(s)}})^{\gln}$ and
$(\otimes_{s=0}^k L_{\bs\mut^{(s)}})^{\gln}$ are canonically isomorphic.
At the same time, we have a bijection $\Dl_\Mmb\to \Dl_{\bs{\tilde\Mu},\bs b}$
given by the conjugation
\vvn-.2>
\beq
\label{conjD}
\D\,\mapsto\,\prod_{s=1}^k\,(u-b_s)^{-c_s}\cdot\D\cdot
\prod_{s=1}^k\,(u-b_s)^{c_s}.
\eeq

Making transformations~\Ref{mut} and~\Ref{conjD} with $c_s=\mu_N^{(s)}$,
\,$s=1,\dots,k$, we can assume that $\mu_N^{(s)}=0$, $s=1,\dots,k$.
Then $\bs\mu^{(1)},\dots,\bs\mu^{(k)}$ are partitions with at most $N$ parts
and the kernel of every operator
$\D\in \Dl_\Mmb$ consists of polynomials only.

If $\mu_1^{(0)}\neq 0$ then the set $\Dl_\Mmb$ is empty and
\,$\dim\>(\otimes_{s=0}^k L_{\bs\mu^{(s)}})^{\gln}=0$.
Therefore, the theorem is trivially true.

If $\mu_1^{(0)}= 0$, then
\vvn-.6>
\be
\bs\la\,=\,(-\mu_N^{(0)},\dots,-\mu_1^{(0)})
\vv.3>
\ee
is a partition with at most $N$ parts and the space
$(\otimes_{s=0}^k L_{\bs\mu^{(s)}})^{\gln}$ is canonically isomorphic to the
space $(\otimes_{s=1}^k L_{\bs\mu^{(s)}})_{\bs\la}^{sing}$. The set $\Dl_\Mmb$
coincides with the set $\Dl_\Llb$ defined in the previous section with
$\bs\La=(\bs\mu^{(1)},\dots,\bs\mu^{(k)})$, and the statement of the theorem
follows from item (i) of Corollary~\ref{real}.
\end{proof}

\subsection{Monodromy of eigenspaces of the Bethe algebra}
\label{monodromy sec}
Let $\Theta \subset \C^n$ and $\Xi\subset \Om_{\lab}(\infty)$
be the Zariski open subsets described in
Lemma~\ref{generic}.
Consider the map $p : \C^n\to\C^n$, $\bs b =(b_1,\dots,b_n) \mapsto
\bs a =(a_1,\dots,a_n)$, where $a_i$ is the $i$-th elementary symmetric
function of $b_1,\dots,b_n$.
Then $\Pi= p(\Theta) $ is a
Zariski open subset of $\C^n$.

As vector spaces, $\M_\lba$ are identified for all $\bs a$ and
$\dim \M_\lba=\dim (V^{\otimes n})^{sing}_{\bs\la}$. By Lemma
\ref{generic}, for any $\bs a\in \Pi$, the vector space $\M_\lba$
has a basis consisting of eigenvectors of the Bethe algebra
$\B_\lba$ and distinct eigenvectors have different eigenvalues.
Therefore, all eigenspaces of $\B_\lba$ are one-dimensional,
and the eigenspaces depend on $\bs a$ holomorphically.

Let $\gamma$ be a loop in $\Pi$ starting at $\bs a$. Analytically continuing
along the loop we obtain a permutation of the set of the eigenspaces of
$\B_\lba$. This construction defines a homomorphism of the
fundamental group $\pi_1(\Pi,\bs a)$ to the symmetric group
$S_{\dim \M_{\bs\la,\bs a}}$. The image of the homomorphism will be called
{\it the monodromy group of eigenspaces of the Bethe algebra}.

\begin{prop}
\label{lem monodr}
The monodromy group acts transitively on the set of eigenspaces
of the Bethe algebra $\B_\lba$.
\end{prop}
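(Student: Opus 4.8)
The plan is to translate the monodromy action on eigenspaces of $\B_\lba$ into the monodromy action of the fundamental group of $\Pi$ on the fibers of the Wronski map, and then to invoke irreducibility of the appropriate covering space. By Lemma~\ref{generic}, for $\bs a\in\Pi$ the eigenspaces of $\B_\lba$ on $\M_\lba$ are in bijection with the points of the fiber $\Wrli(\bs a)\cap\Xi$ (each eigenvector $v$ determines a point $X\in\Xi$ via the differential operator $\D^\B_v=\D_X$, and distinct eigenvectors give distinct points), and this bijection depends holomorphically on $\bs a$. Hence the monodromy group of eigenspaces coincides with the monodromy group of the covering $\Wr^{-1}(\Pi)\cap\Xi\to\Pi$ obtained by restricting the Wronski map. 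So it suffices to prove that this covering space is connected, equivalently that the monodromy group of the Wronski map $\Wrl\colon\Om_\lab(\infty)\to\C^n$ over its regular locus acts transitively.

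First I would set up the covering. Over the Zariski open set $\Pi$ the Wronski map is an unramified covering of degree $\dim(V^{\otimes n})^{sing}_{\bs\la}$ (Corollary~\ref{degree}), and restricting the source to the Zariski open subset $\Xi$ only removes finitely many sheets over a proper closed subset, so after possibly shrinking $\Pi$ we may assume $\Wr^{-1}(\Pi)\subset\Xi$ with all sheets present. Transitivity of the monodromy is equivalent to irreducibility of the total space $\Wr^{-1}(\Pi)$ as a variety, which in turn follows if the Grassmannian-level variety $\Om_\lab(\infty)$ (an affine space, hence irreducible) dominates $\C^n$ under $\Wrl$ with irreducible generic fiber data — but the fibers are finite, so the real point is that $\Om_\lab(\infty)$ is irreducible and the Wronski map is dominant, which already forces the generic fiber to lie in a single orbit provided the covering has no nontrivial connected sub-covering. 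The clean way to conclude is: $\Om_\lab(\infty)$ is irreducible, $\Wrl$ is a finite dominant morphism onto its image (a dense open of $\C^n$), and therefore the restriction to the unramified locus is a connected covering; a connected covering has transitive monodromy.

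The step I expect to be the main obstacle is making the identification of the two monodromy actions rigorous across a loop, i.e.\ checking that analytic continuation of an eigenspace of $\B_\lba$ along $\gm$ matches analytic continuation of the corresponding point of $\Wrli(\bs a)$. This requires knowing that the family of pairs $(\bs a, \text{eigenspace})$ and the family $(\bs a, X\in\Wrli(\bs a)\cap\Xi)$ are isomorphic as analytic covering spaces over $\Pi$, not merely fiberwise in bijection; the content is exactly the holomorphic dependence asserted after Lemma~\ref{generic} together with the fact that $X$ is recovered from $v$ algebraically as the kernel of $\D^\B_v$ and $v$ (up to scale) is recovered from $X$ by part~(ii) of Lemma~\ref{generic}. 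Once this analytic identification is in place, the connectedness of $\Om_\lab(\infty)$ — an affine space — does all the remaining work, since any finite covering of a punctured base that extends to (a dense open in) an irreducible total space must be connected, giving the transitive monodromy group.
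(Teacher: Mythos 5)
Your proposal is correct and takes essentially the same route as the paper: the paper also identifies the family of pairs $(\bs a,\ell)$ with $\Xi$ via the holomorphic bijection $\iota$ coming from Lemma~\ref{generic} (so that the eigenspace covering over $\Pi$ becomes the Wronski map restricted to $\Xi$), and then deduces transitivity from path-connectedness of $\Xi$, a Zariski open subset of the affine cell $\Om_\lab(\infty)$, by joining two points of a fiber with a path and projecting it to a loop in $\Pi$. The point you flag as the main obstacle is precisely what the paper encapsulates in the statement that $\iota$ is a holomorphic bijection with $w=\Wrl\circ\iota$, so no further work beyond Lemma~\ref{generic} is needed.
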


\begin{proof}
Consider the set $\Upsilon$ consisting of all pair $(\bs a, \ell)$, where
$\bs a\in\Pi$ and $\ell$ is an eigenspace of the Bethe algebra acting on
$\M_\lba$. Define the map $w:\Upsilon\to\Pi,\ (\bs a,\ell)\mapsto\bs a$.

By Lemma~\ref{generic}, we have a holomorphic bijection
$\iota : \Upsilon \to \Xi$
which sends $(\bs a,\ell)$ to $X$ such that $\D_\ell = \D_X$.
We have $w = \Wr_{\bs\la}\circ \iota$\,, where $\Wr_{\bs\la}$ is the
Wronski map defined by~\Ref{wronski map}.

Let $(\bs a,\ell_1), (\bs a, \ell_2)$ be two eigenspaces of the Bethe algebra
acting on $\M_\lba$.
The subset $\Xi = \Wr_{\bs\la}^{-1}(\Pi)$ is a Zariski open
subset of an affine cell $\Omega_{\lab}$. In particular, it is path connected.
Connect the points $i(\bs a, \ell_1)$,
$ i(\bs a, \ell_2)$ by a curve $\delta$ in $\Xi$. Then
$\gamma=\Wr_{\bs\la}(\delta)$ is a loop in $\Pi$
whose monodromy sends $(\bs a,
\ell_1)$ to $ (\bs a, \ell_2)$.
\end{proof}

Consider two Bethe lines $(\bs a,\ell), \,(\bs a,\ell_2)$ in $\M_\lba$,
the differential operators $\D_{\ell_1},\,\D_{\ell_2}$ associated with
the lines, and the loop $\gamma$ constructed in the proof of Lemma
\ref{lem monodr}. The differential operators holomorphically depend on
$\bs a$.

\begin{cor}
Analytic continuation of $\D_{\ell_1}$ along $\gamma$ equals $\D_{\ell_2}$.
\qed
\end{cor}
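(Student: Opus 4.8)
The plan is to read the corollary off the construction of $\gamma$ in the proof of Proposition~\ref{lem monodr}, using the holomorphic dependence of the associated differential operator on the point of the Schubert cell. First I would recall the relevant data from that proof: $\delta$ is a path in $\Xi$ joining $X_1:=\iota(\bs a,\ell_1)$ to $X_2:=\iota(\bs a,\ell_2)$, the loop is $\gamma=\Wrl\circ\delta$, one has $w=\Wrl\circ\iota$ on $\Upsilon$, the map $\iota:\Upsilon\to\Xi$ is a biholomorphism with $\D_\ell=\D_{\iota(\bs a,\ell)}$ by Lemma~\ref{generic}, and the monodromy of $\gamma$ on the set of eigenspaces of $\B_\lba$ carries $\ell_1$ to $\ell_2$.

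The key point I would then establish is that $X\mapsto\D_X$ is a holomorphic family on the affine cell $\Om_\lab(\infty)$. Indeed, for $X\in\Om_\lab(\infty)$ the operator $\D_X$ equals $\D^\O_{\bs\la}$ evaluated at $X$, see~\Ref{DOla}, and the coefficients of $\D^\O_{\bs\la}$ depend polynomially on the coordinate functions $f_{ij}$ of the cell, see~\Ref{DOla} and~\Ref{Fi}; hence they are holomorphic in $X$. Consequently the family of operators attached to eigenspaces, namely $(\bs a,\ell)\mapsto\D_\ell=\D_{\iota(\bs a,\ell)}$, is the composition of the biholomorphism $\iota$ with a holomorphic family, so it too is holomorphic. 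Since $w=\Wrl\circ\iota$, the lift of the loop $\gamma\subset\Pi$ to $\Upsilon$ through $w$ is exactly $\iota^{-1}\circ\delta$, a path from $(\bs a,\ell_1)$ to $(\bs a,\ell_2)$.

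Combining these, analytic continuation of $\D_{\ell_1}$ along $\gamma$ is the family $t\mapsto\D_{\delta(t)}$, which begins at $\D_{X_1}=\D_{\ell_1}$ and ends at $\D_{X_2}=\D_{\ell_2}$, proving the corollary. The only delicate point — and the one I expect to be the main, if still minor, obstacle — is to confirm that the locally holomorphic dependence of the eigenspaces of $\B_\lba$ on $\bs a\in\Pi$ used throughout Section~\ref{monodromy sec} matches, under $\iota$, the tautological holomorphic dependence of a point of $\Xi$ on its Wronskian; once this compatibility of the two holomorphic structures on $\Upsilon$ is in hand, the rest of the argument is purely formal.
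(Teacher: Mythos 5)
Your argument is correct and is essentially the paper's own (the corollary is left as immediate, with \qed, from the construction of $\gamma$ in the proof of Proposition~\ref{lem monodr}): one lifts $\gamma$ through $w=\Wr_{\bs\la}\circ\iota$ to the path $\delta$ in $\Xi$, uses that the coefficients of $\D^\O_{\bs\la}$ are polynomial in the cell coordinates $f_{ij}$ so that $X\mapsto\D_X$ is holomorphic, and concludes that the continuation of $\D_{\ell_1}=\D_{X_1}$ along $\gamma$ is $t\mapsto\D_{\delta(t)}$, ending at $\D_{X_2}=\D_{\ell_2}$. The compatibility you flag at the end is exactly the content of Lemma~\ref{generic} ($\D^\B_v=\D_X$) together with the holomorphic dependence of the one-dimensional eigenspaces on $\bs a\in\Pi$, so no genuine gap remains.
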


\end{document}